\DeclareMathSymbol{\lsb@l}{\mathalpha}{letters}{`l}
\newcommand{\C}{{\mathbb C}}
\renewcommand{\H}{{\mathbb H}}
\newcommand{\R}{{\mathbb R}}
\newcommand{\cA}{{\mathcal A}}
\newcommand{\cH}{{\mathcal H}}
\newcommand{\cJ}{{\mathcal J}}
\newcommand{\cL}{{\mathcal L}} 
\newcommand{\cM}{{\mathcal M}}
\newcommand{\cQ}{{\mathcal Q}}
\newcommand{\cR}{{\mathcal R}}
\newcommand{\cS}{{\mathcal S}}
\newcommand{\cT}{{\mathcal T}}
\newcommand{\fg}{{\mathfrak{g}}}
\newcommand{\fh}{{\mathfrak{h}}}
\newcommand{\SL}{{\textrm{SL}}}
\newcommand{\Diffex}{{\textrm{Diff}^{\textrm{ex}}}}
\newtheorem{TheoremABC}{Theorem}
\newtheorem {Theorem}{Theorem}[section]
\newtheorem {Lemma}[Theorem]{Lemma}
\newtheorem {Proposition}[Theorem]{Proposition}
\newtheorem {Definition}[Theorem]{Definition}
\theoremstyle{definition}
\newtheorem {Remark}[Theorem]{Remark}
\begin{document}

\title{The hyperkähler metric on the almost-Fuchsian moduli space}

\author{Samuel Trautwein\thanks{The author was partially supported by the Swiss National
Science Foundation (grant number  200021-156000).} 
%\\ETH Zürich, Rämistrasse 101, 8902 Zürich, Switzerland 
%\\ samuel.trautwein@math.ethz.ch
}

\maketitle

%\textbf{Mathematics Subject Classification (2010):} 53C07 (Primary), 53D20, 14L24 (Secondary)\\

%\textbf{Keywords.} Teichmüller theory, holomorphic quadratic differentials, almost Fuchsian manifolds, moment map, hyperkähler geometry \\

\abstract{Donaldson \cite{Donaldson:2000} constructed a hyperkähler moduli space $\cM$ associated to a closed oriented surface $\Sigma$ with $\textrm{genus}(\Sigma) \geq 2$. This embeds naturally into the cotangent bundle $T^*\cT(\Sigma)$ of Teichmüller space or can be identified with the almost-Fuchsian moduli space associated to $\Sigma$. The latter is the moduli space of quasi-Fuchsian threefolds which contain a unique incompressible minimal surface with principal curvatures in $(-1,1)$.

Donaldson outlined various remarkable properties of this moduli space for which we provide complete proofs in this paper: On the cotangent-bundle of Teichmüller space, the hyperkähler structure on $\cM$ can be viewed as the Feix--Kaledin hyperkähler extension of the Weil--Petersson metric. The almost-Fuchsian moduli space embeds into the $\textrm{SL}(2,\C)$-representation variety of $\Sigma$ and the hyperkähler structure on $\cM$ extends the Goldman holomorphic symplectic structure. Here, the natural complex structure corresponds to the second complex structure in the first picture. Moreover, the area of the minimal surface in an almost-Fuchsian manifold provides a Kähler potential for the hyperkähler metric. 

The various identifications are obtained using the work of Uhlenbeck \cite{Uhlenbeck:1983} on germs of hyperbolic $3$-manifolds, an explicit map from $\cM$ to $\cT(\Sigma)\times \overline{\cT(\Sigma)}$ found by Hodge \cite{Hodge:PhD}, the simultaneous uniformization theorem of Bers \cite{Bers:1960}, and the theory of Higgs bundles introduced by Hitchin \cite{Hitchin:1987}.
}

\newpage

\setcounter{tocdepth}{2}
\tableofcontents

\newpage

\section{Introduction}

The concept of moment maps has a long history in symplectic geometry as a formalism to reduce the degrees of freedom in Hamiltonian mechanics \cite{MarsdenWeinstein:2001}. It has since then been realized, that many important differential geometric equations can be understood on a conceptual level from this viewpoint. The first example in this direction was given in the 80s by Atiyah--Bott \cite{AtBott:YangMillsEq} and asserts that the curvature provides a moment map for the action of the Gauge group on the space of connections. Most of the earlier developments in the 80s and 90s then generalized this construction to various gauge theoretical moduli spaces \cite{Donaldson:ASD4, Donaldson:1987, UYau:1986, Hitchin:1987, Simpson:1987, Bradlow:1991, Mundet:2000}. One notable exception is the observation of Quillen, Fujiki and Donaldson \cite{Fujiki:1990, Donaldson:1997}, that the scalar curvature can be understood as a moment map on the space of compatible almost complex structures on a symplectic manifold for the action of the Hamiltonian diffeomorphism group. A new impulse was given around the turn of the century by Donaldson \cite{Donaldson:2000, Donaldson:2000b}, where he described various frameworks and moment map constructions for actions of the diffeomorphism group.

This survey focuses mostly on the one article \cite{Donaldson:2000} of Donaldson. This may seem a bit odd for a survey article, but we will see that there is a wide range of mathematical areas and delicate constructions involved which are still highly relevant. One particular area which we have in mind are applications to higher Teichmüller theory, on which we elaborate a bit more below. Let $\Sigma$ be a closed oriented surface with $\textrm{genus}(\Sigma) \geq 2$. Donaldson shows, that the associated moduli space 
\begin{align} \label{INTRO:M0}
		\cM := \left\{ (g,\sigma) \in \textrm{Met}(\Sigma)\times Q(g)\,\left|\, \begin{array}{c}\bar{\partial} \sigma = 0,\, |\sigma|_g < 1,\\  K_g + |\sigma|_g^2 = -1 \end{array}\right.\right\} \bigg/ \text{Diff}_0(\Sigma)
\end{align}
carries a natural hyperkähler structure, where $Q(g)$ denotes the space of quadratic differentials for the complex structure $J_g$ determined by the metric $g$. There are various geometric viewpoints from which one can understand this moduli space. First, it embeds naturally as an open neighborhood of the zero section into the cotangent bundle of Teichmüller space $T^*\cT(\Sigma)$ and can be viewed as the Feix--Kaledin hyperkähler extension of the Weil--Peterson metric on Teichmüller space. Secondly, it embeds as an open subset into the quasi-Fuchsian moduli space $\mathcal{QF}(\Sigma)$ and its image parametrizes the class of almost-Fuchsian manifolds. These are quasi-Fuchsian $3$-manifolds which contain a unique incompressible minimal surface with principal curvatures in $(-1,1)$. The area of this minimal surface then provides a Kähler potential for the hyperkähler metric with respect to the standard complex structure. Thirdly, this moduli space can be viewed as an open subset of the smooth locus of the $\textrm{SL}(2,\R)$ representation variety of $\Sigma$ and one can give an explicit construction of the corresponding Higgs field under the non-abelian Hodge correspondence. An important feature of this construction is, that the identifications between the various models can be made rather explicitly.

While several aspects of this moduli space are well known to the experts, it seems that many of the interconnections between the different geometric models are lesser known. Here we think about the relation between the moment map picture and the almost-Fuchsian moduli space or the explicit construction of the non-abelian Hodge correspondence. Moreover, several details were stated at a somewhat conjectural level in the original work of Donaldson \cite{Donaldson:2000}. These can now be made more precise by using recent results on minimal surfaces in hyperbolic threefolds \cite{Wang:2012, HuangWang:2013, Hass:2015}. In order to provide a self-contained presentation, the article briefly surveys the well-known aspects of the theory and provides complete proofs of those aspects, which cannot be found in the existing literature. An extended version of this article, containing more details, can be found in Chapter 4 of the thesis \footnote{Note that the sign conventions there are slightly different from the ones used here.} \cite{ST:thesis}.

One of the main motivation in writing this survey is the belief, that one should be able to exploit Donaldson's constructions in the context of higher Teichmüller theories. The starting point for this would be a generalization of the setup to consider tuples consisting of a complex structure together with differentials of higher degree. Some (very) preliminary results in these directions are indicated in Chapter 5 of the author's thesis \cite{ST:thesis}. The main point is, however, that any such application necessarily goes beyond a simple application of the main results in Donaldson's paper and instead requires a thorough understanding of the details within the constructions. We hope that this survey helps to stimulate further research in this direction.

\paragraph{Donaldson's moment map framework}

Let $(M,\rho)$ be a closed $n$-dimensional manifold equipped with a volume form $\rho$, let $P \rightarrow M$ be its $\SL(n,\R)$-frame bundle and let $(X,\omega)$ be a symplectic manifold with a Hamiltonian $\SL(n,\R)$-action generated by a moment map $\mu: X \rightarrow \mathfrak{sl}(n,\R)^*$. 

Denote by $\cS(P,X)$ the space of section of the associated symplectic fibration $P(X) := P\times_{\SL(n,\R)}X$. This space carries a natural symplectic form defined by
	\begin{align} \label{intro:JQsymp}
		\underline{\omega}_s(\hat{s}_1, \hat{s}_2) := \int_M \omega_s(\hat{s}_1, \hat{s}_2) \rho
	\end{align}
for vertical vector fields $\hat{s}_1,\hat{s}_2 \in \Omega^0(M, s^*T^{vert} P(X))$. Let $\Diffex(M,\rho)$ denote the group of exact volume preserving diffeomorphisms. These are obtained by integrating exact divergence free vector fields and can be viewed as the Lie group corresponding to the Lie subalgebra 
	\begin{align} \textrm{Lie}\left(\Diffex(M,\rho) \right) = \left\{ v \in \textrm{Vect}(M)\,|\, \textrm{$\iota(v)\rho$ is exact} \right\}. \end{align}		
This space is isomorphic to $\Omega^{n-2}(M) / \textrm{ker}(d)$ and thus its dual space can formally be identified with the space of exact $2$-forms on $M$.

\begin{TheoremABC}[\textbf{Donaldson} \cite{Donaldson:2000}] \label{introJQ:ThmA}
Fix a torsion free $\SL(n,\R)$-connection $\nabla$ on $M$ and define $\underline{\mu}: \mathcal{S}(P,X) \rightarrow \Omega^2(M)$ by
	\begin{align}
		\underline{\mu}(s) := \omega(\nabla s \wedge \nabla s) - \langle \mu_s , R^{\nabla} \rangle - dc(\nabla \mu_s)
	\end{align}
where $\mu_s \in \Omega^0(M, \textrm{End}_0(TM)^*)$ is obtained by composing $s \in \cS(P,X)$ with the moment map on each fibre and $c(\nabla \mu_s) \in \Omega^1(M)$ is defined as the contraction $(\mu_s)_{j;i}^i$ of $\nabla \mu_s$. Then $\underline{\mu}$ is equivariant, independent of the connection $\nabla$ used to define it, takes values in the space of closed $2$-forms and satisfies the moment map equation.
\end{TheoremABC}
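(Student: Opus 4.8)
The plan is to follow Donaldson's original argument: transport the whole construction into the differential geometry of the associated bundle $P(X)$ by means of the fixed torsion-free connection $\nabla$, write the infinitesimal $\Diffex(M,\rho)$-action explicitly, and then check the moment map equation by a direct tensor computation combined with two integrations by parts (justified by $\nabla\rho=0$). The first step is to identify the fundamental vector fields. Since $P$ is the frame bundle, each $\phi\in\textrm{Diff}(M,\rho)$ lifts canonically to $P$ and hence acts on $\cS(P,X)$, so an exact divergence-free vector field $v$ generates a vector field on $\cS(P,X)$; splitting $TP(X)$ into horizontal and vertical parts with $\nabla$ shows that at a section $s$ it equals
\[ X_v(s) = -\cL_v s = -\nabla_v s + (\nabla v)\cdot s , \]
up to the overall sign conventions for the action and the moment map, where $\nabla v\in\Omega^0(M,\textrm{End}(TM))$ is traceless — because $\nabla\rho=0$ and $\textrm{div}_\rho v=0$ — and therefore acts on the fibres of $P(X)$ through the infinitesimal $\SL(n,\R)$-action, with $(\nabla v)\cdot s$ denoting the corresponding vertical vector field along $s$. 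That exactly one derivative of $v$ appears here is what forces a connection into the formula for $\underline{\mu}$.

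The core step is the moment map equation itself. Along a path $s_t$ with $\dot s_0=\hat s$, set $\delta\mu_s:=d\mu_s(\hat s)$ for the vertical variation of $\mu_s$. Differentiating the three terms of $\underline{\mu}$ and using that (i) the fibrewise symplectic form is parallel, $\nabla\omega=0$, because $\SL(n,\R)$ acts symplectically; (ii) $\mu$ induces a connection-preserving bundle map $P(X)\to\textrm{End}_0(TM)^*$ by equivariance, so $\nabla\mu_s=d\mu(\nabla s)$; and (iii) the pullback connection along $(t,x)\mapsto s_t(x)$ has no curvature in the $t$-direction; one obtains $d\underline{\mu}_s(\hat s) = \frac{d}{dt}\big|_0\omega(\nabla s_t\wedge\nabla s_t) - \langle\delta\mu_s,R^{\nabla}\rangle - dc(\nabla\delta\mu_s)$. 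The decisive observation is that the first two terms form an exact $2$-form: using torsion-freeness of $\nabla$ (to compute the exterior derivative of a $1$-form through $\nabla$), the curvature identity $\nabla_i\nabla_j s-\nabla_j\nabla_i s=R^{\nabla}_{ij}\cdot s$, and the fibrewise moment map identity $\omega(\xi\cdot s,\hat s)=\langle\delta\mu_s,\xi\rangle$, one checks that the $1$-form $\eta_{\hat s}$ defined by $\eta_{\hat s}(w)=\omega_s(\nabla_w s,\hat s)$ satisfies $d\eta_{\hat s}=\langle\delta\mu_s,R^{\nabla}\rangle-\frac{d}{dt}\big|_0\omega(\nabla s_t\wedge\nabla s_t)$, whence $d\underline{\mu}_s(\hat s)=-d\bigl(\eta_{\hat s}+c(\nabla\delta\mu_s)\bigr)$. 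Pairing with $v$ through $\langle d\beta,v\rangle=\int_M\beta\wedge\iota(v)\rho=\int_M\beta(v)\,\rho$ for $\beta\in\Omega^1(M)$, and integrating by parts once more to turn $c(\nabla\delta\mu_s)(v)$ into $\langle\delta\mu_s,\nabla v\rangle=\omega_s\bigl((\nabla v)\cdot s,\hat s\bigr)$, everything collapses to
\[ \langle d\underline{\mu}_s(\hat s),v\rangle = -\int_M\omega_s\bigl(\nabla_v s-(\nabla v)\cdot s,\hat s\bigr)\,\rho = \underline{\omega}_s\bigl(X_v(s),\hat s\bigr), \]
which is the moment map equation; since $d\underline{\mu}_s(\hat s)$ came out exact, the pairing is moreover well defined on $\textrm{Lie}(\Diffex(M,\rho))^{*}\cong\{\text{exact }2\text{-forms}\}$.

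For the remaining assertions: closedness holds since $d\,dc(\nabla\mu_s)=0$ trivially, while $d\bigl(\omega(\nabla s\wedge\nabla s)\bigr)=d\langle\mu_s,R^{\nabla}\rangle$ follows from the second Bianchi identity $d^{\nabla}R^{\nabla}=0$ together with $\nabla\omega=0$, the curvature identity and the fibrewise moment map identity — the same bookkeeping as before. Equivariance is immediate, since $\nabla s$, $R^{\nabla}$ and $\mu_s$ transform tensorially under $\textrm{Diff}(M,\rho)$ and $\mu$ is $\SL(n,\R)$-equivariant, so each term of $\underline{\mu}$ is natural. For independence of $\nabla$, two torsion-free $\SL(n,\R)$-connections differ by a tensor $a\in\Omega^0(M,S^2T^{*}M\otimes TM)$ which is traceless, and substituting $\nabla'=\nabla+a$ one checks that the changes in $\omega(\nabla s\wedge\nabla s)$, in $\langle\mu_s,R^{\nabla}\rangle$ and in $dc(\nabla\mu_s)$ cancel, the symmetry of $a$ killing the a priori antisymmetric contributions and the equivariance of $\mu$ accounting for the quadratic terms $\omega(a\cdot s\wedge a\cdot s)$ and $\langle\mu_s,a\wedge a\rangle$.

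I expect the main obstacle to be the bookkeeping in the second step: one has to verify that, after the two integrations by parts, the curvature term $\langle\delta\mu_s,R^{\nabla}\rangle$ is cancelled \emph{exactly} by the piece of $\frac{d}{dt}\omega(\nabla s_t\wedge\nabla s_t)$ that the curvature identity produces, and that the correction term $dc(\nabla\mu_s)$ — reverse-engineered precisely for this — absorbs the trace generated when $\nabla v$ is integrated by parts against $\rho$; keeping all signs and index contractions consistent, and tracking where torsion-freeness, $\nabla\rho=0$ and the equivariance of $\mu$ each enter, is the delicate part. The connection-independence is a second computation of the same flavour, its subtlety being that the \emph{symmetry} of $a$ is what rescues the otherwise non-cancelling antisymmetric terms.
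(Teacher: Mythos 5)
Your outline is sound and follows the route the paper itself intends: note, however, that the paper does not actually prove Theorem~\ref{introJQ:ThmA} (equivalently Theorem~\ref{fib.ThmM}) — its ``proof'' is a citation to Theorem~9 of \cite{Donaldson:2000} and Theorem~4.2.4 of \cite{ST:thesis} — so there is no in-paper argument to compare against beyond the supporting material of Section~2. Your reconstruction is consistent with that material: the formula $\cL_v s = \nabla_v s - L_s(\nabla v)$ is exactly Lemma~\ref{fib.LemmaInf1}, the identity $d\eta_{\hat s}=\langle\delta\mu_s,R^{\nabla}\rangle-\tfrac{d}{dt}\big|_0\omega(\nabla s_t\wedge\nabla s_t)$ checks out with the stated sign conventions (the curvature of the pullback connection acts through $L_s$, and $\omega(L_s\xi,\hat s)=\langle d\mu_s(\hat s),\xi\rangle$ is the convention used in Lemma~\ref{LemmaMM0}), the two integrations by parts use $\nabla\rho=0$ and the tracelessness of $\nabla v$ as you say, and the final pairing reproduces (\ref{fib.Meq3}). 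The only caveat is that the closedness and connection-independence claims are asserted rather than carried out — in particular the cancellation of the quadratic terms $\omega(L_sa\wedge L_sa)$ against $\langle\mu_s,\tfrac12[a\wedge a]\rangle$ via equivariance of $\mu$, and the role of the symmetry of $a$, would need to be written out — but these are computations of the kind you correctly identify, not missing ideas.
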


Note that $\underline{\mu}$ is not a moment map in the strict sense, since it takes only values in the space of closed 2-forms and not in the space of exact $2$-forms. Nevertheless, one can still make sense of the moment map equation, see Theorem \ref{fib.ThmM} and Remark \ref{fib.RmkM}.

\paragraph{The hyperkähler moduli space \texorpdfstring{$\cM$}{cM}}

In the following, let $(\Sigma,\rho)$ be a closed oriented $2$-dimensional surface with fixed area form $\rho \in \Omega^2(\Sigma)$ and assume $\textrm{genus}(\Sigma) \geq 2$. Consider as fibre the unit disc bundle $X \subset T^*\H$. This carries a unique $S^1\times\textrm{SL}(2,\R)$-invariant hyperkähler metric, which extends the hyperbolic metric along the zero section and blows up when approaching the boundary of the disc bundle (see Theorem \ref{ThmHK}). Moreover, a section $s \in \cS(P,X)$ can be identified with a pair $(J,\sigma)$ consisting of a complex structure $J$ and a quadratic differential $\sigma$ satisfying $|\sigma|_J < 1$. With this identification, the space $\cS(P,X)$ corresponds to
	\begin{align}
		 \cQ_1(\Sigma) := \left\{(J,\sigma)\, | \,J \in \cJ(\Sigma), \, \sigma \in \Omega^0(\Sigma, S^2(T^*\Sigma \otimes_J \C)), \, |\sigma|_J < 1 \right\}.
	\end{align} 	
The hyperkähler structure on $X$ then yields a hyperkähler structure on $\cQ_1(\Sigma)$ and Theorem \ref{introJQ:ThmA} asserts that there exists a hyperkähler moment map for the action of the Hamiltonian diffeomorphism group. 

\begin{TheoremABC}[\textbf{Donaldson} \cite{Donaldson:2000}] \label{introJQ:ThmB}
		The action of $\textrm{Ham}(\Sigma,\rho)$ on $\cQ_1(\Sigma)$ admits a hyperkähler moment map given by
			\begin{equation}
				\begin{gathered} 
				\underline{\mu}_1 (J,\sigma) = \frac{|\partial \sigma|^2 - |\bar{\partial} \sigma|^2}{\sqrt{1 - |\sigma|^2}} \rho - 2 \sqrt{1 - |\sigma|^2} K_J \rho - 2 \textbf{i} \bar{\partial} \partial 	\sqrt{1 - |\sigma|^2}  + 2c\rho \\
						\underline{\mu}_2(J,\sigma) + \textbf{i}\underline{\mu}_3(J,\sigma) = 2\textbf{i} \bar{\partial}  r(\bar{\partial} \sigma)
				\end{gathered}
			\end{equation}
		where $c := 2\pi(2 - 2\textrm{genus}(\Sigma))/\textrm{vol}(\Sigma,\rho)$ and $r: \Omega^{0,1}(\Sigma, S^2(T^*\Sigma \otimes_J \C) ) \rightarrow \Omega^{1,0}(\Sigma)$ is the contraction defined by the metric $\rho(\cdot,J\cdot)$. 
\end{TheoremABC}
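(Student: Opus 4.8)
The plan is to deduce Theorem~\ref{introJQ:ThmB} from the general moment map formula of Theorem~\ref{introJQ:ThmA} by making the abstract ingredients explicit for the specific fibre $X \subset T^*\H$. The three components of the hyperkähler moment map arise from the three symplectic forms $\omega_I, \omega_J, \omega_K$ on $X$, each of which is $\SL(2,\R)$-invariant and Hamiltonian with its own moment map $\mu_I, \mu_J, \mu_K : X \to \mathfrak{sl}(2,\R)^*$. By the uniqueness statement in Theorem~\ref{ThmHK}, the hyperkähler structure on $X$ is the $S^1\times\SL(2,\R)$-invariant one extending the hyperbolic metric along the zero section; I would first write down $\omega_I,\omega_J,\omega_K$ and the associated moment maps $\mu_I,\mu_J,\mu_K$ in explicit coordinates on the unit disc bundle (e.g. identifying $T^*\H$ with $\H \times \C$ via the hyperbolic metric and using the disc coordinate $|\sigma|_J < 1$), together with the $S^1$-action that rotates the complex structures $J,K$ into one another. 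The $S^1$-invariance forces $\mu_I$ to be $S^1$-invariant while $\mu_J + \mathbf{i}\mu_K$ transforms with weight one, which is already the structural reason the first component of $\underline{\mu}_1$ looks different from the second equation.

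Next I would substitute these fibrewise moment maps into the formula
$\underline{\mu}(s) = \omega(\nabla s \wedge \nabla s) - \langle \mu_s, R^\nabla\rangle - d\,c(\nabla \mu_s)$
from Theorem~\ref{introJQ:ThmA}, taking for $\nabla$ the torsion-free $\SL(2,\R)$-connection; the natural choice is the Levi-Civita connection of the metric $g = \rho(\cdot, J\cdot)$ determined by the section itself, which is legitimate since the output is independent of $\nabla$. With this choice the curvature term $\langle \mu_s, R^\nabla\rangle$ produces the Gaussian curvature $K_J$ contracted against $\mu_I$, which accounts for the $-2\sqrt{1-|\sigma|^2}\,K_J\,\rho$ term in the $I$-component and contributes nothing to the $J,K$-components because $\mu_J + \mathbf{i}\mu_K$ pairs to zero against the (trace-type) curvature in the relevant way. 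The quadratic term $\omega(\nabla s \wedge \nabla s)$ unpacks, after writing $\nabla s = (\nabla J, \nabla \sigma)$ and using $\nabla J = 0$ for the Levi-Civita connection of $g$, into the Dirichlet-type expression $|\partial\sigma|^2 - |\bar\partial\sigma|^2$ divided by the normalizing factor $\sqrt{1-|\sigma|^2}$ coming from the hyperkähler metric on the fibre, and in the $J+\mathbf{i}K$ direction into a term built from $\bar\partial\sigma$ alone; the contraction term $d\,c(\nabla\mu_s)$ then reorganizes the second-derivative pieces into the $\bar\partial\partial\sqrt{1-|\sigma|^2}$ and $\bar\partial\, r(\bar\partial\sigma)$ expressions. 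The constant $2c\rho$ is exactly the ambiguity in the moment map (the center of the Lie algebra, equivalently the freedom to add a closed form), pinned down by the Gauss--Bonnet normalization $\int_\Sigma K_J\,\rho = 2\pi\chi(\Sigma)$ so that $\underline{\mu}_1$ integrates to zero.

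The main obstacle will be the bookkeeping in the quadratic term: one must carefully track how the $S^1\times\SL(2,\R)$-invariant hyperkähler metric on the fibre — which is \emph{not} flat and degenerates at the disc boundary — enters $\omega(\nabla s\wedge\nabla s)$, and in particular verify that the three fibrewise symplectic forms combine so that the $\|1-|\sigma|^2\|^{\pm 1/2}$ weights appear exactly as stated rather than some other power. Concretely this requires the explicit Kähler potential / metric coefficients from Theorem~\ref{ThmHK} and a Hodge-theoretic identity that rewrites the raw contraction $(\mu_s)^i_{j;i}$ as the operators $\partial,\bar\partial,r$ acting on $\sqrt{1-|\sigma|^2}$ and on $\bar\partial\sigma$. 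Once the fibre data is in hand, the surface-integral formula (\ref{intro:JQsymp}) turns everything into the claimed pointwise expressions, and the verification that $\underline{\mu}_1,\underline{\mu}_2,\underline{\mu}_3$ are genuinely the hyperkähler moment map components follows from Theorem~\ref{introJQ:ThmA} applied separately to $\omega_I,\omega_J,\omega_K$, together with the standard fact that an $S^1$-family of moment maps for a common group action assembles into a hyperkähler moment map. I would also double-check the normalization of $r$ (the metric contraction $\Omega^{0,1}(S^2 T^*\Sigma) \to \Omega^{1,0}$) against the conventions used in Theorem~\ref{ThmHK}, since a factor-of-two discrepancy there is precisely the kind of thing the footnote about differing sign conventions in \cite{ST:thesis} is warning about.
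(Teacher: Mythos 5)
Your plan for $\underline{\mu}_1$ is exactly the paper's proof (Step 1 of Theorem \ref{hk.ThmM}): apply Theorem \ref{fib.ThmM} with the Levi-Civita connection of $\rho(\cdot,J\cdot)$, use $\nabla J=0$ to reduce the quadratic term to $\omega_Q(\nabla\sigma,\nabla\sigma)/\sqrt{1-|\sigma|^2}$, pair the fibre moment map $\sqrt{1-|q|^2}\,\mathrm{tr}(J\xi)$ against $R^\nabla=-K_JJ\otimes\rho$, identify $dc(\nabla\mu_s)$ with $2\mathbf{i}\bar\partial\partial\sqrt{1-|\sigma|^2}$, and add $2c\rho$ via Gauss--Bonnet to land in exact forms. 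For $\underline{\mu}_2+\mathbf{i}\underline{\mu}_3$, however, you propose to run Theorem \ref{fib.ThmM} again for $\omega_2$ and $\omega_3$; the paper deliberately does \emph{not} do this, remarking that it found Donaldson's original argument along these lines hard to make rigorous. Instead it computes $(\underline{\omega}_2+\mathbf{i}\underline{\omega}_3)\bigl((\dot J,\dot\sigma),\mathcal{L}_v(J,\sigma)\bigr)$ directly, using the tautological-one-form moment map on the cotangent fibre (Proposition \ref{PropMcot}), an integration by parts in $t$ along a path of sections, and a pointwise Stokes identity to arrive at $2\mathbf{i}\bar\partial_J r(\bar\partial_J\sigma)$; this also yields the stronger statement that these two components extend to moment maps for $\textrm{Symp}_0(\Sigma,\rho)$.

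One concrete inaccuracy in your sketch: you say the quadratic term $\omega(\nabla s\wedge\nabla s)$ contributes, in the $J+\mathbf{i}K$ direction, ``a term built from $\bar\partial\sigma$ alone.'' In fact $\omega_2+\mathbf{i}\omega_3$ is the canonical holomorphic symplectic form $dz\wedge dw$ of the cotangent bundle, which vanishes on pairs of vertical vectors; since $\nabla J=0$ makes $\nabla s=(0,\nabla\sigma)$ purely vertical, the quadratic term is identically zero for these two components, and the entire expression $2\mathbf{i}\bar\partial r(\bar\partial\sigma)$ would have to be extracted from the contraction term $dc(\nabla\mu_s)$ alone (the curvature pairing also vanishes, as you correctly note, since $[J,J]=0$). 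Your route is therefore not wrong in principle, but the hard part is concentrated entirely in one term you have not computed, and the paper's alternative derivation exists precisely because that computation is delicate; if you pursue your version you must actually carry out the contraction of $\nabla\bigl(\langle\sigma,[\cdot,J]\rangle\bigr)$ and show its exterior derivative equals $2\mathbf{i}\bar\partial r(\bar\partial\sigma)$, rather than appealing to the general theorem as if the identification were routine.
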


We will prove in Theorem \ref{hk.ThmM} the slightly stronger statement that two of the moment maps extend to moment maps for the symplectomorphism group. The construction of the hyperkähler quotient $\cM$ is then obtained in two steps: First, it follows from general principles that the hyperkähler moment map above gives rise to a hyperkähler moduli space
\begin{align}
	\cM_0 := \underline{\mu}_1^{-1}(0)\cap \underline{\mu}_2^{-1}(0) \cap \underline{\mu}_3^{-1}(0) / \textrm{Ham}(\Sigma,\rho).
\end{align}
Secondly, this quotient carries a natural action of $H := \textrm{Symp}_0(\Sigma,\rho)/\textrm{Ham}(\Sigma,\rho)$ which admits moment maps $\mu^H_2$ and $\mu^H_3$ with respect to the second and third symplectic form. The vanishing locus of these two moment maps agree and we define accordingly
\begin{align}
	\cM_s := (\mu^H_2)^{-1}(0) / H = (\mu^H_3)^{-1}(0) / H.
\end{align}
Being a Marsden--Weinstein quotient, it follows directly that the second and third symplectic form descend to $\cM_s$. For the first symplectic structure, a different line of arguments is needed which depends on the fact that the $H$ orbits in the level sets are symplectic submanifolds. Finally, by using Moser isotopy and a suitable rescaling of the quadratic differential, it follows that $\cM_s$ can be identified with
\begin{align}
		\cM := \left\{ (g,\sigma) \in \textrm{Met}(\Sigma)\times Q(g)\,\left|\, \begin{array}{c}\bar{\partial} \sigma = 0,\, |\sigma|_g < 1,\\  K_g - \frac{c}{2}|\sigma|_g^2 = \frac{c}{2}\end{array}\right.\right\} \bigg/ \text{Diff}_0(\Sigma)
\end{align}
where $c := 2\pi(2 - 2\textrm{genus}(\Sigma))/\textrm{vol}(\Sigma,\rho)$ as above. After scaling the volume of $\Sigma$, we may assume that $c = -2$. Then, the moduli space $\cM$ takes the simpler form (\ref{INTRO:M0}).

\paragraph{Geometric models of the hyperkähler quotient}

An almost-Fuchsian hyperbolic $3$-manifold is a quasi-Fuchsian $3$-manifold which possesses an incompressible minimal surface with principal curvatures in $(-1,1)$. 

\begin{Remark}
The class of almost-Fuchsian manifolds is strictly smaller than the class of quasi-Fuchsian manifold: There are examples of quasi-Fuchsian manifolds which admit more than one minimal surface (see \cite{Wang:2012, HuangWang:2013, Hass:2015}). These cannot be almost-Fuchsian (see Lemma \ref{Lemma:Ymin}).
\end{Remark}

The isomorphism between $\cM$ and the space of almost-Fuchsian manifolds follows from Uhlenbeck's theory of minimal surfaces in hyperbolic $3$-manifolds \cite{Uhlenbeck:1983}.

\begin{TheoremABC}[\textbf{Uhlenbeck} \cite{Uhlenbeck:1983}] \label{IntroJQ:ThmMtoAF}
Let $g \in \textrm{Met}(\Sigma)$ and $\sigma \in Q(g)$ satisfy the equations $K_g + |\sigma|^2 = -1$, $\bar{\partial} \sigma = 0$, and $ |\sigma|_g < 1$. Then, there exists a unique hyperbolic manifold $Y$, together with an isometric embedding $\Sigma \rightarrow Y$ such that the second fundamental form of this embedding is given by $\textrm{Re}(\sigma)$. Moreover, $Y$ is diffeomorphic to $\Sigma\times\R$ and its hyperbolic metric is of almost-Fuchsian type.
\end{TheoremABC}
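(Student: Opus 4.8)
The strategy is the one underlying the fundamental theorem of surfaces, with the bound $|\sigma|_g<1$ supplying exactly the global control needed to pass from a germ along $\Sigma$ to a complete, almost-Fuchsian quotient. Since a holomorphic quadratic differential is trace-free with respect to $g$, the symmetric $2$-tensor $A:=\textrm{Re}(\sigma)$ has eigenvalues $\pm|\sigma|_g$ and $\det_g A=-|\sigma|_g^2$. Hence $K_g+|\sigma|_g^2=-1$ is the Gauss equation $K_g=-1+\det_g A$ for an immersed surface in the space form of curvature $-1$, and $\bar\partial\sigma=0$ is the Codazzi equation $d^{\nabla^g}A=0$, using that for a trace-free symmetric $2$-tensor on a Riemann surface the Codazzi identity is equivalent to holomorphicity of its $(2,0)$-part. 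The hypothesis $|\sigma|_g<1$ says precisely that the principal curvatures of such an immersion lie in $(-1,1)$.

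Applying the fundamental theorem of hypersurfaces on the simply connected cover $\tilde\Sigma$ produces an isometric immersion $f\colon\tilde\Sigma\to\H^3$ whose second fundamental form is the pullback of $\textrm{Re}(\sigma)$, unique up to post-composition with $\textrm{Isom}^+(\H^3)=\PSL(2,\C)$. Since $g$ and $\sigma$ descend to $\Sigma$, for every deck transformation $\gamma$ the immersion $f\circ\gamma$ has the same first and second fundamental forms as $f$, so uniqueness yields $f\circ\gamma=\rho(\gamma)\circ f$ for a unique $\rho(\gamma)\in\PSL(2,\C)$, defining a representation $\rho\colon\pi_1(\Sigma)\to\PSL(2,\C)$. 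Let $\nu$ be the unit normal fixed by the orientations and set $\Phi\colon\tilde\Sigma\times\R\to\H^3$, $\Phi(x,t):=\exp_{f(x)}(t\,\nu(x))$. A Jacobi-field computation in Fermi coordinates gives $\Phi^*g_{\H^3}=dt^2+g_t$ with $g_t:=g\big((\cosh t\,\mathrm{id}-\sinh t\,S)\cdot,(\cosh t\,\mathrm{id}-\sinh t\,S)\cdot\big)$ and $S$ the shape operator, and the Jacobian of $\Phi$ equals $\cosh^2 t-|\sigma|_g^2\sinh^2 t=1+(1-|\sigma|_g^2)\sinh^2 t\geq 1$ because $|\sigma|_g<1$. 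Thus $\Phi$ is an immersion, $\Phi^*g_{\H^3}$ is a hyperbolic metric, and since $g_t\geq(1-\max_\Sigma|\sigma|_g^2)\,g$ this metric dominates a fixed positive multiple of the complete metric $dt^2+g$ and is therefore complete. Hence $\Phi$ is a local isometry out of a complete manifold, so a Riemannian covering of the simply connected $\H^3$, i.e. a diffeomorphism $\tilde\Sigma\times\R\xrightarrow{\sim}\H^3$.

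Isometries commute with the exponential map and, being orientation-preserving, preserve $\nu$, so $\Phi\circ(\gamma\times\mathrm{id})=\rho(\gamma)\circ\Phi$; thus $\Phi$ carries the free, properly discontinuous $\pi_1(\Sigma)$-action on $\tilde\Sigma\times\R$ to one on $\H^3$, so $\rho$ is discrete and faithful and $Y:=\H^3/\rho(\pi_1(\Sigma))\cong(\tilde\Sigma\times\R)/\pi_1(\Sigma)=\Sigma\times\R$ is a complete hyperbolic $3$-manifold. By construction $\Sigma=\Sigma\times\{0\}\hookrightarrow Y$ is isometric with second fundamental form $\textrm{Re}(\sigma)$, which is trace-free, so this surface is minimal with principal curvatures $\pm|\sigma|_g\in(-1,1)$. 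To see that $Y$ is quasi-Fuchsian, study the endpoint maps $G^\pm(x):=\lim_{t\to\pm\infty}\Phi(x,t)\in\partial_\infty\H^3=S^2$: because the normal geodesics $t\mapsto\Phi(x,t)$ foliate $\H^3$, the forward- and backward-endpoint sets $\Omega^\pm:=G^\pm(\tilde\Sigma)$ are disjoint and $\rho$-invariant, and being quotiented by $\pi_1(\Sigma)$ into copies of $\Sigma$ they are topological open discs, so the limit set $\Lambda_\rho=S^2\setminus(\Omega^+\cup\Omega^-)$ is a Jordan curve; the uniform bound $\max_\Sigma|\sigma|_g<1$ moreover makes $G^\pm$ quasiconformal with controlled dilatation (equivalently, the equidistant surfaces $\Sigma_t$ have principal curvatures tending to $\mp1$, so both ends of $Y$ are funnels), whence $\rho$ is quasi-Fuchsian and $Y$ is of almost-Fuchsian type.

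For uniqueness, if $Y'$ is another complete hyperbolic manifold with an isometric embedding of $\Sigma$ having second fundamental form $\textrm{Re}(\sigma)$, then lifting to $\widetilde{Y'}=\H^3$ gives a $\rho'$-equivariant isometric immersion $\tilde\Sigma\to\H^3$ with the same first and second fundamental forms as $f$; by the uniqueness clause above it equals $A\circ f$ for some $A\in\PSL(2,\C)$, so $\rho'=A\rho A^{-1}$ and $Y'$ is isometric to $Y$ compatibly with the embeddings. The substantive part of the argument is the completeness of $\Phi^*g_{\H^3}$ together with the quasi-Fuchsian conclusion, and this is exactly where Uhlenbeck's analysis enters: the \emph{strict} bound $|\sigma|_g<1$, uniform because $\Sigma$ is compact, is what simultaneously forces $\Phi$ to be a global diffeomorphism and forces the two conformal ends to be honest discs rather than collapsing, so that $\rho$ is quasi-Fuchsian and not merely discrete and faithful; controlling the quasiconformal dilatation of $G^\pm$ — equivalently, the asymptotics of the principal curvatures of the equidistant foliation — is the delicate point.
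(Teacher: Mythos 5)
Your proposal is correct and follows essentially the route the paper takes: the paper's own proof of Theorem \ref{Thm:MtoAF} is a one-line citation of Uhlenbeck's Theorem 3.3, supported by Lemma \ref{Lemma:QF1} (the Gauss--Codazzi reformulation of the equations) and the explicit normal-exponential metric (\ref{QF.met}), all of which you rederive, including the lower bound $g_t \geq (1-\max_\Sigma|\sigma|_g^2)\,g$ that yields completeness and hence the covering-map argument. The only remark worth making is that your closing discussion of quasi-Fuchsianity via the endpoint maps $G^{\pm}$ is the sketchiest step but is also dispensable for the statement as the paper phrases it, since Definition \ref{Def:AF} characterizes ``almost-Fuchsian'' purely by the product form of the complete metric and the principal-curvature bound on the minimal surface, both of which you have already established.
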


An explicit formula of this metric is given in Theorem \ref{Thm:MtoAF}. Moreover, there is a standard isomorphism $\mathcal{QF}(\Sigma) \cong \cT(\Sigma) \times \overline{\cT(\Sigma)}$ and the second complex structure on $\cM$ corresponds to the natural complex structure on the latter space. In this context, we have the following description of the hyperkähler metric.

\begin{TheoremABC}
Let $A: \mathcal{AF}(\Sigma) \rightarrow \R$ be the area functional, which assigns to an almost-Fuchsian manifold $Y$ the area of its unique minimal surface. Then 
		\begin{align}  2\textbf{i} \bar{\partial}_{J_2} \partial_{J_2} A = \underline{\omega}_2 .\end{align}
Hence $A$ provides a Kähler potential with respect to the natural complex structure on $\mathcal{AF}(\Sigma)$ which agrees (up to sign) with the second complex structure on $\cM$.
\end{TheoremABC}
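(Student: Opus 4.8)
\emph{Proof strategy.} The plan is to realise $A$, up to an additive constant, as the moment map for a natural circle action on $\cM$ with respect to $\underline{\omega}_1$, and then to invoke the standard fact that such a moment map is automatically a Kähler potential for $\underline{\omega}_2$ with respect to $J_2$. First I record what $A$ is. By Theorem~\ref{IntroJQ:ThmMtoAF} a point $(g,\sigma)\in\cM$ gives the almost-Fuchsian manifold $Y$ in which $\Sigma$ sits as an isometrically embedded surface with second fundamental form $\textrm{Re}(\sigma)$; since a quadratic differential is trace-free this embedded surface is minimal, its induced metric is $g$, and by Lemma~\ref{Lemma:Ymin} it is the unique minimal surface of $Y$. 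Hence $A(g,\sigma)=\textrm{Area}(\Sigma,g)$, which by Gauss--Bonnet together with $K_g+|\sigma|_g^2=-1$ equals $-2\pi\chi(\Sigma)-\|\sigma\|_{L^2(g)}^2$. In particular the first variation of $A$ along $\cM$ is the elementary expression $dA(\dot g,\dot\sigma)=\tfrac{1}{2}\int_\Sigma\textrm{tr}_g(\dot g)\,dA_g$, which is well defined on the $\textrm{Diff}_0(\Sigma)$-quotient since $A$ is diffeomorphism invariant.

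The circle $S^1$ acts on $\cM$ by $(e^{\mathbf{i}\theta},(g,\sigma))\mapsto(g,e^{\mathbf{i}\theta}\sigma)$: all three defining equations are preserved because $|e^{\mathbf{i}\theta}\sigma|_g=|\sigma|_g$ and $\bar\partial$ is complex linear. This action is induced by the fibrewise rotation belonging to the $S^1\times\SL(2,\R)$-invariance group of the hyperkähler fibre $X\subset T^*\H$ of Theorem~\ref{ThmHK}, so it acts on $\cM$ by isometries of the hyperkähler metric; it is holomorphic for the cotangent-bundle complex structure $J_1$ and hence fixes $\underline{\omega}_1$, whereas it pulls the canonical holomorphic symplectic form $\underline{\omega}_2+\mathbf{i}\underline{\omega}_3$ of $T^*\cT(\Sigma)$ back to $e^{\mathbf{i}\theta}(\underline{\omega}_2+\mathbf{i}\underline{\omega}_3)$, so for the generating vector field $V$ we get $\mathcal{L}_V(\underline{\omega}_2+\mathbf{i}\underline{\omega}_3)=\mathbf{i}(\underline{\omega}_2+\mathbf{i}\underline{\omega}_3)$, i.e.\ $\mathcal{L}_V\underline{\omega}_2=-\underline{\omega}_3$ and $\mathcal{L}_V\underline{\omega}_3=\underline{\omega}_2$. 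Since $\cM$ is a neighbourhood of the zero section in $T^*\cT(\Sigma)$ it is homotopy equivalent to the contractible space $\cT(\Sigma)$, so $H^1(\cM)=0$ and the closed $1$-form $\iota_V\underline{\omega}_1$ is exact: the action is Hamiltonian with respect to $\underline{\omega}_1$, with a moment map $f:\cM\to\R$ normalised by $\iota_V\underline{\omega}_1=df$ and unique up to a constant. Then $d^c_{J_2}f=-df\circ J_2=-\iota_V\underline{\omega}_3$ (using $\underline{\omega}_i(\cdot,\cdot)=g(J_i\cdot,\cdot)$ and $J_2J_1=-J_3$), hence $dd^c_{J_2}f=-d\iota_V\underline{\omega}_3=-\mathcal{L}_V\underline{\omega}_3=-\underline{\omega}_2$, and since $dd^c_{J_2}=-2\mathbf{i}\bar\partial_{J_2}\partial_{J_2}$ this is exactly $2\mathbf{i}\bar\partial_{J_2}\partial_{J_2}f=\underline{\omega}_2$: any such $f$ is a Kähler potential for $\underline{\omega}_2$ with respect to $J_2$.

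It therefore remains to show that $f=A$ up to an additive constant, equivalently $dA=\iota_V\underline{\omega}_1$ on $\cM$; this is the heart of the matter and the step I expect to be the real work. The left-hand side is the expression from the first paragraph, and the right-hand side---with $V=\tfrac{d}{d\theta}\big|_0(g,e^{\mathbf{i}\theta}\sigma)=(0,\mathbf{i}\sigma)$---I would evaluate either from an explicit formula for the restriction of $\underline{\omega}_1$ to $\cM$ in the $(g,\sigma)$-model, or, more in the spirit of the construction, by following the reductions: on $\cQ_1(\Sigma)$ the $\underline{\omega}_1$-moment map for $S^1$ is the integral $s\mapsto\int_\Sigma\mu^{S^1}_X(s)\,\rho$ of the fibre moment map $\mu^{S^1}_X:X\to\R$ of the rotation on $(X,\underline{\omega}_1)$, which one reads off from the metric of Theorem~\ref{ThmHK}, and this descends through the hyperkähler quotient and the subsequent $H$-reduction to a moment map on $\cM$. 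What has to be checked is then that, after keeping track of the Moser isotopy and the rescaling of $\sigma$ used to identify $\cM_s$ with $\cM$, the density $\mu^{S^1}_X(s)\,\rho$ differs from the area form $dA_g$ of the minimal surface by an exact $2$-form---precisely the $\mathbf{i}\bar\partial\partial$-term that already occurs in the moment map $\underline{\mu}_1$ of Theorem~\ref{introJQ:ThmB}---so that the two integrals agree. Granting this, $2\mathbf{i}\bar\partial_{J_2}\partial_{J_2}A=\underline{\omega}_2$ follows from the second paragraph, and the residual sign freedom (whether one calls $J_2$ or $-J_2$ the natural complex structure on $\mathcal{AF}(\Sigma)$) is the "up to sign" in the statement.
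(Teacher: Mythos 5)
Your overall architecture is exactly the paper's: realise the fibre-rotation circle action, show it is $\underline{\omega}_1$-Hamiltonian while rotating $\underline{\omega}_2+\textbf{i}\underline{\omega}_3$, and run the standard Hitchin-type computation $2\textbf{i}\bar{\partial}_{J_2}\partial_{J_2}f = d\iota(v)\underline{\omega}_3 = \cL_v\underline{\omega}_3 = \underline{\omega}_2$ (this is Lemma \ref{Lemma:fibreS1} transported to the moduli space). That formal part of your argument is correct, and your use of $H^1(\cM)=0$ to get existence of \emph{some} Hamiltonian $f$ is a clean way to sidestep the question of how the fibre Hamiltonian descends through the two-stage quotient.

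However, the step you yourself flag as ``the heart of the matter'' --- the identity $dA=\iota_V\underline{\omega}_1$, i.e.\ that the area functional \emph{is} the moment map up to a constant --- is only outlined, not proved, so the argument as written has a genuine gap precisely where the theorem's content lies. The paper closes it with two ingredients you already have at your disposal but do not combine: Proposition \ref{propfiberMQ} identifies the fibre Hamiltonian of the rotation as $H(J,q)=\sqrt{1-|q|_J^2}$, so the $\underline{\omega}_1$-moment map on $\cQ_1(\Sigma)$ is $\int_\Sigma\sqrt{1-|\sigma|_J^2}\,\rho$; and Proposition \ref{PropRescaling} says the minimal-surface metric attached to $(J,\sigma)\in\cM_s$ is the conformal rescaling $(1+\sqrt{1-|\sigma|^2})\rho(\cdot,J\cdot)$, so its area is $A_s=\int_\Sigma(1+\sqrt{1-|\sigma|_J^2})\,\rho$. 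These differ by $\int_\Sigma\rho=\textrm{vol}(\Sigma,\rho)$, an additive constant --- in particular your anticipated correction by ``an exact $2$-form, precisely the $\textbf{i}\bar{\partial}\partial$-term in $\underline{\mu}_1$'' is not what occurs and would have sent you down the wrong computation; the discrepancy is simply the constant $V$. A secondary loose end: your claimed first variation $dA(\dot g,\dot\sigma)=\tfrac12\int_\Sigma\textrm{tr}_g(\dot g)\,dA_g$ is the variation of the area of a \emph{fixed} representative metric, whereas the relevant computation must be done on a fixed model of the quotient ($\cM_s$, with its rescaled representative), which is exactly what the explicit formula for $A_s$ accomplishes.
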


Every complete hyperbolic $3$-manifold $Y$ arises as a quotient of the hyperbolic space $\H^3$ and thus gives rise to a representation of $\rho: \pi_1(Y) \rightarrow \textrm{PSL}(2,\C)$, which is well-defined up to conjugation by an element in $\textrm{PSL}(2,\C)$.
This defines a natural embedding of the quasi-Fuchsian moduli space into the representation variety $\cR_{\textrm{PSL}(2,\C)}(\Sigma)$. Moreover, this embedding can be lifted to the representation variety of $\SL(2,\C)$, see e.g. \cite{Culler:1986}. The corresponding embedding of $\cM$ into $\cR_{\textrm{SL}(2,\C)}(\Sigma)$ can be described explicitly in terms of the non-abelian Hodge theory and Higgs bundles \cite{Hitchin:1987}. Let $g \in \textrm{Met}(\Sigma)$ and $\sigma \in Q(g)$ be a quadratic differential for the induced conformal structure. Choose a complex line bundle $L \rightarrow \Sigma$ with $L^2 = T\Sigma$ and define $E = L\oplus L^{-1}$. The Levi-Civita connection for $g$ induces a unique $U(1)$-connection $a \in \cA(L)$. Then consider the pair
	\begin{align} \label{introJQ:Hitchin}
		A = \begin{pmatrix}a & \frac{\bar{\sigma}}{2} \\ -\frac{\sigma}{2} & -a \end{pmatrix}\in \cA(E) \quad \textrm{and} \quad \phi = \frac{1}{2} \left(\begin{array}{cc} 0 & \textbf{1} \\ 0 & 0 \end{array}\right) \in \Omega^{1,0}(\text{End}(E))
	\end{align}
where $\sigma \in \Omega^{1,0}(L^{-2}) = \Omega^{1,0}(\text{Hom}(L,L^{-1}))$ and $\textbf{1} \in \Omega^0(\text{End}(T\Sigma)) = \Omega^{1,0}(L^2) = \Omega^{1,0}(\text{Hom}(L^{-1},L))$.

\begin{TheoremABC} \label{ThmE}
Let $g \in \textrm{Met}(\Sigma)$ and $\sigma \in Q(g)$ satisfy the equations $K_g + |\sigma|^2 = -1$, $\bar{\partial} \sigma = 0$, and $ |\sigma|_g < 1$. The corresponding pair $(A, \phi)$ defined by (\ref{introJQ:Hitchin}) then satisfies the Hitchin equation
$\bar{\partial}_A \phi = 0$, $F_A + [\phi \wedge \phi^*]= 0$ and $B := A + \phi + \phi^* $ is a flat $\textrm{SL}(2,\C)$-connection. The holonomy representation $\rho_B: \pi_1(\Sigma) \rightarrow \textrm{SL}(2,\C)$ agrees up to conjugation with the representation associated to the almost-Fuchsian associated to the pair $(g,\sigma)$.
\end{TheoremABC}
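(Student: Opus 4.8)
The plan is to verify the three parts of the statement in turn, the first two by direct computation in the local holomorphic frame and the third by connecting the flat connection $B$ to the hyperbolic structure produced by Theorem~\ref{IntroJQ:ThmMtoAF}.

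First I would compute $\bar\partial_A\phi$. Since $\phi$ is the constant nilpotent matrix $\tfrac12\left(\begin{smallmatrix}0&1\\0&0\end{smallmatrix}\right)$ viewed as an element of $\Omega^{1,0}(\mathrm{Hom}(L^{-1},L))$, the relevant $\bar\partial$-operator is the one induced on $L^2=T\Sigma$ by the Levi-Civita connection $a$; holomorphicity of $\phi$ is then exactly the statement that the identity section is holomorphic for the complex structure on $T\Sigma$, which holds for the Dolbeault operator coming from the Chern connection of a Kähler (here, conformal) metric. Next, for the curvature equation $F_A+[\phi\wedge\phi^*]=0$, I would write out $F_A$ block-diagonally: the diagonal blocks are $\pm F_a$ and the off-diagonal blocks involve $\bar\partial a$-type terms acting on $\sigma,\bar\sigma$, which vanish precisely because $\bar\partial\sigma=0$. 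The bracket $[\phi\wedge\phi^*]$ is diagonal, equal to $\tfrac14\,\mathrm{diag}(\rho,-\rho)$ up to the metric identification (here $\phi^*$ is taken with respect to $g$ and the Hermitian metric on $E=L\oplus L^{-1}$). Comparing with $F_a$, which by Chern--Gauss--Bonnet-type identities equals $-\tfrac{i}{2}K_g\,\omega_g$ on $L$, the equation $F_A+[\phi\wedge\phi^*]=0$ becomes exactly the scalar equation $K_g+|\sigma|_g^2=-1$ (after normalizing the volume so $c=-2$). So parts one and two reduce to the two PDEs in the hypothesis together with the standard curvature formula for $L$ with $L^2=T\Sigma$.

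For flatness of $B=A+\phi+\phi^*$, I would expand $F_B=F_A+\bar\partial_A\phi+\partial_A\phi^*+[\phi\wedge\phi^*]+\tfrac12[\phi\wedge\phi]+\tfrac12[\phi^*\wedge\phi^*]$. The bidegree $(1,1)$ part is $F_A+[\phi\wedge\phi^*]=0$ by the above; the $(2,0)$ part is $\bar\partial_A\phi=0$ (here using that $\phi\wedge\phi=0$ since $\phi$ is a constant nilpotent and degree reasons) and its conjugate gives the $(0,2)$ part. Hence $F_B=0$. Strictly, this is the content of the non-abelian Hodge correspondence in the direction Higgs bundle~$\Rightarrow$~flat connection, but for this very explicit ansatz it is a one-line check once the Hitchin equations hold.

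The main obstacle is the last sentence: identifying the holonomy $\rho_B$ with the holonomy of the almost-Fuchsian manifold $Y$ attached to $(g,\sigma)$ by Theorem~\ref{IntroJQ:ThmMtoAF}. Here I would argue as follows. By Uhlenbeck's construction, $Y$ is built from the Gauss--Codazzi data $(g,\mathrm{Re}\,\sigma)$: the developing map of $Y$ is obtained by integrating the first-order system on $\Sigma$ whose coefficients are the Levi-Civita connection together with the shape operator $\mathrm{Re}\,\sigma$. I would show that, under the isomorphism $E=L\oplus L^{-1}\cong$ (spinor bundle), the flat connection $B$ is precisely this system: the diagonal part $A$ encodes the Levi-Civita connection of $g$, while $\phi+\phi^*$ encodes the second fundamental form $\mathrm{Re}\,\sigma$ (the appearance of $\bar\sigma/2$ and $-\sigma/2$ in $A$ together with the constant off-diagonal $\phi$ reassembles into $\mathrm{Re}\,\sigma$ plus the connection in the normal direction). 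Concretely one checks that parallel transport of $B$ along a loop $\gamma\subset\Sigma$ equals the linearized monodromy of the Sym--Bobenko-type immersion formula, hence $\rho_B(\gamma)=\rho_Y(\gamma)$ up to conjugation, and that the signs match the normalization $c=-2$. This step is where the three models genuinely meet, and it is where I expect the bookkeeping of conventions (the choice of $L$, the Hermitian metric on $E$, the factor $\tfrac12$'s, and the $\pm$ in $A$) to be delicate; I would lean on the explicit metric formula of Theorem~\ref{Thm:MtoAF} to pin down every constant.
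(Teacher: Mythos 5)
Your proposal follows essentially the same route as the paper: the Hitchin equations are verified by the direct block computation of Lemma \ref{Lemma:Hitchin}, flatness of $B$ is the same one-line expansion of $F_B$, and the holonomy identification is carried out as in Theorem \ref{Thm:MtoR} --- one shows that the unitary reduction of the flat bundle gives a $\pi_1(\Sigma)$-equivariant isometric minimal immersion $\tilde\Sigma\rightarrow\H^3$ with second fundamental form $\textrm{Re}(\tilde\sigma)$, and then invokes the uniqueness statement in Uhlenbeck's Theorem \ref{Thm:MtoAF} to match the developing map of the almost-Fuchsian manifold with the one built from $B$ via the normal exponential map. So the overall structure is correct and matches the paper's.

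One piece of bookkeeping in your middle paragraph does not close as written: the diagonal blocks of $F_A$ are not just $\pm F_a$. The wedge-square of the off-diagonal part of $A$ (the entries $\bar\sigma/2$ and $-\sigma/2$) contributes $\mp\tfrac14\,\bar\sigma\wedge\sigma$ to the diagonal, and it is this term --- not a comparison of $F_a$ with $[\phi\wedge\phi^*]$ alone --- that produces the $|\sigma|_g^2$ in the scalar equation $K_g+|\sigma|_g^2=-1$; as literally stated, your accounting would only yield $K_g=-1$. Similarly, $\bar\partial_A\phi$ is not computed with the diagonal connection alone: it contains commutator terms of $\phi$ with the off-diagonal $\sigma$-part of $A$, which vanish because $\sigma\wedge\textbf{1}$ and $\textbf{1}\wedge\sigma$ are $(2,0)$-forms on a surface (this is exactly the observation the paper records). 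Neither point changes the outcome, and no renormalization ``$c=-2$'' is needed in this step, since the hypothesis already fixes the equation $K_g+|\sigma|_g^2=-1$.
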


The cotangent bundle of Teichmüller space can be identified with the space
	$$T^*\cT(\Sigma) := \left\{ (J,\sigma)\,|\, J \in \cJ(\Sigma),\, \sigma \in Q(J), \, \bar{\partial}_J \sigma = 0 \right\} \! \big/ \, \textrm{Diff}_0(M).$$
It follows from a standard application of the continuation method that the natural map from $\cM$ into $T^*\cT(\Sigma)$ is an embedding and moreover the following holds.

\begin{TheoremABC} \label{ThmF}
[\textbf{Donaldson} \cite{Donaldson:2000}, \textbf{Hodge} \cite{Hodge:PhD}]
The hyperkähler structure of $\cM$ along the image of its embedding into $T^*\cT(\Sigma)$ agrees with the Feix--Kaledin hyperkähler extension of the Weil--Petersson metric on $\cT(\Sigma)$.
\end{TheoremABC}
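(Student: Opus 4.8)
The plan is to compare the two hyperkähler structures by identifying each with the canonical construction that they both share along the zero section, and then invoke a uniqueness principle for hyperkähler extensions. Recall that the Feix--Kaledin construction takes as input a real-analytic Kähler manifold $(\cT(\Sigma), \omega_{WP})$ and produces, on a neighbourhood of the zero section in $T^*\cT(\Sigma)$, a hyperkähler metric whose restriction to the zero section is the original Kähler metric, whose complex structure $I$ (in the Feix--Kaledin normalization) is the one for which the projection $T^*\cT(\Sigma) \to \cT(\Sigma)$ is holomorphic and the fibres are complex Lagrangian, and for which the holomorphic symplectic form $\omega_J + \textbf{i}\omega_K$ is the canonical (Liouville) holomorphic symplectic form on $T^*\cT(\Sigma)$ twisted by the complexification of $\omega_{WP}$. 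The key rigidity input is that such an extension is \emph{unique} as a germ along the zero section once one fixes the Kähler metric on the base, the complex structure $I$, and the holomorphic symplectic form: this is exactly Feix's and Kaledin's theorem, and it reduces the problem to checking that the hyperkähler data coming from $\cM$ satisfy these normalizations.

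So the steps would be: (1) Use Theorem~\ref{ThmF}'s hypotheses together with the description in the ``hyperkähler moduli space'' paragraph to write down, on the image of $\cM$ in $T^*\cT(\Sigma)$, the three symplectic forms $\underline\omega_1,\underline\omega_2,\underline\omega_3$ and the three complex structures $I_1, I_2, I_3$ descending from $\cQ_1(\Sigma)$; in particular identify which of the complex structures makes the projection to $\cT(\Sigma)$ holomorphic. From the moment-map formula in Theorem~\ref{introJQ:ThmB} and the identification $\cM \cong \cM_s$, the second symplectic form is the one that is linear in $\sigma$, which signals that $(J_2, \sigma)$ realizes $\sigma$ as a holomorphic cotangent vector; this is the complex structure that should match the Feix--Kaledin $I$ (up to the sign noted in Theorem~D). (2) Verify that the restriction of the metric to the zero section $\sigma = 0$ is the Weil--Petersson metric. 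Along the zero section the equation $K_g = -1$ cuts out exactly the Fuchsian locus $\cT(\Sigma)$, the hyperkähler metric on $X \subset T^*\H$ restricts to the hyperbolic metric on $\H$ along its zero section by construction (Theorem~\ref{ThmHK}), and the $L^2$-pairing \eqref{intro:JQsymp} then integrates to the Weil--Petersson form; this is essentially Donaldson's original observation and should only require assembling statements already in the paper. (3) Identify the holomorphic symplectic form $\underline\omega_2 + \textbf{i}\,\underline\omega_3$ on the image with the canonical holomorphic symplectic form of $T^*\cT(\Sigma)$. Here one uses that $\underline\omega_2 + \textbf{i}\underline\omega_3 = 2\textbf{i}\,\bar\partial r(\bar\partial \sigma)$ is exact and, after the Marsden--Weinstein reduction by $\textrm{Ham}(\Sigma,\rho)$ and then by $H$, descends to the Liouville form paired with the tautological one-form on the cotangent bundle — this is the content one extracts from the two-step quotient construction and the fact that $\sigma$ becomes the fibre coordinate. (4) Invoke the embedding statement (that $\cM \hookrightarrow T^*\cT(\Sigma)$ is an open embedding onto a neighbourhood of the zero section, already asserted just before the theorem) so that comparing germs along the zero section suffices, and conclude by Feix--Kaledin uniqueness.

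The main obstacle I anticipate is step (3): pinning down that the holomorphic symplectic form obtained from the hyperkähler reduction is \emph{exactly} the canonical one on $T^*\cT(\Sigma)$, with the correct normalization and no extra closed-but-not-exact correction term coming from the fact (emphasized after Theorem~\ref{introJQ:ThmA}) that $\underline\mu$ lands in closed rather than exact $2$-forms. One must track how the tautological $1$-form on $T^*\cT(\Sigma)$ arises from the primitive $r(\bar\partial\sigma)$ under the identification $\cS(P,X)/\textrm{Symp}_0 \cong T^*\cT(\Sigma)$, and confirm that the $H = \textrm{Symp}_0/\textrm{Ham}$ reduction does not shift it. A secondary but real subtlety is matching the \emph{metric} rather than just the symplectic/complex data: strictly, Feix--Kaledin uniqueness is for the hyperkähler structure determined by $(I, \omega_I, \text{base Kähler metric})$, so one should make sure the remaining complex structure and the compatibility (positivity) of the metric are also forced, which they are once the two-sphere of complex structures and the three symplectic forms are fixed on an open set. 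Finally, one should note the real-analyticity of $\omega_{WP}$ — needed for Feix--Kaledin to apply — which is classical (e.g.\ via Ahlfors or via the explicit formula), and cite it rather than reprove it.
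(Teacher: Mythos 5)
Your overall strategy coincides with the paper's proof of Theorem \ref{Thm.FKhk}: verify that the hyperkähler structure on $\cM$ satisfies the normalizations characterizing the Feix--Kaledin extension and conclude by uniqueness. However, the uniqueness principle you invoke is not the one Feix and Kaledin actually prove. Their theorem gives uniqueness of the germ along the zero section among hyperkähler metrics that are compatible with the canonical holomorphic symplectic structure, restrict to the given Kähler metric on the zero section, \emph{and are invariant under the circle action rotating the fibres}. Your formulation drops the $S^1$-invariance, and compatibility with a fixed holomorphic symplectic form together with a boundary condition along a complex Lagrangian zero section is not rigid by itself. Accordingly, your proof is missing the step that establishes $S^1$-invariance of the hyperkähler structure on $\cM$. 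In the paper this is where the argument actually starts: the fibre metric on $X \subset T^*\H$ is $S^1$-invariant by Theorem \ref{ThmHK}, this invariance survives integration over $\Sigma$ and both reductions, and only then does the Feix--Kaledin characterization apply.

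Your step (1) also misidentifies the relevant complex structure. The Feix--Kaledin base complex structure --- the one for which the projection to $\cT(\Sigma)$ is holomorphic, for which $\underline{\omega}_2 + \textbf{i}\,\underline{\omega}_3$ is of type $(2,0)$, and for which $\underline{\omega}_1$ restricts to the Weil--Petersson form on the zero section --- is $J_1$, acting by $(\hat{J},\hat{\sigma}) \mapsto (J\hat{J}, \textbf{i}\hat{\sigma})$; the structure $J_2$ is instead the one matching the quasi-Fuchsian model $\cT(\Sigma)\times\overline{\cT(\Sigma)}$ (Proposition \ref{Prop:HodgeJ2}), and the zero section is Lagrangian, not Kähler, for $\underline{\omega}_2$. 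Your steps (2)--(4) are in fact consistent only with $I = J_1$, so this is an internal contradiction that must be repaired, not merely a sign. Two smaller remarks: the ``main obstacle'' you flag in step (3) is resolved in the paper fibrewise --- the holomorphic symplectic form on $X \cong \cQ_1(\R^2) \subset T^*\cJ(\R^2)$ is the canonical one by construction, and integration produces the canonical form on $T^*\cT(\Sigma)$ with no correction term --- and your step (2) still needs the argument that harmonic Beltrami representatives $\hat{J} \in \cH^{0,1}_J(T\Sigma)$ are tangent to the moment map zero level, which is what reduces the quotient metric along $\cT(\Sigma)$ to the $L^2$, i.e.\ Weil--Petersson, pairing.
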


Taubes \cite{Taubes:2004} investigated extensions of the maps in Theorem \ref{ThmE} and Theorem \ref{ThmF} to the larger moduli space which one obtains by omitting the constraint $|\sigma|_g < 1$ in the definition of $\cM$. See Remark \ref{Rmk:Taubes1} and Remark \ref{Rmk:Taubes2} for more details.

\subsubsection*{Acknowledgment}

I would like to thank my supervisor D. A. Salamon for many helpful discussions throughout the process of writing this paper. I am also indebted to the referee for his careful reading and excellent feedback.

\section{Donaldson's moment map}

Let $(M,\rho)$ be a closed oriented $n$-dimensional manifold with fixed volume form $\rho$ and let $P \rightarrow M$ be its $\SL(n,\R)$-frame bundle which is defined by
	$$P := \{ (z,\theta)\,|\, z \in M,\, \theta \in \textrm{Hom}(\mathbb{R}^n, T_z M),\, \theta^* \rho_z = \text{dvol}_{\mathbb{R}^n} \}.$$
Let $(X,\omega)$ be a symplectic manifold with Hamiltonian $\textrm{SL}(n,\R)$-action induced by an equivariant moment map $\mu: X \rightarrow \mathfrak{sl}^*(n, \mathbb{R})$ and consider the associated bundle
		$$P(X) := P\times_{\SL(n,\R)} X := (P \times X)/ \SL(n,\R) $$ 
where $\SL(n,\R)$ acts diagonally. Denote by $\mathcal{S}(P,X)$ its space of sections. 
The first two subsections summarize the necessary background on symplectic fibrations and the action of the diffeomorphism group. The main result of this section is Theorem \ref{fib.ThmM}, which contains a precise formulation of Donaldson's moment map picture for the action of the group $\textrm{Diff}_{ex}(M,\rho)$ of exact volume preserving diffeomorphism on $\cS(P,X)$.

\subsection{Symplectic fibrations}

The space $\cS(P,X)$ is formally an infinite dimensional symplectic manifold, where the tangent space at $s \in \cS(P,X)$ is given by the space of vertical vector fields along $s$
			$$T_s \cS(P,X) = \Omega^0(M, s^* T^{vert} P(X)).$$
The symplectic form on $X$ induces a symplectic structure on the vertical tangent bundle $T^{vert} P(X)$ and then by integration on $\mathcal{S}(P,X)$:
	\begin{align}
		\underline{\omega}_s(\hat{s}_1, \hat{s}_2) := \int_M \omega (\hat{s}_1, \hat{s}_2) \rho
	\end{align}
for $\hat{s}_1, \hat{s}_2 \in \Omega^0(M, s^* T^{vert} P(X))$. Moreover, a connection $A \in \cA(P)$ induces a covariant derivative $\nabla: \cS(P,X) \rightarrow \Omega^1(M, s^*T^{vert} P(X))$ defined by
		\begin{equation} \label{fib.cov} 
				\nabla_{\hat{p}} s(p)  = ds(p)\hat{p} + L_{s(p)} A_p(\hat{p}) = ds(p) \hat{p}^{hor}. 
		\end{equation}
In this formula, we lift $s \in \cS(P,X)$ to an equivariant map $s: P \rightarrow X$, denote by $L_x: \mathfrak{sl}(n,\R) \rightarrow T_x X$ the infinitesimal action, and let $\hat{p}^{hor} := \hat{p} - p \cdot A_p(\hat{p})$ be the horizontal component of a tangent vector $\hat{p} \in T_p P$. Moreover, we identify $\Omega^1(M, s^*T^{vert} P(X))$ in the usual way with the space of horizontal and equivariant $1$-forms on $P$ taking values in $s^*TX$.

\subsection{Action of the diffeomorphism group}

The group $\text{Diff}(M,\rho)$ of volume preserving diffeomorphisms can be viewed as infinite dimensional Lie group with Lie algebra
					$$\text{Lie}\left( \text{Diff}(M,\rho) \right) = \left\{ v \in \text{Vect}(M)\,|\, d\iota(v) \rho = 0 \right\}.$$
Every $\phi \in \text{Diff}(M,\rho)$ lifts naturally to an equivariant diffeomorphism $\tilde{\phi} : P \rightarrow P$ defined by
		$$\tilde{\phi}(z,\theta) := (\phi(z), d\phi(z)\circ\theta)$$
for $z \in M$ and $\theta \in \textrm{Hom}(\R^n, T_zM)$. This induces a natural action 
	$$\text{Diff}(M,\rho) \times \mathcal{S}(P,X) \rightarrow \mathcal{S}(P,X), \qquad \phi^*s := s\circ\tilde{\phi}$$
where we view elements of $\cS(P,X)$ as equivariant maps $s: P \rightarrow X$.

There is a one-to-one correspondence between connections $A \in \cA(P)$ and $\SL(n,\R)$-connections $\nabla$ on $TM$. For the calculation of the infinitesimal action it is useful to adopt the later point of view and to choose a torsion-free $\SL(n,\R)$-connections on $TM$.

\begin{Lemma} \label{fib.LemmaInf1}
Choose a torsion-free $\text{SL}(n,\mathbb{R})$-connection $\nabla$ on $TM$ and denote by $A \in \cA(P)$ the corresponding connection $1$-form on $P$. Let $v \in \text{Vect}(M)$ with $d\iota(v) \rho = 0$ be given and denotes its flow by $\phi_v^t \in \text{Diff}(M,\rho)$.
		\begin{enumerate}
		\item The infinitesimal action of $v$ on $P$ is defined as
			$$ \mathcal{L}_v (z, \theta) := \left.\frac{d}{dt}\right|_{t=0} ( \phi_v^{t}(z),  d\phi_v^{t}(z)\circ \theta ) \in T_{(z,\theta)} P$$
		and satisfies for all $(z,\theta) \in P$
			\begin{align} \label{fib.infeq1} d \pi(z,\theta) \mathcal{L}_v (z,\theta) = v(z), \qquad A_{(z,\theta)}(\mathcal{L}_v (z,\theta)) = \theta^{-1} \left(\nabla_{\theta(\cdot)} v\right)(z). \end{align}
		where $\pi: P \rightarrow M$ denotes the projection map.
		
			\item Denote by $\nabla v : P \rightarrow \mathfrak{sl}(n,\R)$ the map $(z,\theta) \mapsto  \theta^{-1} \left(\nabla_{\theta(\cdot)} v\right)(z)$ . Then
					\begin{align} \label{fib.infeq2} \mathcal{L}_v s := \left.\frac{d}{dt}\right|_{t=0} (\phi_v^t)^* s = \nabla_v s - L_s (\nabla v). \end{align}
			\end{enumerate}			
\end{Lemma}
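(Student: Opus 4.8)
The plan is to work throughout with the correspondence between sections of associated bundles and equivariant maps out of $P$, together with the defining relation (\ref{fib.cov}) between the connection $1$-form $A$ and the covariant derivative $\nabla$. The first identity in (\ref{fib.infeq1}) is immediate: differentiating $\pi\circ\tilde\phi_v^t = \phi_v^t\circ\pi$ at $t=0$ gives $d\pi(z,\theta)\mathcal{L}_v(z,\theta) = v(z)$. (Implicit here is that $\mathcal{L}_v$ is genuinely tangent to $P$: since $\phi_v^t$ preserves $\rho$, the frame $d\phi_v^t(z)\circ\theta$ again satisfies the defining condition $\theta^*\rho = \text{dvol}_{\R^n}$, so the curve $t\mapsto(\phi_v^t(z),d\phi_v^t(z)\circ\theta)$ stays in $P$.)

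The heart of the lemma is the second identity in (\ref{fib.infeq1}). Fix an arbitrary $w\in\Vect(M)$ and let $f_w\colon P\to\R^n$, $f_w(z,\theta):=\theta^{-1}(w(z))$, be the corresponding equivariant map (so $f_w(pg) = g^{-1}f_w(p)$), i.e. the section of $TM = P\times_{\SL(n,\R)}\R^n$ represented by $w$. A direct computation gives $(\tilde\phi_v^t)^*f_w = f_{(\phi_v^t)^*w}$, and differentiating at $t=0$ (using that $w\mapsto f_w$ is linear, and $\tfrac{d}{dt}\big|_{0}(\phi_v^t)^*w = [v,w]$) yields $df_w(\mathcal{L}_v) = f_{[v,w]}$. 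On the other hand, decompose $\mathcal{L}_v(p) = \mathcal{L}_v^{\mathrm{hor}}(p) + p\cdot A_p(\mathcal{L}_v(p))$ at a point $p=(z,\theta)$: by the first identity $\mathcal{L}_v^{\mathrm{hor}}$ is the horizontal lift of $v$, so specialising (\ref{fib.cov}) to $TM$ gives $df_w(\mathcal{L}_v^{\mathrm{hor}}) = f_{\nabla_v w}$, while (\ref{fib.cov}) applied to the vertical vector $p\cdot\xi$ gives $df_w(p\cdot\xi) = -\xi\cdot f_w(p)$ for $\xi\in\mathfrak{sl}(n,\R)$. Comparing the two expressions for $df_w(\mathcal{L}_v)$ gives $\theta^{-1}([v,w](z)) = \theta^{-1}((\nabla_v w)(z)) - A_{(z,\theta)}(\mathcal{L}_v)\cdot\theta^{-1}(w(z))$, that is
\[
A_{(z,\theta)}(\mathcal{L}_v)\cdot\theta^{-1}(w(z)) \;=\; \theta^{-1}\bigl((\nabla_v w - [v,w])(z)\bigr) \;=\; \theta^{-1}\bigl((\nabla_w v)(z)\bigr),
\]
where the last equality is precisely where torsion-freeness of $\nabla$ enters. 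Since $w$ is arbitrary and $\theta^{-1}(w(z))$ ranges over all of $\R^n$, this is exactly $A_{(z,\theta)}(\mathcal{L}_v) = \theta^{-1}(\nabla_{\theta(\cdot)}v)(z)$. (Consistently, this endomorphism is trace free, since $\mathrm{tr}(\nabla v) = \mathrm{div}_\rho v$ and $(\mathrm{div}_\rho v)\,\rho = \mathcal{L}_v\rho = d\iota(v)\rho = 0$ by hypothesis, so $A$ may indeed be evaluated on $\mathcal{L}_v$.)

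For part (2), view $s$ as an equivariant map $P\to X$; then $\mathcal{L}_v s = \tfrac{d}{dt}\big|_{0}\,s\circ\tilde\phi_v^t$, which at $p$ equals $ds(p)\,\mathcal{L}_v(p)$. Applying (\ref{fib.cov}) with $\hat p = \mathcal{L}_v(p)$ gives $ds(p)\,\mathcal{L}_v(p) = \nabla_{\mathcal{L}_v(p)}s(p) - L_{s(p)}\bigl(A_p(\mathcal{L}_v(p))\bigr)$. By part (1) the vector $\mathcal{L}_v(p)$ projects to $v$, so $\nabla_{\mathcal{L}_v(p)}s(p) = (\nabla_v s)(p)$, and $A_p(\mathcal{L}_v(p)) = (\nabla v)(p)$; hence $\mathcal{L}_v s = \nabla_v s - L_s(\nabla v)$, as claimed.

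The only non-routine point is the second identity of part (1); everything else is bookkeeping with (\ref{fib.cov}) and the horizontal/vertical splitting of $\mathcal{L}_v$. The essential input there is that $\nabla$ is torsion free — this is exactly what converts the ``naive'' answer $\nabla_v w - [v,w]$ into the connection value $\nabla_w v$ — and the only care required is to keep the sign conventions in $df_w(p\cdot\xi) = -\xi\cdot f_w(p)$ and in (\ref{fib.cov}) mutually consistent.
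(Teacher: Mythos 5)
Your proof is correct, and for the one non-trivial step --- the second identity in (\ref{fib.infeq1}) --- it takes a genuinely different route from the paper. The paper computes $A_{(z,\theta)}(\mathcal{L}_v(z,\theta))\xi$ directly as $\theta^{-1}\nabla_t\, d\phi_v^t(z)\theta(\xi)\big|_{t=0}$ and then interchanges the $t$-derivative with the spatial covariant derivative, i.e.\ it invokes the symmetry of mixed covariant partials of the two-parameter family $(t,s)\mapsto\phi_v^t(\gamma(s))$, which is where torsion-freeness enters there. You instead test $\mathcal{L}_v$ against the equivariant functions $f_w(z,\theta)=\theta^{-1}(w(z))$ representing vector fields, compute $df_w(\mathcal{L}_v)=f_{[v,w]}$ from the pullback formula, split $\mathcal{L}_v$ into horizontal and vertical parts to get $f_{\nabla_v w}-A(\mathcal{L}_v)\cdot f_w$, and then use the algebraic torsion identity $\nabla_v w-\nabla_w v=[v,w]$. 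The paper's argument is shorter; yours avoids the (slightly delicate, though standard) interchange of derivatives and reduces everything to the definition of the covariant derivative via the connection form plus the pointwise torsion identity, at the cost of introducing an auxiliary vector field $w$ --- harmless, since both sides of the resulting identity are tensorial in $w$, as you implicitly use when letting $\theta^{-1}(w(z))$ range over $\R^n$. Your side remarks (that the curve stays in $P$ because $\phi_v^t$ preserves $\rho$, and that $\nabla v$ is trace-free so that the claimed value of $A$ indeed lies in $\mathfrak{sl}(n,\R)$) are correct and address points the paper leaves implicit. Part (2) of your argument coincides with the paper's.
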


\begin{proof}
The first part of (\ref{fib.infeq1}) follows from differentiating $\pi ( \phi_v^{t}(z),  d\phi_v^{t}(z)\circ \theta ) = \phi_v^{t}(z)$. For the second part, we use that $\nabla$ is a torsion-free connection corresponding to $A \in \cA(P)$ and hence
\begin{align*}
	A_{(z,\theta)}(\mathcal{L}_v(z,\theta))\xi &= \theta^{-1} \left.\nabla_t  d \phi_v^t(z) \theta(\xi) \right|_{t=0} 
																									 = \theta^{-1}\nabla_{\theta(\xi)} \left.\partial_t \phi_v^t(z) \right|_{t=0}
																									 = \theta^{-1}\nabla_{\theta(\xi)} v(z)
\end{align*}																						
for every $\xi \in \mathbb{R}^n$. This completes the proof of (\ref{fib.infeq1}). 

Next, let $s : P \rightarrow X$ be an equivariant map. By the chain rule and (\ref{fib.cov}), it follows
			$$(\mathcal{L}_v s)(p) = ds(p)[ \mathcal{L}_v (p)] = \nabla s(p)[\mathcal{L}_v (p)] - L_{(s(p))} A_p(\mathcal{L}_v (p))$$
for every $p \in P$. Inserting (\ref{fib.infeq1}) into this equation yields (\ref{fib.infeq2}).
\end{proof}

A diffeomorphism $\phi \in \textrm{Diff}(M,\rho)$ is called exact, if there exists an isotopy 
	$$\phi : [0,1] \rightarrow \textrm{Diff}(M,\rho), \quad t \mapsto \phi_t$$ 
with $\phi_0 = \mathds{1}$ and $\phi_1 = \phi$, and there exists a smooth map $v: [0,1] \rightarrow \textrm{Vect}(M)$ such that $\partial_t \phi_t = v_t\circ \phi_t$ and $\iota(v_t)\rho$ is exact for all $t \in [0,1]$. The group $\textrm{Diff}_{ex}(M,\rho)$ of all exact diffeomrophism is the subgroup corresponding to the Lie subalgebra
					$$\text{Lie}\left( \textrm{Diff}_{ex}(M,\rho) \right) = \left\{ v \in \text{Vect}(M)\,|\,  \iota(v) \rho \in d \Omega^{n-2}(M) \right\}.$$
The dual space of the Lie algebra can be identified with the space of exact $2$-forms on $M$ using the pairing
		$$\Omega^2_{ex}(M) \times \textrm{Lie}(\textrm{Diff}_{ex}(M,\rho)) \rightarrow \R, \qquad	\langle \tau , v \rangle := \int_M \tau \wedge \alpha_v$$
where $\alpha_v \in \Omega^{n-2}(M)$ satisfies $d \alpha_v = \iota(v) \rho$. Note that this pairing is well-defined and does not depend on the choice of the primitive $\alpha_v$ by Stokes theorem.

\subsection{Donaldson's moment map}

Fix a torsion free $\textrm{SL}(n,\R)$-connection on $TM$ and denote by $A \in \cA(P)$ the corresponding connection $1$-form on $P$. There exists a natural isomorphism $\textrm{ad}(P) \cong \textrm{End}_0(TM)$ and we denote by $R^{\nabla} \in \Omega^2(M, \text{End}_0(TM))$ the curvature of this connection. We introduce the three terms of the moment map in the following. 

First, define $\omega(\nabla s \wedge \nabla s) \in \Omega^2(M)$ by coupling the exterior product on $M$ with the symplectic from on $T^{vert}P(X)$:
			\begin{align} \label{fib.comp1} \omega(\nabla s \wedge \nabla s): \,TM \times TM \rightarrow \mathbb{R}, \qquad (u,v) \mapsto \omega(\nabla_u s , \nabla_v s) \end{align}
Second, view $s \in \mathcal{S}(P,X)$ as an equivariant map $s:P \rightarrow X$. The composition $\mu\circ s: P \rightarrow \textrm{sl}(n,\R)^*$ is equivariant and thus descends to a section $\mu_s \in \Omega^0(M, \text{End}_0(TM)^*)$. The duality pairing gives then rise to the two form
							\begin{align} \label{fib.comp2} \langle \mu_s , R \rangle \in \Omega^2(M). \end{align}
Third, define $c(\nabla \mu_s) \in \Omega^1(M)$ as the contraction $(\mu_s)_{j;i}^i$ of the covariant derivative $\nabla \mu_s \in \Omega^1(M, \text{End}_0(TM)^*)$, which is obtained as the trace over the first and third index. This is explicitly defined by
					\begin{align} \label{fib.comp3}c(\nabla \mu_s) \in \Omega^1(M), \qquad \hat{m} \mapsto \sum_{i=1}^n \left\langle \nabla_{e_i} \mu_s , (\hat{m} \otimes e^i)_0 \right\rangle \end{align}
with respect to any local frame.

\begin{Theorem}[\textbf{Donaldson's moment map}] \label{fib.ThmM}
Let $\nabla$ be a torsion-free $\text{SL}(n,\mathbb{R})$ on $TM$ and define $\underline{\mu}: \mathcal{S}(P,X) \rightarrow \Omega^2(M)$ by
		\begin{align} \label{fib.Meq1}
			\underline{\mu}(s) := \omega(\nabla s \wedge \nabla s) - \langle \mu_s , R \rangle - dc(\nabla \mu_s)
		\end{align}
where the expression on the right hand side are defined in (\ref{fib.comp1}), (\ref{fib.comp2}) and (\ref{fib.comp3}).
				\begin{enumerate}
					\item $\underline{\mu}(s)$ is closed and independent of the choice of the connection $\nabla$.
					\item $\underline{\mu}$ satisfies the naturality condition $\underline{\mu}(\phi^*s) = \phi^* \underline{\mu}(s)$ for every $s \in \cS(P,X)$ and every $\phi \in \textrm{Diff}(M,\rho)$.
					\item Let $v \in \textrm{Vect}(M)$ be an exact divergence free vector field and choose a primitive $\alpha_v \in \Omega^{n-2}(M)$ with $d\alpha_v = \iota(v) \rho$. The derivative of the map
									 \begin{align} \label{fib.Meq2} \mathcal{S}(P,X) \rightarrow \mathbb{R}, \qquad s \mapsto \int_M \underline{\mu}(s) \wedge \alpha_v \end{align}
					is the map $T_s\mathcal{S}(P,X) \rightarrow \mathbb{R}$ defined  by 
									\begin{align} \label{fib.Meq3}  \hat{s} \mapsto \underline{\omega}(\hat{s}, \mathcal{L}_v s ) = \int_M \omega(- \nabla_v s + L_s \nabla v, \hat{s} ) \rho \end{align}
				\end{enumerate}
\end{Theorem}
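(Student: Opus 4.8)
\textbf{Proof plan for Theorem \ref{fib.ThmM}.}

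The plan is to verify the three assertions in turn, with the bulk of the work consisting of a local coordinate computation for the first item and an integration-by-parts argument for the third.

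For part (1), I would work in a local frame near a point of $M$ and compute each of the three terms of $\underline{\mu}(s)$ after changing the torsion-free $\SL(n,\R)$-connection $\nabla$ to another one $\nabla' = \nabla + a$, where $a \in \Omega^1(M,\textrm{End}_0(TM))$ is symmetric in its two lower indices (torsion-freeness) and trace-free in the appropriate slots (the $\mathfrak{sl}$-condition). The key identities are: $\nabla' s = \nabla s - L_s a$, which changes $\omega(\nabla s \wedge \nabla s)$ by terms involving $\omega(L_s a, \nabla s)$ and $\omega(L_s a, L_s a)$; the curvature changes by $R^{\nabla'} = R^{\nabla} + d^{\nabla} a + a \wedge a$; and the contraction term $c(\nabla' \mu_s)$ picks up a correction from both the change in $\nabla$ applied to $\mu_s$ and from the change in which components are traced. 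The moment map identity $d\langle \mu_s, \xi\rangle = \omega(L_s\xi, \cdot) = -\iota_{L_s\xi}\omega$ (equivariance of $\mu$, differentiated along $M$) together with the equivariance relation $\nabla \mu_s = \langle d\mu \circ \nabla s, \cdot\rangle + (\textrm{coadjoint terms})$ is what makes the three variations cancel. Closedness is then easiest seen as a consequence of connection-independence: choosing $\nabla$ flat near a point kills $R^{\nabla}$ and makes $\omega(\nabla s \wedge \nabla s)$ a pullback of a closed form (since $\omega$ is closed on the fibre), while $dc(\nabla\mu_s)$ is manifestly exact, so $\underline{\mu}(s)$ is locally a sum of closed forms.

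For part (2), naturality is a formal check: the lift $\tilde\phi$ intertwines the $\SL(n,\R)$-structures, $\phi^*\nabla$ is again torsion-free, and each of the three terms is built naturally from $s$, $\nabla$ and the frame bundle, so $\underline{\mu}(\phi^* s)$ computed with $\phi^*\nabla$ equals $\phi^*(\underline{\mu}(s)$ computed with $\nabla)$; by part (1) the left side does not depend on which connection we used, giving the claim.

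For part (3), I would differentiate $s \mapsto \int_M \underline{\mu}(s)\wedge\alpha_v$ at $s$ in a direction $\hat s \in T_s\cS(P,X)$. Linearizing the three terms of $\underline\mu$ and integrating by parts against $\alpha_v$ (using $d\alpha_v = \iota(v)\rho$ and Stokes, all boundary terms vanishing since $M$ is closed), the goal is to collect everything into $\int_M \omega(-\nabla_v s + L_s \nabla v, \hat s)\,\rho$. Here I would use Lemma \ref{fib.LemmaInf1}, specifically $\cL_v s = \nabla_v s - L_s(\nabla v)$, to recognize the right-hand side as $\underline\omega(\hat s, \cL_v s)$. The linearization of $\omega(\nabla s\wedge\nabla s)$ produces a $d^{\nabla}$-type term that, paired with $\alpha_v$ and integrated by parts, yields a contraction of $\nabla\hat s$ with $\nabla s$ against $\iota(v)\rho$; the linearization of $\langle\mu_s,R\rangle$ uses $d\langle\mu_s,\cdot\rangle = -\iota_{L_s\cdot}\omega$ to convert the curvature pairing into moment-map terms; and the $dc(\nabla\mu_s)$ term contributes after one more integration by parts. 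The delicate bookkeeping is to see that the $R^{\nabla}$- and $c$-contributions combine to produce exactly the $L_s\nabla v$ part while the $\omega(\nabla s\wedge\nabla s)$-contribution produces the $\nabla_v s$ part.

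\textbf{Main obstacle.} I expect the hardest step to be part (1), and within it the proof that $\underline\mu(s)$ is independent of $\nabla$: this forces one to track how all three terms transform under $\nabla \mapsto \nabla + a$ simultaneously and to use the moment-map/equivariance identities for $\mu$ in exactly the right form so that the quadratic-in-$a$ terms from $\omega(\nabla s\wedge\nabla s)$ cancel against $a\wedge a$ in the curvature and the linear-in-$a$ terms cancel against $d^\nabla a$ and the change in the contraction $c$. Organizing this cleanly — perhaps by first reducing to the case where $a$ is supported near a point and where $\nabla$ is flat there — will be the technical heart of the argument. Once connection-independence is in hand, closedness and part (3) follow with comparatively routine (if lengthy) manipulations.
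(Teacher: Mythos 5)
The paper does not actually prove this theorem: its ``proof'' consists of a citation to Theorem 9 of Donaldson's paper and to Theorem 4.2.4 of the author's thesis, so there is no in-text argument to compare yours against line by line. That said, your outline is the standard route and is consistent with what Donaldson does: connection-independence by tracking the transformation of all three terms under $\nabla \mapsto \nabla + a$ with $a$ symmetric and trace-free, closedness by reducing to a locally flat volume-preserving connection (where $\omega(\nabla s\wedge\nabla s)=s^*\omega$ is closed, $R^{\nabla}=0$, and $dc(\nabla\mu_s)$ is exact), naturality as a formal check, and the moment map identity by linearization plus integration by parts against $\alpha_v$ using $\mathcal{L}_v s=\nabla_v s - L_s(\nabla v)$ from Lemma \ref{fib.LemmaInf1}. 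Two caveats. First, your reduction for closedness needs the small extra remark that a flat torsion-free $\SL(n,\R)$-connection exists locally only after choosing coordinates in which $\rho$ is the standard volume form (a Moser-type normalization); the coordinate connection for arbitrary coordinates preserves $dx^1\wedge\cdots\wedge dx^n$, not $\rho$. Second, and more substantively, what you have written is a plan rather than a proof: the cancellations in part (1) between the quadratic-in-$a$ terms, the $d^{\nabla}a$ term, and the variation of the contraction $c(\nabla\mu_s)$ --- which you correctly identify as the technical heart --- are asserted to occur but not verified, and likewise the bookkeeping in part (3) that separates the $\nabla_v s$ contribution from the $L_s\nabla v$ contribution is described but not carried out. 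Since the paper itself defers these computations to the references, executing them in full would be genuinely new content relative to this text; as it stands your proposal identifies the right strategy and the right difficulties but does not yet constitute a proof.
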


\begin{proof}
This is Theorem 9 in \cite{Donaldson:2000} and a detailed exposition of the proof can be found in \cite{ST:thesis}, Theorem 4.2.4. (Note that the signs in (\ref{fib.Meq1}) are slightly different than the ones in \cite{Donaldson:2000} due to different conventions.)
\end{proof}

\begin{Remark} \label{fib.RmkM}
The map $\underline{\mu}$ is not a moment map in the strict sense, since it takes values in the space of closed 2-forms. Let $v \in \textrm{Vect}(M)$ be an exact divergence free vector field and choose $\alpha_v \in \Omega^{n-2}(M)$ with $d\alpha_v = \iota(v)\rho$. Then
				$$\langle \underline{\mu}(s), v \rangle = \int_M \underline{\mu}(s)\wedge \alpha_v$$
depends on the choice of the primitive $\alpha_v$. Different choices for $\alpha_v$ change the pairing only by a constant, and so its derivative is well-defined and independent of any choices. The equations (\ref{fib.Meq3}) and (\ref{fib.Meq2}) show that $\underline{\mu}$ satisfies the moment map equation.
\end{Remark}

\section{Complex structures and quadratic forms}

The main application of Theorem \ref{fib.ThmM} considered in this paper arises when one takes the hyperbolic plane and its cotangent bundle as fibres. The purpose of this section is to recall some fundamental properties of these spaces and to establish our notation. In particular, the hyperbolic plane can be identified with the space of linear complex structures on $\R^2$ and its cotangent bundle can be identified with pairs $(J,q)$ consisting of a complex structure and a complex quadratic form.

\subsection{The space of complex structures on the plane}

Let $\mathbb{H} := \{ z \in \mathbb{C}\,|\, \text{Im}(z) > 0\}$ denote the upper half plane. It has a canoncial complex structure and we endow it with the hyperbolic metric and volume form
						$$g_{\mathbb{H}}(x,y) = \frac{dx^2 +  dy^2}{y^2}, \qquad \omega_{\mathbb{H}}(x,y) = \frac{dx\wedge dy}{y^2}.$$
The group $\text{SL}(2,\mathbb{R})$ acts on $\mathbb{H}$ by Möbius transformations
				\begin{align} \label{action1} \text{SL}(2,\mathbb{R})\times\mathbb{H} \mapsto \mathbb{H}, \qquad \left(\begin{array}{cc} a & b \\ c & d \end{array}\right) z := \frac{az + b}{cz+d} .\end{align}
Every Möbius transformation is a Kähler isometry of $\H$. Since this action is transitive with stabilizer $\textrm{SO}(2)$ at $\textbf{i}$, it gives rise to an identification $\mathbb{H} \cong \text{SL}(2,\mathbb{R})/\text{SO}(2)$.\\

The space of linear complex structures on $\mathbb{R}^2$, compatible with the standard orientation, is given by
				$$\mathcal{J}(\mathbb{R}^2) := \left\{ J \in \text{End}(\mathbb{R}^2)\,|\, J^2 = - \mathds{1}, \, \det(v, Jv) > 0 \right\}$$
where the condition does not depend on the choice of $v \in \R^2\backslash\{0\}$. The space $\mathcal{J}(\mathbb{R}^2)$ is a Kähler manifold, where the complex structure on the tangent space
					$$T_J \mathcal{J}(\mathbb{R}^2) = \left\{ \hat{J} \in \text{End}(\mathbb{R}^2)\,|\, J \hat{J} + \hat{J} J = 0 \right\}$$
is given by $\hat{J} \mapsto J \hat{J}$ and the metric and symplectic form are
	\begin{gather}\label{cJ:structures}
			\omega_\mathcal{J}(\hat{J}_1, \hat{J}_2) = -\frac{1}{2}\text{tr}\left(\hat{J}_1 J \hat{J}_2\right) , \qquad
					g_\mathcal{J}(\hat{J}_1, \hat{J}_2) = \frac{1}{2}\text{tr}\left(\hat{J}_1 \hat{J}_2\right) 
	\end{gather}
The group $\text{SL}(2,\mathbb{R})$ acts on $\cJ(\mathbb{R}^2)$ by conjugation $\Psi_* J = \Psi J \Psi^{-1}$. This action preserves the Kähler structure on $\cJ(\R^2)$. Moreover, it is transitive with stabilizer $\textrm{SO}(2)$ at the standard complex structure 
				$$J_0 := \left(\begin{array}{cc} 0 & -1 \\ 1 & 0 \end{array}\right)$$ 
This gives rise to an identification $\mathcal{J}(\mathbb{R}^2) \cong \text{SL}(2,\mathbb{R})/\textrm{SO}(2)$. \\

Define $j: \mathbb{H} \rightarrow  \mathcal{J}(\mathbb{R}^2)$ as the composition $\H \cong \text{SL}(2,\mathbb{R})/\textrm{SO}(2) \cong \mathcal{J}(\mathbb{R}^2)$. It follows from our discussion above that this is the unique $\SL(2,\mathbb{R})$-equivariant map which satisfies $j(\textbf{i}) = J_0$. A short calculation yields the formula 
		\begin{align} \label{Hcpx}
					j: \mathbb{H} \rightarrow  \mathcal{J}(\mathbb{R}^2), \qquad j(x + \textbf{i}y) := \left( \begin{array}{cc} \frac{x}{y} & - \frac{x^2 + y^2}{y} \\ \frac{1}{y} & -\frac{x}{y} \end{array} \right)
		\end{align}
and one verifies that this is an antiholomorphic and antisymplectic isometry.	

\begin{Remark}
One is tempted to defined the complex structure on $\mathcal{J}(\mathbb{R}^2)$ with a different sign in order to make $j$ a Kähler isometry. However, this would be in conflict with standard sign conventions in Teichmüller theory and introduce unpleasant signs at other places.
\end{Remark}

\begin{Lemma} \label{LemmaMM0} The action of $\textrm{SL}(2,\R)$ on $\cJ(\R^2)$ and $\H$ is Hamiltonian and generated by the equivariant moment maps 
	\begin{gather*}
		\mu_{\cJ}: \mathcal{J}(\mathbb{R}^2) \rightarrow \mathfrak{sl}^*(2,\mathbb{R}), \qquad \langle \mu_{\cJ}(J), \xi \rangle := \text{tr}(J \xi), \\
		\mu_{\H}: \mathbb{H} \rightarrow \mathfrak{sl}^*(2,\mathbb{R}), \qquad \langle \mu_{\H}(z), \xi \rangle := -\text{tr}(j(z) \xi)
 \end{gather*}
for $\xi \in \mathfrak{sl}(2,\R)$ and $j: \mathbb{H} \rightarrow \mathcal{J}(\mathbb{R}^2)$ defined by (\ref{Hcpx}).
\end{Lemma}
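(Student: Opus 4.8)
The plan is to verify the moment map identity directly on $\cJ(\R^2)$ by a short trace computation, and then to transport it to $\H$ using the equivariant antisymplectic map $j$ of (\ref{Hcpx}). Since $\SL(2,\R)$ acts on $\cJ(\R^2)$ by conjugation, the first step is to record that the infinitesimal action of $\xi \in \mathfrak{sl}(2,\R)$ at $J$ is the commutator $L_J\xi = [\xi,J] = \xi J - J\xi$ (up to the overall sign fixed by one's convention for the action), and to check using $J^2=-\mathds{1}$ that $J[\xi,J]+[\xi,J]J=0$, so that $L_J\xi \in T_J\cJ(\R^2)$. I would also note the identity $J\hat J J=\hat J$, valid for every $\hat J\in T_J\cJ(\R^2)$, which follows at once from $\hat J J=-J\hat J$ and $J^2=-\mathds{1}$; this is the only structural property of the tangent space that the computation uses.

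For the moment map equation on $\cJ(\R^2)$, writing $H_\xi(J):=\langle\mu_\cJ(J),\xi\rangle=\text{tr}(J\xi)$ one has $dH_\xi|_J(\hat J)=\text{tr}(\hat J\xi)$. On the symplectic side,
$$\omega_\cJ(L_J\xi,\hat J)=-\tfrac12\,\text{tr}\big([\xi,J]\,J\,\hat J\big),\qquad [\xi,J]\,J=\xi J^2-J\xi J=-\xi-J\xi J,$$
so that, using cyclicity of the trace together with $J\hat J J=\hat J$,
$$\text{tr}\big([\xi,J]\,J\,\hat J\big)=-\text{tr}(\xi\hat J)-\text{tr}\big(\xi\,(J\hat J J)\big)=-2\,\text{tr}(\hat J\xi),$$
hence $\omega_\cJ(L_J\xi,\hat J)=\text{tr}(\hat J\xi)=dH_\xi|_J(\hat J)$, which is the moment map equation. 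Equivariance, $\langle\mu_\cJ(\Psi J\Psi^{-1}),\xi\rangle=\langle\mu_\cJ(J),\textrm{Ad}(\Psi)^{-1}\xi\rangle$, is immediate from cyclicity of the trace.

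To transfer the result to $\H$, recall from the discussion preceding the lemma that $j\colon\H\to\cJ(\R^2)$ is $\SL(2,\R)$-equivariant and antisymplectic, so that $dj\circ L^\H\xi=L^\cJ\xi\circ j$ and $j^*\omega_\cJ=-\omega_\H$. Since $\mu_\H=-\mu_\cJ\circ j$ by definition, pulling back the identity just established gives
$$d\langle\mu_\H(\cdot),\xi\rangle=-j^*\,d\langle\mu_\cJ(\cdot),\xi\rangle=-j^*\big(\iota(L^\cJ\xi)\,\omega_\cJ\big)=-\iota(L^\H\xi)\big(j^*\omega_\cJ\big)=\iota(L^\H\xi)\,\omega_\H,$$
so $\mu_\H$ is a moment map for the $\SL(2,\R)$-action on $\H$, and its equivariance follows from that of $\mu_\cJ$ together with the equivariance of $j$.

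The main obstacle is not analytic but the consistent bookkeeping of signs: the chosen convention for the moment map equation, the sign in the infinitesimal action $L_J\xi=\pm[\xi,J]$ (left versus right action), and the fact that $j$ is \emph{anti}symplectic rather than symplectic — the last of these being exactly compensated by the extra minus sign built into the definition of $\mu_\H$. Once these conventions are pinned down, both verifications reduce to the two-line trace manipulations above.
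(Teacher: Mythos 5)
Your proposal is correct and follows essentially the same route as the paper: the moment map equation on $\cJ(\R^2)$ is verified by the identical trace manipulation (expanding $[\xi,J]J$ and using $J\hat{J}J=\hat{J}$ with cyclicity of the trace), and the statement for $\H$ is deduced from the equivariance and antisymplecticity of $j$, with the built-in minus sign in $\mu_{\H}$ compensating the antisymplectic pullback exactly as you describe. Your write-up merely makes explicit the sign bookkeeping that the paper leaves implicit.
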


\begin{proof}
Let $J \in \mathcal{J}(\mathbb{R}^2)$, $\hat{J} \in T_J \mathcal{J}(\mathbb{R}^2)$ and $\xi \in \mathfrak{sl}(2,\mathbb{R})$. The infinitesimal action of $\xi$ at $J$ is given by $L_J \xi = [\xi,J]$ and therefore
	\begin{align*}
		\omega_{\mathcal{J}}\left( L_J \xi, \hat{J} \right) 
				= -\frac{1}{2} \text{tr}\left(  [\xi, J] J \hat{J}\right) 
				= \frac{1}{2}\left( \text{tr}\left( \xi\hat{J} \right) + \text{tr}\left(J\xi J \hat{J} \right) \right)
				= \text{tr}(\hat{J}\xi)
		\end{align*}
This proves the first part of the lemma. The second part follows from this, since $j$ is equivariant and antisymplectic.
\end{proof}				

Denote by $\omega_0 = dx\wedge dy$ the standard area form on $\R^2$. Every $J \in \mathcal{J}(\mathbb{R}^2)$ defines a hermitian form on $(\R^2,J)$ defined by
			\begin{align} \label{HJ.Qeqh}
						h_J: \mathbb{R}^2\times\mathbb{R}^2 \rightarrow \mathbb{C}, \qquad h_J(\cdot, \cdot) := \omega_0(\cdot, J \cdot) + \textbf{i}\omega_0(\cdot, \cdot)
			\end{align}			
This is complex anti-linear in the first coordinate and complex linear in the second coordinate with respect to $J$. A direct computation shows that $h_{j(z)}$ has the matrix representation
							\begin{align} \label{HJ.hJeq0} h_{j(z)}(v,w) = v^t\,\frac{1}{\text{Im}(z)} \left( \begin{array}{cc} 1 & - \bar{z} \\ -z & |z|^2 \end{array} \right)\, w. \end{align}
for $v,w \in \R^2$.

\subsection{Complex quadratic forms}

For $J \in \mathcal{J}(\mathbb{R}^2)$ we denote the space of complex quadratic forms on $(\R^2,J)$ by 
\begin{align*} 
				Q(J) &:= \{ q : \mathbb{R}^2 \times \mathbb{R}^2 \rightarrow \mathbb{C}\,|\, \textrm{$(J,\textbf{i})$-complex bilinear and symmetric} \}.
\end{align*}
This carries the complex structure $q \mapsto \textbf{i}q$ and the hermitian structure
\begin{equation} \label{Q(J):structures}
		g_Q(q_1, q_2) := \text{Re}\left(\frac{\overline{q_1(v,v)}q_2(v,v)}{h_J(v,v)^2}\right), \quad
		\omega_Q(q_1, q_2) := \text{Im}\left(\frac{\overline{q_1(v,v)}q_2(v,v)}{h_J(v,v)^2}\right)
\end{equation}
where $h_J$ is defined by (\ref{HJ.Qeqh}) and neither expression depends on $v \in \mathbb{R}^2\backslash\{0\}$.

\subsubsection{Identification with tangent vectors}
Consider the map $T_J \mathcal{J}(\mathbb{R}^2) \rightarrow Q(J)$ defined by
	\begin{align} \label{HJ.QeqT}
		\hat{J} \mapsto q_{(J,\hat{J})} := h_J(\hat{J}\cdot , \cdot).
	\end{align}			
We show in the next lemma that this map is a complex antilinear isometry with respect to the structures defined in (\ref{cJ:structures}) and (\ref{Q(J):structures}).

\begin{Lemma}[\textbf{Quadratic forms and tangent vectors}]
\hspace{2em}
		\begin{enumerate}
			\item For $J \in \mathcal{J}(\mathbb{R}^2)$ and $\hat{J} \in T_J \mathcal{J}(\mathbb{R}^2)$ it holds
									\begin{align} \label{HJ.Qsym} h_J(\hat{J}v, w) = h_J(\hat{J}w, v) \qquad \textrm{for all $v,w \in \mathbb{R}^2$}.\end{align}
			In particular, $q_{(J,\hat{J})} := h_J(\hat{J} \cdot, \cdot) \in Q(J)$.
			
			\item For every $J \in \mathcal{J}(\mathbb{R}^2)$ the map (\ref{HJ.QeqT}) is a complex antilinear isomorphism with respect to the structures defined in (\ref{cJ:structures}) and (\ref{Q(J):structures}).
		
			\item The collection of maps (\ref{HJ.QeqT}) is $\SL(2,\R)$-equivariant in the following sense: Let $J \in \mathcal{J}(\mathbb{R}^2)$, $\hat{J} \in T_J \mathcal{J}(\mathbb{R}^2)$, and $\Psi \in \text{SL}(2,\mathbb{R})$, then
					$$q_{\Psi_*(J,\hat{J})}(v,w) = q_{(J,\hat{J})} (\Psi^{-1} v, \Psi^{-1} w) \qquad \textrm{for all $v,w \in \R^2$.}$$
						
		\end{enumerate} 
\end{Lemma}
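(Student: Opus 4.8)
The statement to prove is the three-part lemma on quadratic forms and tangent vectors. I will handle the three parts in the order given, since parts (2) and (3) use part (1).

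For part (1), the plan is to show the symmetry identity $h_J(\hat{J}v,w) = h_J(\hat{J}w,v)$ directly from the definitions. Recall $h_J(\cdot,\cdot) = \omega_0(\cdot, J\cdot) + \mathbf{i}\omega_0(\cdot,\cdot)$, so I need to verify separately that $\omega_0(v, J\hat{J}w)$ is symmetric in $v,w$ and that $\omega_0(v,\hat{J}w)$ is symmetric in $v,w$. The second is immediate from antisymmetry of $\omega_0$ together with $\hat{J}$ being trace-free (equivalently $\omega_0(v,\hat{J}w) = \omega_0(\hat{J}v, w)$ for $\hat{J} \in \mathfrak{sl}(2,\R)$, which follows since $\omega_0(v, \cdot)$ identifies $\R^2$ with its dual and trace-free endomorphisms are exactly the $\omega_0$-symmetric ones). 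For the first term, the anticommutation relation $J\hat{J} + \hat{J}J = 0$ says $J\hat{J}$ is also trace-free, so the same argument gives $\omega_0(v, J\hat{J}w) = \omega_0(J\hat{J}v, w) = -\omega_0(w, J\hat{J}v)$; combining with antisymmetry of $\omega_0$ once more and the identity $\omega_0(v, J w) = -\omega_0(Jv, w)$ (which holds because $J$ is $\omega_0$-compatible, hence $\omega_0$-symplectic with $J^*\omega_0 = \omega_0$), one reorganizes to get symmetry of $h_J(\hat{J}v,w)$ in $v,w$. The $(J,\mathbf{i})$-complex bilinearity of $q_{(J,\hat J)}$ follows because $h_J$ is complex linear in the second slot and complex antilinear in the first, and $\hat J$ anticommutes with $J$, so $h_J(\hat J (Jv), w) = h_J(\hat J v, Jw)$ and precomposing $\hat J$ with $J$ turns antilinearity into linearity.

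For part (2), I will first check complex antilinearity: the complex structure on $T_J\cJ(\R^2)$ is $\hat J \mapsto J\hat J$, so I need $q_{(J, J\hat J)} = -\mathbf{i}\, q_{(J,\hat J)}$, i.e. $h_J(J\hat J\cdot,\cdot) = -\mathbf{i}\, h_J(\hat J\cdot,\cdot)$; this is exactly the statement that $h_J$ is complex \emph{antilinear} in its first argument ($h_J(Jv,w) = -\mathbf{i}\,h_J(v,w)$), which is part of the definition (\ref{HJ.Qeqh}). Injectivity is clear since $h_J$ is nondegenerate, so $\hat J$ is determined by $q_{(J,\hat J)}$; and the dimensions of $T_J\cJ(\R^2)$ and $Q(J)$ agree (both are $2$-real-dimensional), so the map is an isomorphism. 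For the isometry claim, the cleanest route is equivariance: by part (3) (or a direct check) both structures are $\SU(1,1)\cong\SL(2,\R)$-invariant and $\SL(2,\R)$ acts transitively on $\cJ(\R^2)$, so it suffices to verify the isometry at the single point $J = J_0$. There I pick the basis $\hat J = J_0$ composed with a reflection, compute $q_{(J_0,\hat J)}$ explicitly using (\ref{HJ.hJeq0}) at $z = \mathbf{i}$, and compare $g_\cJ(\hat J_1,\hat J_2) = \tfrac12\mathrm{tr}(\hat J_1\hat J_2)$ with $g_Q$ evaluated on a convenient vector $v$ (say $v = (1,0)$), and similarly for the symplectic forms. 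This reduces to a two-by-two matrix computation.

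For part (3), the plan is a direct substitution: $\Psi_*$ acts on $J$ by conjugation and on $\hat J$ by conjugation, and on $\R^2$ linearly; one computes $q_{\Psi_* (J,\hat J)}(v,w) = h_{\Psi_* J}(\Psi\hat J\Psi^{-1}v, w)$ and uses the transformation rule $h_{\Psi_* J}(v,w) = h_J(\Psi^{-1}v, \Psi^{-1}w)$, which holds because $\Psi \in \SL(2,\R)$ preserves $\omega_0$ (it has determinant one) and intertwines $J$ with $\Psi_*J$. Plugging in gives $h_J(\Psi^{-1}\Psi\hat J\Psi^{-1}v, \Psi^{-1}w) = h_J(\hat J\Psi^{-1}v,\Psi^{-1}w) = q_{(J,\hat J)}(\Psi^{-1}v,\Psi^{-1}w)$, as claimed.

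The only genuinely nontrivial step is the isometry part of (2); everything else is bookkeeping with the anticommutation relation and the defining formula for $h_J$. The main obstacle is keeping signs straight — in particular the sign in $h_J(Jv,w) = -\mathbf{i}\,h_J(v,w)$ versus complex \emph{antilinearity}, and the matching against the deliberately sign-shifted complex structure on $\cJ(\R^2)$ flagged in the Remark — so I would organize the computation to isolate the point evaluation at $J_0$, where the formula (\ref{HJ.hJeq0}) makes the hermitian form completely explicit and the verification becomes a finite check on $2\times 2$ matrices.
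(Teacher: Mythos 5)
Your overall strategy is sound and, for parts (1) and (3), essentially the same as the paper's: part (3) is the identical substitution argument using $\Psi^*\omega_0=\omega_0$, and part (1) rests on the same algebraic fact, namely that $\omega_0(Av,w)+\omega_0(v,Aw)=\textrm{tr}(A)\,\omega_0(v,w)$, applied to the trace-free endomorphisms $\hat J$ and $J\hat J$ (the paper reaches this by differentiating $\omega_0(Jv,Jw)=\omega_0(v,w)$ rather than by invoking trace-freeness, but it is the same identity). Where you genuinely diverge is the isometry claim in part (2): the paper gives an intrinsic one-line computation valid at every $J$, using that $(\R^2,J)$ is complex one-dimensional so that $|h_J(v,\hat Jv)|^2=h_J(v,v)\,h_J(\hat Jv,\hat Jv)$, whereas you reduce to the single point $J_0$ via equivariance and invariance of both metrics and then do an explicit $2\times 2$ matrix check. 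Both routes work; the paper's avoids the matrix computation and the logical dependence of part (2) on part (3), while yours is more mechanical and arguably harder to get wrong once you are at $J_0$ with the explicit formula (\ref{HJ.hJeq0}).

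One caveat you should fix before writing this up: your stated identity ``$\omega_0(v,\hat{J}w) = \omega_0(\hat{J}v, w)$ for $\hat J\in\mathfrak{sl}(2,\R)$'' (and likewise ``$\omega_0(v,J\hat Jw)=\omega_0(J\hat Jv,w)$'') has the wrong sign: trace-freeness gives $\omega_0(Av,w)=-\omega_0(v,Aw)$, equivalently $\omega_0(Av,w)=\omega_0(Aw,v)$, i.e.\ the bilinear form $\omega_0(A\cdot,\cdot)$ is \emph{symmetric} — which is exactly what you correctly say in words and exactly what you need for both the real and imaginary parts of $h_J(\hat Jv,w)$. Carried out literally with the sign as displayed, the ``reorganization'' at the end of part (1) would produce antisymmetry instead of symmetry, so this is more than cosmetic; but once corrected the argument closes exactly as you intend.
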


\begin{proof}
Differentiating the equation $\omega_0(Jv, Jw) = \omega_0(v,w)$ it follows 
	$$\omega_0(\hat{J}v, Jw) + \omega_0(Jv, \hat{J} w) = 0.$$ 
Hence $\omega_0(\hat{J}v, Jw) = \omega_0(v, J\hat{J} w)$ shows that $\hat{J}$ is self-adjoint with respect to the inner product $\omega_0(\cdot, J \cdot)$ and $\omega_0(\hat{J} v,w) = - \omega_0(v, \hat{J}w)$. Then follows
 $$h_J(\hat{J}v, w) = \omega_0(\hat{J}v, Jw) + \textbf{i}\omega_0(\hat{J}v, w) 
										= \omega_0(v, J \hat{J}w) + \textbf{i}\omega_0(\hat{J}w, v)  = h_J(\hat{J}w, v)$$
This completes the proof of (\ref{HJ.Qsym}).		
			
For the second part, it follows from (\ref{HJ.Qsym}) that
	\begin{align*}
		||\hat{J}||^2 &= \frac{1}{2} \left(\frac{h_J(\hat{J}^2 v, v)}{h_J(v,v)} + \frac{h_J(\hat{J}^2 Jv, Jv)}{h_J(v,v)} \right) 
									= \frac{h_J(\hat{J} v, \hat{J} v)}{h_J(v,v)} 
									= \frac{ |h_J(v, \hat{J} v)|^2 }{h_J(v,v)^2} 
									= ||q_{\hat{J}}||^2									
	\end{align*}
where we used in the penultimate equation that $(\mathbb{R}^2,J)$ is complex one-dimensional and hence $|h_J(v,\hat{J}v)|^2 = h_J(v,v) h_J(\hat{J}v, \hat{J}v)$. Hence (\ref{HJ.QeqT}) is an isometry. Since it is clearly complex antilinear, this completes the prove of the second part.

Finally, let $\Psi \in \text{SL}(2,\mathbb{R})$ be given and compute
	\begin{align*}
		q_{\Psi_*(J, \hat{J})} &= h_{\Psi J \Psi^{-1}}( \Psi \hat{J} \Psi^{-1} \cdot, \cdot) 
														= \omega_0( \Psi \hat{J} \Psi^{-1} \cdot,\Psi J \Psi^{-1} \cdot) + \omega_0( \Psi \hat{J} \Psi^{-1} \cdot, \cdot ) \\
													 &= \omega_0( \hat{J} \Psi^{-1} \cdot, J \Psi^{-1} \cdot) + \omega_0( \hat{J} \Psi^{-1} \cdot, \Psi^{-1}\cdot ) 
													 = q_{(J,\hat{J})} (\Psi^{-1} \cdot, \Psi^{-1} \cdot).
	\end{align*}	
This proves equivariance and the lemma.
\end{proof}

\subsubsection{Identification with covectors}

The Riemannian metric on $\mathcal{J}(\mathbb{R}^2)$ defines a complex antilinear isomorphism of the tangent bundle and cotangent bundle of $\mathcal{J}(\mathbb{R}^2)$. This is given by
		\begin{align} \label{HJ.Qdual}
				T_J \mathcal{J}(\mathbb{R}^2)  \rightarrow T_J^* \mathcal{J}(\mathbb{R}^2), \qquad \hat{J} \mapsto \left( \hat{J}' \mapsto \frac{1}{2}\text{tr}\left( \hat{J} \hat{J}' \right)\right).
		\end{align}		
Combining this map with (\ref{HJ.QeqT}) and the derivative of (\ref{Hcpx}), gives rise to an identification between the cotangent bundle $T^*\H$ the hyperbolic plane and the bundle of quadratic forms over $\cJ(\R^2)$:
	$$(j,q): T^*\mathbb{H} \cong \cQ(\R^2) := \left\{ (J,q) \,|\, J \in \cJ(\R^2), \, q \in Q(J) \right\}.$$ 
The next lemma asserts that this map is an antiholomorphic $\SL(2,\mathbb{R})$-equivariant diffeomorphism which restricts to isometries along the fibres.

\begin{Lemma}[\textbf{Quadratic forms and covectors}] \label{HJ.QLemma1}
Define the map 
	$$(j, q) : T^*\mathbb{H} \rightarrow \cQ(\R^2) \subset \mathcal{J}(\mathbb{R}^2)\times\text{Hom}(\mathbb{R}^2\otimes \mathbb{R}^2 , \mathbb{C})$$
for $z = x +\textbf{i}y \in \H$ and $w \in \C$ by 	
\begin{align} \label{HJ.Qq}
		j(z,w) := j(z) := \left( \begin{array}{cc} \frac{x}{y} & - \frac{x^2 + y^2}{y} \\ \frac{1}{y} & -\frac{x}{y} \end{array} \right), \qquad 
		q(z,w) := \left( \begin{array}{cc} \bar{w}  &- \bar{z}\bar{w} \\- \bar{z}\bar{w} & \bar{z}^2 \bar{w} \end{array}\right)
\end{align}		
\begin{enumerate}
	\item $q$ is $\SL(2,\mathbb{R})$-equivariant in the sense that
					$$q(\Psi(z,w))(\cdot,\cdot) = q(z,w)(\Psi^{-1} \cdot, \Psi^{-1}\cdot) \in Q(\Psi j(z)\Psi^{-1})$$ 
	for every $\Psi \in \text{SL}(2,\mathbb{R})$ and $(z,w) \in T^*\mathbb{H} \cong \H\times\C$.
				
	\item For every $z \in \mathbb{H}$ the fibre map $q(z,\cdot) :T^*_z \mathbb{H}  \rightarrow Q(j(z))$ is a complex antilinear isometry.
\end{enumerate}
\end{Lemma}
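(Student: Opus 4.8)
The plan is to verify both assertions by direct computation using the explicit matrix formulas, reducing everything to the model case and then invoking equivariance. First I would establish part (1), the $\SL(2,\R)$-equivariance of $q$. Since $j$ itself is already known to be $\SL(2,\R)$-equivariant by the discussion following \eqref{Hcpx}, and the identification $T^*\H \cong \cQ(\R^2)$ was built by composing the metric-induced antilinear duality \eqref{HJ.Qdual}, the map \eqref{HJ.QeqT}, and the derivative of \eqref{Hcpx} — all three of which are $\SL(2,\R)$-equivariant (the first two by the preceding lemma, the third because $j$ is equivariant, hence so is $dj$) — the composite is equivariant, which is precisely the claimed transformation rule $q(\Psi(z,w))(\cdot,\cdot) = q(z,w)(\Psi^{-1}\cdot,\Psi^{-1}\cdot)$. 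Alternatively, and perhaps more transparently given that \eqref{HJ.Qq} hands us the explicit matrix $q(z,w) = \bar w\,(1,-\bar z)^t(1,-\bar z)$ (viewing it as a rank-one symmetric form), one can check equivariance by a short bare-hands computation: the cotangent action on $T^*\H \cong \H\times\C$ is $\Psi\cdot(z,w) = (\Psi z,\, (cz+d)^2 w)$ for $\Psi = \begin{pmatrix} a & b \\ c & d\end{pmatrix}$, and feeding this into the rank-one form and comparing with $q(z,w)(\Psi^{-1}\cdot,\Psi^{-1}\cdot)$ reduces to the identity $\Psi^{-t}(1,-\bar z)^t = \overline{(cz+d)}\,(1,-\overline{\Psi z})^t$, which is a routine consequence of the Möbius formula.

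For part (2), I would compute the fibre map $q(z,\cdot)\colon T^*_z\H \to Q(j(z))$, $w \mapsto q(z,w)$, at a single point and use equivariance to propagate. Fix $z = \textbf{i}$, so $j(\textbf{i}) = J_0$ and, by \eqref{HJ.hJeq0}, $h_{J_0}(v,w) = v^t w$ on $\R^2$; the form $q(\textbf{i},w)$ then has matrix $\begin{pmatrix} \bar w & 0 \\ 0 & -\bar w\end{pmatrix}$ (substituting $x=0,y=1$ in \eqref{HJ.Qq}, noting $\bar z = -\textbf{i}$ so $-\bar z = \textbf{i}$, $\bar z^2 = -1$ — one should double-check the sign bookkeeping here against \eqref{HJ.Qq} directly). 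Complex antilinearity in $w$ is immediate from the appearance of $\bar w$. For the isometry statement, using \eqref{Q(J):structures} with $v = (1,0)$ one gets $\|q(\textbf{i},w)\|^2 = |q(\textbf{i},w)((1,0),(1,0))|^2/|h_{J_0}((1,0),(1,0))|^2 = |\bar w|^2 = |w|^2$, and the norm on $T^*_{\textbf{i}}\H$ induced by $g_\H$ is likewise $|w|^2$ since $g_\H$ at $\textbf{i}$ is the standard Euclidean metric. Finally, since $\SL(2,\R)$ acts transitively on $\H$ and both the cotangent metric and the $Q(j(z))$-metric are $\SL(2,\R)$-invariant while $q$ is equivariant by part (1), the isometry and antilinearity properties at $\textbf{i}$ transport to every $z \in \H$.

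The routine-but-error-prone core of this argument is the bookkeeping of conventions: the paper has deliberately introduced sign twists (the antiholomorphic/antisymplectic $j$, the remark about the complex structure on $\cJ(\R^2)$), so the main obstacle is not conceptual but making sure that the chain $T^*\H \xrightarrow{dj} T_{j(z)}\cJ(\R^2) \xrightarrow{\text{\eqref{HJ.QeqT}}} Q(j(z))$ composed with the metric duality \eqref{HJ.Qdual} really does produce the stated matrix $q(z,w)$ in \eqref{HJ.Qq} with the right conjugations. Concretely, one should verify \eqref{HJ.Qq} by differentiating \eqref{Hcpx} to get $\partial_x j(z), \partial_y j(z) \in T_{j(z)}\cJ(\R^2)$, applying $\hat J \mapsto h_{j(z)}(\hat J\,\cdot\,,\cdot)$ to the appropriate tangent vector dual to $w\,dz$ under \eqref{HJ.Qdual}, and checking the resulting symmetric form agrees with the displayed matrix; once this identification is pinned down, equivariance and the single-point computation at $\textbf{i}$ do all the remaining work. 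I expect the transitivity-plus-equivariance reduction to be the cleanest way to avoid repeating the messy computation at a general point.
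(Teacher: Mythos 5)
Your proposal follows essentially the same route as the paper, which simply observes that $(j,q)$ is the composition of the three maps (\ref{HJ.QeqT}), (\ref{HJ.Qdual}), and (\ref{Hcpx}), each an antiholomorphic $\SL(2,\R)$-equivariant isometry, so that equivariance and the fibrewise antilinear isometry property follow at once; your added single-point check at $z=\textbf{i}$ plus transitivity is a harmless elaboration of the same idea. One small slip you rightly flagged: from (\ref{HJ.Qq}) one gets $q(\textbf{i},w)=\left(\begin{smallmatrix}\bar w & \textbf{i}\bar w\\ \textbf{i}\bar w & -\bar w\end{smallmatrix}\right)$ rather than $\mathrm{diag}(\bar w,-\bar w)$ (the diagonal matrix is not even $(J_0,\textbf{i})$-bilinear), but since your isometry computation only uses the $(1,1)$-entries of $q(\textbf{i},w)$ and of $h_{J_0}$, which are correct, the conclusion stands.
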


\begin{proof}
We leave it to the reader to check that the map $(j,q)$ is indeed constructed by combining (\ref{HJ.QeqT}), (\ref{HJ.Qdual}), and (\ref{Hcpx}). Since all these maps are antiholomorphic $\SL(2,\mathbb{R})$-equivariant isometries, it follows immediately that $q$ is also $\SL(2,\mathbb{R})$-equivariant and restricts to complex antilinear isometries along the fibres.
\end{proof}

\subsubsection{Duality}
In our discussion so far, we viewed a covector $J^* \in T^*_J \cJ(\R^2)$ as $\R$-linear map $J^*: T_J \cJ(\R^2) \rightarrow \R$. This extends uniquely to a complex linear map $T_J \cJ(\R^2) \rightarrow \C$ and thus gives rise to the complex linear duality pairing 
	\begin{align} \label{HJ.Qdual0}
				T^*_J \cJ(\R^2) \times T_J \cJ(\R^2) \rightarrow \C,\qquad \langle J^*, \hat{J}\rangle = J^*(\hat{J}) - \textbf{i} J^*(J\hat{J})
	\end{align}
When identifying $T^*_J \cJ(\R^2)$ with $Q(J)$ using (\ref{HJ.QeqT}) and (\ref{HJ.Qdual}), this pairing is given by
\begin{align} \label{HJ.Qdual1}
		Q(J) \times T_J \cJ(\R^2) \rightarrow \C, \qquad \langle q, \hat{J} \rangle_{Q\times T\mathcal{J}} = \frac{q(\hat{J}v,v)}{h_J(v,v)}
\end{align}
where the right hand side does not depend on $v \in \mathbb{R}^2\backslash\{0\}$.

\begin{Lemma} \label{HJ.QLemma2}
Define $(j,q)$ by (\ref{HJ.Qq}). Then
	\begin{align} \label{HJ.Qdual2} w\hat{z} = \overline{\langle q(z,w), dj(z)\hat{z} \rangle_{Q\times T\mathcal{J}}} \end{align}
for all $z \in \mathbb{H}$ and $\hat{z}, w \in \C$. Here we think of $\hat{z} \in T_z \H$, $w \in T_z^*\H$, and define the right hand side by (\ref{HJ.Qdual1}).
\end{Lemma}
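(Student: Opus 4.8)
The plan is to verify (\ref{HJ.Qdual2}) by a direct computation from the explicit matrix formulas in Lemma~\ref{HJ.QLemma1}. The key observation is that the right-hand side of (\ref{HJ.Qdual1}) does not depend on the choice of $v \in \R^2\setminus\{0\}$, so I would evaluate everything at the standard basis vector $v = e_1 = (1,0)^t$, which makes almost every term collapse. Thus the task reduces to computing the single number $q(z,w)\big((dj(z)\hat z)e_1,\,e_1\big)\big/h_{j(z)}(e_1,e_1)$ and checking that its complex conjugate is $w\hat z$.

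Concretely, write $\hat z = \hat x + \textbf{i}\hat y$ and differentiate the formula (\ref{Hcpx}) to obtain $\hat J := dj(z)\hat z = \hat x\,\partial_x j(z) + \hat y\,\partial_y j(z) \in T_{j(z)}\cJ(\R^2)$ as an explicit traceless $2\times 2$ matrix. Applying $\hat J$ to $e_1$ uses only the first columns of $\partial_x j(z)$ and $\partial_y j(z)$, so $\hat J e_1$ has a short closed form in $x,y,\hat x,\hat y$. Pairing $\hat J e_1$ with $e_1$ against the matrix of $q(z,w)$ from (\ref{HJ.Qq}) then involves only the first column $(\bar w,\,-\bar z\bar w)^t$ of that matrix, and after substituting $\bar z = x - \textbf{i}y$ the numerator simplifies to $\bar w\,\overline{\hat z}/y$. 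The denominator is read off directly from (\ref{HJ.hJeq0}): $h_{j(z)}(e_1,e_1) = \tfrac1y$. Dividing gives $\langle q(z,w),\,dj(z)\hat z\rangle_{Q\times T\cJ} = \bar w\,\overline{\hat z} = \overline{w\hat z}$, and conjugating yields (\ref{HJ.Qdual2}).

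I do not expect a genuine obstacle; the only thing requiring care is the bookkeeping of complex conjugations, since every entry of $q(z,w)$ carries a bar and $j$ is antiholomorphic (so $dj(z)$ is complex antilinear). The conjugation on the right-hand side of (\ref{HJ.Qdual2}) is exactly what reconciles these bars with the honest, conjugation-free canonical pairing $w\hat z$ between $T^*\H$ and $T\H$. As an alternative one could shorten the computation by an equivariance argument: all the maps used in building $(j,q)$ are $\textrm{SL}(2,\R)$-equivariant, the pairing (\ref{HJ.Qdual1}) is $\textrm{SL}(2,\R)$-invariant, and $h_{\Psi J\Psi^{-1}}(\Psi\cdot,\Psi\cdot) = h_J(\cdot,\cdot)$ since $\Psi$ preserves $\omega_0$; transitivity of the Möbius action on $\H$ then reduces the identity to the single base point $z = \textbf{i}$, where $j(\textbf{i}) = J_0$ and the check with $v = e_1$ is immediate.
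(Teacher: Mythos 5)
Your proposal is correct: the computation at $v=e_1$ does yield $q(z,w)\bigl((dj(z)\hat z)e_1,e_1\bigr)=\bar w\,\overline{\hat z}/y$ and $h_{j(z)}(e_1,e_1)=1/y$, giving $\overline{w\hat z}$ as claimed, and your closing equivariance remark is precisely the alternative proof the paper itself offers (verify at $z=\textbf{i}$, then use $\SL(2,\R)$-equivariance). So this is essentially the same approach as the paper's, just carried out explicitly at a general point $z$ rather than only at the base point.
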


\begin{proof}
This follows directly from the construction of the duality pairing. Alternatively, one may use Lemma \ref{HJ.QLemma1} to verify the formula at $z = \textbf{i}$ and then use $\SL(2,\R)$-equivariance of both sides in (\ref{HJ.Qdual2}) to complete the proof.
\end{proof}

\section{Construction of the moduli space \texorpdfstring{$\cM$}{cM}}
Denote by $ X \subset T^*\H$ the unit disc bundle in the cotangent bundle of the hyperbolic plane. Theorem \ref{ThmHK} provides an explicit $S^1 \times \SL(2,\R)$-invariant hyperkähler metric on $X$, which is compatible with the canonical holomorphic symplectic structure on $T^*\H$ and restricts to the hyperbolic metric along $\H$. The existence of such a metric near the zero section follows from general results of Feix \cite{Feix:2001} and Kaledin \cite{Kaledin:1999}, and the main point of this theorem is to provide an explicit formula. We then calculate moment maps for the action of $S^1$ and $\SL(2,\R)$ for the various symplectic forms.

Next, let $(\Sigma,\rho)$ be a closed $2$-dimensional manifold equipped with an area form $\rho$ and denote by $P \rightarrow \Sigma$ its $\SL(2,\R)$-frame bundle. Using Lemma \ref{HJ.QLemma1}, one can identify the space of sections $\cS(P,X)$ of the associated bundle $P(X) := P \times_{\textrm{SL}(2,\R)} X$ with
	$$\cQ_1(\Sigma) := \left\{ (J,\sigma) \, |\, J \in \cJ(\Sigma),\, \sigma \in Q(J),\, |\sigma|_J < 1\right\}$$
where $Q(J)$ is the space of quadratic differentials $\sigma \in \Omega^0(\Sigma, S^2(T^*\Sigma \otimes_J \C))$. The hyperkähler structure on $X$ induces a hyperkähler structure on the bundle $\cQ_1(\Sigma)$ and the general moment map in Theorem \ref{fib.ThmM} can be used to calculate a hyperkähler moment map for the action of $\textrm{Ham}(\Sigma,\rho)$ on $\cQ_1(\Sigma)$. This is the content of Theorem \ref{hk.ThmM}. We show moreover that two of these moment maps extend to moment maps for the action of $\textrm{Symp}_0(\Sigma,\rho)$.

These calculations are the key ingredient in constructing the moduli space $\cM$, which after suitable rescaling and applying standard Moser isotopy arguments, can be described by
	\begin{align}
			\cM := \left\{ (g,\sigma) \in \textrm{Met}(\Sigma)\times Q(J_g)\,\left|\, \begin{array}{c}\bar{\partial}_{J_g} \sigma = 0,\, |\sigma|_g < 1,\\  K_g - \frac{c}{2}|\sigma|_g^2 = \frac{c}{2}\end{array}\right.\right\}\bigg/ \text{Diff}_0(\Sigma)
	\end{align}
where $c = 2\pi(2 - 2\textrm{genus}(\Sigma))/\textrm{vol}(\Sigma,\rho)$ and $J_g \in \cJ(\Sigma)$ is the unique complex structure compatible with $g$. The main feature of this moment map construction is that it induces a natural hyperkähler structure on $\cM$. We will investigate this structure more closely in the final chapter of this article.

\subsection{Hyperkähler extension of the hyperbolic plane}
The unit disc bundle $X \subset T^*\H$ can be viewed as the set
			$$X = \left\{(z,w) \in \mathbb{H}\times\mathbb{C}\,\left| |w| < \frac{1}{\text{Im}(z)}\right.\right\}$$
and the natural $\SL(2,\R)$-action on $T^*\H$ restricts to an action on $X$ given by
	\begin{align} \label{actionSL2X}
		\SL(2,\R)\times X \rightarrow X,\qquad \left(\begin{array}{cc} a & b \\ c & d \end{array}\right) (z,w) = \left( \frac{az + b}{cz + d}, \, (cz + d)^2 w \right).
	\end{align}
Moreover, consider the $S^1$-action on $X$ which is given by rotation of the fibres
	\begin{align} \label{actionS1X}
		S^1\times X \rightarrow X,\qquad \left(e^{\textbf{i}t}, (z,w) \right) \mapsto \left( z, \, e^{\textbf{i}t} w \right).
	\end{align}
As cotangent bundle of a complex manifold, $X$ carries a canonical holomorphic symplectic structure. This consists of the complex structure $J_1 (\hat{z},\hat{w}) := (\textbf{i}\hat{z} , \textbf{i}\hat{w})$ and the complex nondegenerate closed $2$-form $dz \wedge dw \in \Omega_{J_1}^{2,0}(X)$. Denote the real and imaginary part of this form by
		$$\omega_2 := dx \wedge du - dy \wedge dv, \qquad \omega_3 := dx \wedge dv + dy \wedge du$$
where $z = x + \textbf{i}y$ and $w = u + \textbf{i}v$. A Riemannian metric $g \in \textrm{Met}(X)$ is said to be a hyperkähler metric compatible with this holomorphic symplectic structure, if the relations
			$$\omega_i(\cdot, J_i\cdot) = g(\cdot, \cdot)\qquad \text{for $i=1,2,3$}$$
defines a Kähler form $\omega_1$ and integrable complex structures $J_2, J_3$ satisfy the quaternionic relations together with $J_1$. 

\begin{Remark}
By a Lemma of Hitchin (\cite{Hitchin:1987}, Lemma 6.8) it suffices to require that $J_1$, $J_2$ and $J_3$ are almost complex structure. Integrability is then an automatic consequence of the algebraic relations and the closure of the symplectic forms.
\end{Remark}

Feix \cite{Feix:2001} and Kaledin \cite{Kaledin:1999} showed that for any real analytic Kähler manifold, there exists a unique $S^1$-invariant hyperkähler metric defined on some neighbourhood of the zero section in the total space of the cotangent bundle, which is compatible with the canonical holomorphic symplectic structure and extends the Kähler structure of the zero section. 

\begin{Theorem}[\textbf{Donaldson} \cite{Donaldson:2000}] \label{ThmHK}
Define the Riemannian metric $g$ on $X$ by
			$$g = \frac{d\bar{z} dz}{2\text{Im}(z)^2\sqrt{1-r^2}} + \frac{\text{Im}(z)^2}{2\sqrt{1-r^2}} d\bar{w}dw  + \frac{\textbf{i}\text{Im}(z)\bar{w}}{2\sqrt{1-r^2}} d\bar{z} dw - \frac{\textbf{i}\text{Im}(z)w}{ 2\sqrt{1-r^2}} d\bar{w} dz$$
where $r := |w|\text{Im}(z)$. Then $g$ is a $\SL(2,\R)\times S^1$-invariant hyperkähler metric on $X$. It is compatible with the holomorphic symplectic structure and restricts to the hyperbolic metric along $\mathbb{H}\times\{0\}$ with curvature $-1$.
\end{Theorem}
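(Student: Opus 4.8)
The plan is to verify the explicit tensor $g$ directly; existence of such a metric on a neighbourhood of the zero section is already supplied by Feix \cite{Feix:2001} and Kaledin \cite{Kaledin:1999}, so the content lies in confirming that the closed formula has all of the asserted properties on all of $X$. The first step is to rewrite $g$ in the complex coordinates $(z^{1},z^{2})=(z,w)$ as the Hermitian form $g=\sum_{j,k}H_{j\bar k}\,dz^{j}\,d\overline{z^{k}}$ with
\[
  H=\frac{1}{2\sqrt{1-r^{2}}}\begin{pmatrix} y^{-2} & -\textbf{i}\,yw \\ \textbf{i}\,y\bar w & y^{2} \end{pmatrix},\qquad y:=\mathrm{Im}(z),\quad r:=|w|\,y .
\]
In this form $g$ is manifestly $J_{1}$-Hermitian, and a one-line computation gives $\det H=\tfrac14$ identically on $X$. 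Hence $H$ is positive definite exactly where $r<1$, which confirms that $g$ is a genuine Riemannian metric on $X$ that degenerates as $r\to1$; the same computation shows that the $J_{1}$-volume form of $g$ is a constant multiple of the flat one.

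I would then establish the two invariances. The $S^{1}$-action fixes $z$ and $r$ and sends $\bar w\,dw$, $w\,d\bar w$ and $|dw|^{2}$ to themselves, so $S^{1}$-invariance of $g$ is immediate. For $\mathrm{SL}(2,\R)$-invariance I would substitute the transformation rules $z\mapsto\gamma z$, $w\mapsto(cz+d)^{2}w$ together with $\mathrm{Im}(\gamma z)=y/|cz+d|^{2}$, $d(\gamma z)=(cz+d)^{-2}\,dz$, $d\big((cz+d)^{2}w\big)=(cz+d)^{2}\,dw+2c(cz+d)w\,dz$ and the invariance of $r$, and check that all powers of $(cz+d)$ cancel; this is routine but lengthy, and may be shortened by testing only the generators $z\mapsto z+b$, $z\mapsto\lambda^{2}z$ and $z\mapsto-1/z$, of which only the inversion requires real work. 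A direct expansion also gives $\omega_{2}+\textbf{i}\omega_{3}=dz\wedge dw$, so $\omega_{2}$ and $\omega_{3}$ are closed with constant coefficients.

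It remains to check the hyperkähler axioms in the sense of the Remark preceding the theorem: that $\omega_{1}:=g(J_{1}\cdot,\cdot)$ is closed; that the endomorphisms $J_{2},J_{3}$ determined by $\omega_{i}(\cdot,J_{i}\cdot)=g$ satisfy $J_{2}^{2}=J_{3}^{2}=-\mathrm{Id}$, so that, being $g$-skew, they are $g$-compatible almost complex structures; and that $J_{1},J_{2},J_{3}$ obey the quaternion relations. Granting these, Hitchin's Lemma~6.8 of \cite{Hitchin:1987} yields integrability of $J_{2},J_{3}$ and the hyperkähler property. Here I would use that $\mathrm{SL}(2,\R)$ acts transitively on each level set $\{r=\mathrm{const}\}\subset X$ (the isotropy $\mathrm{SO}(2)$ of $\textbf{i}$ rotating the fibre phase): since $g$, $J_{1}$, $\omega_{2}$, $\omega_{3}$ are $\mathrm{SL}(2,\R)\times S^{1}$-invariant, the pointwise identities $J_{2}^{2}=J_{3}^{2}=-\mathrm{Id}$ and the quaternion relations need only be verified along the one-parameter slice $(z,w)=(\textbf{i},t)$, $0\le t<1$, where $H$, $J_{1}$, $\omega_{2}$, $\omega_{3}$ are simple enough that $J_{2}$ and $J_{3}$ can be written out explicitly and the relations read off. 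For closedness of $\omega_{1}$ I would either observe that $\omega_{1}$ is a real $(1,1)$-form, so that $d\omega_{1}=0$ is equivalent to $\partial\omega_{1}=0$, and reduce this invariant condition again to the slice; or, more cleanly, exhibit a Kähler potential $\Phi$ (which may be taken of the form $\phi(y,r)$) with $H_{j\bar k}=\partial_{z^{j}}\partial_{\overline{z^{k}}}\Phi$, which makes closedness automatic.

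Finally, restricting $g$ to $\H\times\{0\}$ kills the terms containing $dw$ or $d\bar w$ and sets $r=0$, leaving $\frac{d\bar z\,dz}{2y^{2}}=\frac{dx^{2}+dy^{2}}{y^{2}}=g_{\H}$, the hyperbolic metric of Gaussian curvature $-1$. The main obstacle, I expect, is the $\mathrm{SL}(2,\R)$-invariance of the explicit formula together with the closely related verification that $\omega_{1}$ is closed (equivalently, the identification of the potential $\phi(y,r)$): these are elementary but demand careful bookkeeping, whereas the remaining algebraic axioms collapse, thanks to the cohomogeneity-one structure of the $\mathrm{SL}(2,\R)$-action, to finite computations along the slice.
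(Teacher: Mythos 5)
Your proposal is correct, but the core verification runs along a genuinely different route from the paper's. The paper does not re-derive or re-check invariance, positivity, or the restriction to the zero section at all (it defers the derivation of the formula to Donaldson's Lemma~16) and concentrates entirely on the hyperk\"ahler property, for which it gives a structural argument: the computation $\det(g_{\bar\alpha\beta})\equiv\tfrac14$ --- the same determinant you compute --- is used not for positivity but to conclude that the induced connection on the anticanonical bundle $\Lambda^{2,0}$ is trivial, hence $\nabla(dz\wedge dw)=0$, hence $\omega_2$ and $\omega_3$ are parallel and $(X,J_2,\omega_2)$, $(X,J_3,\omega_3)$ are K\"ahler; the quaternion relations are then read off algebraically from the single fact that $dz\wedge dw$ is of type $(2,0)$ for $J_1$ (e.g.\ $\omega_2(J_1\cdot,\cdot)=\omega_2(\cdot,J_1\cdot)$ forces $J_2J_1=-J_1J_2$, and $g(J_2\cdot,\cdot)=\omega_3(J_1\cdot,\cdot)$ forces $J_2=J_3J_1$), with no slice computation and no separate treatment of $d\omega_1=0$. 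Your approach instead invokes Hitchin's Lemma~6.8 (as the Remark preceding the theorem suggests) and verifies the algebraic axioms pointwise along the slice $(\textbf{i},t)$ using the cohomogeneity-one $\SL(2,\R)\times S^1$-action; this is valid --- the quaternion relations are pointwise identities between invariant tensors, and $d\omega_1$ is an invariant $3$-form, so it suffices to evaluate it (after differentiating the global formula transversally to the orbits, not merely its restriction to the slice) at points of the slice --- and it has the merit of actually establishing the invariance, positivity and boundary statements that the paper leaves implicit. What you lose is the economy of the paper's observation that constancy of $\det H$ alone makes $dz\wedge dw$ parallel, which disposes of integrability, closedness and the quaternion relations in essentially one stroke; what you gain is a self-contained check of every clause of the theorem without appeal to the external derivation.
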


\begin{proof}
A derivation of this formula is given by Donaldson in \cite{Donaldson:2000}, Lemma 16. For a detailed exposition, see \cite{ST:thesis}, Theorem 4.5.1. We verify in the following only that $g$ defines indeed a hyperkähler metric. 

The induced metric on the anti-canonical line bundle $\Lambda^{2,0} T^*\H$ is given by $\det(g_{\bar{\alpha}\beta}) \equiv \frac{1}{4}$. The Levi-Civita connection of $g$ induces on the anti-canonical bundle the unique connection, which is compatible with this metric and the holomorphic structure. In particular, it follows that the induced connection on $\Lambda^{2,0} T^*\H$ is trivial and therefore
				$$\nabla (dz \wedge dw) = 0.$$
Hence $\omega_2 = \textrm{Re}(dz \wedge dw)$ and $\omega_3 = \textrm{Im}(dz \wedge dw)$ are parallel and therefore $(X,J_2, \omega_2)$ and $(X,J_3, \omega_3)$ are Kähler manifolds. Moreover, since $J_1$ is clearly integrable, we also have that $(X,J_1, \omega_1)$ is Kähler. 

It remains to verify that the complex structures $J_1,J_2,J_3$ satisfy the algebraic relations of the quaternions. Since $dz\wedge dw \in \Omega^{2,0}_{J_1}(X)$, it follows $\omega_2(J_1\cdot, \cdot) = \omega_2(\cdot, J_1\cdot)$ and then 
		$$\omega_2(\cdot, J_2 J_1\cdot) = g(\cdot, J_1 \cdot) = - g(J_1 \cdot, \cdot) = - \omega_2(J_1 \cdot, J_2 \cdot) = - \omega_2(\cdot, J_1 J_2 \cdot)$$
This proves implies $J_2 J_1 = - J_1 J_2$. Using again $\omega_2 + \textbf{i}\omega_3 \in \Omega^{2,0}_{J_1}(X)$, it follows $\omega_2(\cdot, \cdot) = \omega_3(J_1 \cdot,\cdot)$ and thus
		$$g(J_2 \cdot, \cdot) = \omega_2(\cdot, \cdot) = \omega_3(J_1 \cdot,\cdot) = g(J_3 J_1 \cdot, \cdot).$$
This proves $J_2 = J_3 J_1$ and then also $J_3 = - J_2 J_1 = J_1 J_2$.
\end{proof}

\begin{Remark}\label{Rmk:incomplete}
The hyperkähler metric on $X$ is not complete.
\end{Remark}

\begin{Remark} \label{Rmk:XHH}
Hodge \cite{Hodge:PhD} showed that there exists as $\SL(2,\R)$-equivariant diffeomorphism $\alpha: X \rightarrow \H \times \overline{\H}$ which identifies the second complex structure $J_2$ on $X$ with $(\textbf{i}, - \textbf{i})$ on $\H \times \overline{\H}$. To understand this construction consider the diagonal embedding of the zero section $Z := \H \times \{0\}$
		$$Z \rightarrow  \H\times\bar{\H}, \qquad (z,0) \mapsto (z,z).$$
Since $Z$ is a totally real submanifold of $X$ with totally real image, it follows from the implicit function theorem that it extends uniquely to a biholomorphic map on suitable open neighbourhoods. Hodge showed by an explicit calculations that such an extension does in fact exist globally and it is given by the formula 
\begin{align} 
	\alpha(z , w ) &= \left( \exp_z\left( \textbf{i}f_z(w)\right), \, \exp_z\left( -\textbf{i}f_z(w)\right) \right) 
\end{align}
where $f_z : T_z^* \H \rightarrow T_z\H$ is given by 
	$$f_z(w) := \textrm{arctanh}\left(-\textrm{Im}(z)|w| \right) \frac{\textrm{Im}(z)^2w}{\textrm{Im}(z)|w|}.$$
For $z = x + \textbf{i}y$, $w = u+ \textbf{i}v$ and $\gamma := \sqrt{1 - y^2(u^2 + v^2)}$ it holds
\begin{align} \label{iso:XHH}
	\alpha(x + \textbf{i}y, u + \textbf{i}v) &= \left(x - \frac{y^2 v}{1- yu} + \textbf{i} \frac{y\gamma}{1 - yu},\, x + \frac{y^2 v}{1 + yu} + \textbf{i} \frac{y\gamma}{1 + yu} \right)		
\end{align}	
By Remark \ref{Rmk:incomplete}, $\alpha$ is not an isometry for the product metric.
\end{Remark}

\subsection{Moment maps on the fibre}

The $S^1$-action on $X$ defined by (\ref{actionS1X}) is Hamiltonian for $\omega_1$ and rotates the symplectic forms $\omega_2$ and $\omega_3$. The moment map for this action with respect to $\omega_1$ yields a Kähler potential for the hyperkähler metric with respect to the second and third complex structure. This is a general feature for hyperkähler manifolds equipped with such an $S^1$-action, which has been observed in \cite{Hitchin:1987b}. We recall the argument in the next Lemma. 

\begin{Lemma}[\textbf{Rotation of the fibres}] \label{Lemma:fibreS1}
Equip $X$ with the hyperkähler structure obtained in Theorem \ref{ThmHK} and consider the $S^1$-action on $X$ defined by (\ref{actionS1X})
\begin{enumerate}
	\item This action is Hamiltonian with respect to $\omega_1$ and generated by
\begin{align} H: X \rightarrow \mathbb{R},\qquad H(z,w) := \sqrt{1 - \text{Im}(z)^2|w|^2}. \end{align}
	\item $H$ is a Kähler potential for the hyperkähler structure with respect to the second and third complex structure, i.e.
		\begin{align}\label{eq:S1Potential} 2 \textbf{i} \bar{\partial}_{J_2} \partial_{J_2} H = \omega_2, \qquad 2 \textbf{i} \bar{\partial}_{J_3} \partial_{J_3} H = \omega_3 \end{align}
\end{enumerate}
\end{Lemma}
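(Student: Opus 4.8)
The plan is to prove part (1) by the single honest computation it requires, and then to deduce part (2) formally from the algebra of the quaternionic structure, following the general principle for hyperkähler manifolds with a rotating $S^1$-action observed in \cite{Hitchin:1987b}.

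\emph{Part (1).} The vector field generating the $S^1$-action \eqref{actionS1X} is $V(z,w) = (0,\textbf{i}w)$, i.e.\ $V = \textbf{i}w\,\partial_w - \textbf{i}\bar w\,\partial_{\bar w}$. Since $X$ deformation retracts onto its contractible zero section, $H^1(X;\R) = 0$, so $\iota(V)\omega_1$ is automatically exact and the only content is to identify the correct primitive. I would read off the Kähler form $\omega_1$ from the Hermitian expression for $g$ in Theorem \ref{ThmHK} (using that $J_1$ is multiplication by $\textbf{i}$ on $(\hat z,\hat w)$ and that $\omega_1(\cdot,\cdot) = g(J_1\cdot,\cdot)$), and then contract with $V$. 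The $d\bar z\wedge dz$ term drops out, and, using $\iota_V(d\bar w\wedge dw) = -\textbf{i}\,d(|w|^2)$, $\iota_V(d\bar z\wedge dw) = -\textbf{i}w\,d\bar z$ and $\iota_V(d\bar w\wedge dz) = -\textbf{i}\bar w\,dz$, the remaining three terms combine to
\[
 \iota(V)\omega_1 \;=\; -\frac{y^2}{2\sqrt{1-r^2}}\,d(|w|^2)\;-\;\frac{y\,|w|^2}{\sqrt{1-r^2}}\,dy, \qquad z = x+\textbf{i}y,\ \ r = |w|\,y.
\]
On the other hand $dH = -\tfrac{1}{2\sqrt{1-r^2}}\,d\!\left(y^2|w|^2\right)$ expands to exactly this $1$-form, so $\iota(V)\omega_1 = dH$, which is the Hamiltonian identity in the sign convention fixed in Lemma \ref{LemmaMM0}; in particular $\mathcal{L}_V\omega_1 = d\,dH = 0$, so the action preserves $\omega_1$.

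\emph{Part (2).} First I would record the elementary identity $2\textbf{i}\,\bar\partial_J\partial_J f = d(df\circ J)$, valid for any integrable complex structure $J$ and real function $f$, where $(df\circ J)(\cdot) := df(J\cdot)$; integrability of $J_2$ and $J_3$ is supplied by Theorem \ref{ThmHK}. Next, the $S^1$-action scales the holomorphic symplectic form: the flow $\phi_t(z,w) = (z,e^{\textbf{i}t}w)$ satisfies $\phi_t^*(dz\wedge dw) = e^{\textbf{i}t}\,dz\wedge dw$, so $\mathcal{L}_V(dz\wedge dw) = \textbf{i}\,dz\wedge dw = -\omega_3 + \textbf{i}\,\omega_2$, whence $\mathcal{L}_V\omega_2 = -\omega_3$ and $\mathcal{L}_V\omega_3 = \omega_2$. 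Finally, the quaternion relations among $J_1,J_2,J_3$ established in the proof of Theorem \ref{ThmHK}, together with $\omega_i(\cdot,J_i\cdot) = g$, give the pointwise identities $\omega_1(\cdot,J_2\cdot) = \omega_3$ and $\omega_1(\cdot,J_3\cdot) = -\omega_2$. Combining these with $\iota(V)\omega_1 = dH$ yields $dH\circ J_2 = \omega_1(V,J_2\cdot) = \iota(V)\omega_3$ and $dH\circ J_3 = \omega_1(V,J_3\cdot) = -\iota(V)\omega_2$, and then Cartan's formula together with $d\omega_2 = d\omega_3 = 0$ gives $d(dH\circ J_2) = \mathcal{L}_V\omega_3 = \omega_2$ and $d(dH\circ J_3) = -\mathcal{L}_V\omega_2 = \omega_3$. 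By the $dd^c$-identity these are precisely the two equations in \eqref{eq:S1Potential}.

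The only real obstacle is bookkeeping: normalising $\omega_1$ correctly when reading it off the Hermitian matrix of $g$ (the hidden factors of $2$ in the notation $d\bar z\,dz$ used in Theorem \ref{ThmHK}, and the sign relating the Kähler form to $\textbf{i}\,g_{\bar\alpha\beta}\,d\bar z^\alpha\wedge dz^\beta$), and then keeping all the quaternionic signs consistent with the conventions fixed in Theorem \ref{ThmHK} and Lemma \ref{LemmaMM0}. Once $\iota(V)\omega_1 = dH$ is pinned down with the right sign, the rest of the argument is purely formal and coincides with the general hyperkähler statement of \cite{Hitchin:1987b}.
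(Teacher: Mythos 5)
Your proposal is correct and follows essentially the same route as the paper: part (1) by directly contracting the Hermitian metric of Theorem \ref{ThmHK} with the rotation field $(0,\textbf{i}w)$ and matching the result with $dH$, and part (2) by the formal Hitchin-type argument combining $\cL_V(dz\wedge dw)=\textbf{i}\,dz\wedge dw$ with the quaternionic identities $\omega_1(\cdot,J_2\cdot)=\omega_3$, $\omega_1(\cdot,J_3\cdot)=-\omega_2$ and Cartan's formula. The signs and normalizations you flag all work out consistently with the paper's conventions, and your explicit treatment of the $J_3$ case merely fills in what the paper leaves as ``a similar calculation.''
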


\begin{proof}
For $(z,w) \in X$ write $z = x + \textbf{i}y$ and $w= u + \textbf{i}v$. Then 
		\begin{align*}
				\omega_1((0,\textbf{i}w), (\hat{z},\hat{w})) 
								&= - g( (0,w), (\hat{z},\hat{w}) ) 
								 = - 2 \text{Re} \left( \bar{w} g_{\bar{w}z} \hat{z} + \bar{w} g_{\bar{w}w} \hat{w} \right) \\
								&= -\frac{|w|^2 y\hat{y}  + (u\hat{u} + v \hat{v})y^2}{\sqrt{1 - |w|^2y^2}} 
								 = dH(z,w)[\hat{z},\hat{w}].
		\end{align*}
This shows that $v_H(z,w) = (0,\textbf{i}w)$ is the Hamiltonian vector field generated by $H$.

For the second part, denote by $\phi_t(z,w) := (z, e^{\textbf{i}t}w)$ the rotation by $e^{\textbf{i}t}$. Then
$$\cL_{v_H}(\omega_2 + \textbf{i}\omega_3) 
	= \left.\frac{d}{dt}\right|_{t=0} \phi_t^*(\omega_2 + \textbf{i}\omega_3) 
	= \left.\frac{d}{dt}\right|_{t=0} e^{\textbf{i}t} dz \wedge dw 
	= \textbf{i}\omega_2 - \omega_3$$
and therefore $\cL_{v_H} \omega_2 = - \omega_3$ and $\cL_{v_H} \omega_3 = \omega_2$. The identity
	$$dH(J_2 u) = \omega_1(v_H, J_2 u) = g(J_1 v_H, J_2 u) = g(J_3 v_H, u) = \omega_3(v_H, u)$$
then yields
	$$2 \textbf{i} \bar{\partial}_{J_2} \partial_{J_2} H = d (dH\circ J_2) = d \iota(v_H) \omega_3 = \cL_{v_H} \omega_3 = \omega_2.$$
This proves the first equation in (\ref{eq:S1Potential}). The second follows by a similar calculation and this proves the lemma.
\end{proof}

\textbf{Hyperkähler moment map on $X$.} The $\SL(2,\R)$-action on $X$ defined by (\ref{actionSL2X}) preserves all three symplectic forms and admits a hyperkähler moment map. We calculate the first moment map in Proposition \ref{propfiberMM}. The second and third moment map follow from a more general calculation in Proposition \ref{PropMcot} below.

\begin{Proposition}\label{propfiberMM}
Let $\omega_1$ be the symplectic form obtained in Theorem \ref{ThmHK} and let $j: \mathbb{H}\rightarrow \mathcal{J}(\mathbb{R}^2)$ be the isomorphism (\ref{Hcpx}). Then $\mu_1 : X \rightarrow \mathfrak{sl}^*(2,\mathbb{R})$ defined by
						$$\langle\mu_1(z,w), \xi \rangle := -\sqrt{1- \text{Im}(z)^2|w|^2})\text{tr}(j(z)\xi),\qquad \textrm{for $\xi \in \mathfrak{sl}(2,\R)$}$$
is an equivariant moment map for the $\SL(2,\mathbb{R})$-action on $X$ with respect  to $\omega_1$.
\end{Proposition}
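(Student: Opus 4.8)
The plan is to verify the moment map condition directly: for each $\xi \in \mathfrak{sl}(2,\R)$, the function $\langle \mu_1(\cdot), \xi\rangle$ on $X$ has Hamiltonian vector field equal to the infinitesimal action $L\xi$ of $\xi$, and the map $\mu_1$ is $\SL(2,\R)$-equivariant. The equivariance is essentially automatic: the factor $1 - \mathrm{Im}(z)^2|w|^2 = H(z,w)^2$ is $\SL(2,\R)$-invariant under the action (\ref{actionSL2X}) (this can be checked from the explicit transformation $\mathrm{Im}(z) \mapsto \mathrm{Im}(z)/|cz+d|^2$ and $|w| \mapsto |cz+d|^2|w|$), and $z \mapsto \mathrm{tr}(j(z)\xi)$ transforms by the coadjoint action because $j$ is $\SL(2,\R)$-equivariant (shown around (\ref{Hcpx})) and $\mu_{\H}(z) = -\mathrm{tr}(j(z)\cdot)$ is already known to be an equivariant moment map on $\H$ by Lemma \ref{LemmaMM0}. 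So the content is the Hamiltonian condition.

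For that, I would decompose the infinitesimal action of $\xi$ on $X$ into a ``horizontal'' piece and a ``vertical'' piece. Writing the $\SL(2,\R)$-action on $T^*\H$ as the cotangent lift of the Möbius action on $\H$, the vector field $L\xi$ at $(z,w)$ projects to the Killing field $v_\xi$ on $\H$ generated by $\xi$, while the fibre component is determined by the cotangent-lift formula. A cleaner route: use that on the cotangent bundle $T^*\H$ with its \emph{canonical} symplectic form $\omega_{\mathrm{can}}$, the cotangent lift of $v_\xi$ is Hamiltonian with Hamiltonian $w(v_\xi)$ (the canonical pairing), i.e. the standard ``momentum = $\langle p, $ generator$\rangle$'' formula. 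But here we are using $\omega_1$, not $\omega_{\mathrm{can}}$, so I need to relate the two. The key structural fact is that $\omega_1$ restricts to $\omega_{\H}$ along the zero section and that the difference $\omega_1 - \omega_{\mathrm{can}}$ (or rather, the relation between them) is controlled by the hyperkähler data; alternatively, just compute $\omega_1$ explicitly from the metric $g$ of Theorem \ref{ThmHK} via $\omega_1 = g(J_1\cdot,\cdot)$ and contract.

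Concretely I would: (1) write $J_1(\hat z,\hat w) = (\mathbf{i}\hat z, \mathbf{i}\hat w)$ and read off $\omega_1 = g(J_1\cdot,\cdot)$ from the formula for $g$ in Theorem \ref{ThmHK}, getting an explicit (if messy) real $2$-form in $x,y,u,v$; (2) for a spanning set of $\xi \in \mathfrak{sl}(2,\R)$ (e.g. the three generators corresponding to translation $z\mapsto z+t$, scaling $z \mapsto e^{t}z$, and the rotation fixing $\mathbf{i}$), write down $L\xi$ at $(z,w)$ from (\ref{actionSL2X}) by differentiating at $t=0$; (3) compute $\iota(L\xi)\omega_1$ and check it equals $-d\big(\sqrt{1 - \mathrm{Im}(z)^2|w|^2}\,\mathrm{tr}(j(z)\xi)\big)$. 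Because both sides are $\SL(2,\R)$-equivariant in $\xi$ and the action on $X$ is transitive on the base with the fibre rotation handled by Lemma \ref{Lemma:fibreS1}, it actually suffices to verify the identity along the fibre over $z = \mathbf{i}$ and then propagate by equivariance — this cuts the computation down substantially. At $z = \mathbf{i}$ one has $j(\mathbf{i}) = J_0$, $r = |w|$, and the metric simplifies considerably, so this is the natural place to do the bookkeeping.

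The main obstacle I expect is purely computational: matching the fibre-direction components of $\iota(L\xi)\omega_1$ with the $w$-derivatives of $\sqrt{1-\mathrm{Im}(z)^2|w|^2}\,\mathrm{tr}(j(z)\xi)$, since the cross terms $d\bar z\,dw$ and $d\bar w\,dz$ in $g$ contribute and one must be careful that the $\SL(2,\R)$-action (\ref{actionSL2X}) is \emph{not} the naive fibrewise-linear lift but the cotangent lift, so $L\xi$ has a genuinely nontrivial vertical part (coming from the $2\,(cz+d)^2$ Jacobian factor) even where $v_\xi$ vanishes. Getting that vertical part right, and seeing that it conspires with the $\frac{1}{2\sqrt{1-r^2}}$ factors and the $w$-dependence of $\mathrm{tr}(j(z)\xi)$ to reproduce exactly $dH \cdot \mathrm{tr}(j(z)\xi) + H\, d(\mathrm{tr}(j(z)\xi))$, is the crux. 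Once the $z=\mathbf{i}$ case checks out, equivariance (already established for the two ingredients $H$ and $j$) finishes the proof with no further work.
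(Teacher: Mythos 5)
Your strategy is viable and would prove the proposition, but it is a genuinely different route from the paper's. You reduce everything, by equivariance, to an explicit coordinate verification of $d\langle\mu_1,\xi\rangle = \iota(L\xi)\omega_1$ along the fibre over $z=\textbf{i}$, and you correctly identify both the valid ingredients (invariance of $H$, coadjoint equivariance of $z\mapsto\textrm{tr}(j(z)\xi)$ via Lemma \ref{LemmaMM0}, the nontrivial vertical part of the cotangent lift) and the place where all the work sits. The paper instead avoids almost all of that bookkeeping by two structural observations: first, each level set $X_r=H^{-1}(\sqrt{1-r^2})$ is an $S^1$-reduction on which $\SL(2,\R)$ acts transitively, so the restriction of $\omega_1$ to directions tangent to $X_r$ is forced to be $f(r)\,\omega_{\H}$ for a scalar $f(r)$ computed at a single point (this is where Lemma \ref{Lemma:fibreS1} enters, and it gives $f(r)=\sqrt{1-r^2}$, i.e.\ exactly the prefactor in $\mu_1$); second, $\mathfrak{sl}(2,\R)$ is split at each $(z,w)$ into the part commuting with $j(z)$ (which generates the fibre rotation, with coefficient $-\textrm{tr}(j(z)\xi)$) and the anticommuting part, for which the identity $L_{(z,w)}(j(z)\xi)=-\textbf{i}\,L_{(z,w)}\xi$ shows the relevant pairings with the radial and angular directions vanish. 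The moment map equation then needs checking only in three easy cases (zero section, directions tangent to $X_r$, and the radial direction). What your approach buys is self-containedness and no reliance on the transitivity/uniqueness argument; what the paper's buys is that the ``crux'' you flag --- matching the fibre components of $\iota(L\xi)\omega_1$ against the $w$-derivatives of $H\cdot\textrm{tr}(j(z)\xi)$, with all the cross terms of the metric in Theorem \ref{ThmHK} --- never has to be confronted head-on. Since your proposal defers precisely that computation rather than executing it, to be complete you would still need to carry out the $z=\textbf{i}$ verification for a basis of $\mathfrak{sl}(2,\R)$ and all four tangent directions; there is no conceptual obstruction, but that residual calculation is the bulk of the proof on your route.
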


\begin{proof}
The proof consists of three steps.\\

\textbf{Step 1:} \textit{For $0 < r < 1$ define $X_r := \{ (z, w) \in X\,|\, |w|\text{Im}(z) = r \}$. Then
		\begin{align} \label{MomentEq1} \omega_1( (\hat{z}_1, \hat{w}_1), (\hat{z}_2, \hat{w}_2)) = \sqrt{1 - r^2} \,\omega_{\mathbb{H}}(\hat{z}_1, \hat{z}_2). \end{align}
for all $(z,w) \in X_r$ and $(\hat{z}_1, \hat{w}_1), (\hat{z}_2, \hat{w}_2) \in T_{(z,w)}X_r$.}\\

It follows from Lemma \ref{Lemma:fibreS1} that $X_r = H^{-1}(\sqrt{1-r^2})$. Hence $X_r/S^1$ is a Marsden--Weinstein quotient and $\omega_1$ induces a well-defined $\SL(2,\R)$-invariant symplectic form on $X_r/S^1$. Since $\SL(2,\R)$ acts transitively on $X_r$, such a form is unique up to scaling and there exists $f(r) \in \mathbb{R}$ such that
		\begin{align*} \omega_1( (\hat{z}_1, \hat{w}_1), (\hat{z}_2, \hat{w}_2)) = f(r) \,\omega_{\mathbb{H}}(\hat{z}_1, \hat{z}_2). \end{align*}
for all $(z,w) \in X_r$ and $(\hat{z}_1, \hat{w}_1), (\hat{z}_2, \hat{w}_2) \in T_{(z,w)}X_r$. We calculate $f(r)$ by evaluating $\omega_1$ at $(\textbf{i}, r) \in X_r$ on the tangent vectors $(1, 0), (\textbf{i}, - r) \in T_{(\textbf{i},r)} X_r$
	$$f(r) = (\omega_1)_{(\textbf{i},r)}((1, 0), (\textbf{i}, - r)) = 2 \text{Im}\left( g_{\bar{z}z}\textbf{i} - g_{\bar{z}w} r \right) = \sqrt{1  - r^2}.$$
This establishes (\ref{MomentEq1}).			\\

\textbf{Step 2:} \textit{For $(z,w) \in X$ with $w \neq 0$, we define radial and angular vector fields by
			$$V_r(z,w) := \left(0,\frac{w}{\text{Im}(z)|w|}\right), \qquad V_{\phi}(z,w) := \left(0,\textbf{i}w\right).$$
For $\xi \in \mathfrak{sl}(2,\R)$ denote by $L_{(z,w)} \xi \in T_{(z,w)}X$ its infinitesimal action. Then
		\begin{align} \label{MomentMapX:eq2.1} \xi j(z) = j(z)\xi \qquad \Longrightarrow \qquad  L_{(z,w)} \xi = -\text{tr}(j(z)\xi) V_{\phi} \end{align}	
		\begin{align} \label{MomentMapX:eq2.2} \xi j(z) = - j(z)\xi \qquad \Longrightarrow \qquad \omega_1(L_{(z,w)} \xi, V_r)  = 0 = \omega_1(L_{(z,w)} \xi, V_{\phi}). \end{align}
}

Assume first that $\xi \in \mathfrak{sl}(2,\R)$ commutes with $j(z)$. By $\SL(2,\mathbb{R})$-invariance of (\ref{MomentMapX:eq2.1}), we may assume $z = \textbf{i}$. Then $\xi$ has the shape
	$$\xi = \left(\begin{array}{cc} 0 & a \\ -a & 0 \end{array}\right)$$
for some $a \in \R$ and (\ref{MomentMapX:eq2.1}) follows from a direct calculation
	$$L_{(\textbf{i},w)} \xi = \left(0 , 2\textbf{i} a w\right) = 2a V_{\phi} = - \text{tr}(J_0\xi_0)V_{\phi}.$$

Assume next that $\xi \in \mathfrak{sl}(2,\R)$ anti-commutes with $j(z)$. Then $\textrm{tr}\left(j(z)\xi \right) = 0$ and in particular $j(z)\xi \in \mathfrak{sl}(2,\R)$. The key observation is the identity
	\begin{align} \label{MomentMapX:eq2.3}
		L_{(z,w)} \left(j(z)\xi \right) = -\textbf{i} L_{(z,w)} \xi \qquad \textrm{for all $(z,w) \in X$.}
	\end{align}
In order to see this, let $J \in \cJ(\R^2)$ and $q \in Q(J)$ and note that
		$$ L_J (J\xi) = [J\xi, J] = J [\xi, J] = J \left(L_{J} \xi\right) $$
		$$ L_q (J\xi) = - q(J\xi\cdot, \cdot) - q(\cdot, J\xi\cdot) = - \textbf{i}q(\xi\cdot, \cdot) - \textbf{i}q(\cdot, \xi\cdot) = \textbf{i} L_q \xi.$$
These equations directly imply (\ref{MomentMapX:eq2.3}), by the antiholomorphic identification $T^*\H \cong \cQ(\R^2)$ in Lemma \ref{HJ.QLemma1}. We can now prove (\ref{MomentMapX:eq2.2}). The first equation $\omega_1(L_{(z,w)} \xi, V_{\phi}) = 0$ follows from Step 1. The second equation follows from this and (\ref{MomentMapX:eq2.3})
\begin{align*}
	\omega_1(L_{(z,w)} \xi_1, V_{r}) 
		&= -\frac{1}{\text{Im}(z)|w|}\omega_1(L_{(z,w)} \xi_1, \textbf{i}V_{\phi}) \\
		&= -\frac{1}{\text{Im}(z)|w|} \omega_1( L_{(z,w)} (j(z)\xi_1), V_{\phi}) = 0.
\end{align*}

\textbf{Step 3}: \textit{$\mu_1$ satisfies the moment map equation
			\begin{align} \label{MomentEq2} \left\langle d \mu_1(z,w)[\hat{z}, \hat{w}] , \xi \right\rangle  = \omega_1 (L_{(z,w)} \xi, (\hat{z},\hat{w})) \end{align}
for every $(z,w) \in X$ and $(\hat{z},\hat{w}) \in T_{(z,w)}X$.}\\

Suppose first $w = 0$. For tangent vectors $(\hat{z},0)$ along the base, the claim follows from Lemma \ref{LemmaMM0}. For tangent vectors $(0,\hat{w})$ along the fibre, the derivative of $\langle \mu_1, \xi \rangle$ in the direction of $(0,\hat{w})$ vanishes. Since $\omega_1(L_{(z,0)}\xi, (0,\hat{w})) = 0$, it follows that (\ref{MomentEq2}) is satisfied in the case $w = 0$.

Suppose next $r := |w|\text{Im}(z) > 0$ and consider the case where $(\hat{z},\hat{w})$ is tangential to $X_r$. Since $L_{(z,w)} \xi$ is also tangential, it follows from (\ref{MomentEq1}) and Lemma \ref{LemmaMM0}
			\begin{align*}  \left\langle d \mu_1(z,w)[\hat{z}, \hat{w}] , \xi \right\rangle  
								&= - \sqrt{1-r^2}\, \text{tr}(d j(z)[\hat{z}]\xi) 
								 = \sqrt{1 - r^2} \,\omega_{\mathbb{H}}(L_{z} \xi, \hat{z}) \\
								&= \omega_1 (L_{(z,w)}\xi, (\hat{z},\hat{w}))
			\end{align*}

Finally consider the case $r := |w|\text{Im}(z) > 0$ and $(\hat{z},\hat{w}) = V_r(z,w)$. The vector fields $V_r$ and $V_{\phi}$ defined in Step 3 satisfy
		 $$\omega_1(V_r(z,w), V_{\phi}(z,w)) = 2 \text{Im}\left( \frac{\bar{w}}{\text{Im}(z)|w|} g_{\bar{w}w} \textbf{i}w \right) = \frac{r}{\sqrt{1-r^2}} $$
Hence, it follows from Step 2 that
			\begin{align*} \langle d \mu_1(z,w) [V_r], \xi \rangle = \frac{r}{\sqrt{1-r^2}} \text{tr}(j(z)\xi) = \omega_1\left(-\text{tr}(j(z)\xi) V_{\phi}, V_r\right) = \omega_1( L_{(z,w)}\xi, V_r).\end{align*}
This completes the proof of the moment map equation (\ref{MomentEq2}).
\end{proof}

\bigbreak
\begin{Proposition} \label{PropMcot}
Let $G$ be a Lie group acting on a smooth complex manifold $Y$. Denote by $\pi: T^*Y \rightarrow Y$ the canonical projection and recall that the tautological $1$-form $\lambda \in \Omega^1(T^*Y, \mathbb{C})$ is defined by
				$$\lambda_{(y,\alpha)}:= \alpha\circ d\pi(y,\alpha):  T_{(y,\alpha)}(T^*Y) \rightarrow \mathbb{C}.$$
The holomorphic symplectic form on $T^*Y$ is then given by
			$$\omega_2 + \textbf{i}\omega_3 = -d\lambda \in \Omega^2(T^*Y,\C).$$
The $G$-action on $Y$ induces a natural action on $T^*Y$. This action is Hamiltonian with respect to $\omega_2$ and $\omega_3$ and admits the moment maps
						$$\langle \mu_2(y, \alpha), \xi \rangle + \textbf{i}\langle\mu_3(y,\alpha), \xi \rangle := \lambda_{(y,\alpha)}(L_{(y,\alpha)}\xi) = \alpha (L_y \xi),\qquad \textrm{for $\xi \in \fg$}.$$
Here $L_y : \mathfrak{g} \rightarrow T_y Y$ and $L_{(y,\alpha)} : \mathfrak{g} \rightarrow T_{(y,\alpha)} T^*Y$ denote the infinitesimal action on $Y$ and $T^*Y$ respectively.
\end{Proposition}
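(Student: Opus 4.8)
The proposition is the standard fact that the canonical lift of a group action to a cotangent bundle is Hamiltonian, with moment map obtained by contracting the tautological $1$-form with the infinitesimal action; the only twist is that $T^*Y$ is the \emph{holomorphic} cotangent bundle and $\lambda$ is $\C$-valued, so the plan is to run the argument for $\omega_2 + \textbf{i}\omega_3 = -d\lambda$ and split into real and imaginary parts at the end.

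First I would record the naturality of $\lambda$. For a biholomorphism $\psi : Y \to Y$ let $\hat{\psi} : T^*Y \to T^*Y$, $\hat{\psi}(y,\alpha) := \bigl(\psi(y),\, \alpha\circ d\psi(y)^{-1}\bigr)$, be its canonical lift; the $G$-action on $T^*Y$ in the statement is $g\cdot(y,\alpha) = \hat{\psi}_g(y,\alpha)$, where $\psi_g$ denotes the action of $g$ on $Y$. From the definition $\lambda_{(y,\alpha)} = \alpha\circ d\pi(y,\alpha)$ and the identity $\pi\circ\hat{\psi} = \psi\circ\pi$ one checks in one line that $\hat{\psi}^*\lambda = \lambda$. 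Hence every $g \in G$ preserves $\lambda$, therefore preserves $\omega_2 + \textbf{i}\omega_3 = -d\lambda$, and, taking real and imaginary parts, preserves $\omega_2$ and $\omega_3$ separately.

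For the infinitesimal statement, I would fix $\xi \in \fg$ and let $X_\xi$ be the vector field $X_\xi(y,\alpha) = L_{(y,\alpha)}\xi$ on $T^*Y$. Since $t \mapsto \hat{\psi}_{\exp(t\xi)}$ is its flow, differentiating $\hat{\psi}_{\exp(t\xi)}^*\lambda = \lambda$ at $t=0$ gives $\cL_{X_\xi}\lambda = 0$, and Cartan's formula then yields
	\begin{equation*}
		d\bigl(\iota(X_\xi)\lambda\bigr) = -\iota(X_\xi)\,d\lambda = \iota(X_\xi)(\omega_2 + \textbf{i}\omega_3).
	\end{equation*}
Differentiating $\pi\circ\hat{\psi}_{\exp(t\xi)} = \psi_{\exp(t\xi)}\circ\pi$ gives $d\pi(y,\alpha)\,L_{(y,\alpha)}\xi = L_y\xi$, so
	\begin{equation*}
		\bigl(\iota(X_\xi)\lambda\bigr)(y,\alpha) = \lambda_{(y,\alpha)}\bigl(L_{(y,\alpha)}\xi\bigr) = \alpha(L_y\xi) = \langle\mu_2(y,\alpha),\xi\rangle + \textbf{i}\langle\mu_3(y,\alpha),\xi\rangle ,
	\end{equation*}
which is exactly the function defined in the statement. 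Since $\mu_2,\mu_3$ are real-valued, $\omega_2,\omega_3$ real $2$-forms and $X_\xi$ a real vector field, comparing real and imaginary parts of the displayed identity gives $d\langle\mu_i(\cdot),\xi\rangle = \iota(L_{(\cdot)}\xi)\,\omega_i = \omega_i(L_{(\cdot)}\xi,\cdot)$ for $i = 2,3$, which is the moment map equation in the sign convention used throughout (cf. Step 3 of Proposition \ref{propfiberMM}). Equivariance of $\mu_2 + \textbf{i}\mu_3$, hence of $\mu_2$ and $\mu_3$ separately, follows from $\hat{\psi}_g^*\lambda = \lambda$ together with $L_{\hat{\psi}_g(y,\alpha)}\xi = d\hat{\psi}_g\bigl(L_{(y,\alpha)}(\mathrm{Ad}_{g^{-1}}\xi)\bigr)$, or simply from the general fact that a moment map built from a $G$-invariant primitive of the symplectic form is automatically equivariant.

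The argument is purely formal and I anticipate no genuine obstacle. The two points that deserve a little care are the one-line verification that the canonical lift implicitly used in the statement preserves $\lambda$, and the legitimacy of the real/imaginary split at the end — which is fine precisely because only $\lambda$ is $\C$-valued, whereas the generators $L_{(\cdot)}\xi$ and the forms $\omega_2,\omega_3$ are genuinely real; no holomorphicity is needed beyond making sense of $T^*Y$ and its tautological form.
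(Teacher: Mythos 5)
Your argument is correct and follows essentially the same route as the paper's proof: both establish $g^*\lambda = \lambda$ for the lifted action, apply Cartan's formula to $\cL_{v_\xi}\lambda = 0$ to obtain $d\bigl(\lambda(v_\xi)\bigr) = \iota(v_\xi)(\omega_2 + \textbf{i}\omega_3)$, and identify $\lambda(v_\xi)$ with $\alpha(L_y\xi)$. Your additional remarks on the real/imaginary split and on equivariance are correct elaborations of points the paper leaves implicit.
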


\begin{proof}
Let $g \in G$, $(y,\alpha) \in T^*Y$ and denote by $m_g : T_y Y \rightarrow T_{gy} Y$ the derivative of the action by $g$. Then $g(y,\alpha) = (gy, \alpha \circ m_g^{-1})$ and $g^* \lambda  = \lambda$. Hence the Lie derivative of $\lambda$ in the direction $v_{\xi}(y,\alpha) := L_{(y,\alpha)} \xi$ vanishes. Then, by Cartan's formula, we get
		$$0 = \mathcal{L}_{v_{\xi}} \lambda = d \iota(v_{\xi}) \lambda + \iota(v_{\xi}) d\lambda.$$
This yields $\omega_2(v_{\xi}, \cdot) + \textbf{i}\omega_3(v_{\xi}, \cdot) = d \lambda(v_{\xi})$ and proves the moment map equation.
\end{proof}

\textbf{Hyperkähler Strukture on $\cQ_1(\R^2)$.} Recall from Lemma \ref{HJ.QLemma1} that there exists a canoncial antiholomorphic diffeomorphism $(j, q): X \rightarrow \cQ_1(\R^2)$. Consider on $\cQ_1(\R^2)$ the natural holomorphic symplectic structure, which is obtained by viewing it as subset of $T^*\cJ(\R^2)$. The push forward of the hyperkähler metric on $X$ then provides a compatible hyperkähler metric on $\cQ_1(\R^2)$. The map $(j, q): X \rightarrow \cQ_1(\R^2)$ is antiholomorphic for the first and third complex structure and holomorphic for the second complex structure. The results of this section then translate into the following:

\begin{Proposition} \label{propfiberMQ}
The $S^1$-action on $\cQ_1(\R^2)$, which is defined by rotation of the fibres, is generated by the Hamiltonian
\begin{align} H:\cQ_1(\R^2) \rightarrow \R, \qquad H(J,q) := \sqrt{1 - |q|_J^2}.\end{align}
Moreover, the $\textrm{SL}(2,\R)$-action on $\cQ_1(\R^2)$ admits the hyperkähler moment
\begin{align}
 \langle \mu_1(J,q), \xi \rangle &:= \sqrt{1 - |q|_J^2} \textrm{tr}(J\xi) \\
	\langle \mu_2(J,q) + \textbf{i}\mu_3(J,q), \xi \rangle &:= \langle q, [\xi,J]\rangle_{\cQ\times T\cJ} := \frac{q([\xi,J]v, v)}{h_J(v,v)}
\end{align}					
for $\xi \in \mathfrak{sl}(2,\R)$, where the right hand side does not depend on $v \in \R^2\backslash\{0\}$.
\end{Proposition}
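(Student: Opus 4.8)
The plan is to transport everything along the antiholomorphic diffeomorphism $(j,q)\colon X \to \cQ_1(\R^2)$ of Lemma \ref{HJ.QLemma1}, using the moment maps already computed on $X$ in Proposition \ref{propfiberMM} and Proposition \ref{PropMcot}. For the $S^1$-statement, observe that $(j,q)$ intertwines the fibrewise rotation on $X$ with the fibrewise rotation on $\cQ_1(\R^2)$, since $q(z,\cdot)$ is complex antilinear and antilinear maps send $e^{\textbf{i}t}w$ to $e^{-\textbf{i}t}$ times the image — but one should be careful here: rotation by $e^{\textbf{i}t}$ on $\cQ_1(\R^2)$ corresponds under $(j,q)$ to rotation by $e^{-\textbf{i}t}$ on $X$, which flips the sign of the Hamiltonian vector field but not of the Hamiltonian function itself. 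So $H(J,q) = \sqrt{1-|q|^2_J}$ is simply the pushforward of $H(z,w) = \sqrt{1-\textrm{Im}(z)^2|w|^2}$ from Lemma \ref{Lemma:fibreS1}, using that $q(z,\cdot)$ is a fibrewise isometry (Lemma \ref{HJ.QLemma1}(ii)), hence $|q(z,w)|_{j(z)} = |w|_{T^*_z\H} = \textrm{Im}(z)|w|$. The first moment map $\mu_1$ is likewise obtained by transporting $\langle\mu_1(z,w),\xi\rangle = -\sqrt{1-\textrm{Im}(z)^2|w|^2}\,\textrm{tr}(j(z)\xi)$ from Proposition \ref{propfiberMM}; since $(j,q)$ is $\SL(2,\R)$-equivariant (Lemma \ref{HJ.QLemma1}(i)) and the second expression only involves $j(z)$, the pushforward reads $\langle\mu_1(J,q),\xi\rangle = -\sqrt{1-|q|^2_J}\,\textrm{tr}(J\xi)$; the stated sign $+\textrm{tr}(J\xi)$ versus $-\textrm{tr}(J\xi)$ must be reconciled with the conventions of Lemma \ref{LemmaMM0}, where $\mu_\cJ$ and $\mu_\H$ differ by a sign precisely because $j$ is antisymplectic, so one checks that the pushforward of a moment map along an antisymplectic map introduces the compensating sign.

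For the holomorphic moment map $\mu_2 + \textbf{i}\mu_3$, I would apply Proposition \ref{PropMcot} directly to $Y = \cJ(\R^2)$ with $G = \SL(2,\R)$ acting by conjugation, since $\cQ_1(\R^2)$ sits as an open subset of $T^*\cJ(\R^2)$ and the holomorphic symplectic structure on $\cQ_1(\R^2)$ is by definition the restriction of $-d\lambda$. Proposition \ref{PropMcot} then gives $\langle\mu_2(J,q)+\textbf{i}\mu_3(J,q),\xi\rangle = \langle q, L_J\xi\rangle$, where $L_J\xi = [\xi,J]$ is the infinitesimal conjugation action (as used in the proof of Lemma \ref{LemmaMM0}) and $\langle\,\cdot\,,\,\cdot\,\rangle$ is the complex-linear duality pairing $Q(J)\times T_J\cJ(\R^2)\to\C$ identified in \eqref{HJ.Qdual1} with $q([\xi,J]v,v)/h_J(v,v)$. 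That $[\xi,J] \in T_J\cJ(\R^2)$, i.e. that it anticommutes with $J$, is immediate: $J[\xi,J]+[\xi,J]J = J\xi J - J^2\xi + \xi J^2 - J\xi J = 2\xi + (-2\xi)\cdot\mathds{1}\cdot 0$, which one computes using $J^2=-\mathds{1}$; more carefully, $J[\xi,J] + [\xi,J]J = J\xi J + \xi$ and $J\xi J + \xi$ — this needs the standard check that $\textrm{tr}(\xi)=0$ forces $\xi + J\xi J \in T_J\cJ$, so I would just cite the infinitesimal-action computation already performed in Lemma \ref{LemmaMM0}. Independence of $v\in\R^2\setminus\{0\}$ is part of the definition \eqref{HJ.Qdual1}.

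The main obstacle, as flagged above, is bookkeeping the signs: the three complex structures $J_1, J_2, J_3$ on $X$ are intertwined by $(j,q)$ with the natural structures on $\cQ_1(\R^2)$ in a mixed way — antiholomorphic for $J_1$ and $J_3$, holomorphic for $J_2$ — so one must verify that transporting a triple $(\mu_1,\mu_2,\mu_3)$ forming a hyperkähler moment map along such a map again yields a hyperkähler moment map for the pushed-forward structures, and in particular that the first moment map picks up the sign making it $\sqrt{1-|q|^2_J}\,\textrm{tr}(J\xi)$ rather than its negative, consistently with the sign relation between $\mu_\cJ$ and $\mu_\H$ in Lemma \ref{LemmaMM0}. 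Concretely I would: (i) record how an (anti)holomorphic (anti)symplectic equivariant diffeomorphism transports a moment map (a one-line Cartan-calculus observation); (ii) apply it to each $\omega_i$ separately; (iii) read off the formulas and match them against Lemma \ref{LemmaMM0} and \eqref{HJ.Qdual1} to fix all signs. Everything else is a direct substitution using Lemma \ref{HJ.QLemma1}, Lemma \ref{HJ.QLemma2}, Proposition \ref{propfiberMM}, and Proposition \ref{PropMcot}.
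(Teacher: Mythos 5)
Your proposal is correct and follows essentially the same route as the paper: the paper's proof likewise transports the Hamiltonian and $\mu_1$ from $X$ via the antisymplectic (for $\omega_1$) equivariant map $(j,q)$ of Lemma \ref{HJ.QLemma1}, citing Proposition \ref{propfiberMM}, and obtains $\mu_2+\textbf{i}\mu_3$ from Proposition \ref{PropMcot} together with the duality pairing of Lemma \ref{HJ.QLemma2}, with the sign flip in $\mu_1$ accounted for exactly as you describe. The only blemish is the garbled intermediate algebra for $J[\xi,J]+[\xi,J]J$ (the correct computation gives $J\xi J+\xi-\xi-J\xi J=0$ directly), but since you ultimately defer to the infinitesimal-action computation in Lemma \ref{LemmaMM0}, this does not affect the argument.
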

				
\begin{proof}
The first statement follows from Lemma \ref{Lemma:fibreS1}, where we used that $(j,q)$ is antisymplectic with respect to the first symplectic form.
The hyperkähler moment map follows from Proposition \ref{propfiberMM} and Proposition \ref{PropMcot} (see Lemma \ref{HJ.QLemma2} for a discussion of the duality pairing).
\end{proof}

\subsection{Moment maps on the space of sections}

The main result of this subsection is Theorem \ref{hk.ThmM}, which calculates a hyperkähler moment map for the action of $\textrm{Ham}(\Sigma,\rho)$ on $\cQ_1(\Sigma)$ and two moment maps for the action of $\textrm{Symp}_0(M,\rho)$. We begin our discussion with a careful look at the isomorphism $\cS(P,X) \cong \cQ_1(\Sigma)$.

\subsubsection{Geometric description of the sections}
Let $P \rightarrow (\Sigma,\rho)$ be the $\SL(2,\R)$-frame bundle, let $\cQ_1(\R^2)$ be the space of pairs $(j,q)$ with $j \in \cJ(\R^2)$, $q \in Q(j)$ and $|q|_j < 1$. The associated fibration $P(\cQ_1(\R^2)) := P \times_{\SL(2,\R)} \cQ_1(\R^2)$ naturally embeds into $\textrm{End}(T\Sigma) \times S^2(T^*\Sigma\otimes \C)$ via
\begin{equation}\label{embPXQ}
		[(z,\theta), (j, q) ] \mapsto \left( \theta j \theta^{-1}, \theta^*q \right)
\end{equation}
where $z \in \Sigma$, $\theta: \R^2 \rightarrow T_z\Sigma$ is a volume preserving frame, and $(j,q) \in \cQ_1(\R^2)$. On the space of section this yields the identification
	$$\cS(P,\cQ_1(\R^2)) \cong \cQ_1(\Sigma) := \{ (J,\sigma) \,|\, J \in \cJ(\Sigma), \, \sigma \in Q(J),\, |\sigma|_J < 1 \}$$
where $Q(J) = \Omega^1(\Sigma, S^2(T^*\Sigma \otimes_J \C))$ denotes the space of quadratic $J$-differentials.

\begin{Lemma}\label{LemmaJQs}
\hspace{2em} 
	\begin{enumerate}
		\item Any torsion-free $\SL(2,\R)$-connection on $T\Sigma$ induces connections on $P(\cQ_1(\R^2))$ and $\textrm{End}(T\Sigma)\times S^2(T^*\Sigma\otimes \C)$ which are compatible with respect to (\ref{embPXQ}).

		\item The inclusion (\ref{embPXQ}) is $\textrm{Symp}(\Sigma,\rho)$-equivariant.
		
		\item The first symplectic form satisfies the pointwise identity
		$$\left(\omega_1\right)_{(J,q)}((0,\hat{\sigma}_1), (0,\hat{\sigma}_2)) = \frac{\omega_Q(\hat{\sigma}_1, \hat{\sigma}_2)}{\sqrt{1-|\sigma|^2}}$$ 
		for $(J,\sigma) \in \cQ_1(\Sigma)$ and $\hat{\sigma}_i \in Q(J)$. Here we denote by $\omega_Q$ the pointwise symplectic structure on $S^2(T^*\Sigma\otimes_J \C)$ determined by $J$ and $\rho$.			
	\end{enumerate}
\end{Lemma}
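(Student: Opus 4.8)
Parts 1 and 2 are formal statements about associated bundles. Write $\iota_0 : \cQ_1(\R^2) \hookrightarrow \textrm{End}(\R^2)\times S^2((\R^2)^*\otimes\C)$ for the tautological inclusion $(j,q)\mapsto(j,q)$; it is $\SL(2,\R)$-equivariant for the conjugation action on $\textrm{End}(\R^2)$ and the action $q\mapsto q(\Psi^{-1}\cdot,\Psi^{-1}\cdot)$ on quadratic forms, the embedding (\ref{embPXQ}) is exactly the induced associated-bundle map $P\times_{\SL(2,\R)}\iota_0$, and $\textrm{End}(T\Sigma)\times S^2(T^*\Sigma\otimes\C) = P\times_{\SL(2,\R)}\bigl(\textrm{End}(\R^2)\times S^2((\R^2)^*\otimes\C)\bigr)$. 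For Part 1, a torsion-free $\SL(2,\R)$-connection $\nabla$ on $T\Sigma$ corresponds to a principal connection $A\in\cA(P)$, whose horizontal distribution pushes forward to a connection on every associated bundle; since a bundle map induced by an equivariant map of fibres carries horizontal subspaces to horizontal subspaces, $P\times_{\SL(2,\R)}\iota_0$ intertwines the connection $A$ induces on $P(\cQ_1(\R^2))$ with the one it induces on $\textrm{End}(T\Sigma)\times S^2(T^*\Sigma\otimes\C)$, and the latter is precisely the connection $\nabla$ determines on those tensor bundles. For Part 2, note $\textrm{Symp}(\Sigma,\rho)=\textrm{Diff}(\Sigma,\rho)$ as $\dim\Sigma=2$; such a $\phi$ acts on $\cS(P,\cQ_1(\R^2))$ by $\phi^*s=s\circ\tilde\phi$ with $\tilde\phi(z,\theta)=(\phi(z),d\phi(z)\circ\theta)$ and on sections of $\textrm{End}(T\Sigma)\times S^2(T^*\Sigma\otimes\C)$ by pullback of tensor fields. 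Unwinding (\ref{embPXQ}) at $z$ with a frame $\theta$: if $s(\tilde\phi(z,\theta))=(j,q)$ then the image of $\phi^*s$ at $z$ is $(\theta j\theta^{-1},\theta^*q)$, whereas the image of $s$ at $\phi(z)$, read off with the frame $d\phi(z)\theta$, is $\bigl((d\phi(z)\theta)\,j\,(d\phi(z)\theta)^{-1},(d\phi(z)\theta)^*q\bigr)$; these two are $\phi$-pullbacks of one another, which is the asserted equivariance (equivalently, both actions come from the same lift $\tilde\phi$ of $\phi$ to $P$, and $\iota_0$ is fixed by the structure group).

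For Part 3, the identity is pointwise and $\SL(2,\R)$ preserves the hyperkähler structure of $\cQ_1(\R^2)$, so it suffices to check it on the model fibre, where it decouples over $\cJ(\R^2)$: fix $J\in\cJ(\R^2)$ and work on the disc $Q_1(J):=\{q\in Q(J)\,|\,|q|_J<1\}$, the locus tangent to the directions $(0,\hat\sigma)$. Since $(Q(J),g_Q,\omega_Q)$ from (\ref{Q(J):structures}) is a one-dimensional complex Hermitian vector space, polar coordinates $q=\rho e^{\textbf{i}\theta}q_0$ (with $q_0$ a $g_Q$-unit vector and $\rho=|q|_J$) give $\omega_Q=\rho\,d\rho\wedge d\theta$, and the generator $q\mapsto\textbf{i}q$ of the fibrewise $S^1$-rotation is $\partial_\theta$. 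By Proposition \ref{propfiberMQ} this $S^1$-action is Hamiltonian for $\omega_1$ with Hamiltonian $H(J,q)=\sqrt{1-\rho^2}$, i.e.\ $\iota(v_H)\omega_1=dH$; since $v_H=\partial_\theta$ is tangent to $Q_1(J)$, restricting gives $\iota(\partial_\theta)\bigl(\omega_1|_{Q_1(J)}\bigr)=d\sqrt{1-\rho^2}$. Finally, $\omega_1|_{Q_1(J)}$ is an $S^1$-invariant symplectic form on the disc, hence of the form $f(\rho)\,d\rho\wedge d\theta$, and $-f\,d\rho=\iota(\partial_\theta)(f\,d\rho\wedge d\theta)=\tfrac{-\rho}{\sqrt{1-\rho^2}}\,d\rho$ forces $f(\rho)=\rho/\sqrt{1-\rho^2}$; therefore $\omega_1|_{Q_1(J)}=\tfrac{1}{\sqrt{1-\rho^2}}\,\rho\,d\rho\wedge d\theta=\tfrac{\omega_Q}{\sqrt{1-|\sigma|^2}}$, which is the claim.

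Parts 1 and 2 are routine; the one place that needs care is the sign in Part 3, namely checking that the complex structure $q\mapsto\textbf{i}q$ on $Q(J)$, the Hermitian convention in (\ref{Q(J):structures}), and the moment-map sign convention of Proposition \ref{propfiberMQ} (through Lemma \ref{Lemma:fibreS1}) are mutually consistent so that the stated positive factor really appears — the polar-coordinate argument above is arranged precisely to make this transparent. An alternative route, transporting everything to $X$ via Lemma \ref{HJ.QLemma1} and reading $\omega_1$ on the fibre off the explicit metric of Theorem \ref{ThmHK}, forces one to track the several ``anti-'' identifications at once ($(j,q)$ antiholomorphic for $J_1$, $q(z,\cdot)$ antilinear and hence antisymplectic) and is correspondingly more error-prone.
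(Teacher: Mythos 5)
Your proof is correct. For parts 1 and 2 the paper simply declares the claims ``a matter of unraveling the definitions'' and leaves them to the reader; your associated-bundle formulation is exactly that unravelling and there is nothing to object to. For part 3 the paper's entire proof is the sentence that the identity ``follows from Theorem \ref{ThmHK}'', i.e.\ one is meant to read the fibre component of $\omega_1$ off the explicit metric on $X$ and transport it to $Q(J)$ through the antiholomorphic, fibrewise antilinear identification of Lemma \ref{HJ.QLemma1}. You take a genuinely different route: you characterize $\omega_1|_{Q_1(J)}$ as the $S^1$-invariant $2$-form on the disc whose contraction with the rotation field equals $d\sqrt{1-\rho^2}$, quoting Proposition \ref{propfiberMQ} for the Hamiltonian, and then compare with $\omega_Q=\rho\,d\rho\wedge d\theta$. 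The two arguments are logically close (Proposition \ref{propfiberMQ} is itself deduced from Theorem \ref{ThmHK} via Lemma \ref{Lemma:fibreS1} and the antisymplectic property of $(j,q)$), but yours keeps the whole computation on the $\cQ_1(\R^2)$ side, so the sign of the factor $1/\sqrt{1-|\sigma|^2}$ emerges without tracking the several ``anti-'' identifications at once — which is precisely the error-prone step in the paper's intended computation. The price is that you invoke the moment map, a slightly heavier input, for what is ultimately a pointwise linear-algebra identity. Two small points worth a clause each: polar coordinates degenerate at $\rho=0$, so your determination of $f(\rho)$ is a priori valid only on the punctured disc and extends to the origin by smoothness of both sides; and the reduction from a point of $\Sigma$ to the model fibre uses that the forms on $P(\cQ_1(\R^2))$ are induced from the $\SL(2,\R)$-invariant structure on $\cQ_1(\R^2)$, which you do state.
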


\begin{proof}
The first two claims are a matter of unraveling the definitions and left to the reader. The last claim follows from Theorem \ref{ThmHK}.
\end{proof}

\subsubsection{Calculation of the hyperkähler moment map}
The symplectic forms on $P(\cQ_1(\R^2))$ integrate to symplectic forms on $\cQ_1(\Sigma)$
		$$\underline{\omega}_i((\hat{J}_1,\hat{\sigma}_1),(\hat{J}_2,\hat{\sigma}_2)) := \int_{\Sigma} \omega_i((\hat{J}_1,\hat{\sigma}_1),(\hat{J}_2,\hat{\sigma}_2)) \rho.$$
The next theorem calculates moment maps for these symplectic forms and is due to Donaldson (Proposition 17 in \cite{Donaldson:2000}). We present an alternative proof for the second and third part of the theorem, since we found it hard to transform the original argument into a rigorous proof. Also note that the first moment map in \cite{Donaldson:2000} looks slightly different due to alternative conventions.

\begin{Theorem} \label{hk.ThmM}
The action of $\textrm{Ham}(\Sigma,\rho)$ on $\cQ_1(\Sigma)$ is Hamiltonian for all three symplectic structures $\underline{\omega}_i$. Moreover, the action of $\textrm{Symp}_0(\Sigma,\rho)$ on $\cQ_1(\Sigma)$ is Hamiltonian for $\underline{\omega}_2$ and $\underline{\omega}_3$.
\begin{enumerate}

\item An equivariant moment map for the $\text{Ham}(\Sigma,\rho)$-action on $\cQ_1(\Sigma)$ for $\underline{\omega}_1$ is
			\begin{align} \label{hk.Meq1}
							\underline{\mu}_1 (J,\sigma) = \frac{|\partial_J \sigma|_J^2 - |\bar{\partial}_J \sigma|_J^2 }{\sqrt{1 - |\sigma|_J^2}} \rho - 2 \sqrt{1 - |\sigma|_J^2} K_J \rho - 2 \textbf{i} \bar{\partial}_J \partial_J \sqrt{1 - |\sigma|_J^2}  + 2c\rho
			\end{align}
		where $c := 2\pi(2 - 2\textrm{genus}(\Sigma))/\textrm{vol}(\Sigma,\rho)$, $K_J$ denotes the Gaussian curvature of $\rho(\cdot,J\cdot)$ and all norms $|\cdot|_J$ are calculated with respect to this metric.

\item Define the contraction $r: \Omega^{0,1}_J(\Sigma, S^2(T^*\Sigma \otimes_J \C)) \rightarrow \Omega^{1,0}_J(\Sigma)$ by 
			$r(\gamma) :=  \frac{\gamma(v)(v, \cdot)}{|v|_J^2}$
which is independent of $0\neq v \in \textrm{Vect}(\Sigma)$. An equivariant moment map for the $\text{Ham}(\Sigma,\rho)$-action on $\cQ_1(\Sigma)$ for $\underline{\omega}_2$ and $\underline{\omega}_3$ is given by
\begin{align}\label{hk.Meq3}
\underline{\mu}_2(J,\sigma) + \textbf{i}\underline{\mu}_3(J,\sigma) = 2\textbf{i} \bar{\partial}_J r(\bar{\partial}_J \sigma)
\end{align}
			
\item An equivariant moment map for the $\textrm{Symp}_0(\Sigma,\rho)$-action on $\cQ_1(\Sigma)$ with respect to $\underline{\omega}_2$ and $\underline{\omega}_3$ is given by
\begin{align} \label{hk.Meq2}
	\left\langle \underline{\tilde{\mu}}_2(J,\sigma) + \textbf{i}\underline{\tilde{\mu}}_3(J,\sigma), v \right\rangle 
	= 2\textbf{i}\int_{\Sigma} \iota(v) r(\bar{\partial}_J \sigma) \rho.
\end{align}			
for any symplectic vector field $v \in \textrm{Vect}(\Sigma)$ satisfying $d\iota(v) \rho = 0$.
\end{enumerate}

\end{Theorem}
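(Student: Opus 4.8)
The plan is to derive all three moment maps by specializing Donaldson's general formula from Theorem \ref{fib.ThmM} to the fibre $X = \cQ_1(\R^2)$ with the hyperkähler structure of Theorem \ref{ThmHK}, using the fibrewise moment maps computed in Proposition \ref{propfiberMQ}. The key structural input is that for each of the three symplectic forms $\underline\omega_i$ on $\cS(P,X)$, Theorem \ref{fib.ThmM} produces a closed $2$-form $\underline\mu_i(J,\sigma)$ satisfying the moment map equation for $\Diffex(\Sigma,\rho)$, and since $\Sigma$ is a surface, $\Diffex(\Sigma,\rho) = \textrm{Ham}(\Sigma,\rho)$ and every closed $2$-form is a function times $\rho$; so the first moment map is a genuine (scalar) moment map for $\textrm{Ham}(\Sigma,\rho)$ once we rescale by $\rho$. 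First I would fix a torsion-free $\SL(2,\R)$-connection — most conveniently the Levi-Civita connection of the metric $g_J := \rho(\cdot,J\cdot)$ when we want to read off curvature terms — and expand the three pieces $\omega(\nabla s \wedge \nabla s)$, $\langle \mu_s, R^\nabla\rangle$, and $dc(\nabla\mu_s)$ of (\ref{fib.Meq1}) in terms of the pair $(J,\sigma)$.

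For the first moment map: using Proposition \ref{propfiberMQ}, the fibrewise moment map is $\langle\mu_1(J,q),\xi\rangle = \sqrt{1-|q|_J^2}\,\textrm{tr}(J\xi)$, so $\mu_s$ is essentially $\sqrt{1-|\sigma|_J^2}$ times the "identity endomorphism" pairing; the term $\langle\mu_s,R^\nabla\rangle$ then produces the Gaussian-curvature term $-2\sqrt{1-|\sigma|_J^2}\,K_J\,\rho$ (via the Gauss–Bonnet-type identity $\langle J, R^\nabla\rangle = 2K_J\rho$), the term $\omega(\nabla s\wedge\nabla s)$ decomposes according to the splitting $\nabla\sigma = \partial_J\sigma + \bar\partial_J\sigma$ into the Weil–Petersson-type density $\frac{|\partial_J\sigma|^2 - |\bar\partial_J\sigma|^2}{\sqrt{1-|\sigma|^2}}\rho$ (the sign discrepancy between the holomorphic and antiholomorphic parts reflecting the $(1,0)$ versus $(0,1)$ types and the fibre metric), and $-dc(\nabla\mu_s)$ contributes the exact term $-2\textbf{i}\bar\partial_J\partial_J\sqrt{1-|\sigma|_J^2}$ since $c(\nabla\mu_s)$ is a contraction of the derivative of a scalar. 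The constant $2c\rho$ is a normalization (a central element of the dual Lie algebra, i.e. a closed form cohomologous to the others) chosen so the cohomology class is correct; I would pin it down by integrating over $\Sigma$ and invoking Gauss–Bonnet, $\int_\Sigma K_J\rho = 2\pi\chi(\Sigma)$.

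For the second and third moment maps I would proceed differently and more directly, which is why the paper advertises an alternative proof. The idea: $\underline\omega_2 + \textbf{i}\underline\omega_3$ comes from the holomorphic symplectic form $-d\lambda$ on $T^*\H$ (Proposition \ref{PropMcot}), which integrates to $-d\underline\lambda$ on $\cS(P,X) = T^*\cT$-type space, and then Proposition \ref{PropMcot} applies almost verbatim in infinite dimensions: for $v$ a symplectic vector field on $\Sigma$ with lift acting on sections, the moment map is $\langle\underline\mu_2 + \textbf{i}\underline\mu_3, v\rangle = \underline\lambda(\cL_v s)$, which unwinds to $2\textbf{i}\int_\Sigma \iota(v)\,r(\bar\partial_J\sigma)\,\rho$ — this is exactly statement (3). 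Statement (2) then follows by restricting to Hamiltonian vector fields $v = X_H$ with $dH = \iota(v)\rho$: writing $\int_\Sigma \iota(X_H) r(\bar\partial_J\sigma)\rho = \int_\Sigma dH \wedge (\text{something}) = \int_\Sigma H\,d(\text{that } 1\text{-form})$ by Stokes, one identifies $\bar\partial_J r(\bar\partial_J\sigma)$ as the density, giving (\ref{hk.Meq3}); equivariance in all cases follows from naturality of $\underline\mu$ in Theorem \ref{fib.ThmM}(2) together with the $\SL(2,\R)$-equivariance of the fibre moment maps.

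The main obstacle I anticipate is the first moment map computation: correctly tracking the decomposition of $\omega(\nabla s\wedge\nabla s)$ under $\nabla\sigma = \partial_J\sigma + \bar\partial_J\sigma$ with the nonstandard fibre metric (the $\sqrt{1-|\sigma|^2}$ denominators mean $\nabla s$ mixes base and fibre directions in a nontrivial way, cf. the cross terms $d\bar z\,dw$ in Theorem \ref{ThmHK}), and making the curvature/contraction bookkeeping yield precisely the stated signs. The subtlety is that $\nabla s$ for $s = (J,\sigma)$ is not simply $(\nabla J, \nabla\sigma)$ because the fibre metric depends on $\sigma$; one must use that $(j,q)$ only intertwines the first symplectic structures up to the antiholomorphic identification, so the pointwise formula in Lemma \ref{LemmaJQs}(3) is the right bridge. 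By contrast, steps (2) and (3) should be comparatively clean, reducing to the tautological-$1$-form argument of Proposition \ref{PropMcot} plus one integration by parts.
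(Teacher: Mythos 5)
Your proposal is correct in outline and, for part (1), follows the paper's argument essentially verbatim: fix the Levi--Civita connection so that $\nabla J = 0$, use Lemma \ref{LemmaJQs}(3) to evaluate $\omega(\nabla s\wedge\nabla s)$ as $\omega_Q(\nabla\sigma,\nabla\sigma)/\sqrt{1-|\sigma|^2}$ and split via $\nabla\sigma = \partial_J\sigma + \bar\partial_J\sigma$, read off the curvature term from $R^\nabla = -K_J\,J\otimes\rho$ together with the fibre moment map of Proposition \ref{propfiberMQ}, identify $dc(\nabla\mu_s)$ with $2\textbf{i}\bar\partial_J\partial_J\sqrt{1-|\sigma|^2}$, and fix the constant by Gauss--Bonnet so that $\underline\mu_1$ is exact. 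Where you genuinely diverge is in parts (2)--(3): you propose to apply the tautological-one-form argument of Proposition \ref{PropMcot} globally to $\cQ_1(\Sigma)\subset T^*\cJ(\Sigma)$ (the $\textrm{Symp}_0(\Sigma,\rho)$-action is the cotangent lift because the integrated duality pairing uses $\rho$), obtaining the moment map as $\underline\lambda(\cL_v s)$, whereas the paper instead reruns the proof of Theorem \ref{fib.ThmM} by hand, splitting $\cL_v(J,\sigma) = \nabla_v(J,\sigma) - L_{(J,\sigma)}(\nabla v)$ and showing each pairing is a total $t$-derivative (its Step 2). Your route is cleaner and connection-free; the paper's route keeps everything within the framework already set up for Theorem \ref{fib.ThmM}. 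Both converge on the same intermediate expression, namely a constant multiple of $\int_\Sigma\langle\sigma,\bar\partial_J v\rangle_{Q\times T\cJ}\,\rho$, and the one place where you are too quick is the phrase ``which unwinds to'': passing from that expression to $2\textbf{i}\int_\Sigma\iota(v)\,r(\bar\partial_J\sigma)\,\rho$ is not a formal unwinding but requires the pointwise identity
\begin{align*}
\langle \sigma, \bar{\partial}_J v \rangle_{Q\times T\cJ}\,\rho = -\tfrac{1}{2\textbf{i}}\,\bar{\partial}_J\bigl(\iota(v)\sigma\bigr) - \bigl(\iota(v)\,r(\bar{\partial}_J\sigma)\bigr)\rho
\end{align*}
followed by Stokes (the paper's Step 3); this identity is needed in either approach and is the only substantive computation your sketch omits. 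Your final reduction of (2) from (3) by taking $v = v_H$ and integrating by parts, and the appeal to naturality for equivariance, both match the paper.
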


\begin{proof}
Denote by $\nabla$ the Levi-Civita connection of $\rho(\cdot,J\cdot)$. We deduce (\ref{hk.Meq1}) in Step 1  from Theorem \ref{fib.ThmM}. For the proof of (\ref{hk.Meq2}) we need to extend the arguments used in the derivation of Theorem \ref{fib.ThmM}. This is done in Step 2 and the derivation of (\ref{hk.Meq3}) and (\ref{hk.Meq2}) is completed in Step 3 and Step 4.\\

\textbf{Step 1:} \textit{(\ref{hk.Meq1}) defines an equivariant moment map for the action of $\textrm{Ham}(\Sigma,\rho)$ with respect to $\underline{\omega}_1$.}\\

Since $\nabla J = 0$ for the Levi-Civita connection, Lemma \ref{LemmaJQs} shows
	\begin{align} \label{hkM:proof11a}
		\underline{\omega}_1(\nabla (J,\sigma) \wedge \nabla (J,\sigma)) = \underline{\omega}_1((0,\nabla \sigma) \wedge (0, \nabla \sigma)) =  \frac{\omega_Q( \nabla \sigma, \nabla \sigma)}{\sqrt{1 - |\sigma|^2}}.
	\end{align}
Moreover, for $u \in \textrm{Vect}(\Sigma)$, we calculate
	\begin{align*}
		|\partial_u \sigma|_J^2 - |\bar{\partial}_u \sigma|_J^2 
			= \frac{1}{4} \left| \nabla_{u} \sigma - \textbf{i} \nabla_{Ju} \sigma\right|_J^2  - \frac{1}{4} \left| \nabla_{u} \sigma + \textbf{i} \nabla_{Ju} \sigma\right|_J^2  
			= \omega_Q( \nabla_u \sigma, \nabla_{Ju} \sigma).
		\end{align*} 
Hence $\left(|\partial \sigma|^2 - |\bar{\partial}\sigma|^2\right) \rho = \omega_Q( \nabla \sigma, \nabla \sigma)$ and with (\ref{hkM:proof11a}) it follows
\begin{align}\label{hkM.proof.11}
	\underline{\omega}_1(\nabla (J,\sigma), \nabla (J,\sigma)) = \frac{|\partial_J \sigma|_J^2 - |\bar{\partial}_J \sigma|_J^2}{\sqrt{1 - |\sigma|_J^2}} \rho
\end{align}

The Riemann curvature tensor $R^{\nabla}$ and the Gaussian curvature $K_J$ are related by the formula $R^{\nabla} = - K_J J\otimes \rho$. Using Proposition \ref{propfiberMQ} it follows
\begin{align} 
	\label{hkM.proof.12} \langle \mu_{(J,\sigma)}, R^{\nabla} \rangle = -\sqrt{1 - |\sigma|_J^2} K_J \text{tr}(J^2) \rho = 2K_J\sqrt{1- |\sigma|_J^2} \rho.
\end{align}

Finally, using Proposition \ref{propfiberMM} we obtain
	\begin{align*}
		\nabla_u \mu_{(J,\sigma)} (\Psi)
		&= \cL_u \left( \sqrt{1 - |\sigma|_J^2} \textrm{tr}(J\Psi) \right) - \sqrt{1 - |\sigma|_J^2}) \textrm{tr}(J\nabla_u\Psi) \\
		&= \cL_u \left( \sqrt{1 - |\sigma|_J^2} \right) \textrm{tr}(J\Psi)
	\end{align*}
for all $\Psi \in \Omega^0(\Sigma, \textrm{End}(T\Sigma))$ and $u \in \textrm{Vect}(\Sigma)$. Let $e_1$, $e_2 = J e_1$ be a local orthonormal frame for $T\Sigma$ and write $v \in \textrm{Vect}(\Sigma)$ as $v= v_1 e_1 + v_2 e_2$. Then
		\begin{align*}
				c (\nabla \mu_{(J,\sigma)}) (v) &= \nabla_{e_1} \mu_{(J,\sigma)}(e_1^*\otimes v) + \nabla_{e_2} \mu_{(J,\sigma)}(e_2^* \otimes v) \\
														 &= \cL_{e_1} \sqrt{1- |\sigma|_J^2} \textrm{tr}(J (e_1^*\otimes v)) + \cL_{e_2} \sqrt{1- |\sigma|_J^2} \textrm{tr}(J (e_2^*\otimes v)) \\
														 &= - \cL_{e_1} \sqrt{1- |\sigma|_J^2} v_2 + \cL_{e_2} \sqrt{1- |\sigma|_J^2} v_1\\
														 &= \cL_{Jv} \left(\sqrt{1- |\sigma|_J^2}\right).
		\end{align*}
Using the relation $d (df \circ J) = 2 \textbf{i} \bar{\partial} \partial f$ for $f(z) := - \sqrt{1- |\sigma|_J^2}$, it follows
\begin{align} \label{hkM.proof.13}
		d c(\nabla \mu_{(J,\sigma)}) = 2 \textbf{i}\bar{\partial} \partial \sqrt{1- |\sigma|_J^2}.
\end{align}		

We have identified in (\ref{hkM.proof.11}), (\ref{hkM.proof.12}) and (\ref{hkM.proof.13}) the three components of the moment map in Theorem \ref{fib.ThmM}. The cohomology class of
		$$\frac{|\partial_J \sigma|_J^2- |\bar{\partial}_J \sigma|_J^2}{\sqrt{1 - |\sigma|_J^2}} \rho - 2 \sqrt{1 - |\sigma|_J^2} K_J \rho - 2 \textbf{i} \bar{\partial} \partial \sqrt{1 - |\sigma|_J^2}$$
does not depend on $(J,\sigma) \in \cQ_1(\Sigma)$ and by the Gauss--Bonnet theorem it is represented by $-2c\rho$. Therefore $\underline{\mu}_1$ takes values in the space of exact $2$-forms and the moment map equation follows from Theorem \ref{fib.ThmM}.\\

\textbf{Step 2:} \textit{ For $J \in \cJ(\Sigma)$, $\sigma \in Q(J)$ and $\hat{J} \in T_J \cJ(\Sigma)$ consider the duality pairing
	\begin{align} \langle \sigma, J \rangle_{Q\times T\cJ} : \Sigma \rightarrow \C, \qquad \langle \sigma, \hat{J} \rangle_{Q\times T\cJ} := \frac{\sigma(\hat{J}u, u)}{|u|_J^2} \end{align}
where the right hand side does not depend on the choice of a locally defined vector field $u \neq 0$. An equivariant moment map for the $\textrm{Symp}_0(\Sigma,\rho)$-action on $\cQ_1(\Sigma)$ with respect to $\underline{\omega}_2$ and $\underline{\omega}_3$ is given by
	\begin{align}\left\langle \left(\underline{\mu}_2 + \textbf{i} \underline{\mu}_3\right) (J,\sigma), v \right\rangle := -2\textbf{i} \int_{\Sigma} \langle \sigma, \bar{\partial}_J v \rangle_{Q\times T\cJ} \rho\end{align}
for any symplectic vector field $v \in \textrm{Vect}(\Sigma)$ satisfying $d\iota(v) \rho = 0$.
}\\

The Lie derivative of $J$ and $\sigma$ along $v$ are given by
	$$ \cL_v (J, \sigma) = (\nabla_v J, \nabla_v \sigma) - L_{(J,\sigma)} (\nabla v)$$
where $L_{(J,\sigma)}$ denotes infinitesimal action of $\textrm{End}_0(T\Sigma)$ along the fibre (see Lemma \ref{fib.LemmaInf1}). For a smooth path $\R \rightarrow \cQ_1(\Sigma)$, $t \mapsto (J_t,\sigma_t)$, consider
\begin{equation} \label{hkM:proof2}
\begin{aligned}	
	\left(\underline{\omega}_2 + \textbf{i} \underline{\omega}_3\right) \left( (\dot{J}_t, \dot{\sigma}_t), \cL_v (J_t, \sigma_t) \right) 
			&= \left(\underline{\omega}_2 + \textbf{i} \underline{\omega}_3\right) \left( (\dot{J}_t, \dot{\sigma}_t), \nabla_v (J_t, \sigma_t) \right) \\
				& \qquad - \left(\underline{\omega}_2 + \textbf{i} \underline{\omega}_3\right) \left( (\dot{J}_t, \dot{\sigma}_t), L_{(J_t,\sigma_t)} (\nabla v)\right) 
\end{aligned}		
\end{equation}
Using the definitions and integration by part, the first term yields:
\begin{align*}
&\left(\underline{\omega}_2 + \textbf{i} \underline{\omega}_3\right) \left( (\dot{J}_t, \dot{\sigma}_t), \nabla_v (J_t, \sigma_t) \right) \\
	&\qquad= \int_{\Sigma} \langle \dot{\sigma}_t, \nabla_v J_t \rangle_{Q\times T\cJ} \,\rho - \langle \nabla_v\sigma_t, \dot{J}_t \rangle_{Q\times T\cJ} \,\rho \\
	&\qquad= \int_{\Sigma} \langle \dot{\sigma}_t, \nabla_v J_t \rangle_{Q\times T\cJ} \wedge \iota(v) \rho - \int_{\Sigma} \langle \nabla \sigma_t, \dot{J}_t \rangle_{Q\times T\cJ} \wedge \iota(v) \rho \\
	&\qquad= \int_{\Sigma} \langle \dot{\sigma}_t, \nabla_v J_t \rangle_{Q\times T\cJ} \wedge \iota(v) \rho + \int_{\Sigma} \langle \sigma_t, \nabla \dot{J}_t \rangle_{Q\times T\cJ} \wedge \iota(v) \rho \\
	&\qquad= \partial_t \int_{\Sigma} \langle \sigma_t, \nabla_v J_t \rangle_{Q\times T\cJ} \wedge \iota(v) \rho
\end{align*}
For the second term, apply Proposition \ref{propfiberMQ} fibrewise to obtain
\begin{align*}
\left(\underline{\omega}_2 + \textbf{i} \underline{\omega}_3\right) \left( (\dot{J}_t, \dot{\sigma}_t), L_{(J_t,\sigma_t)} (\nabla v)\right)
&= \int_{\Sigma} (\omega_2 + \textbf{i} \omega_3)\left( (\dot{J}_t, \dot{\sigma}_t), L_{(J_t,\sigma_t)} (\nabla v)\right) \rho \\
&= -\partial_t \int_{\Sigma} \langle \sigma_t, L_{J_t} (\nabla v) \rangle_{Q\times T\cJ} \, \rho \\
&= 2 \textbf{i} \partial_t \int_{\Sigma} \langle \sigma_t, \bar{\partial}_{J_t} v \rangle_{Q\times T\cJ} \, \rho.
\end{align*}
Here we used in the last equation that $L_{J_t} (\nabla v) = [\nabla v, J_t] = -2 J_t \bar{\partial}_{J_t} v$. It follows from this calculation that a moment map with respect to $\underline{\omega}_2$ and $\underline{\omega}_3$ is given by 
$$ \left\langle \left(\underline{\mu}_2 + \textbf{i} \underline{\mu}_3\right) (J,\sigma), v \right\rangle 
	=  \int_{\Sigma} \langle \sigma, \nabla_v J \rangle_{Q\times T\cJ} \wedge \iota(v) \rho	-2\textbf{i} \int_{\Sigma} \langle \sigma, \bar{\partial}_J v \rangle_{Q\times T\cJ} \rho	$$
The first term vanishes for the Levi-Civita connection, since $\nabla J = 0$, and this proves Step 2.\\

\textbf{Step 3:} \textit{(\ref{hk.Meq2}) defines an equivariant moment maps for the action of $\textrm{Symp}_0(\Sigma,\rho)$ with respect to $\underline{\omega}_2$ and $\underline{\omega}_3$.}\\

Step 3 follows Step 2 and Stokes theorem using the pointwise identity
\begin{align}
	\langle \sigma, \bar{\partial}_J v \rangle_{Q\times T\cJ} \rho = -\frac{1}{2\textbf{i}} \bar{\partial}_J (\iota(v) q) - (\iota(v) r(\bar{\partial}_J \sigma) ) \rho .
\end{align}
This can either be verified in a local holomorphic chart or deduced from the following calculation, which holds for any locally defined and parallel vector field $u \neq 0$:
\begin{align*}
\sigma(\bar{\partial}_u v, u) 
	&= \bar{\partial}_u \left( \sigma(v, u) \right) - (\bar{\partial}_u \sigma)(v, u) \\
	&= -\frac{1}{2\textbf{i}} \left[ \bar{\partial}_u \left(\sigma(v, Ju) \right) - \bar{\partial}_{Ju} \left(\sigma(v, u) \right)\right] - \iota(v)r(\bar{\partial}_u \sigma) |u|_J^2 \\
	&= - \frac{1}{2\textbf{i}} \bar{\partial} (\iota(v) \sigma) (u, Ju) - \iota(v)r(\bar{\partial}_u \sigma) \rho(u,Ju)
\end{align*}

\textbf{Step 4:} \textit{(\ref{hk.Meq3}) defines an equivariant moment maps for the action of $\textrm{Ham}(\Sigma,\rho)$ with respect to $\underline{\omega}_2$ and $\underline{\omega}_3$.}\\

Let $H: \Sigma \rightarrow \R$ be a Hamiltonian and define $v_H \in \textrm{Vect}(M)$ by $\iota(v_H)\rho = dH$. Then
\begin{align*}
	\int_{\Sigma} \iota(v_H) r(\bar{\partial}_J \sigma) \rho 
	= \int_{\Sigma} r(\bar{\partial}_J \sigma) \wedge dH 
	= \int_{\Sigma} H  \bar{\partial}_J r(\bar{\partial}_J \sigma) 
\end{align*}
and (\ref{hk.Meq3}) follows now from Step 3.
\end{proof}

\subsection{Construction of the moduli space}

\subsubsection{The Hamiltonian quotient space}
The hyperkähler quotient of $\cQ_1(\Sigma)$ by $\text{Ham}(\Sigma,\rho)$ is defined by
\begin{equation} \label{moduli:QM0}
\begin{aligned}
		\mathcal{M}_0 &:= \underline{\mu}_1^{-1}(0)\cap\underline{\mu}_2^{-1}(0)\cap\underline{\mu}_3^{-1}(0)/ \text{Ham}(\Sigma,\rho) \\
									&=  \left\{(J,\sigma) \in \cQ_1(\Sigma)\,\left|\, \underline{\mu}_1(J,\sigma) = 0, \, \bar{\partial}_J r(\bar{\partial}_J \sigma) = 0 \right. \right\} \bigg/ \text{Ham}(\Sigma,\rho)
\end{aligned}
\end{equation}
where $\underline{\mu}_1, \underline{\mu}_2, \underline{\mu}_3$ are the moment maps calculated in Theorem \ref{hk.ThmM} for the $\textrm{Ham}(\Sigma,\rho)$-action on $\cQ_1(\Sigma)$. It follows from general principles that $\mathcal{M}_0$ is a hyperkähler manifold. 
The next lemma is formulated in a finite dimensional setting, but extends formally to our case. It indicates that transversality for the hyperkähler moment map is an automatic consequence of the setup.

\begin{Lemma}
Let $(M,g, I_1,I_2,I_3)$ be a hyperkähler manifold and let $G$ be a Lie group. Suppose $G$ acts freely on $M$ by hyperkähler isometries and admits a hyperkähler moment map
						$$\mu = (\mu_1, \mu_2, \mu_3) : M \rightarrow \mathbb{R}^3\otimes \mathfrak{g}^*.$$
Then $0$ is a regular value of $\mu$.
\end{Lemma}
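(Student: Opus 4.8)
The plan is to show that at every point $m \in \mu^{-1}(0)$ the differential $d\mu(m): T_m M \to \R^3 \otimes \fg^*$ is surjective, using the freeness of the action together with the quaternionic structure. The key point is the standard moment map identity: for each $i$ and each $\xi \in \fg$, the component $\langle \mu_i(\cdot), \xi\rangle$ has differential $d\langle \mu_i, \xi\rangle(m)[\hat m] = \omega_i(L_m \xi, \hat m) = g(I_i L_m \xi, \hat m)$, where $L_m: \fg \to T_m M$ is the infinitesimal action. So the adjoint of $d\mu(m)$, viewed as a map $\R^3 \otimes \fg \to T_m M$, sends $(e_i \otimes \xi)$ to $I_i L_m \xi$. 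Surjectivity of $d\mu(m)$ is equivalent to injectivity of this adjoint, i.e. to the statement that the vectors $\{I_1 L_m\xi, I_2 L_m\xi, I_3 L_m\xi : \xi \in \fg\}$ span a subspace whose... more precisely, that no nonzero element of $\R^3 \otimes \fg$ maps to zero.

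Concretely, I would argue as follows. Suppose $\sum_{i=1}^3 I_i L_m \xi_i = 0$ for some $\xi_1,\xi_2,\xi_3 \in \fg$; I must show all $\xi_i = 0$. First I would record that the subspaces $L_m\fg$, $I_1 L_m\fg$, $I_2 L_m\fg$, $I_3 L_m\fg$ are pairwise orthogonal: for instance $g(L_m\xi, I_1 L_m\eta) = \omega_1(I_1 L_m\xi, I_1 L_m\eta) = \omega_1(L_m\xi, L_m\eta) = \langle d\mu_1, \eta\rangle$ paired appropriately — more cleanly, $g(L_m\xi, I_i L_m\eta) = -\omega_i(L_m\xi, L_m\eta)$, and since $\mu_i$ is $G$-invariant one has $\omega_i(L_m\xi, L_m\eta) = d\langle\mu_i,\eta\rangle(m)[L_m\xi] = 0$ on $\mu^{-1}(0)$... in fact $\omega_i(L_m\xi, L_m\eta) = \langle \mu_i([\xi,\eta]), \cdot\rangle$-type equivariance gives $\langle d\mu_i(m) L_m\xi, \eta\rangle = \langle \mathrm{ad}^*_\xi \mu_i(m), \eta\rangle = 0$ since $\mu_i(m) = 0$. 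Similarly $g(I_i L_m\xi, I_j L_m\eta) = g(L_m\xi, I_i^{-1}I_j L_m\eta)$, and for $i \neq j$, $I_i^{-1}I_j = \pm I_k$, reducing to the previous case. Thus $I_1 L_m\fg$, $I_2 L_m\fg$, $I_3 L_m\fg$ are mutually orthogonal, so $\sum I_i L_m\xi_i = 0$ forces each $I_i L_m\xi_i = 0$, hence $L_m\xi_i = 0$ (as $I_i$ is invertible), hence $\xi_i = 0$ because the action is free (so $L_m$ is injective). This shows the adjoint of $d\mu(m)$ is injective, so $d\mu(m)$ is surjective and $0$ is a regular value.

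The main obstacle — and the only place where care is genuinely needed — is the orthogonality computation, specifically getting the signs and the conventions straight in the relations $g(\cdot,\cdot) = \omega_i(\cdot, I_i\cdot)$, $I_i I_j = I_k$ (cyclically), and the equivariance/invariance of $\mu_i$ that kills $\omega_i(L_m\xi, L_m\eta)$ on the zero level set. Once those pointwise linear-algebra facts are in place, surjectivity of $d\mu(m)$ is immediate. In the infinite-dimensional setting of the paper one should add the standard caveat that "surjectivity" is understood in the appropriate functional-analytic sense (e.g. the relevant elliptic operator has closed range and cokernel identified via the same orthogonality), but the formal computation is identical; since the lemma is stated in finite dimensions this subtlety does not need to be addressed here.
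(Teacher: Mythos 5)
Your proof is correct and is essentially the paper's argument read in dual form: both rest on the identity $\omega_i(L_m\xi, L_m\eta) = \langle \mu_i(m),[\eta,\xi]\rangle = 0$ on the zero level set (equivariance of the moment map), which yields the orthogonality of $L_m\fg$ and the $I_i L_m\fg$, combined with injectivity of $L_m$ coming from freeness. The only difference is presentational: the paper exhibits an explicit preimage $v = I_1u_1 + I_2u_2 + I_3u_3$ of a given triple $(\eta_1,\eta_2,\eta_3)\in(\fg^*)^3$, whereas you prove injectivity of the adjoint map $(\xi_1,\xi_2,\xi_3)\mapsto \sum_i I_i L_m\xi_i$ --- the same linear-algebra computation run in the opposite direction.
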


\begin{proof}
Let $x \in M$ with $\mu(x) = 0$ be given. Denote by $L_x: \mathfrak{g} \rightarrow T_xM$ its infinitesimal action and decompose $T_x M = W_0 \oplus W_1$ with
					$$W_1 := \text{Im}(L_x), \qquad W_0 = \text{Im}(L_x)^{\perp}.$$
Equivariance of the moment map yields for all $\xi,\eta \in \mathfrak{g}$ the identity
		$$\langle I_i L_x \xi, L_x \eta \rangle = \langle d\mu_i(x) L_x \eta, \xi \rangle = \langle \mu_i(x), [\eta, \xi] \rangle = 0.$$
This shows that the three complex structures map $W_1$ into $W_0$. 

Let $\eta_1,\eta_2,\eta_3 \in \mathfrak{g}^*$ be given. Since $G$ acts freely, $L_x$ is injective, and the dual map $L_x^*: T_xM \rightarrow \mathfrak{g}^*$ is surjective with kernel $W_0$. Hence there exist $u_i\in W_1$ with $\eta_i = L_x^*(u_i)$ for $i = 1,2,3$. For $v :=  I_1 u_1 + I_2 u_2 + I_3 u_3$ we then obtain
		$$\langle d\mu_i(x)v , \xi \rangle = \omega_i (L_x\xi, v) = - g(L_x\xi, I_i v ) = g(L_x\xi, u_i ) = \langle \eta_i, \xi \rangle.$$
This proves surjectivity of $d\mu : T_xM \rightarrow \mathbb{R}^3\otimes \mathfrak{g}^*$ and the lemma.
\end{proof}

\subsubsection{Construction of the symplectic quotient space}		
The group a Hamiltonian diffeomorphism $\textrm{Ham}(\Sigma,\rho) < \textrm{Symp}_0(\Sigma,\rho)$ is a normal subgroup (see \cite{IntroductionST}, Proposition 10.2) and therefore 
		$$H := \text{Symp}_0(\Sigma,\rho)/\text{Ham}(\Sigma,\rho)$$
is a well-defined quotient group. The flux homomorpism associates to every path $[0,1] \rightarrow \text{Symp}_0(\Sigma,\rho)$, $t \mapsto \psi_t$, a cohomology class in $H^1(\Sigma, \R)$ defined by
						$$\text{Flux}(\{\psi_t\}) := \int_0^1 [\iota(\partial_t \psi_t) \omega ] \, dt \in H^1(\Sigma,\mathbb{R}).$$
Since $\pi_1(\text{Symp}_0(\Sigma)) = 0$ for a closed higher genus surface (see \cite{1969:EarlyEells}), it follows that the flux homomorphism descends to an isomorphism
		$$\textrm{Flux}: \, H := \textrm{Symp}_0(\Sigma,\rho) / \textrm{Ham}(\Sigma,\rho) \stackrel{\cong}{\longrightarrow} H^1(\Sigma,\R).$$
See \cite{IntroductionST}, Proposition 10.18 for more details. The Lie algebra of $H$ is the quotient
	\begin{align*}
		\textrm{Lie}(H) &:= \frac{\{ v \in \textrm{Vect}(\Sigma)\,|\, d\iota(v)\rho = 0 \}}{\{ v \in \textrm{Vect}(\Sigma)\,|\, \textrm{$ \iota(v)\rho$ exact} \}} 
	\end{align*}
The Hamiltonian quotient $\cM_0 := \cQ_1/\!/\textrm{Ham}(\Sigma,\rho)$ defined by (\ref{moduli:QM0}) admits a natural action of $H$ which preserving the hyperkähler structure. We investigate this action more closely in the following proposition.

\begin{Proposition} \label{hk.Q.Prop} \hspace{2em}
		\begin{enumerate}			
			\item The $H$-action is Hamiltonian with respect to $\underline{\omega}_2$ and $\underline{\omega}_3$. For $[v] \in \textrm{Lie}(H)$ and $[J,\sigma] \in \cM_0$ the maps
					\begin{align} \label{hk.Q.eq0}
		\left\langle\tilde{\mu}^H_2([J,\sigma]) + \textbf{i} \tilde{\mu}^H_3([J,\sigma]), [v] \right\rangle = 2\textbf{i} \int_{\Sigma} \iota(v)r(\bar{\partial}_J\sigma) \rho
					\end{align}		
			are well-defined equivariant moment maps for $\underline{\omega}_2$ and $\underline{\omega}_3$ respectively.
			\item On $\mathcal{M}_0$ the equation $\tilde{\mu}_2^H([J,\sigma]) = 0$ is equivalent to $\tilde{\mu}_3^H([J,\sigma]) = 0$ and 
	\begin{align} \label{hk.Q.eq1}
		(\tilde{\mu}_2^H)^{-1}(0) = (\tilde{\mu}_3^H)^{-1}(0) = \left\{ [J,\sigma] \in \mathcal{M}_0\,|\, \bar{\partial}_J \sigma = 0\right\}
	\end{align}			
			
			\item $(\tilde{\mu}_2^H)^{-1}(0) = (\tilde{\mu}_3^H)^{-1}(0)$ is a $J_1$--complex submanifold
			
			\item The $H$-orbits in $(\tilde{\mu}_2^H)^{-1}(0) = (\tilde{\mu}_3^H)^{-1}(0)$ are $J_1$--complex submanifolds.
			
		\end{enumerate}

\end{Proposition}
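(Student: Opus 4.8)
\emph{Overview.} Parts (1)--(2) combine a descent argument with Hodge theory on $\Sigma$, while parts (3)--(4) are hyperkähler linear algebra, with (4) the delicate point. For part (1), Theorem~\ref{hk.ThmM}(3) already provides a moment map $\underline{\tilde\mu}_2 + \textbf{i}\underline{\tilde\mu}_3$ for the $\textrm{Symp}_0(\Sigma,\rho)$-action on $\cQ_1(\Sigma)$; I would show it descends to $\cM_0$ and equals (\ref{hk.Q.eq0}). On a Hamiltonian vector field $v_H$ one has $2\textbf{i}\int_\Sigma\iota(v_H)r(\bar\partial_J\sigma)\rho = 2\textbf{i}\int_\Sigma H\,\bar\partial_J r(\bar\partial_J\sigma)$ (the identity used in Step~4 of the proof of Theorem~\ref{hk.ThmM}), which vanishes on $\cM_0$ since there $\underline\mu_2 = \underline\mu_3 = 0$, i.e. $\bar\partial_J r(\bar\partial_J\sigma) = 0$; hence the pairing depends only on $[v]\in\textrm{Lie}(H)$, and by $\textrm{Symp}_0$-equivariance together with the normality of $\textrm{Ham}(\Sigma,\rho)$ it depends only on the $\textrm{Ham}$-orbit of $(J,\sigma)$. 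Since $H$ is abelian, $H$-equivariance is automatic, and the moment-map identities for $\underline\omega_2,\underline\omega_3$ on $\cM_0$ follow from those on $\cQ_1(\Sigma)$ by restriction to the common zero level of $\underline\mu_1,\underline\mu_2,\underline\mu_3$ and passing to the quotient.

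\emph{Part (2).} Pointwise on $\Sigma$ one has $\iota(v)r(\bar\partial_J\sigma)\cdot\rho = r(\bar\partial_J\sigma)\wedge\iota(v)\rho$. On $\cM_0$ the $(1,0)$-form $\beta := r(\bar\partial_J\sigma)$ satisfies $\bar\partial_J\beta = 0$, so it is holomorphic and in particular $d$-closed; also $\iota(v)\rho$ is closed for symplectic $v$, with $[\iota(v)\rho]$ ranging over all of $H^1(\Sigma;\R)$. Hence $\langle(\tilde\mu_2^H + \textbf{i}\tilde\mu_3^H)([J,\sigma]),[v]\rangle = 2\textbf{i}\,[\beta]\cup[\iota(v)\rho]$, so $\tilde\mu_3^H([J,\sigma]) = 0$ is equivalent to $[\textrm{Re}\,\beta]\cup\alpha = 0$ for all $\alpha\in H^1(\Sigma;\R)$, hence to $[\textrm{Re}\,\beta] = 0$ by nondegeneracy of the cup product on a closed surface; as $\textrm{Re}\,\beta$ is a real harmonic $1$-form this gives $\textrm{Re}\,\beta = 0$, and since $\beta$ is of type $(1,0)$, $\beta = 0$; finally $r$ is a fibrewise isomorphism, so $\beta = 0 \iff \bar\partial_J\sigma = 0$. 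Running the same argument with $\textrm{Im}$ in place of $\textrm{Re}$ handles $\tilde\mu_2^H$, which proves (\ref{hk.Q.eq1}).

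\emph{Part (3).} By (\ref{hk.Q.eq1}), $N := (\tilde\mu_2^H)^{-1}(0) = (\tilde\mu_2^H + \textbf{i}\tilde\mu_3^H)^{-1}(0)$. Since $\underline\omega_2 + \textbf{i}\underline\omega_3$ is of type $(2,0)$ for $J_1$, the moment-map relation $d\langle\tilde\mu_2^H + \textbf{i}\tilde\mu_3^H,[v]\rangle = \iota(L_{[J,\sigma]}[v])(\underline\omega_2 + \textbf{i}\underline\omega_3)$ has right-hand side of type $(1,0)$, so $\tilde\mu_2^H + \textbf{i}\tilde\mu_3^H$ is $J_1$-holomorphic; its zero fibre $N$ is therefore $J_1$-invariant. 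That $N$ is a submanifold follows from the usual transversality argument (as for the hyperkähler moment map above): the $H$-action is free because for $\textrm{genus}(\Sigma)\geq 2$ a diffeomorphism isotopic to the identity that fixes $(J,\sigma)$ is the identity, and surjectivity of the relevant linearization ultimately rests on the surjectivity of $\bar\partial_J$ on quadratic differentials, whose cokernel $H^1(\Sigma,S^2(T^*\Sigma\otimes_J\C))$ vanishes for $\textrm{genus}(\Sigma)\geq 2$ by Serre duality.

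\emph{Part (4).} The plan is to establish the pointwise identity $J_1\,L_{[J,\sigma]}[v] = L_{[J,\sigma]}[-Jv]$ for $[J,\sigma]\in N$, taking $v$ to be the $g$-harmonic representative of $[v]$; since $\iota(-Jv)\rho = -{*}(\iota(v)\rho)$ is again harmonic, $[-Jv]\in\textrm{Lie}(H)$, so this exhibits $T_{[J,\sigma]}(H\cdot[J,\sigma]) = \textrm{Im}(L_{[J,\sigma]})$ as $J_1$-invariant and the orbit as a $J_1$-complex submanifold. On $\cQ_1(\Sigma)$ we have $L_{(J,\sigma)}[v] = (\mathcal{L}_v J,\mathcal{L}_v\sigma)$ and $J_1$ acts (Theorem~\ref{ThmHK}) by $(\hat J,\hat\sigma)\mapsto(-J\hat J,-\textbf{i}\hat\sigma)$; I would verify the identity componentwise using: (i) for harmonic $v$, $\nabla v$ is symmetric and traceless, hence $J$-antilinear, so $\mathcal{L}_v J = -[\nabla v,J] = 2J\nabla v$, and since $\nabla(Jv) = J\nabla v$ one computes $-J\,\mathcal{L}_v J = 2\nabla v = \mathcal{L}_{-Jv}J$; (ii) $\nabla_{Jv}\sigma = \textbf{i}\,\nabla_v\sigma$, which uses $\bar\partial_J\sigma = 0$ so that $\nabla\sigma = \partial_J\sigma$ is of type $(1,0)$; and (iii) the fibrewise identity $L_\sigma(J\xi) = \textbf{i}\,L_\sigma\xi$ established in the proof of Proposition~\ref{propfiberMM}, applied to $\xi = \nabla v$, which together with (ii) gives $-\textbf{i}\,\mathcal{L}_v\sigma = \mathcal{L}_{-Jv}\sigma$. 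One then checks that these equalities, obtained in $T\cQ_1(\Sigma)$, descend to $T_{[J,\sigma]}\cM_0$: the group $\textrm{Symp}_0(\Sigma,\rho)$ preserves $J_1$ and, by equivariance of the moment maps, the three zero level sets, so $\mathcal{L}_v$ lies in $\bigcap_i\ker d\underline\mu_i$ and projects onto the infinitesimal $H$-action. I expect the main obstacle to lie in Part (4): pinning down the precise sign conventions for $J_1$ in the $(J,\sigma)$-model, confirming that the $J$-linear (rotational) part of $\nabla v$ genuinely drops out for the harmonic representative, and making the descent to $\cM_0$ rigorous in the infinite-dimensional setting.
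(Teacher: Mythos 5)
Your proposal is correct. Parts (1) and (2) follow essentially the paper's own route: the paper also obtains (1) by descent from Theorem \ref{hk.ThmM}, and in (2) it phrases the cohomological step as Poincar\'e duality of the closed $(1,0)$-form $r(\bar{\partial}_J\sigma)$ against the harmonic forms $\iota(v)\rho$, $v \in \fh_J$, which is the same computation as your cup-product argument (your version even gives the equality of the two zero sets for free, without the paper's separate observation that $\tilde{\mu}_2^H(\cdot,Jv) = \tilde{\mu}_3^H(\cdot,v)$). For (3) and (4) you take a genuinely different route. The paper settles both at once by quaternionic linear algebra: the moment map equations exhibit $J_2$ and $J_3$ as isomorphisms from $T_{[J,\sigma]}(H\cdot[J,\sigma])$ onto the \emph{same} orthogonal complement $(T_{[J,\sigma]}N)^{\perp}$ (because the two level sets coincide), so $J_1 = J_2J_3$ preserves both the orbit tangent space and $TN$ with no computation in representatives. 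Your (3), via $J_1$-holomorphicity of $\tilde{\mu}_2^H + \textbf{i}\tilde{\mu}_3^H$, is an equally standard correct alternative; your (4) is more laborious but buys an explicit formula, $\cL_{-Jv}(J,\sigma) = (-J\cL_vJ, -\textbf{i}\cL_v\sigma)$ for harmonic $v$, identifying the induced complex structure on the orbit with the Hodge-star structure on $H^1(\Sigma;\R) \cong \fh_J$, which the abstract argument does not reveal. Your two worries are harmless: the sign of $J_1$ is opposite to the paper's convention $(\hat{J},\hat{\sigma}) \mapsto (J\hat{J},\textbf{i}\hat{\sigma})$ (see the proof of Theorem \ref{Thm.FKhk}), which only replaces $-Jv$ by $Jv$; and the descent resolves itself, since $J_1\cL_v(J,\sigma) = \pm\cL_{Jv}(J,\sigma)$ lies again in $\bigcap_i \ker d\underline{\mu}_i$ while $J_1$ maps the $\textrm{Ham}$-orbit directions out of that kernel, so the identity forces $\cL_v(J,\sigma)$ to be horizontal and no projection argument is needed.
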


\begin{proof}
Theorem \ref{hk.ThmM} show that the action of $\textrm{Symp}_0(\Sigma,\rho)$ on $\cQ_1(\Sigma)$ is Hamiltonian with respect to $\underline{\omega}_2$ and $\underline{\omega}_3$. This directly implies that the action of $H$ is Hamiltonian for the symplectic forms induced by $\underline{\omega}_2$ and $\underline{\omega}_3$. The formula for the moment maps (\ref{hk.Q.eq0}) follows from (\ref{hk.Meq2}).

For fixed $J$, one can identify the Lie algebra of $H$ with
		$$\fh_J :=  \{v \in \text{Vect}(\Sigma)\,|\, d \iota(v)\rho = 0 = d \iota(Jv)\rho  \}$$
by Hodge theory. This is a $J$-invariant subspace and it holds $\tilde{\mu}_2^H([J,\sigma], Jv) = \tilde{\mu}_3^H([J,\sigma], v)$ for all $v \in \fh_J$. Hence $(\tilde{\mu}^H_2)^{-1}(0) = (\tilde{\mu}^H_3)^{-1}(0)$.

We prove (\ref{hk.Q.eq1}) next. Let $[J,\sigma] \in (\tilde{\mu}_2^H)^{-1}(0) = (\tilde{\mu}_3^H)^{-1}(0)$ be given. Then
		$$0 = \int_{\Sigma} \iota(v)r(\bar{\partial}_J\sigma) \rho = \int_{\Sigma} r(\bar{\partial}_J\sigma) \wedge \iota(v)\rho$$
for all $v \in \fh_J$. The defining equations of $\mathcal{M}_0$ show that $r(\bar{\partial}_J\sigma) \in \Omega_J^{1,0}(\Sigma,\C)$ is closed. Since $\{\iota(v)\rho\,|\,v\in\mathfrak{h}_J\}$ parametrizes the space of (real) harmonic $1$-forms, it follows from Poincar\'e duality that $[r(\bar{\partial}_J\sigma)] = 0 \in H_J^{1,0}(\Sigma)$. Hence $r(\bar{\partial}_J \sigma) = \partial_J f$ with $\bar{\partial}_J\partial_J f = 0$. Then $f$ is constant and therefore $\bar{\partial}_J \sigma = 0$.

Let $[J,\sigma] \in (\tilde{\mu}_2^H)^{-1}(0) = (\tilde{\mu}_3^H)^{-1}(0)$. It follows from the moment map equations that multiplication with $J_2$ and $J_3$ yields isomorphism
		$$ J_2: T_{[J,\sigma]} \left(H \cdot [J,\sigma]\right) \rightarrow \left(T_{[J,\sigma]} \left(\tilde{\mu}_2^H\right)^{-1}(0) \right)^{\perp}$$
		$$ J_3: T_{[J,\sigma]} \left(H \cdot [J,\sigma]\right) \rightarrow \left(T_{[J,\sigma]} \left(\tilde{\mu}_3^H\right)^{-1}(0) \right)^{\perp}$$
Hence $J_1 = J_2J_3$ maps $T_{[J,\sigma]} \left(H \cdot [J,\sigma]\right)$ and $T_{[J,\sigma]} \left(\tilde{\mu}_2^H\right)^{-1}(0)$ onto themselves. Therefore $\left(\tilde{\mu}_2^H\right)^{-1}(0) = \left(\tilde{\mu}_3^H\right)^{-1}(0)$ is a $J_1$-complex submanifold of $\cM_0$ and the $H$-orbits are complex submanifolds.
\end{proof}

The next lemma describes a general procedure to obtain symplectic quotients in the absence of moment maps.

\begin{Lemma} \label{Teich.LemmaSymplStr}
Let $(Q,\omega)$ be a symplectic manifold and let $G$ be a Lie group acting symplectically, properly and freely on $Q$. Suppose that all $G$ orbits are symplectic submanifold of $Q$. Then $Q/G$ carries a natural symplectic structure which is obtained by declaring that $(T_q (G\cdot q))^{\omega} \rightarrow T_{[q]} Q/G$ is a symplectic isomorphism for every $q \in Q$.
\end{Lemma}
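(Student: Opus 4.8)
The statement is a standard fact about symplectic reduction when moment maps are unavailable, so the plan is to verify directly that the prescribed bilinear form on $Q/G$ is well-defined, closed and nondegenerate. First I would fix $q \in Q$ and observe that the hypothesis ``$G\cdot q$ is a symplectic submanifold'' means $T_q(G\cdot q) \cap (T_q(G\cdot q))^{\omega} = 0$, so that $T_q Q = T_q(G\cdot q) \oplus (T_q(G\cdot q))^{\omega}$ as a symplectic direct sum. The projection $T_q Q \to T_{[q]}(Q/G) = T_q Q / T_q(G\cdot q)$ therefore restricts to a linear isomorphism $(T_q(G\cdot q))^{\omega} \xrightarrow{\ \cong\ } T_{[q]}(Q/G)$, and we use this to transport $\omega|_{(T_q(G\cdot q))^{\omega}}$ to a bilinear form $\bar\omega_{[q]}$ on $T_{[q]}(Q/G)$; it is nondegenerate because $\omega$ restricted to a symplectic complement is nondegenerate.

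The next step is to check that $\bar\omega$ is independent of the representative $q$ and hence well-defined on the quotient. For $g\in G$ the map $g\colon Q\to Q$ is a symplectomorphism carrying $G\cdot q$ to $G\cdot(gq)$, hence carrying $(T_q(G\cdot q))^{\omega}$ isometrically onto $(T_{gq}(G\cdot gq))^{\omega}$; since $g$ also covers the identity on $Q/G$, the two transported forms agree. One should also note that the identification is smooth in $q$ — the subbundle $(TG\text{-orbit})^{\omega}\subset TQ$ is smooth because $G$ acts freely and properly, so the orbit foliation and its symplectic-orthogonal distribution are smooth — so $\bar\omega$ is a genuine smooth $2$-form on $Q/G$.

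Finally I would prove $d\bar\omega = 0$. Let $\pi\colon Q\to Q/G$ be the projection; the distribution $\mathcal H := (TG\text{-orbit})^{\omega}$ is $G$-invariant and $d\pi$ identifies it fibrewise with $T(Q/G)$, so $\pi^*\bar\omega$ is the $2$-form on $Q$ that agrees with $\omega$ on $\mathcal H\times\mathcal H$ and is zero as soon as one argument is tangent to an orbit. Restricting to the leaves of $\mathcal H$ — or, more efficiently, working with $G$-invariant vector fields whose flows preserve $\mathcal H$ — one computes that for sections $X,Y,Z$ of $\mathcal H$ the expression $d(\pi^*\bar\omega)(X,Y,Z)$ equals $d\omega(X,Y,Z)=0$, using that brackets of $\mathcal H$-valued invariant vector fields pair trivially through $\omega$ with the orbit directions that were discarded. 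Since $\pi$ is a submersion, $d(\pi^*\bar\omega) = \pi^*(d\bar\omega) = 0$ forces $d\bar\omega = 0$.

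The main obstacle is the closedness computation: one must be careful that $\pi^*\bar\omega$ is \emph{not} the restriction of a form pulled back from the base in the naive sense, and the cancellation of the extra terms in $d(\pi^*\bar\omega)$ relies precisely on the symplectic-orthogonality condition (the ``discarded'' orbit directions are $\omega$-orthogonal to $\mathcal H$). A clean way to organize this is to choose, near a point of $Q/G$, a local slice $S\subset Q$ for the $G$-action; then $\pi|_S\colon S\to Q/G$ is a diffeomorphism onto an open set, $\bar\omega = (\pi|_S^{-1})^*(\omega|_S)$, and closedness of $\bar\omega$ reduces to closedness of $\omega|_S$, which is immediate. Independence of the slice then follows from the representative-independence already established.
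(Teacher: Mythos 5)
Your main argument is essentially the proof in the paper: the hypothesis that the orbits are symplectic gives the splitting $T_qQ=T_q(G\cdot q)\oplus(T_q(G\cdot q))^{\omega}$, the symplectic $G$-action gives independence of the representative and hence a well-defined nondegenerate $2$-form, and closedness follows from the Cartan/Palais formula applied to the horizontal lifts $\tilde v_i$ of vector fields on $Q/G$, using that the vertical component of $[\tilde v_1,\tilde v_2]$ pairs to zero against the third horizontal lift because $\mathcal{H}:=(T(G\cdot q))^{\omega}$ is by definition $\omega$-orthogonal to the orbit directions. That part is correct and matches the paper.

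However, the ``clean way'' via a local slice at the end is wrong and should be dropped. For a general slice $S$ the claimed identity $\bar\omega=(\pi|_S^{-1})^*(\omega|_S)$ fails: writing a slice-tangent vector as $u=Pu+(1-P)u$ with $P$ the projection onto $\mathcal{H}$ along the orbit directions, one gets
\begin{equation*}
\omega(u,v)-\omega(Pu,Pv)=\omega\bigl((1-P)u,(1-P)v\bigr),
\end{equation*}
and this error term does not vanish precisely because the orbit tangent spaces are \emph{symplectic} rather than isotropic (the opposite of the Marsden--Weinstein situation). The identity would require $T_sS=\mathcal{H}_s$ at every point of $S$, i.e.\ $S$ an integral manifold of $\mathcal{H}$, and $\mathcal{H}$ need not be integrable --- indeed, if it were, closedness would be immediate and the bracket computation you (and the paper) perform would be unnecessary. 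This is exactly the pitfall you yourself flag one sentence earlier, so keep the invariant-vector-field computation as the actual proof and discard the slice remark.
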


\begin{proof}
The tangent space $T_q (G\cdot q)$ of the $G$-orbit through $q$ is by assumption symplectic and so its symplectic complement $(T_q (G\cdot q))^{\omega}$ is also a symplectc.  The induced symplectic form on $T_{[q]} Q/G$ does not depend on the representative $q$, because $G$ acts symplectically on $Q$. It follows that $Q/G$ carries a well-defined non-degenerated $2$-form $\omega_{Q/G} \in \Omega^2(Q/G)$. It remains to show that $\omega_{Q/G} \in \Omega^2(Q/G) $ is closed. We have
\begin{align*}
				d\omega_{Q/G}(v_1,v_2,v_3) &= \omega_{Q/G}([v_1,v_2],v_3) + \omega_{Q/G}([v_2,v_3],v_1) + \omega_{Q/G}([v_3,v_1],v_2) \\
																	 &\,\, -\mathcal{L}_{v_3}(\omega_{Q/G}(v_1,v_2)) - \mathcal{L}_{v_1}(\omega_{Q/G}(v_2,v_3)) - \mathcal{L}_{v_2}(\omega_{Q/G}(v_3,v_1))
\end{align*}
for $v_1,v_2,v_3 \in \textrm{Vect}(Q/G)$. Let $\tilde{v}_j \in \textrm{Vect}(Q)$ be the unique lift of $v_j$ with $\tilde{v}_j(q) \in
(T_q (G\cdot q))^{\omega}$ for all $q \in Q$. Then follows 
		$$\omega_{Q/G}([v_1,v_2], v_3) = \omega([\tilde{v}_1, \tilde{v}_2], \tilde{v}_3)$$
since $[\tilde{v}_1,\tilde{v}_2]$ projects to $[v_1,v_2]$. Moreover, $\mathcal{L}_{v_3}(\omega_{Q/G}(v_1,v_2)) = \mathcal{L}_{\tilde{v}_3}(\omega(\tilde{v}_1,\tilde{v}_2))$. Similar equations hold for the other terms in $d\tilde{\omega}$ and hence
			$$d\omega_{Q/G}(v_1,v_2,v_3) = d\omega(\tilde{v}_1,\tilde{v}_2,\tilde{v}_3) = 0.$$
This completes the proof of the lemma.
\end{proof}

Consider the moduli space
\begin{align}  \label{moduli:QMa}
	\cM_s	:=  \left\{(J,\sigma) \in \cQ(\Sigma)\,\left|\, \underline{\mu}_1(J,\sigma) = 0, \, \bar{\partial}_J \sigma = 0,\, |\sigma| < 1\right.\right\} \bigg/ \text{Symp}_0(\Sigma,\rho)
\end{align}
It follows from Proposition \ref{hk.Q.Prop} that $\cM_s$ is a Marsden--Weinstein quotient of $\cM_0$ for the symplectic structures induced by $\underline{\omega}_2$ and $\underline{\omega}_3$, and therefore they descend both to symplectic structures on $\cM_s$. Using Lemma \ref{Teich.LemmaSymplStr}, $\underline{\omega}_1$ also induces a natural symplectic structure on $\cM_s$. This yields three algebraically compatible symplectic forms on $\cM_s$ and a lemma of Hitchin (\cite{Hitchin:1987}, Lemma 6.8) shows that this defines a hyperkähler structure.

\begin{Proposition} \label{hk.Prop:SimplifyEq}
Let $(J,\sigma) \in \cQ_1(\Sigma)$ and assume $\bar{\partial}_J \sigma = 0$. Then
				\begin{align} \label{geoJQ}
					\underline{\mu}_1(J,\sigma) = - \left(2K_J + \Delta \log(1 + \sqrt{1- |\sigma|^2})\right)\rho + 2c\rho
				\end{align}
were $\Delta = d^*d$ is the positive Laplacian for the metric $\rho(\cdot,J\cdot)$. In particular, 
\begin{align}  \label{moduli:QMb}
	\mathcal{M}_s = \left\{ (J,\sigma)\in \cQ(\Sigma)\,\left|\, \begin{array}{c} \bar{\partial}_J \sigma = 0, \quad |\sigma| < 1 \\ K_J + \frac{1}{2}\Delta \log(1 + \sqrt{1- |\sigma|^2}) = c \end{array}\right.\right\} \bigg/ \text{Symp}_0(\Sigma, \rho)
\end{align}
where $c := 2\pi(2 - 2\textrm{genus}(\Sigma))/\textrm{vol}(\Sigma,\rho)$.
\end{Proposition}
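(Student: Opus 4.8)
The plan is to start from the moment map formula \eqref{hk.Meq1} and simplify the three curvature-type terms under the assumption $\bar\partial_J\sigma = 0$. First I would observe that when $\bar\partial_J\sigma = 0$, the holomorphic quadratic differential $\sigma$ satisfies $\partial_J\sigma = \nabla\sigma$ only up to the $(0,1)$-part vanishing, so the term $|\partial_J\sigma|_J^2 - |\bar\partial_J\sigma|_J^2$ becomes simply $|\nabla\sigma|_J^2$; but more usefully, I expect one can relate $|\partial_J\sigma|_J^2$ to a Laplacian of $|\sigma|_J^2$ via a Bochner–Weitzenböck identity. The key classical fact is that for a holomorphic quadratic differential, $\Delta \log|\sigma|_J = 4K_J$ away from the zeros of $\sigma$ (this is the standard computation that the flat metric $|\sigma|$ has curvature supported at the zeros), or in a form valid everywhere, $\partial_J\bar\partial_J \log|\sigma|_J^2$ is a multiple of the curvature form plus delta contributions; the combination appearing in $\underline\mu_1$ should be arranged precisely so that these singular terms cancel against the $|\partial_J\sigma|^2/\sqrt{1-|\sigma|^2}$ contribution, leaving a smooth expression.

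Concretely, the strategy is: (i) write everything in a local conformal coordinate $z$ where $J = J_0$, so $\sigma = \phi\,dz^2$ with $\phi$ holomorphic, the metric is $\rho = \lambda\,\tfrac{i}{2}dz\wedge d\bar z$, $|\sigma|_J^2 = |\phi|^2/\lambda^2$, and $K_J = -\tfrac{1}{2\lambda}\Delta_0\log\lambda$ with $\Delta_0 = 4\partial_z\partial_{\bar z}$; (ii) compute $|\partial_J\sigma|_J^2$ in this chart — since $\bar\partial\phi = 0$, the covariant derivative $\nabla\sigma$ reduces to $(\partial_z\phi - 2\phi\,\partial_z\log\lambda)\,dz^3$ against the metric, giving a clean expression; (iii) substitute into \eqref{hk.Meq1} and verify, after collecting terms, that the combination of the first three terms equals $-(2K_J + \Delta\log(1+\sqrt{1-|\sigma|^2}))\rho$. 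The appearance of $1 + \sqrt{1-|\sigma|^2}$ rather than $\sqrt{1-|\sigma|^2}$ alone I expect comes from the algebraic identity $2i\bar\partial_J\partial_J\sqrt{1-|\sigma|^2} = \Delta\sqrt{1-|\sigma|^2}\cdot(-\rho)$ being re-expressed, together with the $|\partial\sigma|^2$ term, via the substitution $u = \sqrt{1-|\sigma|^2}$ and the identity $\Delta\log(1+u) = \frac{\Delta u}{1+u} - \frac{|\nabla u|^2}{(1+u)^2}$; the constants should work out so that $|\nabla u|^2 = |\nabla|\sigma|^2|^2/(4u^2)$ matches the $|\partial_J\sigma|_J^2/\sqrt{1-|\sigma|^2}$ term.

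Once \eqref{geoJQ} is established, the description \eqref{moduli:QMb} of $\mathcal{M}_s$ follows immediately: by \eqref{moduli:QMa} the space $\mathcal{M}_s$ is cut out by $\underline\mu_1(J,\sigma) = 0$, $\bar\partial_J\sigma = 0$, $|\sigma| < 1$, modulo $\mathrm{Symp}_0(\Sigma,\rho)$, and \eqref{geoJQ} shows $\underline\mu_1(J,\sigma) = 0$ is equivalent to $K_J + \tfrac12\Delta\log(1+\sqrt{1-|\sigma|^2}) = c$. I would also remark that on the locus $\bar\partial_J\sigma = 0$ the class of $\underline\mu_1(J,\sigma)$ is automatically exact (this was already noted in Step 1 of the proof of Theorem~\ref{hk.ThmM} via Gauss–Bonnet), so the pointwise equation is the full content.

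The main obstacle I anticipate is the Bochner-type computation of $|\partial_J\sigma|_J^2$ for a holomorphic quadratic differential and correctly tracking all the conformal factors and signs so that the singular contributions at the zeros of $\sigma$ cancel and the final answer collapses to the stated smooth expression involving $\log(1+\sqrt{1-|\sigma|^2})$. This is a routine but delicate local calculation; the cleanest route is probably to work with $u := \sqrt{1-|\sigma|_J^2}$ throughout, express $|\partial_J\sigma|_J^2$ in terms of $u$, $\nabla u$, $K_J$, and then recognize the combination $\frac{|\partial_J\sigma|^2}{u} - \Delta u$ (coming from the first and third terms of \eqref{hk.Meq1}) as $-(1+u)\Delta\log(1+u)$ up to the curvature term — at which point \eqref{geoJQ} drops out.
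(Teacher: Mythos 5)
Your proposal is correct and follows essentially the same route as the paper's proof: the paper likewise works in a local holomorphic chart away from the zeros of $\sigma$, uses the two identities $|\partial h|^2 = h\,|\partial\sigma|^2$ for $h := |\sigma|^2$ and $K_J\rho = -\tfrac{\textbf{i}}{2}\bar{\partial}\partial\log h$ (harmonicity of $\log|f|^2$ for $\sigma = f\,dz^2$), and then carries out the same substitution $u=\sqrt{1-h}$, merely organizing the final algebra as the identification of an explicit $\partial$-primitive $-2\partial\log(1+\sqrt{1-h})+\partial\log h$ instead of expanding $\Delta\log(1+u)$. The only cosmetic difference is that the paper disposes of the zeros of $\sigma$ by observing that it suffices to verify the identity between two smooth $2$-forms on the open dense set where $h\neq 0$, rather than tracking cancelling singular contributions.
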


\begin{proof}
Consider the function $h := |\sigma|^2$. It suffices to prove the lemma around a point where $h \neq 0$. We also simplify notation and abbreviate $\bar{\partial} := \bar{\partial}_J$ and $\partial := \partial_J$.
			
Denote by $h_Q$	the hermitian form on the bundle of quadratic differentials induced by $J$ and $\rho$. Since $\bar{\partial}\sigma = 0$, it follows $\partial h = \partial h_Q(\sigma, \sigma) = h_Q(\sigma, \partial \sigma)$ and hence $|\partial h|^2 = h |\partial \sigma|^2$.
Moreover, using $|\partial h |^2 \rho = -\frac{\textbf{i}}{2}\, \bar{\partial} h \wedge \partial h$, we then obtain
\begin{align} \label{geoJQeq2} |\partial \sigma|^2 \rho = -\frac{\textbf{i}}{2h} \bar{\partial} h \wedge \partial h.\end{align}
Next, choose holomorphic coordinates and write
				$$\rho = \lambda dx \wedge dy, \qquad \sigma(z) = f(z) dz^2$$
for some positive function $\lambda$ and a holomorphic function $f$. The Gaussian curvature $K_J$ can be computed in these coordinates via
				$$K_J = - \frac{1}{2} \lambda^{-1} (\partial_x^2 \log(\lambda) + \partial_y^2 \log(\lambda)).$$
Since $f(z)$ is holomorphic, $\log(|f(z)|^2)$ is harmonic and we compute
	\begin{align*}
		\bar{\partial} \partial \log(h) &= \frac{1}{4} (\partial_x^2 + \partial_y^2)\log(|f(z)|^2 \lambda^{-2}) 2\textbf{i} \,dx \wedge dy\\ 
																		&= -\textbf{i}(\partial_x^2 + \partial_y^2) \log(\lambda)\, dx \wedge dy \\
																		&= 2 \textbf{i} K_J \rho.
		\end{align*}
This shows
		\begin{align} \label{geoJQeq3} K_J\rho = -\frac{\textbf{i}}{2} \bar{\partial} \partial \log(h) .\end{align}
Plugging (\ref{geoJQeq2}) and (\ref{geoJQeq3}) into (\ref{hk.Meq1}) and using $\bar{\partial} \sigma = 0$ yields 
\begin{align*} 
	 \underline{\mu}_1(J,\sigma) 
			&= -\textbf{i} \frac{\bar{\partial} h \wedge \partial h}{2h \sqrt{1-h}} + \textbf{i} \sqrt{1 -h} \bar{\partial} \partial \log(h) - 2\textbf{i} \bar{\partial} \partial \sqrt{1-h} + 2c\rho \\
			&= -\textbf{i} \left[ -\bar{\partial} \sqrt{1-h} \wedge \partial \log(h) + \sqrt{1-h}\bar{\partial} \partial \log(h) - 2 \bar{\partial} \partial \sqrt{1-h} \right] + 2c\rho\\
					&= \textbf{i} \bar{\partial}\left[ \sqrt{1-h} \partial \log(h) - 2 \partial \sqrt{1-h} \right] + 2c\rho
\end{align*}
where the last equation follows from integration by parts. A primitive for the inner expression is given by
			\begin{align*}
				\sqrt{1-h} \partial \log(h) - 2 \partial \sqrt{1-h} 
									= \frac{1}{h\sqrt{1-h}} \partial h
									= -2 \partial \log(1 + \sqrt{1-h}) + \partial \log(h)
			\end{align*}		
Plugging this into the calculation above and using (\ref{geoJQeq3}) then yields
\begin{align*}
	\underline{\mu}_1(J,\sigma) &= \textbf{i}\bar{\partial} \partial \log(h) - 2\textbf{i} \bar{\partial} \partial \log(1 + \sqrt{1-h}) + 2c\rho\\
															&= -2K_J\rho - \Delta (\log(1 + \sqrt{1-h})) \rho + 2c\rho
	\end{align*}
where $\Delta = d^*d$ is the positive Laplacian which satisfies $(\Delta h)\rho = 2\textbf{i} \bar{\partial} \partial h$.
\end{proof}

\subsubsection{Metric description of the moduli space}

Denote by $\textrm{Met}(\Sigma)$ the space of Riemannian metrics on $\Sigma$. For every $g \in \textrm{Met}(\Sigma)$ there exists a unique complex structure $J = J_g \in \cJ(\Sigma)$ which is compatible with $g$. In the following, we always refer to this complex structure, when discussing holomorphic objects on $(\Sigma, g)$. Define
\begin{align} \label{moduli:QMg}
	\cM_d := \left\{ (g,\sigma)\,\left|\, \begin{array}{c} g\in \textrm{Met}(\Sigma), \, \sigma \in Q(g), \, |\sigma| < 1 \\ \bar{\partial} \sigma = 0,\, K_g + \frac{1}{2}\Delta \log(1 + \sqrt{1- |\sigma|^2}) = c \end{array}\right.\right\}\bigg/ \textrm{Diff}_0(\Sigma).
\end{align}
Proposition \ref{hk.Prop:SimplifyEq} and standard Moser isotopy arguments show that the canonical inclusion $\cM_s \rightarrow \cM_d$ is an isomorphism, where $\cM_s$ is defined by (\ref{moduli:QMb}). The next proposition provides a simpler description of this moduli space.

\begin{Proposition}\label{PropRescaling}
Consider on the space of pairs $(g,\sigma)$ with $|\sigma|_g < 1$ the self-map 
	$$(g,\sigma) \mapsto \left(\left(1 + \sqrt{1 - |\sigma|_g^2}\right)\cdot g, \,\sigma \right).$$ 
This induces a well-defined isomorphism between $\cM_d$ and
\begin{align} \label{moduli:gs}
					\cM := \left\{ (g,\sigma) \,\left|\, \begin{array}{c}g \in \textrm{Met}(\Sigma), \, \sigma \in Q(g),\, |\sigma| < 1 \\  \bar{\partial} \sigma = 0,\, K_g - \frac{c}{2}|\sigma|^2 = \frac{c}{2}\end{array}\right.\right\} \bigg/ \text{Diff}_0(\Sigma)
\end{align}
where $c := 2\pi(2-2\textrm{genus}(\Sigma))/\textrm{vol}(\Sigma,\rho)$.
\end{Proposition}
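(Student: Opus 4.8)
The plan is to exhibit the stated self-map as an explicit change of conformal representative of $g$, and to write down its inverse by hand. Three preliminary facts are used throughout: (i) in real dimension two the compatible complex structure $J_g$ depends only on the conformal class of $g$, so $Q(g)=Q(J_g)$ and the operator $\bar{\partial}_{J_g}$ — hence the condition $\bar{\partial}\sigma=0$ — are unchanged when $g$ is rescaled by a positive function; (ii) if $\tilde g=e^{2u}g$ with $u\in C^\infty(\Sigma)$, then the Gaussian curvatures satisfy $K_{\tilde g}=e^{-2u}(K_g+\Delta_g u)$, where $\Delta_g=d^*d$ is the \emph{positive} Laplacian, the same convention fixed in Proposition \ref{hk.Prop:SimplifyEq}; (iii) a quadratic differential $\sigma$ obeys $|\sigma|_{\tilde g}^2=e^{-4u}|\sigma|_g^2$. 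Since $|\sigma|_g<1$ forces $w:=\sqrt{1-|\sigma|_g^2}\in(0,1]$, the function $1+w$ is smooth, positive and bounded in $(1,2]$, so $(1+w)g$ is again a Riemannian metric and the map is well defined on pairs; being built from metric-natural operations it is $\textrm{Diff}(\Sigma)$-equivariant and hence descends to the quotients by $\textrm{Diff}_0(\Sigma)$.

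I would first check that the map sends a pair satisfying the defining equations of $\cM_d$ to one satisfying those of $\cM$. Set $\tilde g:=(1+w)g=e^{2u}g$ with $u=\tfrac12\log(1+w)$. By (i), $\bar{\partial}\sigma=0$ is preserved. By (iii), $|\sigma|_{\tilde g}^2=(1+w)^{-2}(1-w^2)=\frac{1-w}{1+w}\in[0,1)$, so in particular $|\sigma|_{\tilde g}<1$. By (ii) together with the $\cM_d$-equation $K_g+\tfrac12\Delta_g\log(1+w)=c$ one gets $K_{\tilde g}=(1+w)^{-1}\bigl(K_g+\tfrac12\Delta_g\log(1+w)\bigr)=\frac{c}{1+w}$, and therefore $K_{\tilde g}-\tfrac{c}{2}|\sigma|_{\tilde g}^2=\frac{c}{1+w}-\frac{c}{2}\cdot\frac{1-w}{1+w}=\frac{c}{2(1+w)}\bigl(2-(1-w)\bigr)=\frac{c}{2}$, which is exactly the defining equation of $\cM$.

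For the inverse, given $(\tilde g,\sigma)$ I would seek a positive function $\psi$ with $g:=\psi^{-1}\tilde g$ and $\psi=1+\sqrt{1-|\sigma|_g^2}$. Since $|\sigma|_g^2=\psi^2|\sigma|_{\tilde g}^2$, this is the pointwise algebraic equation $(\psi-1)^2=1-\psi^2|\sigma|_{\tilde g}^2$, whose unique solution with $\psi\neq0$ is $\psi=\frac{2}{1+|\sigma|_{\tilde g}^2}$; for $(\tilde g,\sigma)\in\cM$ this lies in $(1,2]$ and is smooth. Then $g:=\tfrac12(1+|\sigma|_{\tilde g}^2)\tilde g$ is a metric with $|\sigma|_g^2=\psi^2|\sigma|_{\tilde g}^2=2\psi-\psi^2$, so $\sqrt{1-|\sigma|_g^2}=|\psi-1|=\psi-1$ and $|\sigma|_g<1$; hence $1+\sqrt{1-|\sigma|_g^2}=\psi$, so applying the forward map to $(g,\sigma)$ recovers $(\tilde g,\sigma)$. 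Conversely, starting from $(g,\sigma)\in\cM_d$, the forward map gives $|\sigma|_{\tilde g}^2=\frac{1-w}{1+w}$, hence $1+|\sigma|_{\tilde g}^2=\frac{2}{1+w}$ and the backward map returns $(1+w)^{-1}\tilde g=g$; so the two maps are mutually inverse on pairs. Finally, the backward image of $(\tilde g,\sigma)\in\cM$ lies in $\cM_d$: rewriting (ii) as $K_g+\Delta_g u=e^{2u}K_{\tilde g}=\psi K_{\tilde g}$ with $e^{2u}=\psi=1+\sqrt{1-|\sigma|_g^2}$ gives $K_g+\tfrac12\Delta_g\log\bigl(1+\sqrt{1-|\sigma|_g^2}\bigr)=\psi K_{\tilde g}$, and the $\cM$-equation yields $K_{\tilde g}=\tfrac{c}{2}(1+|\sigma|_{\tilde g}^2)=\tfrac{c}{2}\cdot\tfrac{2}{\psi}$, so $\psi K_{\tilde g}=c$, as required. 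Equivariance under $\textrm{Diff}_0(\Sigma)$ then promotes these to mutually inverse maps $\cM_d\cong\cM$.

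The argument is entirely elementary; there is no genuine obstacle, and the only points demanding care are (a) keeping the sign conventions for $\Delta_g$ and for the conformal change of $K_g$ consistent with those already fixed in Proposition \ref{hk.Prop:SimplifyEq}, and (b) verifying that the conformal factors $1+w$ and $\tfrac{2}{1+|\sigma|_{\tilde g}^2}$ stay in $(1,2]$, so that one never leaves the space of honest metrics nor the open condition $|\sigma|<1$ in either direction. The identification $\cM_s\cong\cM_d$ used beforehand is the one already supplied by Proposition \ref{hk.Prop:SimplifyEq} and the standard Moser isotopy argument, so nothing further is needed there.
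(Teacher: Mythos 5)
Your proposal is correct and follows essentially the same route as the paper: the same algebraic identity $(f-1)^2=1-|\sigma|_{g_0}^2$ yielding $1/f=\tfrac12(1+|\sigma|_g^2)$ for the inverse conformal factor, and the same conformal transformation law for the Gaussian curvature. You are in fact slightly more careful than the printed proof, which leaves the final substitution of the two defining equations implicit and contains a couple of typographical slips in the formulas for $g_0$ and $|\sigma|_{g_0}$ that your version states correctly.
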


\begin{proof}
Let $g_0 \in \textrm{Met}(\Sigma)$, let $\sigma \in Q(g_0)$ with $|\sigma|_{g_0} < 1$ and define 
		$$f := 1 + \sqrt{1 - |\sigma|_{g_0}^2},\qquad g := f g_0.$$ 
Then $|\sigma|_{g} = |\sigma|_{g_0}/ f < 1$. For the converse direction, use the relation $(f - 1)^2 = 1 - |\sigma|_{g_0}^2$ to obtain
		\begin{align} \label{finv}
			\frac{1}{f} = \frac{1 + |\sigma|_{g_0}^2/f^2}{2} = \frac{1 + |\sigma|_{g}^2}{2}.
		\end{align}
It follows that one can recover $g_0$ from $(g,\sigma)$ via $g_0 = 2 |\sigma|_{g}/(1 + |\sigma|_{g}^2) \cdot g$. In particular
		$$|\sigma|_{g_0} = \frac{2|\sigma|_g}{1 + |\sigma|_g}.$$
and this shows that $|\sigma|_{g_0} < 1$ if and only if $|\sigma|_g < 1$.
The Gaussian curvature changes under the conformal change to
			$$K_g = \frac{1}{f}\left(K_{g_0} + \frac{1}{2} \Delta_{g_0} \log(f) \right)$$
and (\ref{finv}) then yields
		\begin{align*}
				K_{g} = \frac{1 + |\sigma|_g^2}{2} \left(K_{g_0} + \frac{1}{2} \Delta_{g_0} \log(f) \right).
		\end{align*}
This proves the identification of $\cM$ with $\cM_d$ and the proposition.
\end{proof}

\section{Three geometric models for the moduli space}
We assume throughout this section that $\text{genus}(\Sigma) \geq 2$ and that
			$V := \text{vol}(\Sigma, \rho)  = \pi(2\text{genus}(\Sigma) - 2)$.
The moduli space (\ref{moduli:gs}) constructed in the previous section is then given by
	\begin{align} \label{moduli:HKMJ}
			\mathcal{M} := \left\{ (g,\sigma) \,\left|\, \begin{array}{c} g \in \textrm{Met}(\Sigma), \, \sigma \in Q(g), \, |\sigma| < 1 \\ \bar{\partial} \sigma = 0,\, K_g + |\sigma|_g^2 = -1\end{array}\right.\right\} \bigg/ \text{Diff}_0(\Sigma)
	\end{align}		
It follows from the construction in the previous section that $\cM$ carries a natural hyperkähler structure. The purpose of this section is to establish the following three geometric description of this moduli space proposed by Donaldson \cite{Donaldson:2000}. 

	\begin{enumerate}
		\item $\cM$ embeds as an open neighbourhood of the zero section into the cotangent bundle of Teichmüller space $\mathcal{T}(\Sigma)$. The hyperkähler metric on $\cM$ can then be viewed as the Feix--Kaledin extension of the Weil--Petersson metric on $\mathcal{T}(\Sigma)$.
		
		\item $\cM$ parametrizes the class of almost-Fuchsian hyperbolic $3$-manifolds. These are quasi-Fuchsian $3$-manifolds which possess an incompressible minimal surface with principal curvatures in $(-1,1)$. This surface is then unique and its area provides a Kähler potential for the hyperkähler metric.
		\item $\cM$ embeds as an open subset into the smooth locus of the $\SL(2,\C)$ repre\-sentation variety $\cR_{\textrm{SL}(2,\C)}(\Sigma) := \textrm{Hom}\left(\pi_1(\Sigma), \SL(2,\C) \right) / \SL(2,\C)$. The natural complex structure in this picture corresponds to the second complex structure in the first picture and the Goldman holomorphic symplectic form on $\cR_{\textrm{SL}(2,\C)}(\Sigma)$ (see \cite{Goldman:2004}) restricts to $\underline{\omega}_1 - \textbf{i} \underline{\omega}_3$ along the moduli space $\cM$.
	\end{enumerate}

Following ideas of Uhlenbeck \cite{Uhlenbeck:1983}, we show that there is a natural construction which associates to $[g,\sigma] \in \cM$ a complete hyperbolic $3$-manifold. The isomorphism between $\cM$ and the almost-Fuchsian moduli space is then given in Theorem \ref{Thm:MtoAF}. Next, following Hodge \cite{Hodge:PhD} we describe an explicit embedding of $\cM$ into $\cT(\Sigma) \times \overline{\cT(\Sigma)}$ in Theorem \ref{Thm:MtoAF}. This map is not surjective and both maps are related by the simultaneous uniformization theorem of Bers \cite{Bers:1960}, stated in Theorem \ref{Thm.SimulUnif}. Moreover, one can express every complete hyperbolic $3$-manifold as quotient of hyperbolic space $\H^3$. This gives rise to a natural embedding of the almost-Fuchsian moduli space into $\cR_{\textrm{PSL}(2,\C)}(\Sigma)$ which lifts to $\cR_{\textrm{SL}(2,\C)}(\Sigma)$. Theorem \ref{Thm:MtoR} was outlined by Donaldson \cite{Donaldson:2000} and describes the latter embedding in terms of the non-abelian Hodge correspondence and the theory of Higgs bundles \cite{Hitchin:1987}. Finally, we recall in Theorem \ref{Thm:MtoCotT} a well-known result of Uhlenbeck \cite{Uhlenbeck:1983} which states that the natural map of $\cM$ into $T^*\cT(\Sigma)$ is an embedding and show in Theorem \ref{Thm.FKhk} that the induced metric is indeed the Feix--Kaledin hyperkähler extension of the Weil--Petersson metric.

\subsection{Germs of hyperbolic 3-manifolds and almost-Fuchsian metrics}

Let $g \in \textrm{Met}(\Sigma)$ and $\sigma \in Q(g)$ be a quadratic differential compatible with the conformal structure determined by $g$. The next lemma shows that the equations
		\begin{align} \label{eq:hGerm} K_g + |\sigma|^2 = -1, \qquad \bar{\partial} \sigma = 0 \end{align}
are closely related to the curvature equations of a hyperbolic $3$-manifolds $(Y,g^Y)$ along a minimal surfaces $\Sigma \subset Y$. 

\begin{Lemma} \label{Lemma:QF1}
Let $(Y,g^Y)$ be a Riemannian $3$-manifold and let $(\Sigma,g) \subset (Y, g^Y)$ be an isometrically embedded minimal surface with second fundamental form $h$. Then there exists exists a unique quadratic differential $\sigma \in Q(g)$ with $h = \text{Re}(\sigma)$ and the following is satisfied:

\begin{enumerate}
\item $\sigma$ is holomorphic if and only if
				$$R_z^Y(u,v)w \in T_z\Sigma \qquad \text{for all $z \in \Sigma$ and $u,v,w \in T_z \Sigma$} $$ 
where $R^Y$ denotes the curvature tensor of the ambient manifold $Y$.

\item The intrinsic and extrinsic curvature along $\Sigma$ are related by
		$$\frac{\langle R^Y(u,v)v, u \rangle_{g^Y}}{|u|_{g}^2 |v|_{g}^2 - \langle u,v \rangle_{g}^2} = K_g + |\sigma|_g^2 \qquad \text{for all $u,v \in \textrm{Vect}(\Sigma)$}$$
where $K_g$ denotes the Gaussian curvature of $(\Sigma,g)$.
		
\end{enumerate}
\end{Lemma}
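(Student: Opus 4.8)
The plan is to work entirely in terms of the Gauss and Codazzi equations for the isometric embedding $(\Sigma,g)\subset(Y,g^Y)$, using that $\Sigma$ is minimal. First I would set up the second fundamental form: since $\Sigma$ is a surface, $h$ is a symmetric $2$-tensor, and the minimality condition $\operatorname{tr}_g h = 0$ together with the fact that a traceless symmetric $2$-tensor on a Riemann surface is exactly the real part of a quadratic differential (this is the standard decomposition $S^2(T^*\Sigma) = (S^2(T^*\Sigma)\otimes\C)^{\mathrm{real}}$, i.e. the map $\sigma\mapsto\operatorname{Re}(\sigma)$ from $Q(g)$ onto traceless symmetric tensors is a real-linear isomorphism) gives the unique $\sigma\in Q(g)$ with $h=\operatorname{Re}(\sigma)$. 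For part (1), I would write the Codazzi equation, which states that $(\nabla^\Sigma_u h)(v,w) - (\nabla^\Sigma_v h)(u,w) = \langle R^Y(u,v)w, \nu\rangle_{g^Y}$ for the unit normal $\nu$. In a local holomorphic coordinate $z$ write $\sigma = f\,dz^2$; then a direct computation shows the left-hand side is (a nonzero multiple of) $\operatorname{Re}(\bar\partial f)$ paired appropriately, so the Codazzi tensor vanishes identically iff $\bar\partial\sigma = 0$. On the other hand, in dimension three the ambient curvature tensor is determined by the Ricci tensor, and the condition $R^Y(u,v)w\in T_z\Sigma$ for all tangent $u,v,w$ is equivalent to the vanishing of all the mixed components $\langle R^Y(u,v)w,\nu\rangle$; matching these two characterizations of the same quantity gives part (1).

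For part (2) I would invoke the Gauss equation: for $u,v\in T_z\Sigma$,
\[
\langle R^Y(u,v)v,u\rangle_{g^Y} = \langle R^\Sigma(u,v)v,u\rangle_g - \big(h(u,u)h(v,v) - h(u,v)^2\big).
\]
Dividing by $|u|_g^2|v|_g^2 - \langle u,v\rangle_g^2$, the first term on the right is by definition the Gaussian curvature $K_g$. It remains to identify
\[
-\frac{h(u,u)h(v,v)-h(u,v)^2}{|u|_g^2|v|_g^2-\langle u,v\rangle_g^2}
\]
with $|\sigma|_g^2$. Here I would use that $h$ is traceless: choosing an orthonormal basis $e_1,e_2=Je_1$, tracelessness forces $h(e_1,e_1)=-h(e_2,e_2)$, so $\det h = h(e_1,e_1)h(e_2,e_2)-h(e_1,e_2)^2 = -\big(h(e_1,e_1)^2+h(e_1,e_2)^2\big)$, which is precisely $-|\operatorname{Re}(\sigma)|_g^2 = -|\sigma|_g^2$ for the pointwise norm on quadratic differentials as normalized earlier in the paper (the norm on $Q(J)$ in \eqref{Q(J):structures}, transported to $\Sigma$ via $\rho(\cdot,J\cdot)$). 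Since the expression $\det h / \det g$ is basis-independent, this holds for every $u,v$, giving part (2).

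The main obstacle I anticipate is purely bookkeeping: getting the normalizing constants exactly right in both parts — the factor relating $\bar\partial$ of the coefficient function to the Codazzi tensor, and the factor relating $|\sigma|_g^2$ (as defined by the Hermitian structure \eqref{Q(J):structures} pushed to $\Sigma$) to $|\operatorname{Re}(\sigma)|_g^2$ and hence to $-\det h/\det g$. To handle this cleanly I would fix an isothermal coordinate $z = x+\textbf{i}y$ with $g = \lambda\,(dx^2+dy^2)$, write $\sigma = f\,dz^2$, and compute both sides as explicit functions of $\lambda$ and $f$; the invariance of the quantities then lets me conclude in general. The conceptual content — Gauss for (2), Codazzi for (1), plus the linear algebra of traceless symmetric forms on a surface — is routine; only the constant-chasing needs care, and it is exactly the kind of local computation the paper defers with "this can be verified in a local holomorphic chart."
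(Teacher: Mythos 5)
Your proposal is correct and follows essentially the same route as the paper: the linear-algebra identification of traceless symmetric $2$-tensors with real parts of quadratic differentials for existence and uniqueness of $\sigma$, the Mainardi--Codazzi equation reducing in an isothermal chart to the Cauchy--Riemann equations for the coefficient of $\sigma$ in part (1), and the Gauss equation together with $-\det h/\det g = |\sigma|_g^2$ (forced by tracelessness) in part (2). The constant-chasing you defer to a local computation in a chart $g=\lambda(dx^2+dy^2)$, $\sigma=f\,dz^2$ is exactly how the paper's proof proceeds.
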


\begin{proof}
Let $z \in \Sigma$ and choose conformal coordinates $(x,y)$ in a neighborhood of $z$. In these coordinates $h$ can be written as
			$$h(x,y) = h_{11}(x,y) dx^2 + h_{22}(x,y) dy^2 + 2 h_{12}(x,y) dx dy.$$
Since $\Sigma \subset Y$ is minimal, its mean curvature vanishes and thus $h_{11} = - h_{22}$. Define a quadratic differential by
			\begin{align} \label{eq:hsigma} \sigma(x,y) = \left(h_{11}(x,y) - \textbf{i} h_{12}(x,y) \right) dz^2. \end{align}
This satisfies $h = \textrm{Re}(\sigma)$ and, since the expression is conformally invariant, it defines a quadratic differential on $\Sigma$.

We prove 1. For $z \in \Sigma$ denote by $\Pi(z) : T_zY \rightarrow T_z\Sigma$ the orthogonal projection determined by $g^Y$. The Mainardi-Codazzi equation yields for $u,v,w \in \textrm{Vect}(\Sigma)$:
	\begin{align*}
			\left(R_z^Y(u,v)w\right)^{\perp} 
			:&= (\mathds{1} - \Pi(z)) R^Y_z(u,v)w \\
			&= (\nabla_u^{\Sigma} h)_z(v,w) - (\nabla^{\Sigma}_v h)_z(u,w) \\
			&= \mathcal{L}_u h(v,w) - \mathcal{L}_v h(u,w) + h([u,v],w) + h(u,\nabla_v^{\Sigma} w) - h(v, \nabla_u^{\Sigma} w) \\
			&= \mathcal{L}_u h(v,w) - \mathcal{L}_v h(u,w) + \left(\mathds{1} - \Pi(z)\right) R_z^{\Sigma}(u,v) w \\
			&= \mathcal{L}_u h(v,w) - \mathcal{L}_v h(u,w).
	\end{align*}
In a conformal chart around $z$ the equation $\left(R^Y(u,v)w\right)^{\perp} = 0$ is thus equivalent to
				$$\partial_1 h_{22}(x,y) = \partial_2 h_{12}(x,y), \qquad \partial_2 h_{11}(x,y) = \partial_1 h_{12}(x,y).$$
Since $h_{11} = -h_{22}$, these are the Cauchy--Riemann equations for the function $h_{11}(x,y) - \textbf{i}h_{12}(x,y)$ and hence equivalent to holomorphicity of $\sigma$.

We prove 2. The Gauss-Codazzi equation yields for $u,v \in \textrm{Vect}(\Sigma)$
	\begin{align} \label{GaussCodazziEqn} \langle R^Y(u,v)v , u \rangle_{g^Y}  = \langle R^{\Sigma}(u,v)v , u \rangle_g - h(u,u)h(v,v) - h(u,v)^2.\end{align}
For a unit vector field $u \in \textrm{Vect}(\Sigma)$ with $|u|_g = 1$ it holds
		$$|\sigma|_g^2 = \textrm{Re}(\sigma(u))^2 + \textrm{Im}(\sigma(u))^2 = - h(u,u) h(Ju, Ju) - h(u,Ju)^2$$
and $\langle R^{\Sigma}(u,Ju)Ju , u \rangle_g = K_g$. Inserting these into (\ref{GaussCodazziEqn}) shows
		$$\langle R^Y(u,Ju)Ju , u \rangle  = K_g + |\sigma|_g^2.$$
Hence $K_g + |\sigma|_g^2$ agrees with the sectional curvature of $T_z\Sigma \subset T_zY$.
\end{proof}

Uhlenbeck \cite{Uhlenbeck:1983} and Taubes \cite{Taubes:2004} observed independently that any solution $(g,\sigma)$ of (\ref{eq:hGerm}) determines a unique hyperbolic metric on a tubular neighborhood $Y_{loc} := \Sigma\times (-\epsilon,\epsilon)$. We use the exponential map to identify $Y_{loc}$ with a subset of the normal bundle of $\Sigma$. The hyperbolic metric has then the product form
		$$g^Y(z,t) = \left(\begin{array}{cc} g_t(z) & 0 \\ 0 & 1 \end{array}\right)$$
where $g_0 = g$ and $\Sigma \times \{0\} \subset Y$ is a minimal surface with second fundamental form $\textrm{Re}(\sigma)$. This is a considerably stronger statement then Lemma \ref{Lemma:QF1} above, which follows from a lengthy calculation using the Bianchi identities.
Moreover, Uhlenbeck \cite{Uhlenbeck:1983} showed under the additional pointwise constraint $|\sigma|_g < 1$ that the hyperbolic metric extends to a complete almost-Fuchsian hyperbolic metric on $Y := \Sigma \times \R$ (see Theorem \ref{Thm:MtoAF} below).

\begin{Definition}[\textbf{Almost-Fuchsian metrics}] \label{Def:AF}
We call a complete hyperbolic metric $g^Y$ on $Y := \Sigma \times \R$ almost-Fuchsian when it has the product shape
		$$g^Y(z,t) = \left(\begin{array}{cc} g_t(z) & 0 \\ 0 & 1 \end{array}\right)$$
with $g_t \in \textrm{Met}(\Sigma)$ and such that $\Sigma \times \{0\} \subset Y$ is a minimal surface with principal curvatures in $(-1,1)$. Denote by $\mathcal{AF}(\Sigma)$ the space of all almost-Fuchsian metrics.
\end{Definition}

An almost-Fuchsian manifold is a hyperbolic $3$-manifolds $Y$ which is isometric to $\Sigma\times \R$ equipped with an almost-Fuchsian metric. The work of Uhlenbeck \cite{Uhlenbeck:1983} shows that a complete hyperbolic $3$-manifold $Y$ is almost-Fuchsian if and only if it admits a minimal and incompressible embedding $\iota: \Sigma \hookrightarrow Y$ with principal curvatures in $(-1,1)$. The next theorem provides an explicit isomorphism between the moduli space $\cM$ and $\mathcal{AF}(\Sigma)/\textrm{Diff}_0(\Sigma)$.

\begin{Theorem} [\textbf{Uhlenbeck \cite{Uhlenbeck:1983}}]\label{Thm:MtoAF}
Let $g \in \textrm{Met}(\Sigma)$ and $\sigma \in Q(g)$ satisfy the equations
		\begin{align} \label{eq:hGermQF} K_g + |\sigma|^2 = -1, \qquad \bar{\partial} \sigma = 0, \qquad |\sigma|_g < 1. \end{align}
For every such pair we define an almost-Fuchsian metric by
		\begin{align} \label{QF.met} 
			g^Y = g^Y_{g,\sigma} = \left(\begin{array}{cc} g \left( \cosh(t) \mathds{1} - \sinh(t) g^{-1} \textrm{Re}(\sigma) \right)^2 & 0 \\ 0 & 1 \end{array}\right).
		\end{align}
This is the unique almost-Fuchsian metric which restricts to $g$ along $\Sigma\times\{0\}$ and such that $\textrm{Re}(\sigma)$ is the second fundamental form of $\Sigma\times\{0\} \subset Y$. In particular, 
	\begin{align} \label{Map:MtoAF}
		\cM \stackrel{\cong}{\longrightarrow} \mathcal{AF}(\Sigma)/\textrm{Diff}_0(\Sigma),\qquad [g,\sigma] \mapsto [g^Y_{g,\sigma}]
	\end{align}
defines an isomorphism of the two moduli spaces.
\end{Theorem}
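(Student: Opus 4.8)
The plan is to construct the metric $g^Y$ explicitly from the candidate formula \eqref{QF.met}, verify that it is hyperbolic and almost-Fuchsian, and then establish that the assignment $[g,\sigma] \mapsto [g^Y_{g,\sigma}]$ is a well-defined bijection. First I would set up the ansatz: write $A := \mathds{1} - g^{-1}\mathrm{Re}(\sigma)$, viewed as a $g$-self-adjoint endomorphism of $T\Sigma$, and let $S(t) := \cosh(t)\mathds{1} - \sinh(t)\,g^{-1}\mathrm{Re}(\sigma)$, so that $g_t := g(S(t)\cdot, S(t)\cdot)$. One checks that $S(t)$ satisfies the linear ODE $\dot S(t) = g^{-1}\mathrm{Re}(\sigma)'\cdot$ — more precisely $S'(t) = \sinh(t)\mathds{1} - \cosh(t)\,g^{-1}\mathrm{Re}(\sigma)$ — so $S$ solves a Riccati-type evolution whose shape matches the Gauss–Codazzi flow for an equidistant family of surfaces foliating a hyperbolic $3$-manifold. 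The key computation is to plug $g^Y(z,t) = g_t(z) \oplus 1$ into the condition $\mathrm{Sec}(g^Y) \equiv -1$ and show it reduces exactly to the two equations $K_g + |\sigma|^2 = -1$ and $\bar\partial\sigma = 0$ together with the fact that $\mathrm{Re}(\sigma)$ is trace-free (automatic since $\sigma$ is a quadratic differential); Lemma \ref{Lemma:QF1} already packages the relevant intrinsic/extrinsic curvature identities, so I would invoke it rather than redo that algebra, supplemented by the Riccati equation for the shape operator along the normal flow. Checking $\bar\partial\sigma = 0$ enters precisely as the Codazzi equation for the foliation, via part 1 of Lemma \ref{Lemma:QF1}.

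Having verified that $g^Y_{g,\sigma}$ is hyperbolic on $\Sigma\times\R$, the next step is to show it is \emph{complete} and almost-Fuchsian in the sense of Definition \ref{Def:AF}. Completeness follows because the condition $|\sigma|_g < 1$ forces the principal curvatures $\lambda_1, \lambda_2$ of $\Sigma\times\{0\}$ to lie in $(-1,1)$ — indeed $\lambda_1\lambda_2 = -|\sigma|^2 > -1$ and $\lambda_1 + \lambda_2 = 0$, so $|\lambda_i| = |\sigma| < 1$ — and then $S(t)$ is invertible and positive-definite for \emph{all} $t\in\R$ (its eigenvalues are $\cosh(t) \mp \lambda_i\sinh(t)$, which stay positive since $|\lambda_i|<1$), so $g_t$ is a genuine metric for all $t$ and one estimates the length of divergent curves to be infinite. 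The surface $\Sigma\times\{0\}$ is incompressible because $\pi_1(\Sigma) \to \pi_1(Y)$ is an isomorphism ($Y \cong \Sigma\times\R$). That it is the \emph{unique} such metric is the statement that a hyperbolic $3$-manifold is determined near a surface by the first and second fundamental forms (Gauss–Codazzi + the fundamental theorem of hypersurface theory, plus uniqueness of analytic continuation / completeness), combined with Uhlenbeck's observation that under $|\sigma|<1$ the local germ extends uniquely to a complete metric — I would cite \cite{Uhlenbeck:1983} for this and only sketch the idea.

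For the final assertion that \eqref{Map:MtoAF} is an isomorphism of moduli spaces, I would argue in three parts. Well-definedness on the quotient: a diffeomorphism $\phi\in\mathrm{Diff}_0(\Sigma)$ sends $(g,\sigma)$ to $(\phi^*g, \phi^*\sigma)$ and correspondingly $g^Y_{g,\sigma}$ to $(\phi\times\mathrm{id})^* g^Y_{g,\sigma}$, which lies in the same $\mathrm{Diff}_0(\Sigma\times\R)$-orbit, so the map descends. Injectivity: if $g^Y_{g_1,\sigma_1}$ and $g^Y_{g_2,\sigma_2}$ are isometric via a diffeomorphism isotopic to the identity, then by Lemma \ref{Lemma:Ymin} (uniqueness of the minimal surface in an almost-Fuchsian manifold) the isometry must carry the $t=0$ slice to the $t=0$ slice, hence restricts to a diffeomorphism of $\Sigma$ matching $(g_1,\sigma_1)$ with $(g_2,\sigma_2)$; tracking the isotopy shows it lies in $\mathrm{Diff}_0(\Sigma)$. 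Surjectivity: given an almost-Fuchsian metric $g^Y$ on $\Sigma\times\R$, its $t=0$ slice is by definition a minimal surface with principal curvatures in $(-1,1)$; letting $g$ be the induced metric and $\sigma$ the quadratic differential with $\mathrm{Re}(\sigma)$ equal to its second fundamental form (constructed as in the proof of Lemma \ref{Lemma:QF1}), the Gauss equation gives $K_g + |\sigma|^2 = -1$ and the Codazzi equation gives $\bar\partial\sigma = 0$, while $|\sigma|_g<1$ is the principal-curvature hypothesis; the uniqueness part then forces $g^Y = g^Y_{g,\sigma}$, so $[g^Y]$ is in the image. The main obstacle I anticipate is the explicit verification that the ansatz \eqref{QF.met} does solve the hyperbolic equation — i.e. confirming the exact coefficients $\cosh(t), \sinh(t)$ — and the careful completeness/extension argument; both are essentially due to Uhlenbeck, so the honest thing is to reproduce the short conceptual computation (Riccati equation for the shape operator of equidistant surfaces, which forces the $\cosh/\sinh$ fundamental system) and cite \cite{Uhlenbeck:1983} for the extension-to-completeness step rather than reprove it in full.
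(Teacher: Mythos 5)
Your proposal is correct, but note that the paper itself does not prove this theorem at all: its ``proof'' is the single sentence that the statement is a reformulation of Theorem 3.3 in \cite{Uhlenbeck:1983}. What you have written is a reconstruction of Uhlenbeck's own argument, and it is the right one. The decisive points are all present: the eigenvalues of the shape operator of the minimal slice are $\pm|\sigma|_g$ (since $\mathrm{Re}(\sigma)$ is trace-free and $\det$ of the shape operator is $-|\sigma|_g^2$), so $|\sigma|_g<1$ is exactly the almost-Fuchsian condition and also exactly what makes $S(t)=\cosh(t)\mathds{1}-\sinh(t)\,g^{-1}\mathrm{Re}(\sigma)$ invertible for all $t$, i.e.\ the normal exponential map has no focal points and the Fermi-coordinate metric \eqref{QF.met} is defined and complete on all of $\Sigma\times\R$; the Gauss and Codazzi equations for the foliation reduce to $K_g+|\sigma|_g^2=-1$ and $\bar\partial\sigma=0$ via Lemma \ref{Lemma:QF1}; injectivity of \eqref{Map:MtoAF} uses the uniqueness of the minimal surface from Lemma \ref{Lemma:Ymin}; and surjectivity runs the Gauss--Codazzi argument backwards from Definition \ref{Def:AF}. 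The only blemish is notational: the displayed ``linear ODE'' $\dot S(t)=g^{-1}\mathrm{Re}(\sigma)'\cdot$ is garbled as written (your corrected version is just the derivative of $S$); the substantive fact you want is $S''=S$, i.e.\ $S$ is the fundamental solution of the Jacobi equation for curvature $-1$, equivalently that the shape operator $B_t=-S'(t)S(t)^{-1}$ of the slice $\Sigma\times\{t\}$ satisfies the Riccati equation $B_t'=B_t^2-\mathds{1}$. Compared with the paper, your version buys an actual self-contained argument at the cost of redoing Uhlenbeck's computation; deferring the extension-to-completeness step to \cite{Uhlenbeck:1983}, as you propose, is a reasonable compromise and is effectively what the paper does wholesale.
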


\begin{proof}
This is a reformulation of Theorem 3.3 in \cite{Uhlenbeck:1983}.
\end{proof}

\begin{Lemma} \label{Lemma:Ymin}
Every almost-Fuchsian manifold $Y = (\Sigma \times \R, g^Y)$ contains a unique closed incompressible minimal surface, which is $\Sigma\times\{0\}$.
\end{Lemma}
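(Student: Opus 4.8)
The plan is to verify first that $\Sigma_0:=\Sigma\times\{0\}$ is an incompressible minimal surface, and then to show, by a maximum‑principle argument, that it is the only closed embedded minimal surface in $Y$. Minimality of $\Sigma_0$ is immediate from Theorem~\ref{Thm:MtoAF}: its second fundamental form is $\mathrm{Re}(\sigma)$, which is trace‑free, so the mean curvature vanishes. Incompressibility holds because $Y\cong\Sigma\times\R$ deformation retracts onto $\Sigma_0$, so the inclusion induces an isomorphism $\pi_1(\Sigma_0)\to\pi_1(Y)$.

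For uniqueness I would analyse the foliation of $Y$ by the slices $\Sigma_t:=\Sigma\times\{t\}$. From the product form $g^Y=g_t+dt^2$ the lines $\{z\}\times\R$ are unit‑speed geodesics meeting every $\Sigma_t$ orthogonally, so $\Sigma_t$ is the equidistant surface at signed distance $t$ from $\Sigma_0$, and its shape operator obeys the Riccati equation $\dot S=\mathds{1}-S^2$ valid in a hyperbolic $3$‑manifold. Starting from $g^{-1}\mathrm{Re}(\sigma)$, which is trace‑free with eigenvalues in $(-1,1)$ precisely because $|\sigma|_g<1$, one finds that the principal curvatures of $\Sigma_t$ are $\tanh(t\pm a)$ with $a:=\operatorname{arctanh}|\sigma(z)|_g$, so that pointwise $H(t)=\tanh(t+a)+\tanh(t-a)$ satisfies $H(0)=0$ and $H'(t)>0$. (The same conclusion follows by differentiating $\log\det\bigl(\cosh t\,\mathds{1}-\sinh t\,g^{-1}\mathrm{Re}(\sigma)\bigr)$ in the explicit metric~(\ref{QF.met}).) Consequently, for $t\ne0$ the surface $\Sigma_t$ has nowhere‑vanishing mean curvature whose vector points toward $\Sigma_0$; in particular every slab $\Sigma\times[-T,T]$, $T>0$, is mean‑convex.

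Now let $S\subset Y$ be any closed embedded minimal surface. Since the projection $Y\to\R$ is bounded on the compact set $S$, we have $S\subset\Sigma\times[-T^*,T^*]$ with $T^*:=\max\{|t| : (z,t)\in S\}$. Suppose $T^*>0$; then $S$ meets $\partial\bigl(\Sigma\times[-T^*,T^*]\bigr)$, say at a point $p\in S\cap\Sigma_{T^*}$ (the case $p\in\Sigma_{-T^*}$ being identical), where $S$ lies locally in $\{t\le T^*\}$ and is tangent to $\Sigma_{T^*}$. Comparing the minimal surface $S$ with the strictly mean‑convex leaf $\Sigma_{T^*}$ through $p$ via the strong maximum principle (tangency principle) for the minimal‑surface operator yields a contradiction. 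Hence $T^*=0$, i.e.\ $S\subseteq\Sigma_0$, and since $S$ is a closed surface and $\Sigma_0$ is connected, $S=\Sigma_0$.

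The computations here are routine; the one point requiring care is keeping the sign conventions for the shape operator and the mean‑curvature vector consistent, so that "mean‑convex" genuinely refers to the slab $\Sigma\times[-T,T]$ and not to its complement — this is exactly what makes the tangency comparison cut in the right direction. Everything else reduces to a standard application of the strong maximum principle for the quasilinear elliptic minimal‑surface equation.
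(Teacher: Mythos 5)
Your argument is correct and is essentially the paper's own proof: both compute the mean curvature of the slices $\Sigma\times\{t\}$ from the explicit metric (\ref{QF.met}), observe that it is nonzero with a coherent sign for $t\neq 0$, and conclude by the maximum principle at an extremal slice. Your version merely supplies the details the paper leaves terse -- and your determination that the mean curvature vector of $\Sigma_t$ points \emph{toward} $\Sigma\times\{0\}$ (so the slabs are mean-convex) is the sign needed for the tangency comparison, whereas the paper states the opposite direction.
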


\begin{proof}
By Theorem \ref{Thm:MtoAF}, we may assume that $g^Y$ is given by (\ref{QF.met}). A direct calculation shows that the mean curvature along $\Sigma\times\{t\}$ is
			$$H(z,t) = \frac{2 \cosh(t) \sinh(t) (1 + |\sigma(z)|_g^2) }{\cosh(t)^2 + \sinh(t)^2|\sigma(z)|_g^2}.$$
As a vector, this points in positive $t$ direction for $t > 0$ and in negative $t$ direction for $t < 0$. Hence, by the maximum principle, there exists no bounded minimal surface in $Y$ except $\Sigma\times\{0\}$.
\end{proof}

\subsection{A Kähler potential and quasi-Fuchsian manifolds}

This section begins with a brief recollection of well-known properties of hyperbolic space $\H^3$, quasi-Fuchisan groups and the simultaneous uniformization theorem of Bers. Classical references for this material are \cite{Thurston:Book, Bers:1970}.

Next, we describe work of Hodge \cite{Hodge:PhD} which gives rise to an explicit embedding $\cM \hookrightarrow \cT(\Sigma) \times \overline{\cT(\Sigma)}$ which is equivariant with respect to the natural action of the mapping class group and intertwines the second complex structure on $\cM$ with the canonical complex structure on $\cT(\Sigma) \times \overline{\cT(\Sigma)}$. This map is not surjective and its image can be identified with the space of almost-Fuchsian manifolds.

Finally, we show that the functional, which assigns to every almost-Fuchsian manifold the area of its unique minimal surface, is a Kähler potential for the hyperkähler metric with respect to the standard complex structure obtained from $\cT(\Sigma) \times \overline{\cT(\Sigma)}$.

\subsubsection{Hyperbolic space and Kleinian groups}

\paragraph{The upper half plane model.} The upper half plane model of hyperbolic space is $\H^3 := \C\times \R_{>0}$ endowed with the hyperbolic metric
			$$g^{\H^3}_{(z,y)}\left((\hat{z}_1, \hat{y}_1), (\hat{z}_2, \hat{y}_2)\right)  = \frac{\textrm{Re}(\hat{z}_1)\textrm{Re}(\hat{z}_2) + \textrm{Im}(\hat{z}_1)\textrm{Im}(\hat{z}_2)  + \hat{y}_1 \hat{y}_2}{y^2}.$$
Identify $(z,y) \in \H^3$ with the quaternion $\textrm{Re}(z) + \textbf{i}\textrm{Im}(z) + \textbf{j}y + \textbf{k}\cdot 0$ and define
		$$\textrm{SL}(2,\C) \times \H^3 \rightarrow \H^3,\qquad \begin{pmatrix} a & b \\ c& d \end{pmatrix} (z, y) := (a(z + \textbf{j}y) + b) (c(z + \textbf{j}y) + d)^{-1}.$$		
One readily checks that this action is well-defined, preserves the hyperbolic metric, acts transitively on the unit disc bundle, and  identifies the isometry group of $\H^3$ with $\textrm{PSL}(2,\C)$. 
The boundary at infinity $\partial_{\infty} \H^3$ can be identified with $(\C\times\{0\})\cup\{\infty\} \cong S^2$. It follows from the explicit formula above that isometries on $\H^3$ correspond to conformal automorphism of the boundary. The induced action of $\SL(2,\C)$ on the boundary is the standard action given by Möbius transformations.

\paragraph{Kleinian groups.} A Kleinian group is a discrete subgroup $\Gamma < \textrm{PSL}(2,\C)$. 
The limit set $L_{\Gamma} \subset \partial_{\infty} \H^3$ of a Kleinian group $\Gamma$ is defined as follows: Choose $p \in \H^3$ and denote its orbit by $\Gamma(p) \subset \H^3$. Then $L_{\Gamma} \subset \partial_{\infty} \H^3$ is the set of points which can be approximated in the euclidean topology of the closed ball $\H^3 \cup \partial_{\infty} \H^3$ by sequences contained in the orbit $\Gamma(p)$. One readily checks that this definition does not depend on the choice of $p$. The complement $\Omega_{\Gamma} := \partial_{\infty} \H^3 \backslash L_{\Gamma}$ is called the region of discontinuity. This is the largest open subset of the boundary on which $\Gamma$ acts properly and discontinuously. The Ahlfors finiteness theorem asserts that for a finitely generated Kleinian group the quotient $\Omega_{\Gamma}/\Gamma$ is the disjoint union of finitely many Riemann surfaces with finitely many points removed. The hyperbolic manifold $Y := \H^3/\Gamma$ can thus be viewed as hyperbolic cobordism between these surfaces.

\paragraph{Fuchsian and quasi-Fuchsian groups.} A quasi-Fuchsian group is a Kleinian group $\Gamma < \textrm{PSL}(2,\C)$ whose limit set $L_{\Gamma}$ is a Jordan curve and such that both components of its region of discontinuity $\Omega_{\Gamma} =: D_+ \cup D_-$ are preserved by $\Gamma$. For these groups Marsden \cite{Marden:1974} proved that $\H^3/\Gamma$ is diffeomorphic to $(D_+/\Gamma) \times \R$ and $(\H^3 \cup \Omega_{\Gamma})/\Gamma$ is diffeomorphic to $(D_+/\Gamma) \times [0,1]$. A quasi-Fuchsian manifold is a complete hyperbolic $3$-manifold $Y$ which is isometric to $\H^3/\Gamma$ for some quasi-Fuchsian group $\Gamma$. A Fuchsian group is a quasi-Fuchsian group $\Gamma$ whose limit set $L_{\Gamma}$ is a circle. 

Every Fuchsian group is conjugated to a discrete subgroup of $\textrm{PSL}(2,\R)$ and thus determines a hyperbolic surface $(\Sigma,g) := \H^2/\Gamma$. A direct calculation shows that the Fuchsian hyperbolic $3$-manifold $Y:= \H^3/\Gamma$ is isometric to $\Sigma\times \R$ equipped with the metric
\begin{align} \label{eq:Fmet} g^Y(z,t) = \begin{pmatrix}  \cosh(t)^2 g(z) & 0 \\ 0 & 1 \end{pmatrix}\end{align}
where $\Sigma := \H^2/\Gamma$ and $g \in \textrm{Met}(\Sigma)$ is the induced hyperbolic metric. 

It follows from Definition \ref{Def:AF} that every Fuchsian manifold is almost-Fuchsian, and conversely, that every almost-Fuchsian manifold is quasi-isometric to a Fuchsian manifold. In particular, every almost-Fuchsian manifold is quasi-Fuchsian, since every quasi-isometry of $\H^3$ induces a continuous map on its boundary at infinity. The converse is not true: There are examples of quasi-Fuchsian manifolds which admit more then one minimal surface (see \cite{Wang:2012, HuangWang:2013, Hass:2015}) and these cannot be almost-Fuchsian by Lemma \ref{Lemma:Ymin}.

\subsubsection{Simultaneous uniformization}

An \textit{odd coupled pair} is a triple $(\Sigma_{-}, [f], \Sigma_{+})$ consisting of two closed Riemann surfaces $\Sigma_{\pm}$ and the homotopy class of an orientation reversing diffeomorphism $f: \Sigma_- \rightarrow \Sigma_+$. Two odd coupled pairs $(\Sigma_-, [f], \Sigma^+)$ and $(\tilde{\Sigma}_{-}, [\tilde{f}], \tilde{\Sigma}_{+})$ are called equivalent if there exist biholomorphic maps $h_{-}: \tilde{\Sigma}_{-} \rightarrow \Sigma_{-}$ and $h_+: \tilde{\Sigma}_{+} \rightarrow \Sigma_+$ such that $f$ is homotopic to $h_+\circ \tilde{f} \circ h_{-}^{-1}$. 

Now fix a closed oriented Riemann surface $\Sigma$. It is not hard to see that every odd coupled pair of Riemann surfaces of the same genus as $\Sigma$ is isomorphic to a couple of the form
		$$(\Sigma_-, [f], \Sigma^+) \sim  ( (\Sigma, J_-), [\textrm{id}], (\bar{\Sigma}, J_+))$$
for some complex structures $J_- \in \cJ(\Sigma)$ and $J_+ \in \cJ(\bar{\Sigma})$. More precisely, this gives rise to an identification of the space of odd coupled pairs with the quotient $\cT(\Sigma)\times \cT(\bar{\Sigma}) / \textrm{MCG}(\Sigma)$ where $\textrm{MCG}(\Sigma) = \textrm{Diff}_+(\Sigma)/\textrm{Diff}_0(\Sigma)$ denotes the mapping class group.

Every quasi-Fuchsian group $\Gamma < \textrm{PSL}(2,\C)$ gives rise to an odd coupled pair: The Riemann surfaces $\Sigma_{\pm}$ are the two connected components of $\Omega_{\Gamma}/\Gamma$, where $\Omega_{\Gamma}$ denotes the region of discontinuity on the boundary sphere. Moreover, the fundamental groups $\pi_1(\Sigma_{\pm})$ are canonically isomorphic to $\Gamma$ and hence give rise to an isomorphism $\pi_1(\Sigma_-) \rightarrow \pi_1(\Sigma_+)$. This determines a unique homotopy class $[f]$ by the Dehn--Nielsen--Baer Theorem. The simultaneous uniformization theorem of Bers asserts that this constructions provides a bijection between the moduli space of quasi-Fuchsian groups and odd coupled pairs.

\begin{Theorem}[\textbf{Simultaneous uniformization}, Bers \cite{Bers:1960}] \label{Thm.SimulUnif}
Every odd coupled pair $(\Sigma_-, [f], \Sigma_+)$ is equivalent to one which can be represented by a quasi-Fuchsian group $\Gamma < \textrm{PSL}(2,\C)$, which is uniquely determined up to conjugation. 
\end{Theorem}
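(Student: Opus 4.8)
The plan is to prove Bers' theorem by the classical quasiconformal deformation argument, combining the Fuchsian uniformization of one of the two surfaces with the measurable Riemann mapping theorem of Ahlfors and Bers (see \cite{Bers:1960} and the expositions \cite{Thurston:Book, Bers:1970}). By the normal form recorded above it suffices to realize a pair of the shape $\big((\Sigma,J_-),[\textrm{id}],(\bar\Sigma,J_+)\big)$. First I would uniformize $(\Sigma,J_-)$ as $\D/\Gamma_0$ for a torsion-free Fuchsian group $\Gamma_0<\PSL(2,\R)<\PSL(2,\C)$ acting on the open unit disc $\D\subset\widehat{\C}$. The same group acts on the exterior disc $\D^*:=\widehat{\C}\setminus\overline{\D}$, and $\D^*/\Gamma_0$ is biholomorphic to the conjugate surface $\bar\Sigma$ with some complex structure $J_0$; thus $\Gamma_0$ itself represents the pair $\big((\Sigma,J_-),[\textrm{id}],(\bar\Sigma,J_0)\big)$, the Fuchsian locus. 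Next choose a quasiconformal homeomorphism $\psi:(\bar\Sigma,J_0)\to(\bar\Sigma,J_+)$ in the homotopy class corresponding to $[\textrm{id}]$ (an orientation-preserving diffeomorphism exists and is automatically quasiconformal on a closed surface), let $\nu$ be its Beltrami coefficient, a measurable $(-1,1)$-differential with $\|\nu\|_\infty<1$, lift $\nu$ to a $\Gamma_0$-invariant Beltrami coefficient on $\D^*$, and extend it by $0$ over $\D$; this yields a $\Gamma_0$-compatible Beltrami coefficient $\mu$ on $\widehat{\C}$ with $\|\mu\|_\infty<1$.

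By the measurable Riemann mapping theorem there is a quasiconformal homeomorphism $w=w^\mu:\widehat{\C}\to\widehat{\C}$ with $\bar\partial w=\mu\,\partial w$, unique up to post-composition with a Möbius transformation. The decisive step is the standard consequence of this uniqueness that, since $\mu$ is $\Gamma_0$-invariant, each $w\circ\gamma\circ w^{-1}$ for $\gamma\in\Gamma_0$ solves the Beltrami equation with coefficient $0$ and is therefore a Möbius transformation; hence $\Gamma:=w\,\Gamma_0\,w^{-1}$ is a subgroup of $\PSL(2,\C)$, discrete because it acts properly discontinuously on $w(\D)$, and isomorphic to $\Gamma_0\cong\pi_1(\Sigma)$. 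Since $\mu$ vanishes on $\D$, the restriction $w|_\D$ is conformal, so $w(\D)$ is a $\Gamma$-invariant Jordan domain with $w(\D)/\Gamma\cong\D/\Gamma_0=(\Sigma,J_-)$; likewise $w(\D^*)/\Gamma$ is biholomorphic to $\D^*/\Gamma_0$ with complex structure twisted by $\nu$, which by construction is $(\bar\Sigma,J_+)$. The common boundary $w(S^1)$ is a quasicircle equal to the limit set of $\Gamma$, both complementary domains are $\Gamma$-invariant, so $\Gamma$ is quasi-Fuchsian and represents the prescribed pair; that the induced marking is the given one follows by tracing the isomorphisms $\pi_1(\Sigma)\cong\Gamma_0\cong\Gamma$ through the Dehn--Nielsen--Baer identification used to define $[f]$.

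For uniqueness, suppose $\Gamma_1,\Gamma_2<\PSL(2,\C)$ are quasi-Fuchsian and represent equivalent odd coupled pairs, with regions of discontinuity $D^{(k)}_+\cup D^{(k)}_-$. The marking-preserving biholomorphisms of the quotient surfaces lift to biholomorphisms $\tilde h_\pm:D^{(1)}_\pm\to D^{(2)}_\pm$ that are equivariant for one and the same group isomorphism $\phi:\Gamma_1\to\Gamma_2$ — this is exactly where compatibility of the two biholomorphisms with $[f]$ enters. Since the limit sets are Jordan curves, Carath\'eodory's theorem extends $\tilde h_+$ and $\tilde h_-$ continuously to the boundary, equivariance forces the two extensions to agree there, and they glue to a homeomorphism $\widehat h$ of $\widehat{\C}$ with $\widehat h\,\gamma=\phi(\gamma)\,\widehat h$ for all $\gamma\in\Gamma_1$. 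This $\widehat h$ is conformal on $\Omega_{\Gamma_1}=D^{(1)}_+\cup D^{(1)}_-$, and a conjugacy between quasi-Fuchsian groups that is conformal on the region of discontinuity must be a Möbius transformation — equivalently, the $\Gamma_1$-invariant Beltrami coefficient of the quasiconformal map $\widehat h$ is supported on the limit set, and a finitely generated quasi-Fuchsian group carries no nonzero invariant Beltrami differential on its limit set. Hence $\widehat h\in\PSL(2,\C)$ conjugates $\Gamma_1$ to $\Gamma_2$.

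The main obstacle is the reliance on two nontrivial inputs from quasiconformal analysis. The first is the existence-and-uniqueness form of the measurable Riemann mapping theorem, which is precisely what makes ``solve a $\Gamma_0$-invariant Beltrami equation on the sphere'' produce a group of Möbius transformations; the second, needed only for uniqueness, is the rigidity statement that a conformal-at-infinity conjugacy of finitely generated quasi-Fuchsian groups is Möbius. Neither reduces to a routine computation, so in a survey of this kind I would invoke \cite{Bers:1960, Thurston:Book} for both and devote the exposition instead to the bookkeeping that identifies the two quotient Riemann surfaces and verifies that the marking produced by the construction is the prescribed homotopy class.
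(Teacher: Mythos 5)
Your outline is essentially correct, but note that the paper offers no proof of this statement at all: it is quoted as a classical black box, attributed to \cite{Bers:1960}, and the surrounding text only sets up the dictionary between quasi-Fuchsian groups and odd coupled pairs. What you have written is the standard Ahlfors--Bers argument that the paper implicitly relies on, and it is the right one: reduce to the normal form $\big((\Sigma,J_-),[\textrm{id}],(\bar\Sigma,J_+)\big)$, uniformize one side by a Fuchsian group $\Gamma_0$, encode the other side in a $\Gamma_0$-invariant Beltrami coefficient supported on one of the two invariant discs, solve the Beltrami equation on $\widehat{\C}$, and use uniqueness of normalized solutions to see that $w\Gamma_0 w^{-1}$ is again a group of Möbius transformations with the prescribed quotient data. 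The only soft spot is in your uniqueness argument: after gluing the two equivariant biholomorphisms along the limit set via Carath\'eodory extension, you still need to know that the resulting sphere homeomorphism $\widehat h$ is globally quasiconformal before you can say that its Beltrami coefficient is supported on the limit set and conclude rigidity; this does not follow merely from conformality on the two complementary Jordan domains and requires either the fact that quasicircles are removable for conformal welding in this setting, or (more in the spirit of Bers) the alternative route of writing both groups as quasiconformal deformations $w_i\Gamma_0 w_i^{-1}$ of a common Fuchsian group and comparing the Teichmüller classes of the Beltrami coefficients directly. Since you explicitly flag both analytic inputs as citations rather than claims, the proposal is acceptable as a survey-level reconstruction; it simply supplies detail the paper chose to omit entirely.
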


The quasi-Fuchsian moduli space $\mathcal{QF}(\Sigma) \subset \textrm{Hom}(\pi_1(\Sigma), \textrm{PSL}(2,\C))/\textrm{PSL}(2,\C)$ is defined as the space of injective group homomorphism whose image is a quasi-Fuchsian group, and which are identified up to conjugation. It follows from our discussion that
	$$\mathcal{QF}(\Sigma) \cong \cT(\Sigma)\times \cT(\bar{\Sigma}).$$ 
More precisely, any quasi-Fuchsian representation $\rho: \pi_1(\Sigma) \rightarrow \textrm{PSL}(2,\C)$ induces representations $\rho_{\pm}: \pi_1(\Sigma) \rightarrow \textrm{Aut}(D_{\pm})$, whose push forward under identifications $D_{\pm} \cong \H$ are Fuchsian representations. Note that different identifications give rise to Fuchsian representations which are conjugated in $\textrm{PSL}(2,\R)$. Since Teichmüller space is naturally isomorphic to the space of Fuchsian representations up to conjugations, this gives rise to a well-defined map $\mathcal{QF}(\Sigma) \rightarrow \cT(\Sigma)\times \cT(\bar{\Sigma})$. It follows from the Dehn--Nielsen--Baer Theorem and Theorem \ref{Thm.SimulUnif} that this construction yields indeed a bijective correspondence.

\subsubsection{Embedding into the quasi-Fuchsian moduli space}

We present two maps from $\cM$ into $\cT(\Sigma)\times \overline{\cT(\Sigma)}$. The following proposition is a rather direct consequence of Definition \ref{Def:AF} and makes no claim about holomorphicity.

\begin{Proposition} \label{Prop:AFtoTT_QF}
For an almost-Fuchsian metric
\begin{align*}
			g^Y = g^Y_{g,\sigma} = \left(\begin{array}{cc} g \left( \cosh(t) \mathds{1} - \sinh(t) g^{-1} \textrm{Re}(\sigma) \right)^2 & 0 \\ 0 & 1 \end{array}\right) \in \mathcal{AF}(\Sigma)
\end{align*}
define $g_{\pm}^{\infty} := g(1 + |\sigma|_g^2) \mp 2 \textrm{Re}(\sigma)$ and let $J_{+}(g^Y) \in \cJ(\Sigma)$ and $J_{-}(g^Y) \in \cJ(\bar{\Sigma})$ be the unique complex structures compatible with $g_{\pm}^{\infty}$. Then 
\begin{enumerate}
\item $(\Sigma\times\R, g^Y)$ is isomorphic to the quasi-Fuchsian manifold which corresponds to the odd coupled pair $( (\bar{\Sigma} , J_{-}(g^Y)), [\textrm{id}_{\Sigma}], (\Sigma, J_{+}(g_Y)))$. 
\item The map $\cM \cong \mathcal{AF}(\Sigma)/\textrm{Diff}_0(\Sigma) \hookrightarrow \cT(\Sigma) \times \cT(\bar{\Sigma})$ defined by
	\begin{align} \label{Map:MtoTT_QF}
				[g,\sigma] \mapsto \left[J_{+}(g^Y_{g,\sigma}), J_{-}(g^Y_{g,\sigma}) \right]
	\end{align}
is a mapping class group equivariant embedding.
\end{enumerate}
\end{Proposition}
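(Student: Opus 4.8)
The plan is to prove the first part by identifying the two ideal boundary Riemann surfaces of the quasi-Fuchsian manifold $(\Sigma\times\R, g^Y)$ directly from the explicit formula (\ref{QF.met}) and then invoking simultaneous uniformization (Theorem \ref{Thm.SimulUnif}); the second part then follows by combining this with the uniqueness of the minimal surface (Lemma \ref{Lemma:Ymin}).

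\emph{Step 1 (asymptotics of the two ends).} I would write $S := g^{-1}\textrm{Re}(\sigma)\in\Omega^0(\Sigma,\textrm{End}(T\Sigma))$. This endomorphism is $g$-symmetric and trace-free, hence $S^2 = |\sigma|_g^2\,\mathds{1}$, so $(\mathds{1}\mp S)^2 = (1+|\sigma|_g^2)\mathds{1}\mp 2S$ and therefore $g(\mathds{1}\mp S)^2 = (1+|\sigma|_g^2)g\mp 2\textrm{Re}(\sigma) = g_\pm^\infty$. Since $\cosh(t)\mathds{1} - \sinh(t)S = \tfrac12 e^{t}(\mathds{1}-S) + \tfrac12 e^{-t}(\mathds{1}+S)$, the fibre metric in (\ref{QF.met}) obeys $g_t = \tfrac14 e^{2t}\big(g_+^\infty + O(e^{-2t})\big)$ as $t\to+\infty$ and $g_t = \tfrac14 e^{-2t}\big(g_-^\infty + O(e^{2t})\big)$ as $t\to-\infty$. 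Substituting $\varrho := 2e^{-t}$ near the positive end turns $g^Y = dt^2 + g_t$ into the conformally compact normal form $\varrho^{-2}\big(d\varrho^2 + g_+^\infty + O(\varrho^2)\big)$, so the ideal boundary at that end is the Riemann surface $(\Sigma, J_+(g^Y))$; the same computation with $\varrho := 2e^{t}$ at the negative end yields $(\bar\Sigma, J_-(g^Y))$, with the reversed orientation because the outward normal there is $-\partial_t$.

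\emph{Step 2 (which quasi-Fuchsian manifold, and proof of part 1).} By Theorem \ref{Thm:MtoAF} the metric $g^Y$ is complete and quasi-Fuchsian, so $(\Sigma\times\R, g^Y)\cong\H^3/\Gamma$ for a quasi-Fuchsian group $\Gamma$, and its conformal boundary is $\Omega_\Gamma/\Gamma$. By the construction recalled before Theorem \ref{Thm.SimulUnif}, the odd coupled pair of $\Gamma$ consists of the two components of $\Omega_\Gamma/\Gamma$ together with the homotopy class of the gluing through $\H^3/\Gamma$; Step 1 identifies these components with $(\bar\Sigma, J_-(g^Y))$ and $(\Sigma, J_+(g^Y))$, and because $Y$ carries the product structure $\Sigma\times\R$ the gluing class is $[\textrm{id}_\Sigma]$. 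Thus $\Gamma$ represents the odd coupled pair $\big((\bar\Sigma, J_-(g^Y)), [\textrm{id}_\Sigma], (\Sigma, J_+(g^Y))\big)$, and simultaneous uniformization makes it the unique such group up to conjugation. This is exactly the first assertion.

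\emph{Step 3 (well-defined MCG-equivariant embedding).} The assignment $(g,\sigma)\mapsto(J_+(g^Y_{g,\sigma}), J_-(g^Y_{g,\sigma}))\in\cJ(\Sigma)\times\cJ(\bar\Sigma)$ is continuous, and $\textrm{Diff}_+(\Sigma)$-equivariant since pulling back $(g,\sigma)$ pulls back $g_\pm^\infty$ and hence $J_\pm$; it therefore descends to a continuous map $\cM\to\cT(\Sigma)\times\cT(\bar\Sigma)$ intertwining the mapping class group actions, which is (\ref{Map:MtoTT_QF}). For injectivity, suppose $[g_1,\sigma_1]$ and $[g_2,\sigma_2]$ have the same image. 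By part 1 the manifolds $Y_i := (\Sigma\times\R, g^Y_{g_i,\sigma_i})$ have equivalent odd coupled pairs, so by Theorem \ref{Thm.SimulUnif} they are isometric via an isometry inducing the identity on $\pi_1(\Sigma) = \pi_1(\Sigma\times\R)$ (up to conjugation); since homotopic diffeomorphisms of $\Sigma\times\R$ are isotopic, this isometry is isotopic to $\textrm{id}_{\Sigma\times\R}$. By Lemma \ref{Lemma:Ymin} each $Y_i$ contains a unique incompressible minimal surface, namely $\Sigma\times\{0\}$ with induced metric $g_i$ and second fundamental form $\textrm{Re}(\sigma_i)$, so the isometry carries one onto the other and restricts to a diffeomorphism of $\Sigma$ isotopic to the identity pulling $(g_2,\sigma_2)$ back to $(g_1,\sigma_1)$; hence $[g_1,\sigma_1]=[g_2,\sigma_2]$ in $\cM$. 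Finally $\mathcal{AF}(\Sigma)$ is open in $\mathcal{QF}(\Sigma)$ — the minimal surface of an almost-Fuchsian manifold is strictly stable, so it persists under small deformations with principal curvatures still in $(-1,1)$ — so via Theorem \ref{Thm:MtoAF} and Bers' theorem the map is an open inclusion followed by a homeomorphism; together with injectivity, $\dim_\R\cM = 12\,\textrm{genus}(\Sigma)-12 = \dim_\R(\cT(\Sigma)\times\cT(\bar\Sigma))$ and invariance of domain, this gives that it is an embedding.

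\emph{Main obstacle.} The only genuinely non-formal points are the asymptotic identification of the ideal boundary in Step 1, which rests on the standard normal form of a conformally compact hyperbolic metric near its conformal infinity, and the bookkeeping in Step 3 needed to upgrade the abstract isometry produced by simultaneous uniformization to one isotopic to $\textrm{id}_{\Sigma\times\R}$ (so that it descends to an equality in $\cM$ and not merely an identification up to a mapping class).
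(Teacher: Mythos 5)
Your proposal is correct and follows essentially the same route as the paper: the core of the argument is the identical rescaling computation, writing $\cosh(t)\mathds{1}-\sinh(t)S$ asymptotically and using $S^2=|\sigma|_g^2\mathds{1}$ to identify the limiting conformal structures at the two ends with $[g_\pm^\infty]$, then invoking Bers' simultaneous uniformization; the paper phrases the limit as convergence of the conformal structures on the slices $\Sigma_t$ along the normal geodesics rather than via the conformally compact normal form, but this is the same calculation. Your Step 3 supplies the injectivity and embedding bookkeeping (via Lemma \ref{Lemma:Ymin} and the isotopy of the uniformizing isometry) that the paper leaves implicit, which is a welcome addition rather than a deviation.
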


\begin{proof}
For each $z \in \Sigma$ the curve $\gamma_z(t) = (z,t)$ is a geodesic in $Y$ and any point of the boundary at infinity $\Sigma_{\pm}$ can be represented as a geodesic rays $\gamma_z^{\pm}(t): [0,\infty) \rightarrow Y$ defined by $\gamma_z^{\pm}(t) := \gamma_z(\pm t)$. In particular, it follows that the conformal structure on $\Sigma_t := \Sigma\times\{t\} \subset Y$ converges to the conformal structure on $\Sigma_{\pm}$ as $t\rightarrow \pm \infty$. The conformal structure on $\Sigma_t$ can be represented by 
	$$\tilde{g}_t := g \left(\mathds{1} - \tanh(t) g^{-1} \textrm{Re}(\sigma) \right)^2$$
which is conformally equivalent to $g_t$.  The rescaled metrics converge to 
		$$g_{\pm}^{\infty} := g \left(\mathds{1} \mp g^{-1} \textrm{Re}(\sigma) \right)^2 = g (1 + |\sigma|_g^2) \mp 2 \textrm{Re}(\sigma)$$
as $t \rightarrow \pm \infty$. Here we used the relation $(g^{-1} \textrm{Re}(\sigma))^2 = |\sigma|_g^2 \mathds{1}$. This establishes the given formula and the proposition.
\end{proof}

One can understand the map (\ref{Map:MtoTT_QF}) more explicitly on the level of sections. Denote by $X \subset T^*\H$ the unit disc bundle equipped with the hyperkähler structure from Theorem \ref{ThmHK}. By a result of Hodge \cite{Hodge:PhD}, there exists a unique $\textrm{SL}(2,\R)$-equivariant diffeomorphism $\alpha: X \rightarrow \H\times \bar{\H}$ which intertwines the second complex structure on $X$ with $(\textbf{i},-\textbf{i})$ on $\H\times \bar{\H}$ (see Remark \ref{Rmk:XHH}). This is explicitly given by the formula
\begin{align} \label{defn:ALPHA}
		\alpha(x + \textbf{i}y, u + \textbf{i}v) &= \left(x - \frac{y^2 v}{1- yu} + \textbf{i} \frac{y\gamma}{1 - yu},\, x + \frac{y^2 v}{1 + yu} + \textbf{i} \frac{y\gamma}{1 + yu} \right)
\end{align}	
where $\gamma := \sqrt{1 - y^2(u^2 + v^2)}$. Using Lemma \ref{HJ.QLemma1}, one can identify $X$ the fibres of $\mathcal{Q}_1(\Sigma)$, and then obtain a $\textrm{Diff}(\Sigma)$-equivariant bundle map $\alpha: \mathcal{Q}_1(\Sigma) \rightarrow \cJ(\Sigma) \times \overline{\cJ(\Sigma)}$ which descends to a mapping class group equivariant map
\begin{align} \label{Map:MtoTT_Hodge}
		\cM \cong \cM_s \rightarrow \mathcal{T}(\Sigma) \times \overline{\mathcal{T}(\Sigma)}
\end{align}
where $\cM_s$ denotes the moduli space (\ref{moduli:QMb}).

\begin{Proposition}[Hodge \cite{Hodge:PhD}] \label{Prop:HodgeJ2}
The two maps (\ref{Map:MtoTT_QF}) and (\ref{Map:MtoTT_Hodge}) agree with respect to the natural identification $\overline{\mathcal{T}(\Sigma)} \cong \mathcal{T}(\bar{\Sigma})$ induced by the map $J \mapsto -J$. Moreover, the second complex structure on $\cM$ corresponds to $(\hat{J}_1, \hat{J}_2) \mapsto (J_1 \hat{J}_1, -J_2 \hat{J}_2)$ on $\cT(\Sigma) \times \overline{\cT(\Sigma)}$.
\end{Proposition}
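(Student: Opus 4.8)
The plan is to deduce the first statement from a pointwise (fibrewise) computation, and the second by chasing complex structures through the identifications. Both maps in question are genuinely pointwise: the map (\ref{Map:MtoTT_QF}) sends $(g,\sigma)$ to the complex structures compatible with $g^{\infty}_{\pm}=g(1+|\sigma|_g^2)\mp 2\,\textrm{Re}(\sigma)$, which depend only on the values $g(z),\sigma(z)$ at each point, and Hodge's map (\ref{Map:MtoTT_Hodge}) is by construction induced by a bundle map. Hence it suffices to compare the two as $\SL(2,\R)$-equivariant maps of the model fibre. Using Lemma~\ref{HJ.QLemma1} I would identify the fibre of $\cQ_1(\Sigma)$ with $\cQ_1(\R^2)$; the $\SL(2,\R)$-action then lets me normalise the complex structure to $J_0=j(\textbf{i})$ and parametrise the remaining quadratic forms by $q(\textbf{i},w)$, $|w|<1$, via (\ref{HJ.Qq}). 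The residual $\textrm{SO}(2)$ acts on the $w$-disc by rotations (it multiplies $w$ by a unit complex number, by (\ref{actionSL2X}) and Lemma~\ref{HJ.QLemma1}), so after the elementary observation that both maps intertwine this rotation action, agreement need only be verified along the slice $w\in[0,1)$.

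On the metric side the essential simplification comes from the rescaling of Proposition~\ref{PropRescaling}: passing from $\cM_s$ to $\cM$ replaces $g_0=\rho(\cdot,J\cdot)$ by $g=\bigl(1+\sqrt{1-|\sigma|_{g_0}^2}\bigr)g_0$, and the identity $(f-1)^2=1-|\sigma|_{g_0}^2$ gives $g(1+|\sigma|_g^2)=2\,\rho(\cdot,J\cdot)$. Thus fibrewise $g^{\infty}_{\pm}=2\,\mathds{1}\mp 2\,\textrm{Re}\bigl(q(\textbf{i},w)\bigr)$ in the Euclidean trivialisation at $\textbf{i}$, from which $J_{\pm}$ are read off directly; on the Hodge side one evaluates the explicit formula (\ref{iso:XHH}) for $\alpha$ at $(\textbf{i},w)$ and applies (\ref{Hcpx}). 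Comparing the outputs along $w\in[0,1)$ one finds that the first components agree on the nose while the second components agree after applying $J\mapsto -J$; by equivariance this holds on the whole fibre, hence on all of $\cM$, which is precisely the asserted agreement under $\overline{\cT(\Sigma)}\cong\cT(\bar\Sigma)$, $J\mapsto -J$. (As a consistency check, on the zero section $\sigma=0$ both maps reduce to the diagonal $\cT(\Sigma)\hookrightarrow\cT(\Sigma)\times\cT(\bar\Sigma)$: the Fuchsian metric gives $g^{\infty}_{+}=g^{\infty}_{-}=g$, while $\alpha(z,0)=(z,z)$.)

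For the statement about the second complex structure: by construction (Remark~\ref{Rmk:XHH}) $\alpha$ intertwines $J_2$ on $X$ with the complex structure $(\textbf{i},-\textbf{i})$ on $\H\times\overline{\H}$, and $(j,q)\colon X\to\cQ_1(\R^2)$ is holomorphic for $J_2$, so the second complex structure of the hyperkähler structure on $\cM$ pulls back, fibrewise, to $J_2$ on $X$. Since $j\colon\H\to\cJ(\R^2)$ is antiholomorphic and $\overline{\cT(\Sigma)}$ carries the conjugate complex structure (identified with $\cT(\bar\Sigma)$ via $J\mapsto -J$), propagating $(\textbf{i},-\textbf{i})$ through the chain $\cM\cong\cM_s$, then $(j,q)^{-1}$, then $\alpha$, then $j\times j$, into $\cT(\Sigma)\times\overline{\cT(\Sigma)}$, turns it into the operator $(\hat J_1,\hat J_2)\mapsto(J_1\hat J_1,-J_2\hat J_2)$, which is the natural complex structure on $\cT(\Sigma)\times\overline{\cT(\Sigma)}$.

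The main obstacle, and essentially all of the labour, is the bookkeeping of orientations and complex conjugations, which interact at several points: the antiholomorphicity of $j$; the conjugate complex structures built into $\bar\H$ and $\overline{\cT(\Sigma)}$; the orientation reversal in $\cJ(\bar\Sigma)$; the precise $\SL(2,\R)$-action on the $\overline{\H}$-factor for which $\alpha$ is equivariant (namely the one under which the fibrewise construction descends to a bundle map taking values in $\overline{\cJ(\Sigma)}$); and the rescaling of Proposition~\ref{PropRescaling}, whose whole purpose here is to replace the unwieldy $g(1+|\sigma|_g^2)$ by a constant multiple of $\rho(\cdot,J\cdot)$ so that the two fibrewise formulas coincide. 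Once these conventions are pinned down consistently the remaining computations are elementary; marshalling them correctly is the real content.
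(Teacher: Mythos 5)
Your proposal follows essentially the same route as the paper's proof: reduce to a fibrewise computation in a normalized frame (the paper uses a holomorphic chart with frame $\theta_z=\lambda^{-1}d\phi(z)$, which amounts to your normalization to the point $(\textbf{i},w)$ in the model fibre), evaluate Hodge's explicit formula for $\alpha$ there, and use the rescaling identity $g(1+|\sigma|_g^2)=2\,\rho(\cdot,J\cdot)$ from Proposition~\ref{PropRescaling} to match the result with $g_{\pm}^{\infty}$; the statement about the second complex structure is likewise obtained by propagating $(\textbf{i},-\textbf{i})$ from Remark~\ref{Rmk:XHH} through the identifications. The argument is correct and the differences (invoking $\SL(2,\R)$- and $S^1$-equivariance to cut down to a slice rather than writing out the matrices) are cosmetic.
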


\begin{proof}
Let $(J,\sigma) \in \cQ_1(\Sigma)$ and denote by $g := \rho(\cdot,J\cdot)$ the induced Riemannian metric. Choose a holomorphic chart $\phi:U \rightarrow \Sigma$ and
	$$\phi^*J = J_0, \qquad \phi^*\rho = \lambda^2 dx \wedge dy, \qquad \phi^*g = \lambda^2 (dx^2 + dy^2), \qquad \phi^*\sigma = \lambda^2 (u-\textbf{i}v) dz^2$$
for a smooth functions $u,v: U \rightarrow \R$ and $\lambda: U \rightarrow \R_+$. This chart defines a canonical trivialization of the $\SL(2,\R)$-frame bundle given by the frames $\theta_z := \lambda^{-1} d\phi(z)$. With respect to this trivialization corresponds the pair $(\phi^*J, \phi^*\sigma)$ under the isomorphism (\ref{HJ.Qq}) to the section $s^{loc} := (\textbf{i}, u + \textbf{i}v): U \rightarrow X$. By (\ref{defn:ALPHA}) we then have
	$$\alpha(s^{loc}) := \left(\frac{-v}{1-u} + \textbf{i}\frac{\gamma}{1-u} ,  \frac{v}{1+u} + \textbf{i}\frac{\gamma}{1+u} \right).$$
This corresponds to the two complex structures
	 $$J_{+}^{loc} := j\left(\frac{-v}{1-u} + \textbf{i}\frac{\gamma}{1-u} \right) = \frac{1}{\gamma} \left( \begin{array}{cc} v & - (1+u) \\ 1-u & -v \end{array} \right) \in \cJ(\R^2)$$
	 $$J_{-}^{loc} := j\left(\frac{ v}{1+u} + \textbf{i}\frac{\gamma}{1+u} \right) = \frac{1}{\gamma} \left( \begin{array}{cc} -v & 1-u \\ 1+u & v \end{array} \right) \in \cJ(\R^2)$$
where $j: \H \rightarrow \cJ(\R^2)$ is defined (\ref{Hcpx}). These are compatible with the metrics
	$$g_{+} :=  \left(\begin{array}{cc} 0 & 2 \lambda^2\gamma \\ -2 \lambda^2\gamma & 0 \end{array} \right)J_+^{loc} =  2 \lambda^2 \left( \begin{array}{cc} 1-u & -v \\ -v & 1+u \end{array} \right) = 2(\phi^*g - \phi^*\textrm{Re}(\sigma)).$$
	$$g_{-} :=  \left(\begin{array}{cc} 0 & 2 \lambda^2\gamma \\ -2 \lambda^2\gamma & 0 \end{array} \right)J_-^{loc} =  2 \lambda^2 \left( \begin{array}{cc} 1+u & v \\ v & 1-u \end{array} \right) = 2(\phi^*g + \phi^*\textrm{Re}(\sigma)).$$
This shows that the complex structures $(J_+,J_-)$ associated to $(J,\sigma)$ under the maps $\alpha$ are determined by $g_{\pm} := 2(g \mp \textrm{Re}(\sigma))$. Finally, define $\tilde{g}:= (1 + \sqrt{1 - |\sigma|_g^2}) g$. A short calculation shows
	$$g_{\pm} = 2(g \mp \textrm{Re}(\sigma)) = \tilde{g}(1+ |\sigma|_{\tilde{g}}^2) \pm 2 \textrm{Re}(\sigma)$$ 
and hence $J_{\pm}$ agree with the complex structures defined in Proposition \ref{Prop:AFtoTT_QF}.
\end{proof}

\subsubsection{A Kähler potential for the hyperkähler metric}

Consider the area functional on $\cM \cong \mathcal{AF}(\Sigma)/\textrm{Diff}_0(\Sigma)$ which assigns to every almost-Fuchsian manifold the area of its unique closed minimal surface. It follows from Theorem \ref{Thm:MtoAF} and Lemma \ref{Lemma:Ymin}, that it is given by
	\begin{align} \label{def:A1}	A: \cM \cong \mathcal{AF}(\Sigma)/\textrm{Diff}_0(\Sigma) \rightarrow \R,\qquad A([g,\sigma]) := \textrm{vol}(\Sigma, g). \end{align}
The second complex structure on $\cM$ corresponds by Proposition \ref{Prop:HodgeJ2} to the standard complex structure on $\mathcal{AF}(\Sigma)$ obtained from the embedding into $\cT(\Sigma)\times \overline{\cT(\Sigma)}$. The next theorem verifies a remark of Donaldson which says that the area functional (\ref{def:A1}) is a Kähler potential for the hyperkähler metric on $\cM$ with respect to this complex structure. This has been confirmed by direct arguments along $\cT(\Sigma) \subset \cM$ in \cite{GuoZhengWang:2010}.

\begin{Theorem} \label{Thm:Potential}
The area functional (\ref{def:A1}) provides a Kähler potential for the hyperbolic metric. More precisely
	\begin{align} \label{eq:Potential}  2\textbf{i} \bar{\partial}_{J_2} \partial_{J_2} A = \underline{\omega}_2 .\end{align}
\end{Theorem}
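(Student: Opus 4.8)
The plan is to recognise the area functional, up to an additive constant, as the integral over $\Sigma$ of the fibrewise Hamiltonian $H$ from Lemma~\ref{Lemma:fibreS1}, and then to run the standard Hitchin argument for the $S^1$-action rotating the cotangent fibres --- now carried out on the reduced space $\cM$ rather than on a single fibre. \emph{Step 1 (identify the functional).} By Proposition~\ref{PropRescaling} and the isomorphisms $\mathcal{M}_s\cong\mathcal{M}_d\cong\cM$, a class $[g,\sigma]\in\cM$ corresponds to a class $[J,\sigma]\in\mathcal{M}_s$ for which $g=(1+\sqrt{1-|\sigma|_{g_0}^2})\,g_0$ with $g_0:=\rho(\cdot,J\cdot)$; here $\mathrm{vol}(\Sigma,g_0)=\int_\Sigma\rho=:V$ and $|\sigma|_{g_0}=|\sigma|_J$. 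On a surface the volume form scales linearly under a conformal change, so $A([g,\sigma])=\mathrm{vol}(\Sigma,g)=\int_\Sigma\big(1+\sqrt{1-|\sigma|_J^2}\big)\rho=V+\underline{H}([J,\sigma])$, where $\underline{H}(J,\sigma):=\int_\Sigma\sqrt{1-|\sigma|_J^2}\,\rho$ and $V$ is constant. It therefore suffices to prove $2\textbf{i}\bar\partial_{J_2}\partial_{J_2}\underline{H}=\underline\omega_2$ on $\mathcal{M}_s$.

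\emph{Step 2 (transfer the $S^1$-action to $\mathcal{M}_s$).} The fibre rotation~(\ref{actionS1X}) induces an $S^1$-action on $\cQ_1(\Sigma)$ with infinitesimal generator $(J,\sigma)\mapsto(0,\textbf{i}\sigma)$. Integrating the fibrewise identities of Lemma~\ref{Lemma:fibreS1} and Proposition~\ref{propfiberMQ} against $\rho$ shows that this generator is $\underline\omega_1$-Hamiltonian with Hamiltonian $\underline{H}$, that it preserves $\underline g$, $J_1$ and $\underline\omega_1$, and that $\phi_t^*(\underline\omega_2+\textbf{i}\underline\omega_3)=e^{\textbf{i}t}(\underline\omega_2+\textbf{i}\underline\omega_3)$ because $dz\wedge dw$ transforms in that way. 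The action commutes with $\mathrm{Symp}_0(\Sigma,\rho)$ and preserves every level set entering the construction of $\mathcal{M}_s$: the functions $\underline\mu_1$ and $|\sigma|$ are $S^1$-invariant, while $\underline\mu_2+\textbf{i}\underline\mu_3=2\textbf{i}\bar\partial_J r(\bar\partial_J\sigma)$ and $\bar\partial_J\sigma$ are multiplied by $e^{\textbf{i}t}$, so their zero sets are preserved. Hence the $S^1$-action descends through $\cQ_1(\Sigma)\to\mathcal{M}_0\to\mathcal{M}_s$; since $\underline{H}$ is $\mathrm{Symp}_0$-invariant (the action preserves $\rho$), its Hamiltonian vector field is $\underline\omega_1$-orthogonal to the group orbits and tangent to the level sets, so the descended action is still $\underline\omega_1$-Hamiltonian with Hamiltonian the descended $\underline{H}=A-V$, preserves $\underline g$, $J_1$, $\underline\omega_1$ on $\mathcal{M}_s$, and rotates $(\underline\omega_2,\underline\omega_3)$ as above.

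\emph{Step 3 (Hitchin's argument on $\mathcal{M}_s$).} Writing $v$ for the generator of the descended $S^1$-action, from $\underline\omega_1=\underline g(J_1\cdot,\cdot)$, $\iota(v)\underline\omega_1=d\underline{H}$ and $J_1J_2=J_3$ one obtains $d\underline{H}(J_2 u)=\underline\omega_3(v,u)$, i.e.\ $d\underline{H}\circ J_2=\iota(v)\underline\omega_3$; then, exactly as in the proof of Lemma~\ref{Lemma:fibreS1}, $2\textbf{i}\bar\partial_{J_2}\partial_{J_2}\underline{H}=d(d\underline{H}\circ J_2)=d\,\iota(v)\underline\omega_3=\mathcal{L}_v\underline\omega_3=\underline\omega_2$, using $d\underline\omega_3=0$ and $\mathcal{L}_v\underline\omega_3=\underline\omega_2$. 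Since $A-\underline{H}=V$ is constant this gives $2\textbf{i}\bar\partial_{J_2}\partial_{J_2}A=\underline\omega_2$, the claim.

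\emph{Main obstacle.} The substantive point is Step~2: verifying that the $S^1$-action, its Hamiltonian $\underline{H}$, and the full hyperkähler package descend compatibly through the (infinite-dimensional, two-stage) reduction --- in particular that $\underline{H}$ remains the $\underline\omega_1$-Hamiltonian of the descended $S^1$-action on $\mathcal{M}_s$, where $\underline\omega_1$ is obtained from Lemma~\ref{Teich.LemmaSymplStr} and not by a Marsden--Weinstein reduction. This reduces to checking that the generator $(0,\textbf{i}\sigma)$ lies in the $\underline\omega_1$-orthogonal complement of the $\mathrm{Symp}_0$-orbits and is tangent to the relevant level sets, after which the descent is formal; the rescaling computation of Step~1 and Hitchin's computation of Step~3 are routine.
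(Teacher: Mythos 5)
Your proposal is correct and follows essentially the same route as the paper: identify $A$ on $\cM_s$ with $V+\int_\Sigma\sqrt{1-|\sigma|_J^2}\,\rho$ via Proposition \ref{PropRescaling}, observe that this generates the fibre-rotation $S^1$-action with respect to $\underline{\omega}_1$, and run the Hitchin computation of Lemma \ref{Lemma:fibreS1} on $\cM_s$. Your Step 2 is in fact more explicit than the paper's one-line assertion that the $S^1$-action and its Hamiltonian descend through the two-stage quotient, so it is a welcome elaboration rather than a deviation.
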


\begin{proof}
On the moduli space $\cM_d$, defined by (\ref{moduli:QMg}), the area functional has the shape 
	\begin{align} \label{def:A2}	A_d: \cM_d \rightarrow \R,\qquad A([g,\sigma]) := \int_{\Sigma} \left(1 + \sqrt{1- |\sigma|_{g}^2}\right) d\textrm{vol}_g .\end{align}
This follows from the identification $\cM \cong \cM_d$ in Proposition \ref{PropRescaling}. In particular, on the original moduli space $\cM_s$, defined by (\ref{moduli:QMa}), one has
	\begin{align} \label{def:A3}	A_s: \cM_s \rightarrow \R,\qquad A([J,\sigma]) := \int_{\Sigma} \left(1 + \sqrt{1- |\sigma|_{J}^2}\right) \rho \end{align}
where the norm $|\cdot|_J$ is defined using the metric $\rho(\cdot, J \cdot)$. Consider the $S^1$-action 
	$$S^1\times \cM_s \rightarrow \cM_s, \qquad e^{\textbf{i}t} [g,\sigma] =  [g, e^{\textbf{i}t}\sigma].$$
It follows from Lemma \ref{propfiberMQ} that $A_s$ is a Hamiltonian function on $(\cM_s, \underline{\omega}_1)$ which generates this $S^1$-action. Moreover, this action rotates the second and third symplectic structure satisfying $\cL_{v_A} \underline{\omega}_2 = - \underline{\omega}_3$ and $\cL_{v_A} \underline{\omega}_3 = \underline{\omega}_2$ for the Hamiltonian vector field $v_A$ of $A_s$. The same formal calculation as in Lemma \ref{Lemma:fibreS1} shows that
	$$d A_s(J_2 w) = \underline{\omega}_1(v_A, J_2 w) = \langle J_1 v_A, J_2 w \rangle = \langle J_3 v_A, w \rangle = \underline{\omega}_3(v_A, w)$$
for all $w \in \textrm{Vect}(\cM_s)$ and therefore
	$$2 \textbf{i} \bar{\partial}_{J_2} \partial_{J_2} H = d (dH\circ J_2) = d \iota(v_A) \underline{\omega}_3 = \cL_{v_A} \underline{\omega}_3 = \underline{\omega}_2.$$
This proves (\ref{eq:Potential}) and the theorem.
\end{proof}

\subsection{Embedding into the \texorpdfstring{$\text{SL}(2,\mathbb{C})$}{SL2C} representation variety}

Let $g^Y$ by a hyperbolic metric on $Y := \Sigma \times \R$. A standard fact from differential geometry asserts that the universal cover $\tilde{Y}$ of $Y$ is isometric to hyperbolic space. This follows from direct arguments considering Jacobi fields or more generally from the Cartan--Ambrose--Higgs theorem. Let $\phi: \tilde{Y} \rightarrow \H^3$ be such an isometry. The push-forward of the deck transformation action of $\pi_1(\Sigma)$ on $\tilde{Y}$ yields then a representation $\rho: \pi_1(\Sigma) \rightarrow \textrm{PSL}(2,\C)$. Different choices of the isometry $\phi$ differ by an element of $\textrm{PSL}(2,\C)$ and lead to conjugated representations. We thus obtain a well-defined embedding
	\begin{align}
		\label{Map:MtoR} \cM \cong \mathcal{AF}(\Sigma)/\textrm{Diff}_0(\Sigma) \rightarrow \cR_{\textrm{PSL}(2,\C)}(\Sigma) := \frac{\textrm{Ham}(\pi(\Sigma), \textrm{PSL}(2, \C))}{\textrm{PSL}(2, \C)}.
	\end{align}
The image is an open subset in the smooth locus of the the representation variety $\cR_{\textrm{PSL}(2,\C)}(\Sigma)$. It is a well-known fact that this embedding can be lifted to the $\textrm{SL}(2,\C)$-representation variety $\cR_{\textrm{SL}(2,\C)}(\Sigma)$, see \cite{Culler:1986}. We discuss an explicit construction of this lift using the theory of Higgs bundles below. 

The variety $\cR_{\textrm{SL}(2,\C)}(\Sigma)$ carries a natural holomorphic symplectic structure, see Goldman \cite{Goldman:2004}. A classical result of Bers \cite{Bers:1970} asserts that the restriction of this complex structure to $\mathcal{QF}(\Sigma)$ corresponds to the standard complex structure on $\cT(\Sigma)\times \overline{\cT(\Sigma)}$. In particular, it follows from Proposition \ref{Prop:HodgeJ2} that the second complex structure on $\cM$ corresponds to multiplication by $\textbf{i}$ on $\cR_{\textrm{SL}(2,\C)}(\Sigma)$. Moreover, the holomorphic symplectic form corresponds to $\underline{\omega}_1 - \textbf{i}\underline{\omega}_3$ on $\cM$. This can be seen by noting that both symplectic forms agree with the Weil--Petersson symplectic form along Teichmüller space, which we embed diagonally into the quasi-Fuchisan moduli space using $\alpha$. It then follows from holomorphicity that both forms agree on all of $\cM$. See Hodge \cite{Hodge:PhD} for more details on this.

\begin{Remark}
The quasi-Fuchsian moduli space carries a natural holomorphic symplectic structure which can be expressed in complex Fenchel--Nielsen coordinates and corresponds to the Goldman holomorphic symplectic structure on $\cR_{\textrm{SL}(2,\C)}(\Sigma)$, see \cite{Platis:2001, Goldman:2004}.
\end{Remark}

\subsubsection{Construction of Higgs bundles}
This section describes a construction of Donaldson which associates to every pair $[g,\sigma] \in \cM$ a solution of the Hitchin equations \cite{Hitchin:1987}. This solution can be used to construct a flat $\SL(2,\C)$-connection and the holonomy representation of this connection gives then rise to an alternative description of the embedding of $\cM$ into $\cR_{\textrm{PSL}(2,\C)}(\Sigma)$.\\

Let $g\in \textrm{Met}(\Sigma)$ and $\sigma \in Q(g)$ be given. Choose a complex line bundle $L \rightarrow \Sigma$ with $L^2 = T\Sigma$ and define $E = L \oplus L^{-1}$. The Levi-Civita connection for $g$ induces a unique $U(1)$-connection $a \in \cA(L)$. Then consider the pair
	\begin{align} \label{eq:DonHitchin}
		A = \begin{pmatrix}a & \frac{\bar{\sigma}}{2} \\ -\frac{\sigma}{2} & -a \end{pmatrix}\in \cA(E) \quad \textrm{and} \quad \phi = \frac{1}{2} \left(\begin{array}{cc} 0 & \textbf{1} \\ 0 & 0 \end{array}\right) \in \Omega^{1,0}(\text{End}(E))
	\end{align}
where 
	\begin{gather*}
		\sigma \in Q(J) =  \Omega^{1,0}(L^{-2}) = \Omega^{1,0}(\text{Hom}(L,L^{-1})) \\
		\bar{\sigma} \in \overline{Q(J)} =  \Omega^{0,1}(L^2) = \Omega^{0,1}(\text{Hom}(L^{-1},L)) \\
		\textbf{1} \in \Omega^0(\text{End}(T\Sigma)) = \Omega^{1,0}(L^2) = \Omega^{1,0}(\text{Hom}(L^{-1},L)).
	\end{gather*}
The adjoint section $\phi^*$ is given by
	$$\phi^* = \frac{1}{2}\left(\begin{array}{cc} 0 & 0 \\ \textbf{1}^* & 0 \end{array}\right) \in \Omega^{0,1}(\text{End}(E))$$
where $\textbf{1}^* = 2\textbf{i} \textrm{dvol}_g \in \Omega^2(\Sigma, \C) = \Omega^{0,1}(\Sigma, T^*\Sigma) = \Omega^{0,1}(\Sigma, \textrm{Hom}(L, L^{-1}))$ and we used the sign convention $\Lambda^{1,1}(T^*\Sigma) \cong \Lambda^{0,1}(T^*\Sigma) \otimes \Lambda^{1,0}(T^*\Sigma)$.

\begin{Lemma}\label{Lemma:Hitchin}
Consider the setup described above. The pair $(g,\sigma)$ satisfies (\ref{eq:hGerm}) if and only if $(A,\phi)$ satisfies the Hitchin equations
		\begin{align} \label{eqHitchin} F_A + [\phi \wedge \phi^*] = 0, \qquad \bar{\partial}_A \phi = 0. \end{align}
When these conditions are satisfied, then $B := A + \phi + \phi^*$ is a flat $\textrm{SL}(2,\C)$-connection.

\end{Lemma}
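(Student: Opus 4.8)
The plan is to unwind the two Hitchin equations into explicit PDEs on $(g,\sigma)$ using the splitting $E = L \oplus L^{-1}$, and then match them against the pair of equations $K_g + |\sigma|_g^2 = -1$ and $\bar{\partial}\sigma = 0$. First I would compute $[\phi \wedge \phi^*]$. Writing $\phi = \tfrac12\left(\begin{smallmatrix} 0 & \mathbf{1} \\ 0 & 0\end{smallmatrix}\right)$ and $\phi^* = \tfrac12\left(\begin{smallmatrix} 0 & 0 \\ \mathbf{1}^* & 0\end{smallmatrix}\right)$, the bracket $[\phi\wedge\phi^*] = \phi\wedge\phi^* + \phi^*\wedge\phi$ (the sign from the wedge of $1$-forms is already absorbed) is diagonal, equal to $\tfrac14\,\mathrm{diag}(\mathbf{1}\wedge\mathbf{1}^*, \mathbf{1}^*\wedge\mathbf{1})$, and using $\mathbf{1}^* = 2\mathbf{i}\,\mathrm{dvol}_g$ together with the normalization $\mathbf{1}\wedge\mathbf{1}^* = \mathbf{1}^*$ (since $\mathbf{1}$ acts as the identity under the pairing $\Lambda^{1,0}\otimes\Lambda^{0,1}\to\Lambda^{1,1}$), this produces a diagonal $2$-form with entries $\pm\tfrac{\mathbf{i}}{2}\,\mathrm{dvol}_g$ up to the sign convention fixed in the excerpt. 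Next I would compute $F_A$ for the connection $A = \left(\begin{smallmatrix} a & \bar\sigma/2 \\ -\sigma/2 & -a\end{smallmatrix}\right)$: the diagonal part is $\mathrm{diag}(F_a + \tfrac14[\bar\sigma\wedge(-\sigma)]\text{-type term}, -F_a + \dots)$, where $F_a$ is the curvature of the Chern connection on $L$ and the off-diagonal-squared contribution is $-\tfrac14\sigma\wedge\bar\sigma$ acting on $L$, i.e. proportional to $|\sigma|_g^2\,\mathrm{dvol}_g$. The key classical input is $F_a = -\tfrac{\mathbf{i}}{2}K_g\,\mathrm{dvol}_g$ (the curvature of the spin bundle $L$ with $L^2 = T\Sigma$ is half the curvature of $T\Sigma$, which is $-\mathbf{i}K_g\,\mathrm{dvol}_g$). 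Assembling the $(1,1)$-part of $F_A + [\phi\wedge\phi^*] = 0$ then reduces, on the $L$-summand, exactly to $-\tfrac{\mathbf{i}}{2}(K_g + |\sigma|_g^2 + 1)\,\mathrm{dvol}_g = 0$, i.e. $K_g + |\sigma|_g^2 = -1$; the $L^{-1}$-summand gives the same equation by tracelessness.

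Second, I would expand $\bar{\partial}_A\phi = 0$. Since $\phi$ is strictly upper triangular with constant-type entry $\mathbf{1}$, the off-diagonal components of $\bar{\partial}_A\phi$ that survive are: the $(1,2)$-entry, which is $\bar{\partial}_L\mathbf{1}$ — but $\mathbf{1}$ is the tautological section of $\Omega^{1,0}(\mathrm{Hom}(L^{-1},L)) = \Omega^{1,0}(L^2) = \Omega^{1,0}(T\Sigma)$ and is holomorphic for the Dolbeault operator induced by $a$ (this is the content of $a$ being the Chern connection compatible with the holomorphic structure $J_g$), so this vanishes automatically; and the $(2,1)$- and diagonal entries, which involve the off-diagonal part $-\sigma/2$ of $A$ commuted with $\phi$. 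The commutator $[A^{0,1},\phi]$ contributes a term $-\tfrac14[\sigma,\mathbf{1}]$-type, which is off-diagonal-to-diagonal and lands in $\Omega^{1,1}$; tracking this carefully shows the only genuinely new equation coming from $\bar{\partial}_A\phi = 0$ is $\bar{\partial}_{J_g}\sigma = 0$, i.e. holomorphicity of the quadratic differential. (The cleanest bookkeeping is to note $\bar\partial_A\phi$ has a $(0,1)$-part-times-$(1,0)$-part bidegree landing in $\mathrm{Hom}(L,L^{-1})$ which is precisely where $\bar\partial\sigma$ lives.) Conversely, if both equations on $(g,\sigma)$ hold, running the computations backward gives both Hitchin equations.

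Finally, for flatness of $B = A + \phi + \phi^*$: by Hitchin's standard identity, $F_B = F_A + [\phi\wedge\phi^*] + \bar{\partial}_A\phi + \partial_A\phi^* + [\phi\wedge\phi] + [\phi^*\wedge\phi^*]$. Here $[\phi\wedge\phi] = 0$ and $[\phi^*\wedge\phi^*] = 0$ because $\phi$ (resp. $\phi^*$) is nilpotent with a single nonzero entry, so $\phi\wedge\phi = 0$ as a matrix-valued $2$-form; $\partial_A\phi^* = (\bar{\partial}_A\phi)^* = 0$; and $F_A + [\phi\wedge\phi^*] = 0$ is the first Hitchin equation. Hence $F_B = 0$. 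That $B$ is an $\mathrm{SL}(2,\C)$-connection (rather than just $\mathrm{GL}$) follows since $A$ is traceless by construction and $\phi,\phi^*$ are strictly triangular, hence traceless. The step I expect to be the main obstacle is getting all the signs and normalization constants in $[\phi\wedge\phi^*]$, in the $\sigma\wedge\bar\sigma$ term of $F_A$, and in $F_a = -\tfrac{\mathbf{i}}{2}K_g\,\mathrm{dvol}_g$ to line up so that the constant $+1$ in $K_g + |\sigma|_g^2 = -1$ emerges correctly; this is purely a careful-bookkeeping issue with the convention $\Lambda^{1,1} \cong \Lambda^{0,1}\otimes\Lambda^{1,0}$ fixed just before the lemma, and with the identification $Q(J) = \Omega^{1,0}(\mathrm{Hom}(L,L^{-1}))$, but it is where an unguarded computation most easily goes wrong.
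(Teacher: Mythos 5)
Your computational strategy (expand $F_A$, $[\phi\wedge\phi^*]$ and $\bar{\partial}_A\phi$ in the splitting $E=L\oplus L^{-1}$) is the same as the paper's, and your treatment of the diagonal part of $F_A+[\phi\wedge\phi^*]$, of $F_a=\frac{1}{2\textbf{i}}K_g\,\textrm{dvol}_g$, and of the flatness identity $F_B=F_A+[\phi\wedge\phi^*]$ is correct. But you misattribute where the equation $\bar{\partial}\sigma=0$ comes from, and this is a genuine error, not bookkeeping. In Donaldson's construction the quadratic differential sits in the \emph{connection} $A$, not in the Higgs field: $\phi$ is the constant section built from $\textbf{1}\in\Omega^{1,0}(\textrm{Hom}(L^{-1},L))=\Omega^{1,0}(T\Sigma)$. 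The entry $-\sigma/2$ of $A$ is a $(1,0)$-form, so it does not appear in $A^{0,1}$ and hence cannot enter $\bar{\partial}_A\phi$ at all; and even in $d_A\phi$ its only contributions are $\sigma\wedge\textbf{1}$ and $\textbf{1}\wedge\sigma$, which vanish as $(2,0)$-forms on a surface. Since the identity section $\textbf{1}$ is parallel for the connection induced by the Levi--Civita connection, $\bar{\partial}_A\phi=0$ holds \emph{identically} and carries no information --- this is the first sentence of the paper's proof. Your claim that its $(2,1)$ and diagonal entries produce $\bar{\partial}_{J_g}\sigma=0$ is false.

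The holomorphicity of $\sigma$ is instead encoded in the \emph{off-diagonal} entries of the first Hitchin equation: the paper computes
\begin{align*}
F_A = \begin{pmatrix} K_g + |\sigma|_g^2 & \nabla \bar{\sigma} \\ \nabla \sigma & -K_g - |\sigma|_g^2 \end{pmatrix} \frac{\textrm{dvol}_g}{2\textbf{i}},
\end{align*}
so that $F_A+[\phi\wedge\phi^*]=0$ is equivalent to the conjunction of $K_g+|\sigma|_g^2=-1$ (diagonal) \emph{and} $\nabla\sigma=0$ (off-diagonal), the latter being $\bar{\partial}\sigma=0$ because the $(2,0)$-part of $\nabla\sigma$ vanishes automatically. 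As written, your argument breaks the ``if and only if'' in both directions: you would conclude that $F_A+[\phi\wedge\phi^*]=0$ alone does not force $\sigma$ to be holomorphic, and you would extract a spurious nontrivial condition from the vacuous equation $\bar{\partial}_A\phi=0$. Once the off-diagonal terms of $F_A$ are accounted for, the rest of your proposal goes through as in the paper.
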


\begin{proof}
The equation $d_A \phi = 0$ is automatically satisfied, since $\textbf{1} \wedge \sigma = 0$ and $\sigma \wedge \textbf{1} = 0$. 
The curvature forms of $a$ and $-a$ on $L$ and $L^{-1}$ are related to the Gaussian curvature of $g$ by $F_a = \frac{1}{2 \textbf{i}} K_g\textrm{vol}_g$ and $F_{-a}= -\frac{\textbf{i}}{2}K_g\textrm{vol}_g$.
It then follows that
	$$F_A = \begin{pmatrix} K_g + |\sigma|_g^2 & \nabla \bar{\sigma} \\ \nabla \sigma & -K_g - |\sigma|_g^2 \end{pmatrix} \frac{\textrm{dvol}_g}{2\textbf{i}}$$
where the covariant derivative $\nabla$ is obtained from the Levi-Civita connection of $g$. Hence
	$$F_A + [\phi \wedge \phi^*] = \begin{pmatrix} K_g + 1 + |\sigma|_g^2 & \nabla \bar{\sigma} \\ \nabla \sigma & - (K_g + 1 + |\sigma|^2) \end{pmatrix} \frac{1}{2\textbf{i}}\textrm{dovl}_g$$
and this proves the first part of the lemma. 

Note that the condition $\bar{\partial}_A \phi = 0$ is equivalent to $d_A \phi = 0$ and then also implies $d_A \phi^* = 0$. Under this assumption, it follows 
		$$F_B = F_A + d_A(\phi + \phi^*) +\frac{1}{2} [(\phi + \phi^*) \wedge (\phi + \phi^*)] = F_A + [\phi \wedge \phi^*]$$		
and $B$ is flat when $F_A + [\phi \wedge \phi^*] = 0$ in addition.
\end{proof}

The holonomy representation $\rho_{A,\phi}: \pi_1(\Sigma) \rightarrow \textrm{SL}(2,\C)$ of the flat connection $B := A + \phi + \phi^*$ is well-defined up to conjugation and therefore Lemma \ref{Lemma:Hitchin} yields an embedding of $\cM$ into $\cR_{\textrm{SL}(2,\C)}(\Sigma)$. 

\begin{Remark} \label{Rmk:Taubes1}
Taubes \cite{Taubes:2004} constructs a flat $\textrm{SO}(3,\C)$-connection from a pair $(g,\sigma)$ which satisfies (\ref{eq:hGerm}). The resulting connection is essentially the same as the one constructed above. The standard way to obtain a flat $\textrm{SO}(3,\C)$-connection from our setting, is by viewing the Lie algebra bundle $\mathfrak{su}(E)$ as real vector bundle of rank $3$. It is thus naturally a $\textrm{SO}(3)$-bundle and its complexification $\mathfrak{sl}(E)$ can be viewed as $\textrm{SO}(3,\C)$-bundle. With this understood, it follows that any flat $\textrm{SL}(2,\C)$-connection on $E$ induces a flat $\textrm{SO}(3,\C)$-connection on $\mathfrak{sl}(E)$.
\end{Remark}

\subsubsection{Higgs bundles and almost-Fuchsian manifolds}

The connection between hyperbolic $3$-manifolds and the Hitchin equations was observed by Donaldson \cite{Donaldson:1987b}. For this consider the model $\H^3 = \SL(2,\C)/\textrm{SU}(2)$ for hyperbolic space. The Riemannian structure is obtained from the $SU(2)$-invariant inner product on $\mathfrak{sl}(2,\C)$
	$$\mathfrak{sl}(2,\C) \times \mathfrak{sl}(2,\C) \rightarrow \R, \qquad \langle \zeta_1, \zeta_2 \rangle := \textrm{tr}(\zeta_1 \zeta_2^* + \zeta_2 \zeta_1^*).$$
This induces a left-invariant metric on $\SL(2,\C)$ which then descends to the hyperbolic metric on the quotient $\SL(2,\C)/\textrm{SU}(2)$. 

Let $P^c$ and $P$ be the $\textrm{SL}(2,\C)$ and $\textrm{SU}(2)$-frame bundle of $E = L \oplus L^{-1}$. Then $B$ induces a flat connection on the $\H^3$-bundle 
	$$P(\H^3) := P^c \times_{\SL(2,\C)} \left(\SL(2,\C)/\textrm{SU}(2) \right) = P^c / \textrm{SU}(2)$$
and the reduction $P \subset P^c$ gives rise to a section $s_{A,\phi} \in \Omega^0(\Sigma, P(\H^3))$. The next theorem asserts that the quotient $Y := \H^3/\rho_{A,\phi}$ is an almost-Fuchsian manifold and that $s_{A,\phi}$ gives rise to a minimal isometric embedding of $\Sigma$ into $Y$. Moreover, the two maps from $\cM$ into the representation variety $\cR_{\textrm{SL}(2,\C)}(\Sigma)$ defined by (\ref{Map:MtoR}) and via the Hitchin equations in the previous section agree.

\begin{Theorem} \label{Thm:MtoR}
Suppose $(g,\sigma)$ satisfies (\ref{eq:hGerm}) and let $(A,\phi)$ be the corresponding solution of the Hitchin equations (see Lemma \ref{Lemma:Hitchin}). Denote by $(\tilde{\Sigma}, \tilde{g}, \tilde{\sigma})$ the universal cover of $\Sigma$ equipped with the lifted Riemannian metric $\tilde{g}$ and quadratic differential $\tilde{\sigma}$. Then the following holds.
\begin{enumerate}
	\item $s_{A,\phi}$ lifts to a $\pi_1(\Sigma)$-equivariant isometric immersion $\tilde{s}_{A,\phi}: (\tilde{\Sigma}, \tilde{g}) \rightarrow \H^3$ and the second fundamental form of $\tilde{s}_{A,\phi}$ is given by $\textrm{Re}(\tilde{\sigma})$.
	
	\item The holonomy representation $\rho_{A,\phi}: \pi_1(\Sigma) \rightarrow \SL(2,\C)$ of $B := A + \phi + \phi^*$ agrees up to conjugation with the image of $[g,\sigma]$ under (\ref{Map:MtoR}). In particular, $Y := \H^3/\rho_B$ is a smooth almost-Fuchsian manifold and $s_{A,\phi}$ defines a minimal isometric embedding $(\Sigma,g) \hookrightarrow Y$ with second fundamental form $\textrm{Re}(\sigma)$.
\end{enumerate}
\end{Theorem}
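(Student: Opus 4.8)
The strategy is to interpret the Hitchin data $(A,\phi)$ geometrically. The key observation is that a solution of the Hitchin equations on $E = L\oplus L^{-1}$ together with the flat connection $B = A+\phi+\phi^*$ is precisely the local data of a minimal surface in $\H^3$: indeed, the harmonic metric / equivariant harmonic map associated to the flat $\SL(2,\C)$-connection $B$ has the property that its Hopf differential is essentially the $(2,0)$-part of the second fundamental form, and in our rank-$2$ diagonal situation one reads off the first and second fundamental forms directly from $\phi$ and $\phi^*$. Concretely, the section $s_{A,\phi}\in\Omega^0(\Sigma, P(\H^3))$ is (after choosing the $\mathrm{SU}(2)$-reduction $P\subset P^c$) the harmonic section of the flat $\H^3$-bundle associated to $B$; lifting to the universal cover $\tilde\Sigma$ and trivializing the flat bundle gives a $\pi_1(\Sigma)$-equivariant map $\tilde s_{A,\phi}\colon\tilde\Sigma\to\H^3$ equivariant with respect to the holonomy $\rho_{A,\phi}$.

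\emph{Step 1: identify the first and second fundamental forms.} Using the identifications $\phi\in\Omega^{1,0}(\mathrm{Hom}(L^{-1},L))$ with $\phi=\tfrac12\mathbf 1$ and $\phi^*\in\Omega^{0,1}(\mathrm{Hom}(L,L^{-1}))$ with $\phi^*=\tfrac12\mathbf 1^*$, one computes the pullback metric of $\tilde s_{A,\phi}$ from the $\mathfrak{sl}(2,\C)$-valued $1$-form $\phi+\phi^*$ (the ``horizontal part'' of $B$ relative to the reduction $P$), which is the derivative of the harmonic map into $\H^3 = \SL(2,\C)/\mathrm{SU}(2)$. A direct computation of $\langle(\phi+\phi^*)\otimes(\phi+\phi^*)\rangle$ with the invariant inner product $\langle\zeta_1,\zeta_2\rangle=\mathrm{tr}(\zeta_1\zeta_2^*+\zeta_2\zeta_1^*)$ shows the pullback metric equals $g$ (here the normalization $L^2=T\Sigma$ and the factor $\tfrac12$ are used); hence $\tilde s_{A,\phi}$ is an isometric immersion. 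For the second fundamental form, differentiate $\phi+\phi^*$ using the connection $A$ and project onto the normal direction; the off-diagonal pieces $\bar\sigma/2,-\sigma/2$ of $A$ feed exactly into this normal component and yield $\mathrm{Re}(\tilde\sigma)$ as the second fundamental form. The fact that the diagonal piece $K_g+|\sigma|^2+1$ of $F_A+[\phi\wedge\phi^*]$ vanishes is precisely the Gauss equation of this immersion, and the vanishing of the off-diagonal piece $\nabla\sigma$ is the Codazzi equation (cf.\ Lemma \ref{Lemma:QF1}), so the equations (\ref{eqHitchin}) are equivalent to $\tilde s_{A,\phi}$ being a minimal immersion realizing the given $(g,\sigma)$. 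This proves part (1).

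\emph{Step 2: identify the two representations.} By Theorem \ref{Thm:MtoAF} the pair $(g,\sigma)$ also determines an almost-Fuchsian manifold $Y_{g,\sigma}=(\Sigma\times\R, g^Y)$ with $\Sigma\times\{0\}$ a minimal surface of second fundamental form $\mathrm{Re}(\sigma)$; its holonomy is the representation $(\ref{Map:MtoR})$. On the other hand, Step 1 produces a $\rho_{A,\phi}$-equivariant minimal isometric immersion $\tilde\Sigma\to\H^3$ with the same induced metric $g$ and the same second fundamental form $\mathrm{Re}(\tilde\sigma)$. By the fundamental theorem of submanifold geometry (uniqueness of an isometric immersion into a space form with prescribed first and second fundamental form, up to ambient isometry), the developing map of $Y_{g,\sigma}$ restricted to the universal cover of the minimal surface and the map $\tilde s_{A,\phi}$ differ by an element of $\mathrm{Isom}(\H^3)=\PSL(2,\C)$; comparing equivariance then gives that $\rho_{A,\phi}$ and the representation $(\ref{Map:MtoR})$ are conjugate in $\PSL(2,\C)$ (and the lift to $\SL(2,\C)$ is the one fixed by the spin structure $L$). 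In particular $\rho_{A,\phi}$ is discrete and faithful with quotient $Y\cong Y_{g,\sigma}$ almost-Fuchsian, and $s_{A,\phi}$ descends to the asserted minimal embedding. This proves part (2).

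\emph{Main obstacle.} The delicate point is Step 1: one must verify carefully that the harmonic section $s_{A,\phi}$ of the flat $\H^3$-bundle, read through the $\mathrm{SU}(2)$-reduction, really has derivative $\phi+\phi^*$ and that the resulting immersion is \emph{immersive} (not just harmonic) — this is where $|\sigma|_g<1$ is not needed for immersivity but the precise bookkeeping of the identifications $\mathbf 1,\mathbf 1^*,\sigma,\bar\sigma$ and the factors of $\tfrac12$ and $2\mathbf i\,\mathrm{dvol}_g$ must be tracked so that the Gauss--Codazzi equations match (\ref{eqHitchin}) on the nose. Once the immersion is correctly identified, Step 2 is a soft application of uniqueness of isometric immersions into $\H^3$ combined with Theorem \ref{Thm:MtoAF}.
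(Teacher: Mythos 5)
Your proposal is correct and follows essentially the same route as the paper: both identify $\nabla s_{A,\phi}=\phi+\phi^*$, read off the induced metric and the second fundamental form from the invariant inner product and the covariant derivative $[A(u),ds(v)]$ (the paper carries this out in an explicit local holomorphic chart with the non-unitary trivialization $\lambda\oplus\lambda^{-1}$), and match the Hitchin equations with the Gauss--Codazzi equations. The only minor difference is in part (2), where you close the argument with Bonnet-type rigidity of isometric immersions into $\H^3$ with prescribed fundamental forms, while the paper instead exhibits the $\pi_1(\Sigma)$-equivariant isometry $\tilde{Y}\rightarrow\H^3$ directly via the normal exponential map $(z,t)\mapsto\exp_{\tilde{s}_{A,\phi}(z)}(t\nu(\tilde{s}_{A,\phi}(z)))$, invoking Uhlenbeck's uniqueness of the hyperbolic metric determined by $(\tilde{g},\tilde{\sigma})$ -- two equivalent ways of using the same uniqueness phenomenon.
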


\begin{proof}
We recall some of the key observations of Donaldson \cite{Donaldson:1987b}: First, the canonical isomorphism
	\begin{align} \label{adPvert} \textrm{\textbf{i}}\textrm{ad}(P) \cong s_{A,\phi}^* (T^{vert} (P(\H^3)) \end{align}
intertwines the connection induced by $A$ on $\textrm{\textbf{i}}\textrm{ad}(P)$ and the connection induced by $B := A + \phi + \phi^*$ and the Levi-Civita connection of $\H^3$ on $s_{A,\phi}^* (T^{vert} (P(\H^3))$. Second, the associated section $s_{A,\phi}$ satisfies
			$$\nabla s_{A,\phi} = (\phi + \phi^*) \in \Omega^1(\Sigma, \textrm{\textbf{i}}\textrm{ad}(P)) \subset \Omega^1(\Sigma, \textrm{End}(E))$$ 
where we identify $\textrm{\textbf{i}}\textrm{ad}(P)$ with the space of self-adjoint endomorphism of $E$. Moreover, the Hitchin equations (\ref{eqHitchin}) yield $d_A^* (\phi + \phi^*) = 0$ and thus $\nabla^* \nabla s_{A,\phi} = 0$. Solutions to the later equation are called twisted harmonic sections -- they are represented in any flat trivialization by harmonic maps into $\H^3$. The standard relation between harmonic maps and minimal surfaces then shows that $s_{A,\phi}$ is a minimal immersion.

We prove the first part, by a calculation in local coordinates. Let $U \subset \Sigma$ be a contractible holomorphic coordinate chart and suppose $g = \lambda^2 (dx^2 + dy^2)$ in these coordinates. This chart provides a trivialization of $T\Sigma = L^2$ over $U$ and we choose compatible trivializations of $L$ and $L^{-1}$. These trivializations are not unitary, and the bundle metric is given by $\lambda\oplus \lambda^{-1}$. In this trivialization, $s_{A,\phi}$ is represented by a map $s: U \rightarrow \H^3$ with derivative
	$$ds(v) = \frac{1}{2} \begin{pmatrix} 0 &  v \\  \lambda^2\bar{v} & 0 \end{pmatrix} \in \textrm{End}(\C^2).$$
In this formula, we view $ds(v)$ is viewed as section of $\textbf{i}\textrm{ad}(P) \subset \textrm{End}(E)$. Next consider the section $\zeta \in \Omega^1(U, \mathfrak{sl}(2,\C))$ defined by
	$$\zeta(v) = \frac{1}{2} \begin{pmatrix} 0 & \lambda v \\ \lambda \bar{v} & 0 \end{pmatrix}$$
This satisfies $ds(v) = L_s \zeta(v)$, where $L_s : \mathfrak{sl}(2,\C) \rightarrow T_s \H^3$ denotes the infinitesimal action. From this it follows that $|ds(v)|^2 = \lambda^2 |v|^2$ and $s$ is an isometric immersion. We calculate in the same chart
	$$\nabla_u (ds(v))  = [A(u) , ds(v)] = \frac{1}{2} \begin{pmatrix} \frac{1}{2}(\bar{\sigma}(u,v) + \sigma(u,v)) & a(u) v + v a(u) \\ - \lambda^2 (a(u) \bar{v} + \bar{v} a(u)) & -\frac{1}{2}(\bar{\sigma}(u,v) + \sigma(u,v)) \end{pmatrix}$$
for vector fields $u,v: U \rightarrow \C$. It follows from the formula for $ds(v)$ above that
		$$\nu(s) := \begin{pmatrix} \frac{1}{2} & 0 \\ 0 & -\frac{1}{2} \end{pmatrix}.$$
corresponds to the unit normal vector field along the image of $s$. In particular, the normal component of $\nabla_u (ds(v))$ is given by $\textrm{Re}(\sigma) \nu(s)$. This shows that the second fundamental form is given by $\textrm{Re}(\sigma)$ and completes the proof of the first part.
		
By Theorem \ref{Thm:MtoAF} there exists a unique quasi-Fuchisan metric $g^Y \in \mathcal{AF}(\Sigma)$ on $Y := \Sigma \times \R$ for which $\Sigma\times\{0\}$ is a minimal surface with induced metric $g$ and second fundamental form $\textrm{Re}(\sigma)$. This lifts to a hyperbolic metric on $\tilde{Y} := \tilde{\Sigma}\times \R$ which is uniquely determined by $\tilde{g}$ and $\tilde{\sigma}$ (see \cite{Uhlenbeck:1983} Theorem 5.1) and
			$$\tilde{Y} \rightarrow \H^3, \qquad (z,t) \mapsto \exp_{\tilde{s}_{A,\phi}(z)} (t\nu(\tilde{s}_{A,\phi}(z)))$$
is a $\pi_1(\Sigma)$-equivariant isometry. This proves the second part and the theorem.
\end{proof}

\subsection{The cotangent bundle of Teichmüller space }
The cotangent bundle of Teichmüller space can be identified with the space
	\begin{align}
			T^*\cT(\Sigma) := \{(J,\sigma)\,|\, J \in \cJ(\Sigma), \, \sigma \in Q(J),\, \bar{\partial}_J \sigma = 0\}/\textrm{Diff}_0(\Sigma).
	\end{align}	
The next theorem shows that the natural map from $\cM$ into $T^*\cT(\Sigma)$ is an embedding. This follows from a standard application of the continuation method and the proof is due to Uhlenbeck (\cite{Uhlenbeck:1983}, Theorem 4.4). We reproduce the proof below for convenience of the reader.

\begin{Theorem}[\textbf{Uhlenbeck} \cite{Uhlenbeck:1983}]\label{Thm:MtoCotT}
Let $\cM$ be the moduli space (\ref{moduli:HKMJ}). For $g \in \textrm{Met}(\Sigma)$ denote by $J_g \in \cJ(\Sigma)$ the unique complex structure compatible with $g$ and the orientation of $\Sigma$. Then
			\begin{align} \label{MTcotEmb} \cM \rightarrow T^*\cT(\Sigma), \qquad [g,\sigma] \mapsto [J_g, \sigma] \end{align}
is a smooth embedding.
\end{Theorem}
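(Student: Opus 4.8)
The plan is to reduce the assertion to the analysis of the conformal Gauss--curvature equation and then combine a maximum principle with the implicit function theorem. First I would note that the map $[g,\sigma]\mapsto[J_g,\sigma]$ is well defined and smooth, being induced by the $\textrm{Diff}(\Sigma)$-equivariant assignment $(g,\sigma)\mapsto(J_g,\sigma)$, and that a map between the two quotient moduli spaces is an embedding once one checks it is injective and restricts to a diffeomorphism onto an open subset of $T^*\cT(\Sigma)$. Working in a local slice for the $\textrm{Diff}_0(\Sigma)$-action (so $J$ ranges over a chart of $\cT(\Sigma)$ and $\sigma$ over holomorphic quadratic differentials), I would parametrize the metrics compatible with a fixed $J$ by $g=e^{2u}h_J$, where $h_J$ is the hyperbolic metric in the conformal class of $J$ (smooth in $J$ by uniformization). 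Using $K_{e^{2u}h_J}=e^{-2u}(K_{h_J}+\Delta_{h_J}u)$ and $|\sigma|^2_{e^{2u}h_J}=e^{-4u}|\sigma|^2_{h_J}$, the equation $K_g+|\sigma|_g^2=-1$ becomes
$$F_{J,\sigma}(u):=\Delta_{h_J}u+e^{2u}+e^{-2u}|\sigma|^2_{h_J}-1=0,$$
with the constraint $|\sigma|_g<1$ reading $e^{-4u}|\sigma|^2_{h_J}<1$; the theorem thus reduces to a statement about this semilinear elliptic equation.

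Injectivity amounts to uniqueness of solutions of $F_{J,\sigma}(u)=0$ subject to $|\sigma|_g<1$, which I would establish by the maximum principle. If $u_1,u_2$ are two such solutions and $w:=u_1-u_2$, subtracting gives $\Delta_{h_J}w=-(e^{2u_1}-e^{2u_2})-(e^{-2u_1}-e^{-2u_2})|\sigma|^2_{h_J}$. At an interior maximum $q$ of $w$ one has $\Delta_{h_J}w(q)\ge0$; on the other hand, writing $a=u_1(q)>b=u_2(q)$ in the case $\max w>0$, the right-hand side at $q$ equals $-(e^{2a}-e^{2b})\bigl(1-e^{-2(a+b)}|\sigma|^2_{h_J}(q)\bigr)$, which is strictly negative because the constraints $|\sigma|^2_{h_J}(q)<e^{4a}$ and $|\sigma|^2_{h_J}(q)<e^{4b}$ force $|\sigma|^2_{h_J}(q)<e^{2(a+b)}$. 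This contradiction, together with the symmetric argument at a minimum, yields $w\equiv0$ and hence $g_1=g_2$; note that the pointwise bound $|\sigma|_g<1$ enters decisively here. Unwinding the $\textrm{Diff}_0(\Sigma)$-identifications then gives injectivity of the map on $\cM$.

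For the local diffeomorphism onto an open image I would linearize: a direct computation gives $dF_{J,\sigma}(u)\phi=\Delta_{h_J}\phi+2e^{2u}\phi-2e^{-2u}|\sigma|^2_{h_J}\phi=\Delta_{h_J}\phi+2e^{2u}(1-|\sigma|^2_g)\phi$, whose zeroth-order coefficient is \emph{strictly positive} precisely because $|\sigma|_g<1$. Hence $dF_{J,\sigma}(u)$ is a positive self-adjoint elliptic operator, so an isomorphism on the relevant Hölder or Sobolev completions, and the implicit function theorem produces near any point $[J_0,\sigma_0]$ of the image a smooth family of solutions $u=u(J,\sigma)$; the open condition $|\sigma|_{g(J,\sigma)}<1$ persists on a neighbourhood by continuity, so $[J,\sigma]\mapsto[e^{2u(J,\sigma)}h_J,\sigma]$ is a smooth local inverse. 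Thus the image is open and the map is an open embedding (in particular an immersion). Finally, running the continuation method along the rays $t\mapsto(J,t\sigma)$, $t\in[0,1]$, starting from the hyperbolic solution $u\equiv0$ at $\sigma=0$, identifies the image as an open neighbourhood of the zero section of $T^*\cT(\Sigma)$.

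I expect the main obstacle to be the rigorous packaging of the elliptic analysis in the quotient setting rather than any single estimate: choosing the $\textrm{Diff}_0(\Sigma)$-slices and Banach-space completions so that $F_{J,\sigma}$ is a smooth map of Banach manifolds, bootstrapping $u$ to full regularity, and — for the precise description of the image — supplying the a priori bounds showing that the continuation can break down only as $|\sigma|_g$ approaches $1$. The maximum principle and the linearization computation are short; the real work is in the functional-analytic bookkeeping and the closedness step of the continuation argument.
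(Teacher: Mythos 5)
Your proposal is correct, and it shares the paper's overall skeleton --- the reduction to the semilinear equation $\Delta_{g_0}u-1+e^{2u}+|\sigma|_{g_0}^2e^{-2u}=0$ for $g=e^{2u}g_0$ and the observation that the linearization $\Delta_{g_0}+2e^{2u}(1-|\sigma|_g^2)$ is a positive self-adjoint isomorphism exactly when $|\sigma|_g<1$ --- but it proves the key injectivity step by a genuinely different argument. The paper establishes uniqueness via the continuation method: it deforms $\sigma$ to $t\sigma$, shows that the resulting path $u_t$ satisfies $\dot u_t<0$ and hence $\partial_t|\sigma|_{g_t}^2\geq 0$ (so the constraint $|t\sigma|_{g_t}<1$ propagates), proves convergence of the path by uniform bounds on $L_{u_t}^{-1}$, and concludes that every solution is reached by the unique path emanating from the hyperbolic solution $u_0\equiv 0$. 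You instead compare two putative solutions $u_1,u_2$ directly: at a positive maximum of $w=u_1-u_2$ the identity $\Delta w=-(e^{2a}-e^{2b})\bigl(1-e^{-2(a+b)}|\sigma|_{g_0}^2\bigr)$ is strictly negative, since $|\sigma|_{g_0}^2<e^{4b}\leq e^{2(a+b)}$ follows from the pointwise bounds $|\sigma|_{g_i}<1$; this checks out and gives uniqueness in two lines, without any existence or convergence analysis. Your route is shorter and more self-contained for injectivity, and you additionally spell out the immersion/openness step via the implicit function theorem, which the paper leaves implicit. What the paper's continuation argument buys in exchange is structural information your comparison does not: existence of solutions along the whole ray $t\mapsto t\sigma$ and the monotonicity of $|t\sigma|_{g_t}$, which describe how the image sits inside $T^*\cT(\Sigma)$ and why the deformation can only break down as $|\sigma|_g\to 1$ --- precisely the a priori control you flag at the end as the remaining work in your own approach.
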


\begin{Remark} \label{Rmk:Taubes2}
The theorem does not hold without the restriction $|\sigma|_g < 1$, see \cite{Uhlenbeck:1983, HuangLuci:2012}. Taubes \cite{Taubes:2004} investigated extensions to the larger space
$$\mathcal{H} := \left\{ (g,\sigma) \,\left|\, \begin{array}{c} g \in \textrm{Met}(\Sigma), \, \sigma \in Q(g), \\ \bar{\partial} \sigma = 0,\, K_g + |\sigma|_g^2 = -1\end{array}\right.\right\} \bigg/ \text{Diff}_0(\Sigma)$$
of \textit{minimal hyperbolic germs}. He shows first of all, that $\mathcal{H}$ is a smooth orientable manifold of (real) dimension $12(\textrm{genus}(\Sigma) - 1)$ and conducts a detailed study of the two maps from $\mathcal{H}$ to the space of flat $\textrm{SL}(2,\C)$-connections and into the cotangent bundle of Teichmüller space. He proves that both maps are not proper, that they have identical critical loci and gives a geometric description of the critical points. Moreover, the pullback of the canonical symplectic forms on the cotangent bundle of Teichmüller space and on the space of flat $\textrm{SL}(2,\C)$-connections agree on $\mathcal{H}$. Another way to put this is that there exists a Lagrangian immersion of $\mathcal{H}$ in the product space of the cotangent bundle of Teichmüller space and the smooth locus of the $\textrm{SL}(2,\C)$ representation variety of $\Sigma$.
\end{Remark}

\begin{proof}[Proof of Theorem \ref{Thm:MtoCotT}]
Let $J \in \cJ(\Sigma)$ and $\sigma \in Q(J)$. We show that there exists at most one metric $g$ in the conformal class determined by $J$ with $|\sigma|_g < 1$ and
		\begin{align} \label{KW:eq1}
				K_g + |\sigma|_g^2 = -1
		\end{align}				
By uniformization, there exists a unique hyperbolic metric $g_0 \in \textrm{Met}(\Sigma)$ which is compatible with $J$. Every other metric in the conformal class of $g_0$ has the shape $g = e^{2u}g_0$ for some smooth function $u:\Sigma \rightarrow \R$. \\

\textbf{Step 1:} \textit{$g := e^{2u}g_0 \in \textrm{Met}(\Sigma)$ solves (\ref{KW:eq1}) if and only if $u$ solves
		\begin{align} \label{KW:eq2}
				\Delta_{g_0} u - 1 + e^{2u} + |\sigma|_{g_0}^2e^{-2u} = 0.
		\end{align}
where $\Delta_{g_0} = d^*d$ denotes the positive Laplacian.}\\

The Gaussian curvature changes as $K_g = e^{-2u}\left(\Delta_{g_0} u - 1 \right)$ and the norm of $\sigma$ changes by $|\sigma|_g^2 = |\sigma|_{g_0}^2 e^{-4u}$. Hence
			$$K_g + |\sigma|_g^2 + 1 = e^{-2u}\left(\Delta_{g_0} u - 1 + e^{2u} + |\sigma|_{g_0}^2e^{-2u} \right)$$
and this proves Step 1.			\\

\textbf{Step 2:} \textit{Fix $k \geq 2$ and define $F: W^{k,2}(\Sigma,\R) \rightarrow W^{k-2,2}(\Sigma,\R)$ by 
		\begin{align} \label{proof:TTQ1} F(u) := \Delta_{g_0} u - 1 + e^{2u} + |\sigma|_{g_0}^2e^{-2u} \end{align}
Suppose $|\sigma|_{g} < 1$ pointwise, then $L_u := dF(u): W^{k,2}(\Sigma,\R) \rightarrow W^{k-2,2}(\Sigma,\R)$ is given by
		\begin{align} \label{proof:TTQ2} L_u \xi := \Delta_{g_0} \xi + 2e^{2u}\xi  - 2|\sigma|_{g_0}^2 e^{-2u} \xi. \end{align}
and this is a positive self-adjoint isomorphism.}\\

The formula for the derivative is immediate. We then calculate
\begin{align*}
	\langle L_u \xi, \xi \rangle_{L^2} &= \int_{\Sigma} \left( |d\xi|_{g_0}^2 + 2 e^{2u} \xi^2 - 2 |\sigma|_{g_0}^2 e^{-2u}\xi^2\right) \textrm{dvol}_{g_0} \\
																		 &= \int_{\Sigma} \left( |d\xi|_{g}^2 + 2 \xi^2 - 2 |\sigma|_{g}^2 \xi^2\right) \textrm{dvol}_{g} \\
																		 &= \int_{\Sigma} \left( |d\xi|_{g}^2 + 2 (1 - |\sigma|_{g}^2) \xi^2\right) \textrm{dvol}_{g}
\end{align*}
This is strictly positive for $\xi \neq 0$ and hence $L_u$ is injective. Since $L_u$ is a lower order pertubation of the Laplacian $\Delta_{g_0}$, it is a Fredholm operator of index $0$, and therefore also surjective.\\

\textbf{Step 3:} \textit{Let $g \in \textrm{Met}(\Sigma)$ and $\sigma \in Q(g)$ with $|\sigma|_{g} < 1$ satisfy (\ref{KW:eq1}). Then there exists a unique smooth path $u: [0,1] \rightarrow W^{k,2}(\Sigma,\R)$, $t \mapsto u_t$, such that
			\begin{align} \label{proof:TTQ3} \Delta_{g_0} u_t - 1 + e^{2u_t} + |t \sigma|_{g_0}^2e^{-2u_t} = 0 \end{align}
 for all $t \in [0,1]$ and $g = g_0 e^{2u_1}$.}\\

First, let $0 \leq t_0 < 1$ and suppose that $u_t \in W^{k,2}(\Sigma,\R)$ is a smooth family of functions satisfying (\ref{proof:TTQ3}) for $t \in (t_0,1]$. We claim that
		\begin{align} \label{proof:TTQ4} \partial_t |\sigma|^2_{g_t} \geq 0\qquad \textrm{for all $t \in (t_0,1]$.} \end{align}
Indeed, differentiating the equation yields
		$$L_{u_t} \dot{u}_t + 2t|\sigma|_{g_0}^2 e^{-2{u_t}} = 0$$
where $L_{u_t}$ is a positive elliptic operator by Step 2, provided that $|t \sigma|_{g_t}^2 < 1$. In this case, it follows from the maximums principle that $\dot{u}_t < 0$ and then $\partial_t |\sigma|^2_{g_t} = \partial_t \left(|\sigma|_{g_0}^2 e^{-4u_t}\right) > 0$. Therefore the set of times $t\in (t_0,1]$ for which (\ref{proof:TTQ4}) holds is open, closed and contains $1$. It follows that (\ref{proof:TTQ4}) is satisfied for all $t \in (t_0,1]$

Next, consider $G: W^{k,2}(\Sigma,\R) \times \R \rightarrow W^{k-2, 2}(\Sigma,\R)$ defined by
		$$G(u,t) = \Delta_{g_0} u - 1 + e^{2u} + |t \sigma|_{g_0}^2e^{-2u}.$$
We need to show that there exists a unique family $u_t$ satisfying $G(u_t,t) = 0$ for all $t \in  [0,1]$ and $g = g_0 e^{2u_1}$. By Step 2, we can apply the inverse function theorem at $G(u_t,t)$ if $|t \sigma|^2_{g_t} < 1$ for $g_t := g_0e^{2u_t}$. For $t = 1$ this is satisfied by assumption, and the solution exists on some interval $(t_0, 1]$. Moreover, it follows from (\ref{proof:TTQ4}) that the condition $|t \sigma|^2_{g_t} < 1$ remains satisfied for all $t \in (t_0,1]$. This yields uniqueness of the solution and openness of the maximal existence interval. It remains to show that $u_t$ converges as $t \rightarrow t_0$. The estimate in Step 2, shows that the family of operators $L_{u_t} : W^{2,2}(\Sigma,\R) \rightarrow L^2(\Sigma,\R)$ is uniformly bounded and hence
		$$\dot{u}_t = L_{u_t}^{-1} \left(2t |\sigma|_{g_0}^2 e^{-2u_t}\right), \qquad t \in (t_0,1]$$
is uniformly bounded in $W^{2,2}(\Sigma,\R)$. It follows $u_t$ converges in $W^{2,2}$ as $t \rightarrow t_0$. Repeating the argument inductively, we see that $\dot{u}_t$ is also uniformly bounded in $W^{k,2}(\Sigma,\R)$ and thus the convergence also holds in $W^{k,2}$. \\

\textbf{Step 4:} \textit{The inclusion (\ref{MTcotEmb}) is an embedding.}\\

Let $g \in \textrm{Met}(\Sigma)$ and $\sigma \in Q(g)$ with $|\sigma|_{g} < 1$ satisfy (\ref{KW:eq1}). By Step 3 there exists a unique path $u: [0,1] \rightarrow W^{k,2}(\Sigma,\R)$ satisfying 
	$$K_{g_t} + |t\sigma|_{g_t}^2 = -1, \qquad g_t := g_0 e^{-2u_t}.$$
For $t = 0$, the maximum principle yields that $u_0 \equiv 0$. We may thus recover the metric $g = g_1$ by following the path of solutions defined $G(u_t,t) = 0$. This shows uniqueness of solutions within the conformal class under the constraint $|\sigma|_g < 1$ and this proves the theorem.
\end{proof}

\begin{Theorem}[\textbf{Donaldson} \cite{Donaldson:2000}, \textbf{Hodge} \cite{Hodge:PhD}] \label{Thm.FKhk}
The hyperkähler structure of $\cM$ agrees with the Feix--Kaledin hyperkähler extension of the Weil--Petersson metric on $\cT(\Sigma)$.
\end{Theorem}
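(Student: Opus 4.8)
The plan is to combine the three structural facts established so far: (i) the second complex structure $J_2$ on $\cM$ corresponds, under the embedding into $T^*\cT(\Sigma)$, to the standard complex structure on $\cT(\Sigma)\times\overline{\cT(\Sigma)}$, hence along the diagonal $\cT(\Sigma)\subset\cM$ it agrees with the standard complex structure of Teichmüller space; (ii) along the zero section $\cT(\Sigma)$ the metric $\underline{g}$ restricts to the Weil--Petersson metric (this must be checked: at $\sigma=0$ the moduli space equations reduce to $K_g=-1$, and $\underline{\omega}_1$ evaluated on tangent vectors to the zero section reduces, via Theorem~\ref{hk.ThmM} and Proposition~\ref{propfiberMQ}, to the integral $\int_\Sigma\omega_{\cJ}(\hat J_1,\hat J_2)\rho$, which is the Weil--Petersson form on $\cT(\Sigma)$); and (iii) the $S^1$-action on $\cM$ by rotation of $\sigma$ is holomorphic for $J_1$, fixes exactly the zero section $\cT(\Sigma)$, and acts on the holomorphic symplectic form $\underline\omega_2+\textbf{i}\,\underline\omega_3$ by multiplication by $e^{\textbf{i}t}$ (shown in the proof of Theorem~\ref{Thm:Potential}). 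By the uniqueness part of the Feix--Kaledin theorem (Feix \cite{Feix:2001}, Kaledin \cite{Kaledin:1999}), there is a \emph{unique} $S^1$-invariant hyperkähler structure on a neighbourhood of the zero section of $T^*\cT(\Sigma)$ which is compatible with the canonical holomorphic symplectic form and restricts to the given Kähler structure along the zero section. So it suffices to verify that the hyperkähler structure on $\cM$, transported to $T^*\cT(\Sigma)$ via Theorem~\ref{Thm:MtoCotT}, satisfies these normalisations.

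First I would fix the identification: by Theorem~\ref{Thm:MtoCotT} the map $[g,\sigma]\mapsto[J_g,\sigma]$ is an embedding of $\cM$ onto an open neighbourhood of the zero section in $T^*\cT(\Sigma)$, and I claim it intertwines $J_2$ with the canonical complex structure $J_2^{\mathrm{can}}$ on $T^*\cT(\Sigma)$ (the one making it a holomorphic cotangent bundle). This is exactly the content of Remark~\ref{Rmk:XHH} and Proposition~\ref{Prop:HodgeJ2} at the fibre level: the fibrewise antiholomorphic-for-$J_1$, holomorphic-for-$J_2$ identification $(j,q)\colon X\to\cQ_1(\R^2)\subset T^*\cJ(\R^2)$ is compatible with the canonical holomorphic symplectic structure of $T^*\H$, so globally $\underline\omega_2+\textbf{i}\,\underline\omega_3$ is the canonical holomorphic symplectic form $-d\lambda$ of $T^*\cT(\Sigma)$ (this uses Proposition~\ref{PropMcot} applied with $Y=\cT(\Sigma)$ and $G$ the relevant symmetry, together with the descent of $\underline\omega_2,\underline\omega_3$ from $\cQ_1(\Sigma)$). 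Thus the holomorphic symplectic normalisation of Feix--Kaledin is automatic. Next I would restrict to the zero section: at $\sigma=0$ the level set $\underline\mu_1^{-1}(0)\cap\cdots$ inside $\cQ_1(\Sigma)$ is just $\{(J,0)\mid K_J=-1 \text{ after Moser}\}$, i.e. $\cT(\Sigma)$, and the induced Kähler form is $\underline\omega_1|_{\sigma=0}$; using Theorem~\ref{hk.ThmM}(1) with $\sigma=0$ (only the $-2K_J\rho+2c\rho$ term survives, which is the moment-map value, so the \emph{symplectic form} on the quotient is computed from $\omega_1$ on $\cQ_1$ restricted to the $\hat\sigma=0$ directions) one gets precisely the Weil--Petersson form. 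Finally I would invoke the $S^1$-invariance: the action $e^{\textbf{i}t}\cdot[g,\sigma]=[g,e^{\textbf{i}t}\sigma]$ is fibrewise rotation, which under the identification $\cQ_1(\R^2)\cong X$ is the standard fibre rotation of Lemma~\ref{Lemma:fibreS1}, hence preserves $\underline g$ and $\underline\omega_1$ and rotates $\underline\omega_2,\underline\omega_3$ — exactly the $S^1$-action in the Feix--Kaledin construction. Uniqueness then forces the two hyperkähler structures to coincide on the (connected) neighbourhood of the zero section.

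The main obstacle I expect is a \emph{matching-of-conventions} issue rather than a deep one: one must be careful that the complex structure on $T^*\cT(\Sigma)$ coming from Feix--Kaledin (the holomorphic cotangent bundle structure, built from the complex structure of $\cT(\Sigma)$ together with the Weil--Petersson metric via the Dolbeault/twistor construction) is genuinely the same as our $J_2$, and not, say, $-J_2$ or $J_1$. The excerpt's own remarks (the sign-convention warning after Remark on $\cJ(\R^2)$, and Proposition~\ref{Prop:HodgeJ2}'s appearance of $(\hat J_1,\hat J_2)\mapsto(J_1\hat J_1,-J_2\hat J_2)$) flag that signs are delicate here. Concretely, I would pin this down by checking on a single tangent vector to the zero section that $J_2$ acts as the complex structure of Teichmüller space (which is (i) above) and that the $(2,0)$-form $\underline\omega_2+\textbf{i}\,\underline\omega_3$ pairs a vertical cotangent vector against a horizontal one in the way $-d\lambda$ does — a finite, local computation in the Hodge coordinates of Remark~\ref{Rmk:XHH}. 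A secondary subtlety is that Feix--Kaledin only guarantees uniqueness \emph{on some neighbourhood} of the zero section, whereas $\cM$ is an explicit such neighbourhood (the unit disc bundle condition $|\sigma|_g<1$, sharpened by Theorem~\ref{Thm:MtoCotT}); since both hyperkähler structures agree on the germ along $\cT(\Sigma)$ and both are real-analytic, they agree on the connected component of their common domain of definition, which contains $\cM$. I would state this real-analytic continuation step explicitly, as it is the only point where one uses more than the formal uniqueness.
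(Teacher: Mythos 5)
Your overall strategy coincides with the paper's: transport the hyperkähler structure to $T^*\cT(\Sigma)$ via Theorem \ref{Thm:MtoCotT}, verify the three Feix--Kaledin normalisations ($S^1$-invariance, compatibility with the canonical holomorphic symplectic form, restriction to the Weil--Petersson Kähler structure along the zero section), and invoke uniqueness; your additional remark about real-analytic continuation from the germ along the zero section to all of $\cM$ is a genuine refinement the paper does not spell out. However, there is a concrete error in your identification of the complex structures. You claim that the embedding into $T^*\cT(\Sigma)$ intertwines $J_2$ with the canonical complex structure of the holomorphic cotangent bundle, and that $J_2$ restricts to the complex structure of Teichmüller space along the zero section. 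Both claims are false, and the paper's proof asserts the opposite: it is the \emph{first} complex structure $J_1$, acting as $(\hat J,\hat\sigma)\mapsto(J\hat J,\textbf{i}\hat\sigma)$, that is the canonical cotangent-bundle complex structure, while the zero section is \emph{totally real} for $J_2$ --- that is precisely the starting point of Hodge's construction in Remark \ref{Rmk:XHH}. The role of $J_2$ is to match the product complex structure of $\cT(\Sigma)\times\overline{\cT(\Sigma)}$ under the entirely different map $\alpha$ of Proposition \ref{Prop:HodgeJ2}, not under the tautological inclusion into $T^*\cT(\Sigma)$.

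Your own computations expose the inconsistency: you correctly identify $\underline{\omega}_2+\textbf{i}\,\underline{\omega}_3$ with the canonical form $-d\lambda$, but $-d\lambda$ is of type $(2,0)$ for the cotangent-bundle complex structure, whereas $\underline{\omega}_2+\textbf{i}\,\underline{\omega}_3$ is of type $(2,0)$ for $J_1$ (Theorem \ref{ThmHK}); this forces the cotangent-bundle structure to be $J_1$, consistent with your (correct) choice of $\underline{\omega}_1|_{\sigma=0}$ as the Kähler form to compare with Weil--Petersson. Once the labels are corrected, the argument goes through and is essentially the paper's: the identification of $J_1$ is immediate from the construction of $\cM_s$, the $S^1$-invariance and compatibility with $-d\lambda$ descend from the fibrewise statements (Theorem \ref{ThmHK}, Lemma \ref{Lemma:fibreS1}, Proposition \ref{PropMcot}), and the only substantive computation is the restriction to the zero section. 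There the paper first checks that the harmonic representatives $(\hat J,0)$ with $\bar{\partial}_J^*\hat J=0$ are tangent to the moment-map level set, and only then evaluates the metric on them via the trace formula to recover the Weil--Petersson form; your sketch should include this tangency check, since without it the induced metric on the quotient is not being computed on the correct horizontal representatives.
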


\begin{proof}
Consider the model $\cM_s$ defined by (\ref{moduli:QMa}) of the moduli space. It follows directly from its construction that the first complex structure on $\cM_s$ corresponds to the natural complex structure $(\hat{J}, \hat{\sigma}) \mapsto (J\hat{J}, \textbf{i} \hat{\sigma})$ on $T^*\cT(\Sigma)$. Moreover, the hyperkähler metric on the fibre $\cQ_1(\R^2) \subset T^* \cJ(\R^2)$ is $S^1$-invariant and compatible with the natural holomorphic symplectic structure. Hence, after integration, it follows that the hyperkähler structure on $\cM$ is also $S^1$-invariant and compatible with the holomorphic symplectic structure of $T^*\cT(\Sigma)$.

It only remains to show that the hyperkähler metric on $\cM$ restricts to the Weil--Petersson metric on $\cT(\Sigma)$. For this recall that there are two ways to understand tangent vector $\hat{J}_1, \hat{J}_2 \in T_J \cJ(\Sigma)$, namely as sections in $\Omega^0(\Sigma, \textrm{End}(T\Sigma))$ or as $1$-forms in $\Omega_J^{0,1}(\Sigma, T\Sigma)$. These two perspectives are related by the formula
\begin{align} \label{TJ:eqn1}
\frac{1}{2} \textrm{tr}\left(\hat{J}_1 \hat{J}_2 \right) \rho - \frac{\textbf{i}}{2} \textrm{tr}\left(\hat{J}_1 J \hat{J}_2 \right)\rho = -2\textbf{i} h_J\left(\hat{J}_1 \wedge \hat{J}_2 \right)
\end{align}
where we used the left hand side to define the Kähler structure along $\cJ(\Sigma)$. The tangent space of Teichmüller space can be identified with
	$$T_{[J]} \cT(\Sigma) \cong \cH^{0,1}_J(T\Sigma) := \{\hat{J} \in \Omega^{0,1}(\Sigma, T\Sigma)\, |\, \bar{\partial}_J^* \hat{J} = 0 \}.$$
We claim that for every $J$ with $K_J \equiv c$ and $\hat{J} \in \cH^{0,1}_J(T\Sigma)$ the vector $(\hat{J}, 0)$ is tangential to the vanishing locus of the hyperkähler moment map. Indeed, for any $v \in \textrm{Vect}(\Sigma)$ it holds
$$\underline{\omega}_1(\bar{\partial}_J v,  \hat{J}) = - \frac{1}{2} \int_{\Sigma} \textrm{tr}\left( (\bar{\partial}_J v) J \hat{J} \right) \rho = -\langle \bar{\partial}_J v, J \hat{J} \rangle_{L^2} = \langle J v, \bar{\partial}_J^* \hat{J} \rangle_{L^2} = 0.$$
It now follows from the moment map equation that $(\hat{J}, 0)$ is tangential to the vanishing locus of the first moment map. This implies the assertion, since the remaining two moment maps vanish along Teichmüller space. With this understood and (\ref{TJ:eqn1}), it follows that the hyperkähler metric on $\cM_s$ restricts to
	\begin{align*}
	g_{\cT}(\hat{J}_1, \hat{J}_2) + \textbf{i}\omega_{\cT}(\hat{J_1}, \hat{J}_2) = 2 \textbf{i} \int_{\Sigma} h_J\left(\hat{J}_1 \wedge \hat{J}_2 \right)
	\end{align*}
for $J \in \cJ(\Sigma)$ with $K_J \equiv c$ and $\hat{J}_1, \hat{J}_2 \in \cH^{0,1}_J(T\Sigma)$. Since we have specialized to the case where $c = -2$, we can simplify the right hand side to
	\begin{align*}
	g_{\cT}(\hat{J}_1, \hat{J}_2) + \textbf{i}\omega_{\cT}(\hat{J_1}, \hat{J}_2) = \int_{\Sigma} h_{-1}\left(\hat{J}_1 \wedge * \hat{J}_2 \right)
	\end{align*}
where $h_{-1}$ denotes the hermitian metric defined by the unique hyperbolic metric in the conformal class determined by $J$.
	
\end{proof}

\newpage

\bibliographystyle{plain}
\bibliography{references}

\end{document}